\newtheorem{thm}{Theorem}
\newtheorem{cor}[thm]{Corollary}
\newtheorem{lem}[thm]{Lemma}
\newtheorem{prop}[thm]{Proposition}
\newtheorem{defn}[thm]{Definition}
\theoremstyle{definition}
\newtheorem{rem}{Remark}
\newtheorem{notation}{Notation}
\newtheorem{examp}{Example}
\newcommand{\rr}{\mathbb{R}}
\newcommand{\ee}{\varepsilon}
\newcommand{\nn}{\mathbb{N}}
\newcommand{\ttt}{\mathcal{T}}
\newcommand{\g}{\mathcal{G}}
\begin{document}

\title[Finite order spreading models]{Finite order spreading models}
\author{S. A. Argyros, V. Kanellopoulos and K. Tyros}
\address{National Technical University of Athens, Faculty of Applied Sciences,
Department of Mathematics, Zografou Campus, 157 80, Athens,
Greece} \email{sargyros@math.ntua.gr} \email{bkanel@math.ntua.gr}
\email{ktyros@central.ntua.gr}

\begin{abstract}
  Extending the classical notion of the spreading
  model, the $k$-spreading models of a Banach space are
  introduced, for every $k\in\nn$. The definition, which is based on the $k$-sequences and plegma
  families, reveals a new class of spreading sequences associated
  to a Banach space. Most of the results of the
  classical theory are stated and proved in the higher order setting. Moreover, new phenomena like the universality of the
  class of the 2-spreading models of $c_0$ and the composition
  property are established. As consequence, a problem concerning
  the structure of the $k$-iterated spreading models is solved.
\end{abstract}

\thanks{2010 \textit{Mathematics Subject Classification}: 46B03,
46B06, 46B25, 46B45,  05D10}
\thanks{\textit{Keywords}: Spreading
models, Ramsey theory}
\thanks{This research is partially supported
by NTUA Programme PEBE 2009 and it is part of the PhD Thesis of the
third named author}

\maketitle

\section*{Introduction}
The present work was motivated by a problem of E. Odell and Th.
Schlumprecht concerning the structure of the $k$-iterated
spreading models of the Banach spaces. Our attempt to answer the
problem led to the $k$-spreading models which in turn are based on
the $k$-sequences and plegma families. The aim of this paper is to
introduce the above concepts and to develop a theory yielding ,
among others, a solution to the aforementioned problem.

Spreading models, invented by A. Brunel and L. Sucheston (c.f.
\cite{BS}), posses a key role in the modern Banach space theory.
Let us recall that a spreading model of a Banach space $X$ is a
spreading sequence\footnote{A sequence $(e_n)_{n}$ in a seminormed
space $(E,\|\cdot\|_*)$ is called spreading if for every
$n\in\nn$, $k_1<\ldots<k_n$ in $\nn$ and $a_1,\ldots,a_n\in\rr$ we
have that $\|\sum_{j=1}^na_j e_j\|_*=\|\sum_{j=1}^n a_j
e_{k_j}\|_*$.

In the literature the term ``spreading model" usually indicates
the space generated by the corresponding spreading sequence rather
than the sequence itself. We have chosen to use the term for the
spreading sequence and whenever we refer to $\ell^p$ or $c_0$
spreading model we shall mean that the spreading sequence is
equivalent to the usual basis of the corresponding space.}
generated by a sequence of $X$. The spreading sequences have
regular structure and the spreading models act as the tool for
realizing that structure in the space $X$ in an asymptotic manner.
This together with the Brunel-Sucheston's discovery that every
bounded sequence has a subsequence generating a spreading model
determine the significance and importance of this concept. For a
comprehensive presentation of the theory of the spreading models
we refer the interested reader to the monograph of  B. Beauzamy
and J.-T. Laprest\'e (c.f. \cite{BL}).

Iteration is naturally applicable to spreading models. Thus one
could define the 2-iterated spreading models of a Banach space $X$
to be the spreading sequences which occur as spreading models of
the spaces generated by spreading models of $X$. Further iteration
yields the $k$-iterated spreading models of $X$, for every
$k\in\nn$. Iterated spreading models appeared in the literature
shortly after Brunel-Sucheston's invention. Indeed, B. Beauzamy
and B. Maurey in \cite{BM}, answering a problem of H.P. Rosenthal,
showed that the class of the 2-iterated spreading models does not
coincide with the corresponding one of the spreading models. In
particular they constructed a Banach space admitting the usual
basis of $\ell^1$ as a $2$-iterated spreading model and not as a
spreading model.

E. Odell and Th. Schlumprecht in  \cite{O-S} asked whether or not
every Banach space admits a $k$-iterated spreading model equivalent
to the usual basis of $\ell^p$, for some $1\leq p<\infty$, or $c_0$.
Let us also point out that in the same paper they provided a
reflexive space $\mathfrak{X}$ with an unconditional basis such that
no $\ell^p$ or $c_0$ is embedded into the space generated by any
spreading model of the space. This remarkable result answered a long
standing problem of the Banach space theory.

Our approach uses the $k$-spreading models which in many cases
include the $k$-iterated ones. The $k$-spreading models are always
spreading sequences $(e_n)_n$ in a seminormed space $E$. They are
generated by $k$-sequences $(x_s)_{s\in[\nn]^k}$, where $[\nn]^k$
denotes the family of all  $k$-subsets of $\nn$. A critical
ingredient in the definition is the plegma families $(s_i)_{i=1}^l$
of elements of $[\nn]^k$, described as follows.

A finite sequence $(s_j)_{j=1}^l$ in $[\nn]^k$ is a plegma family if
its elements satisfy the following order relation: for every $1\leq
i\leq k$,  $s_1(i)<\ldots<s_l(i)$ and for every $1\leq i< k$,
$s_l(i) < s_1(i+1)$. The plegma families, as they are used in the
definition, force a weaker asymptotic relation of the $k$-spreading
models to the space $X$, as $k$ increases. For $k=1$, the plegma
families coincide to the finite subsets of $\nn$ yielding that the
new definition of the 1-spreading models recovers the classical one.
For $k>1$, the plegma families have a quite strict behavior which is
described in the first section of the paper. Of independent interest
is also Lemma \ref{intro_denslem} stated below.

The $k$-spreading models of a Banach space $X$ are denoted by
$\mathcal{SM}_k(X)$ and they define an increasing sequence. As the
definition easily yields, the same holds for the $k$-iterated ones.
Similarly to the classical case, for every bounded $k$-sequence
$(x_s)_{s\in[\nn]^k}$ there exists an infinite subset $L$ of $\nn$
such that the $k$-subsequence $(x_s)_{s\in[L]^k}$ generates a
$k$-spreading model.

The advantage of the $k$-spreading models is that, unlike the
$k$-iterated ones, for $k\geq2$, the space $X$ determines directly
their norm, through the $k$-sequences. Moreover, the $k$-spreading
models have a transfinite extension yielding a hierarchy of
$\xi$-spreading models for all $\xi<\omega_1$. The definition and
the study of this hierarchy is more involved and will be presented
elsewhere. We should also mention that L. Halbeisen and E. Odell
(c.f. \cite{H-O}) introduced the asymptotic models which share
some common features with the 2-spreading models. The asymptotic
models are associated to bounded 2-sequences $(x_s)_{s\in[\nn]^2}$
and they are not necessarily spreading sequences.

The paper mainly concerns the definition and the study of the
$k$-spreading models.
Highlighting the results of the paper we should mention the
universal property satisfied by the 2-spreading models of $c_0$.
More precisely, it is shown that every spreading sequence is
isomorphically equivalent to some 2-spreading model of $c_0$. As the
spaces generated by $k$-iterated spreading models of $c_0$ are
isomorphic to $c_0$, the previous result shows that the
$k$-spreading models do not coincide with the $k$-iterated ones. The
composition property is also established. Roughly speaking,
 under some natural conditions, the
$d$-spreading model of a $k$-spreading model of a Banach space $X$
is a $(k+d)$-spreading model of $X$. This result is used for showing
that a special class of the $k$-iterated spreading models are
actually $k$-spreading models. We also extend to the higher order
results of the spreading model theory. Among others we provide
conditions for the $k$-sequences to generate unconditional spreading
models and we study properties like non-distortion and duality of
$\ell^1$ and $c_0$ $k$-spreading models. Moreover we introduce the
Ces\`aro summability for $k$-sequences and we prove the following
that extends a classical theorem due to H.P. Rosenthal (c.f.
\cite{M,Ro}).
\begin{thm}
Let $X$ be a Banach space, $k\in\nn$ and  $(x_s)_{s\in[\nn]^k}$ be
a weakly relatively compact $k$-sequence in $X$, i.e.
$\overline{\{x_s:s\in[\nn]^k\}}^w$ is $w$-compact. Then there
exists $M\in [\nn]^\infty$ such that at least one of the following
holds:
\begin{enumerate}
\item[(1)] The subsequence  $(x_s)_{s\in[M]^k}$ generates a
$k$-spreading model equivalent to the usual basis of $\ell^1$.
\item[(2)] There exists $x_0\in X$ such that for every $L\in
[M]^\infty$, $(x_s)_{s\in [L]^k}$ is $k$-Ces\`aro summable to
$x_0$.
\end{enumerate}
\end{thm}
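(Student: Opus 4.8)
The plan is to lift Rosenthal's classical argument "one order up'': after passing to a suitable $[M]^k$ and subtracting a weak limit, the statement reduces to a dichotomy for the Ces\`aro averages of a single generated $k$-spreading model.

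I would first apply the existence theorem quoted above to pass to $M_0\in[\nn]^\infty$ on which $(x_s)_{s\in[M_0]^k}$ generates a $k$-spreading model, and then use the weak relative compactness: by a Ramsey-type selection over plegma families together with the Eberlein--\v{S}mulian theorem (applied to $\overline{\{x_s:s\in[\nn]^k\}}^w$), pass to $M_1\in[M_0]^\infty$ so that either $(x_s)_{s\in[M_1]^k}$ already generates an $\ell^1$ $k$-spreading model — in which case (1) holds and we stop — or there is $x_0\in X$ for which the translated $k$-sequence $(x_s-x_0)_{s\in[M_1]^k}$ is "$w$-null'' along $[M_1]^k$; here it is essential that $x_0$ be a genuine element of $X$ (weak, not weak-$*$, compactness). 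In the second case, after one more refinement to $M\in[M_1]^\infty$, the $w$-null $k$-sequence $(x_s-x_0)_{s\in[M]^k}$ generates a $k$-spreading model $(e_n)_n$ in a seminormed space $(E,\|\cdot\|_*)$ which is $1$-suppression unconditional, by the (earlier) criterion under which $w$-null $k$-sequences generate unconditional $k$-spreading models. Note that on coefficient vectors $(a_i)$ with $\sum_i a_i=0$ the translation changes nothing, so by the usual doubling trick for spreading sequences $(e_n)_n$ is equivalent to the $\ell^1$-basis if and only if the original $k$-sequence generates an $\ell^1$ $k$-spreading model over $[M]^k$.

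The dichotomy for $(e_n)_n$ is then the heart. Spreading gives $\|e_1+\dots+e_{n+m}\|_*\le\|e_1+\dots+e_n\|_*+\|e_1+\dots+e_m\|_*$, so $a_n:=\frac1n\|e_1+\dots+e_n\|_*$ is non-increasing and $c:=\lim_n a_n=\inf_n a_n$ exists. If $c>0$, I claim $(e_n)_n\sim\ell^1$ — hence, by the last remark, alternative (1) holds — via a shift-averaging argument. For $b_i\ge0$ with $\sum_{i\le n}b_i=1$ and $v:=\|\sum_{i=1}^n b_ie_i\|_*$, form for large $K$ the vector $w=\sum_{i=1}^{Kn}c_ie_i$ by repeating the block $(b_1,\dots,b_n)$ $K$ times; by spreading each shift $w^{(\sigma)}=\sum_i c_ie_{i+\sigma}$, $0\le\sigma<n$, has $\|w^{(\sigma)}\|_*=\|w\|_*\le Kv$, while $\frac1n\sum_\sigma w^{(\sigma)}$ equals $\frac1n(e_{n+1}+\dots+e_{Kn})$ plus a vector supported on at most $2n$ coordinates with coefficients at most $\frac1n$, whose seminorm is $O(1)$ by $1$-suppression unconditionality. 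Hence $Kv\ge\|w\|_*\ge\|\tfrac1n\sum_\sigma w^{(\sigma)}\|_*\ge\tfrac1n\|e_1+\dots+e_{(K-1)n}\|_*-O(1)\ge c(K-1)-O(1)$, and $K\to\infty$ gives $v\ge c$, i.e. $\|\sum b_ie_i\|_*\ge c\sum|b_i|$ for all scalars (arbitrary signs costing only the unconditionality constant). If instead $c=0$, then $\|\frac1n(e_1+\dots+e_n)\|_*\to0$, and unwinding the definition of "$(x_s)_{s\in[M]^k}$ generates $(e_n)_n$'' while splitting a long plegma family into consecutive plegma subfamilies of a fixed length yields, for every $L\in[M]^\infty$, that the plegma averages of $(x_s-x_0)_{s\in[L]^k}$ tend to $0$ in norm; that is, $(x_s)_{s\in[L]^k}$ is $k$-Ces\`aro summable to $x_0$, which is alternative (2).

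I expect the real obstacle to be the extraction of $M_1$ and $x_0$: producing, from mere weak relative compactness, a single infinite $M$ and a single vector $x_0\in X$ serving the whole $k$-sequence over every further $L\in[M]^\infty$, and checking that after subtracting $x_0$ the $k$-sequence still generates a ($1$-suppression unconditional) $k$-spreading model. This is where the rigid combinatorics of plegma families has to be reconciled with a weak-compactness/Ramsey selection, and where weak (not weak-$*$) compactness is precisely what keeps the Ces\`aro limit of (2) inside $X$. Granting that, the two cases are, respectively, the shift-averaging computation above and a routine plegma-block averaging.
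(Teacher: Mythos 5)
Your overall architecture matches the paper's: extract a spreading model, use weak compactness to produce $x_0$, subtract it, get an unconditional spreading model of $(x_s-x_0)$, and run the $\ell^1$-versus-Ces\`aro dichotomy on the spreading sequence (your shift-averaging computation reproves the standard fact, Proposition I.1.4 of Beauzamy--Laprest\'e, which the paper simply cites). The genuine gap is in the final step of alternative (2). By definition, $k$-Ces\`aro summability is the convergence of $\binom{n}{k}^{-1}\sum_{s\in[L|n]^k}x_s$ --- an average over \emph{all} $\binom{n}{k}$ $k$-subsets of $L|n$ --- not an average along plegma families. A plegma $l$-tuple in $[L|n]^k$ occupies $kl$ distinct elements of $L|n$, so plegma families there have length at most $n/k$ and cover a negligible fraction of $[L|n]^k$; ``splitting a long plegma family into consecutive plegma subfamilies'' controls only plegma averages and says nothing about the full Ces\`aro mean. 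Passing from ``the spreading model is Ces\`aro summable to zero'' to ``the $k$-sequence is $k$-Ces\`aro summable to zero'' is exactly where the paper invokes its density lemma for plegma families, a consequence of the Furstenberg--Katznelson multidimensional Szemer\'edi theorem: if the full averages over $[L|p_n]^k$ stayed $\geq\theta$ in norm, a norming functional would yield a subset of $[\{1,\dots,p_n\}]^k$ of density $\geq\theta/2K$ on which it exceeds $\theta/2$, and the density lemma extracts arbitrarily long plegma families inside that subset, contradicting Ces\`aro summability to zero of the spreading model. For $k\geq 2$ this is the hardest step of the theorem (the introduction flags it explicitly as the difference from $k=1$), and it is not supplied by ``routine plegma-block averaging.''

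A second, smaller issue: the unconditionality criterion you invoke requires the $k$-sequence to be \emph{subordinated} (i.e.\ to extend to a weakly continuous map on $[M]^{\leq k}$), not merely weakly null --- the paper exhibits a normalized weakly null $2$-sequence in $c_0$ all of whose $2$-spreading models are the summing basis. So the refinement producing $M_1$ and $x_0$ must produce a subordinated $k$-subsequence, with $x_0=\widehat{\varphi}(\emptyset)$; you correctly flag this as the expected obstacle, but your sketch does not resolve it. Finally, the case in which $(x_s-x_0)$ fails to be seminormalized (equivalently, $(x_s)$ has a norm-convergent $k$-subsequence) must be split off at the outset, since the unconditionality theorem needs seminormalization; in that case alternative (2) holds trivially.
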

There are significant differences between the cases $k=1$ and
$k\geq2$. First for $k=1$ the two alternatives are exclusive which
does not remain valid for $k\geq 2$. Second the proof for the case
$k\geq 2$ uses the following density result concerning plegma
families which is a consequence of the multidimensional
Szemeredi's theorem due to H. Furstenberg and Y. Katznelson (c.f.
\cite{FK}).
\begin{lem}\label{intro_denslem}
Let  $\delta>0$ and $k, l\in\nn$. Then  there exists  $n_0\in \nn$
such that for every $n\geq n_0$ and   every subset $\mathcal{A}$ of
the set of all $k$-subsets of $\{1,\ldots, n\}$ of size at least
$\delta (\substack{n\\ k})$, there exists  a plegma $l$-tuple
$(s_j)_{j=1}^l$ in $\mathcal{A}$.
  \end{lem}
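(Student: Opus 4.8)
The plan is to recast the statement as a density statement on a $k$-dimensional integer grid and then invoke the (finitary form of the) multidimensional Szemer\'edi theorem of Furstenberg and Katznelson \cite{FK}. First I would identify each $k$-subset $s=\{s(1)<\dots<s(k)\}$ of $\{1,\dots,n\}$ with the lattice point $\phi(s)=(s(1),\dots,s(k))$, so that $\phi$ is a bijection from the $k$-subsets of $\{1,\dots,n\}$ onto $\Delta_n:=\{x\in\{1,\dots,n\}^k:\ x_1<\dots<x_k\}$. Since $\binom{n}{k}\geq n^k/(2\cdot k!)$ for all sufficiently large $n$ (for instance whenever $n\geq k^2$), a family $\mathcal{A}$ as in the statement satisfies $|\phi(\mathcal{A})|=|\mathcal{A}|\geq\delta\binom{n}{k}\geq\delta'n^k$ with $\delta':=\delta/(2\cdot k!)$; that is, $\phi(\mathcal{A})$ has density at least $\delta'$ in $\{1,\dots,n\}^k$. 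The next step is to apply the finitary multidimensional Szemer\'edi theorem in dimension $k$, for the density threshold $\delta'$ and for the combinatorial cube $F=\{0,1,\dots,l-1\}^k\subseteq\mathbb{Z}^k$: this produces an $n_0=n_0(k,l,\delta)$ such that for every $n\geq n_0$ the set $\phi(\mathcal{A})$ contains a homothetic copy $a+r\cdot F=\{a+r\varepsilon:\varepsilon\in\{0,\dots,l-1\}^k\}$ for some $a=(a_1,\dots,a_k)\in\mathbb{Z}^k$ and some positive integer $r$.

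It then remains to read a plegma $l$-tuple off this cube, and this is the one step that is not mere bookkeeping. For $j=1,\dots,l$ put $s_j:=\{a_1+(j-1)r,\ \dots,\ a_k+(j-1)r\}$; since $a+(j-1)r(1,\dots,1)$ is a vertex of $a+r\cdot F$ and hence lies in $\phi(\mathcal{A})$, each $s_j$ is an element of $\mathcal{A}$ with increasing enumeration $s_j(i)=a_i+(j-1)r$. The key observation is that the \emph{entire} cube lies in $\phi(\mathcal{A})\subseteq\Delta_n$: applying this to the vertex obtained from $a$ by replacing its $i$-th coordinate with $a_i+(l-1)r$, strict monotonicity of that point forces $a_{i+1}-a_i>(l-1)r$ for every $1\leq i<k$. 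Consequently, for each $i$ we get $s_1(i)<\dots<s_l(i)$ because the common difference $r$ is positive, and $s_l(i)=a_i+(l-1)r<a_{i+1}=s_1(i+1)$ whenever $i<k$; these are precisely the two defining order relations of a plegma family, so $(s_j)_{j=1}^{l}$ is a plegma $l$-tuple contained in $\mathcal{A}$. (For $l=1$ there is nothing to prove, and for $k=1$ a plegma $l$-tuple is just an ordered $l$-element subset, which exists once $n\geq l/\delta$.)

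I expect the main obstacle to be exactly this extraction step, that is, recognizing which configuration to search for. If one instead sought a homothetic copy of an arithmetic progression — a combinatorial line — one would only obtain the $l$ translates $s_1,\ s_1+r,\ \dots,\ s_1+(l-1)r$ of a single $k$-set, with no control on the size of $r$ relative to the gaps of $s_1$, so the interleaving relation $s_l(i)<s_1(i+1)$ could well fail. It is precisely the membership of the \emph{full} cube $\{0,\dots,l-1\}^k$ in $\Delta_n$ that pins the coordinates of the base point $a$ apart by more than $(l-1)r$, which is what makes the plegma inequalities automatic; and locating such a cube inside an arbitrary dense subset of the grid is exactly what Furstenberg--Katznelson delivers. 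The only other point requiring a little care is to use the theorem in its finitary/density form on $\{1,\dots,n\}^k$ rather than its original statement for sets of positive upper density in $\mathbb{Z}^k$, but the passage between the two is routine.
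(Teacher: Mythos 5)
Your proof is correct and follows essentially the same route as the paper: both arguments view $\mathcal{A}$ as a subset of $\{1,\dots,n\}^k$ of density at least $\delta/(2\,k!)$ and invoke the finitary Furstenberg--Katznelson theorem to locate a homothetic copy $a+d F$ of a fixed finite configuration, then read the plegma family off that copy. The only (harmless) difference is the choice of $F$: the paper takes the $(l+1)$-point set $F=\{\mathbf{0}\}\cup\{t_j\}_{j=1}^{l}$ with $t_j=(j,\,l+j,\dots,(k-1)l+j)$ already a plegma family, so that $a+dF$ is automatically plegma once $a$ is an increasing tuple, whereas you request the full cube $\{0,\dots,l-1\}^k$ and deduce the interleaving inequality $s_l(i)<s_1(i+1)$ from the fact that the off-diagonal vertices also lie among the increasing tuples --- a larger pattern, but an equally valid extraction.
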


We close the paper with two examples. The first one is a Banach
space similar to the aforementioned one of Odell-Schlumprecht. It is
proved that no $k$-spreading model of the space is isomorphic to
some $\ell^p$, $1\leq p<\infty$, or $c_0$. The composition property,
mentioned above, yields that the same holds for the $k$-iterated
spreading models and thus the answer to the aforementioned
Odell-Schlumprecht problem is a negative one. In the second example,
for every $k\in\nn$ we present a space $\mathfrak{X}_{k+1}$
admitting the usual basis of $\ell^1$ as a $(k+1)$-spreading model
while for every $d\leq k$, $\mathfrak{X}_{k+1}$ does not admit
$\ell^1$ as a $d$-spreading model. As we have mentioned, the
corresponding problem for $k$-iterated spreading models has been
answered in \cite{BM} for $k+1=2$. It seems that for $k>1$ this
problem is still open. However, recently the $(k+1)$-iterated spreading models have been separated by the $k$ ones in \cite{AM}. The proofs  in both examples make use of the
results exhibited in the previous sections of the paper.
\subsection*{Notation} By $\nn=\{1,2,...\}$
we denote the set of all positive integers.  We will use capital
letters as $L,M,N,...$ (resp. lower case letters as  $s,t,u,...$)
to denote infinite subsets (resp. finite subsets) of $\nn$. For
every infinite subset $L$ of $\nn$, the notation $[L]^\infty$
(resp. $[L]^{<\infty}$) stands for the set of all infinite (resp.
finite) subsets of $L$.  For every $s\in[\nn]^{<\infty}$, by
$|s|$ we denote  the cardinality of $s$. For $L\in[\nn]^\infty$
and $k\in\nn$,  $[L]^k$ (resp. $[L]^{\leq k}$) is the set of all
$s\in[L]^{<\infty}$ with $|s|=k$ (resp. $|s|\leq k$).  For every
$s,t\in[\nn]^{<\infty}$,  we write $s<t$ if either at least one of
them is the empty set, or $\max s<\min t$.

Throughout the paper we shall identify strictly increasing sequences
in $\nn$ with their corresponding range, i.e. we view every strictly
increasing sequence in $\nn$ as a subset of $\nn$ and conversely
every subset of $\nn$ as the sequence resulting from the increasing
ordering of its elements. Thus, for an infinite subset
$L=\{l_1<l_2<...\}$ of $\nn$  and  $i\in\nn$, we set $L(i)=l_i$ and
similarly, for a finite subset $s=\{n_1<..<n_k\}$ of $\nn$ and for
$1\leq i\leq k$, we set $s(i)=n_i$.  Also, for every  $L, N\in
[\nn]^\infty$ and $s\in [\nn]^{<\infty}$,  we set
$L(N)=\{L(N(i)):i\in\nn\}$ and $L(s)=\{L(s(i)): 1\leq i\leq |s|\}$.
Similarly, for every $s\in [\nn]^{k}$ and $ F\subseteq \{1,...,k\}$,
we set $s(F)=\{s(i):i\in F\}$. Also  for $1\leq m\leq k$, we set
$s|m=\{s(i): 1\leq i\leq m\}$.

For every $s,t\in[\nn]^{<\infty}$, we write $s\sqsubseteq t$ (resp.
$s\sqsubset t$) to denote that $s$ is an initial (resp.
\emph{proper} initial) segment of $t$.
Given two  sequences $(s^1_j)_{j=1}^{l_1}$ and $(s^2_j)_{j=1}^{l_2}$
in $[\nn]^{<\infty}$, by
$(s^1_j)_{j=1}^{l_1\;\;\smallfrown}(s^2_j)_{j=1}^{l_2}$, we denote
their concatenation. Similarly for more than two sequences.

For a Banach space $X$ with a
Schauder basis $(e_n)_n$ and every $x\in X$, $x=\sum_n \lambda_n
e_n$ we write $\text{supp}(x)$ to denote the support of $x$, i.e.
$\text{supp}(x)=\{n\in\nn:\lambda_n\neq 0\}$. If the
support of $x$  is finite and $E\subseteq \nn$ then by $E(x)$, we
denote the restriction of $x$ to $E$, namely $E(x)=\sum_{n\in
E}\lambda_n e_n$.

Two sequences $(x_n)_n$ and $(y_n)_n$, not necessarily in the same Banach space, will be called isometric (resp. equivalent) if (resp. there exists $0<c\leq C$ such that) for every $n\in\nn$ and $a_1,\ldots,a_n\in\rr$ we have that $\|\sum_{i=1}^na_ix_i\|=\|\sum_{i=1}^na_iy_i\|$ (resp. $c\|\sum_{i=1}^na_ix_i\|\leq\|\sum_{i=1}^na_iy_i\|\leq C\|\sum_{i=1}^na_ix_i\|$).  Generally concerning Banach space theory the notation and the terminology that we follow is the standard one (see \cite{AK} and  \cite{Lid-Tza}).

\section{Plegma families in $[\nn]^k$}
As we have already mentioned, the basic ingredients of the
definition of the $k$-spreading models are the $k$-sequences and the
plegma families.
In this section we introduce  the  plegma families as well as the
related notions of the plegma paths and the plegma preserving maps.

\subsection{Definition and basic properties}\label{section
admissibility} We start with the definition of the plegma families.
\begin{defn}\label{defn plegma} Let $k\in\nn$ and $M\in [\nn]^\infty$. A
plegma family in $[M]^k$ is a finite sequence  $(s_j)_{j=1}^l$ in
$[M]^k$ satisfying the following properties.
\begin{enumerate}
\item[(i)] For every $1\leq
i\leq k$,  $s_1(i)<\ldots<s_l(i)$.
\item[(ii)] For every $1\leq i< k$,
$s_l(i) < s_1(i+1)$.
\end{enumerate}
For each $l\in \nn$, the set of all sequences $(s_j)_{j=1}^l$ which
are plegma families in $[M]^k$ will be denoted by
$\textit{Plm}_l([M]^k)$. We also set
$\textit{Plm}([M]^k)=\bigcup_{l=1}^\infty\textit{Plm}_l([M]^k)$.
\end{defn}
Notice that  for $l=1$ and every $k\in \nn$,   we have
$\textit{Plm}_1([M]^k)=[M]^k$. Moreover, for $k=1$ and every
$l\in\nn$, $\textit{Plm}_l([M]^1)=[M]^{l}$. In the sequel the
elements of $\textit{Plm}_2([M]^k)$ will be called  plegma
pairs in $[M]^k$.

\begin{rem} Although the notion of the plegma family is natural, it does
not seem to have appeared in the literature. As it was pointed out
to us by S. Todorcevic, a concept that slightly reminds plegma pairs
in $[\nn]^3$ is given by E. Specker in \cite{Sp}.
\end{rem}

  In the next proposition we gather
some useful properties of plegma families. The proof is
straightforward.

\begin{prop} \label{rem34} Let $k,l\in\nn$, $M\in[\nn]^\infty$ and $(s_j)_{j=1}^l$ be a finite sequence  in $[M]^k$.
\begin{enumerate}
\item[(i)] $(s_j)_{j=1}^l\in \textit{Plm}_l([M]^k)$ if and only
if there exists $F\in[M]^{kl}$ such that $s_j(i)=F((i-1)k+j)$,
for every $1\leq i\leq k$ and $1\leq j\leq l$.
\item[(ii)] If $(s_j)_{j=1}^l\in \textit{Plm}_l([M]^k)$
then $(s_{j_p})_{p=1}^m\in\textit{Plm}_m([M]^k)$, for every
$1\leq m\leq l$ and $1\leq j_1<\ldots<j_m\leq l$.
\item[(iii)]  $(s_j)_{j=1}^l\in\textit{Plm}_l([M]^k)$ if and only if
 $(s_{j_1},s_{j_2})$ is a plegma pair in $[M]^k$, for every
 $1\leq j_1<j_2\leq l$.
\item[(iv)] If
$(s_j)_{j=1}^l\in\textit{Plm}_l([M]^k)$ then
$(s_j(F))_{j=1}^l\in\textit{Plm}_l([M]^{|F|})$, for every
non empty $ F\subseteq \{1,...,k\}$.
\end{enumerate}
\end{prop}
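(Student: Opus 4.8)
The plan is to verify the four assertions by directly unwinding Definition~\ref{defn plegma}; each is elementary bookkeeping with the order relations, and part~(i) is the structural observation from which one could, if desired, deduce the rest.

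For (i), I would note that conditions~(i) and~(ii) of Definition~\ref{defn plegma} together say exactly that the $kl$ numbers
\[
s_1(1)<\dots<s_l(1)<s_1(2)<\dots<s_l(2)<\dots<s_1(k)<\dots<s_l(k)
\]
form a strictly increasing sequence lying in $M$. Letting $F\in[M]^{kl}$ be the set of these numbers, one reads off $s_j(i)=F((i-1)l+j)$; that is, partitioning $F$ into $k$ consecutive blocks of length $l$ and transposing the roles of $i$ and $j$ recovers the family. Conversely, given $F\in[M]^{kl}$ and defining the $s_j\in[M]^k$ by this formula, monotonicity of $F$ immediately yields $s_1(i)<\dots<s_l(i)$ (entries within a block) and $s_l(i)<s_1(i+1)$ (the end of one block precedes the start of the next), so $(s_j)_{j=1}^l$ is a plegma family. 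Thus (i) amounts to a bijective reformulation of plegma $l$-families in $[M]^k$ as $kl$-subsets of $M$.

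For (ii), I would simply check that both defining conditions pass to subsequences: if $s_1(i)<\dots<s_l(i)$ then $s_{j_1}(i)<\dots<s_{j_m}(i)$, and since $1\le j_1$ and $j_m\le l$ we get $s_{j_m}(i)\le s_l(i)<s_1(i+1)\le s_{j_1}(i+1)$ for $1\le i<k$. For (iii), the forward implication is the case $m=2$ of (ii); for the converse, condition~(i) of Definition~\ref{defn plegma} for $(s_j)_{j=1}^l$ is, coordinate by coordinate, exactly the union over pairs $j_1<j_2$ of condition~(i) for $(s_{j_1},s_{j_2})$, while condition~(ii) for $(s_j)_{j=1}^l$ is condition~(ii) for the single plegma pair $(s_1,s_l)$, which by definition reads $s_l(i)<s_1(i+1)$ for $1\le i<k$ (the case $l=1$ being trivial). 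Finally, for (iv), writing $F=\{i_1<\dots<i_p\}$ one has $(s_j(F))(r)=s_j(i_r)$; condition~(i) for $(s_j(F))_{j=1}^l$ at coordinate $r$ is condition~(i) for $(s_j)_{j=1}^l$ at coordinate $i_r$, and condition~(ii) follows from $s_l(i_r)<s_1(i_r+1)\le s_1(i_{r+1})$, the last inequality holding because the entries of the single set $s_1$ are strictly increasing and $i_r+1\le i_{r+1}$.

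There is no genuine obstacle here: the whole argument is definitional. The only point requiring care — and the reason the relevant pair in (iii) is $(s_1,s_l)$ rather than an arbitrary one — is keeping straight the asymmetric roles of the first index $j=1$ and the last index $j=l$ in condition~(ii), and likewise tracking the coordinate shift in (iv).
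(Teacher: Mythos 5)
Your proof is correct, and it is exactly the "straightforward" verification that the paper omits (the paper gives no argument for this proposition), so there is nothing to compare beyond noting that all four parts are handled completely, including the degenerate cases $l=1$ in (iii) and the coordinate shift in (iv). One remark: in part (i) you read off $s_j(i)=F((i-1)l+j)$, whereas the statement as printed has $s_j(i)=F((i-1)k+j)$; the printed formula is a typo (for $k\neq l$ the map $(i,j)\mapsto (i-1)k+j$ is not injective and can exceed $kl$), and your indexing, which enumerates $F$ in $k$ consecutive blocks of length $l$, is the intended and correct one.
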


\begin{thm} \label{ramseyforplegma}
Let $M$ be an infinite subset of $\nn$ and  $k,l\in\nn$. Then for
every finite partition $\textit{Plm}_l([M]^k)=\bigcup_{j=1}^p
P_j$, there exist $L\in[M]^\infty$ and $1\leq j_0\leq p$ such that
$\textit{Plm}_l([L]^k)\subseteq P_{j_0}$.
\end{thm}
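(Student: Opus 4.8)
The plan is to reduce the statement to the classical infinite Ramsey theorem by means of the bijection between plegma $l$-families in $[M]^k$ and $kl$-subsets of $M$ furnished by Proposition \ref{rem34}(i).

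First I would make that correspondence explicit. For each $F\in[M]^{kl}$ define $\Phi_M(F)=(s_j)_{j=1}^l$ by $s_j(i)=F((i-1)k+j)$ for $1\leq i\leq k$ and $1\leq j\leq l$. By Proposition \ref{rem34}(i) the map $\Phi_M:[M]^{kl}\to\textit{Plm}_l([M]^k)$ is well defined and bijective. The crucial additional observation is that $\Phi_M$ is compatible with passing to subsets: since the entries of $\Phi_M(F)$ are exactly the elements of $F$, for every $L\in[M]^\infty$ we have $\Phi_M(F)\in\textit{Plm}_l([L]^k)$ if and only if $F\subseteq L$; hence $\Phi_M$ restricts to a bijection between $[L]^{kl}$ and $\textit{Plm}_l([L]^k)$, that is, $\Phi_M([L]^{kl})=\textit{Plm}_l([L]^k)$.

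Next, given the finite partition $\textit{Plm}_l([M]^k)=\bigcup_{j=1}^pP_j$, I would pull it back through $\Phi_M$ to obtain a finite partition (equivalently, a $p$-coloring) $[M]^{kl}=\bigcup_{j=1}^pQ_j$, where $Q_j=\Phi_M^{-1}(P_j)$. By the infinite Ramsey theorem applied to this coloring of $[M]^{kl}$, there exist $L\in[M]^\infty$ and $1\leq j_0\leq p$ with $[L]^{kl}\subseteq Q_{j_0}$. Applying $\Phi_M$ and using the compatibility established above, $\textit{Plm}_l([L]^k)=\Phi_M([L]^{kl})\subseteq\Phi_M(Q_{j_0})=P_{j_0}$, which is precisely the desired conclusion.

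There is essentially no serious obstacle here; the only point requiring a moment's care is the verification that $\Phi_M$ intertwines the restriction operations on the two sides, i.e. that a plegma family lies in $[L]^k$ exactly when the associated $kl$-set lies in $L$ — and this is immediate from the formula $s_j(i)=F((i-1)k+j)$, since the range of the family equals $F$. Everything else is a direct invocation of the classical Ramsey theorem, so the result should be viewed as the natural transfer of infinite Ramsey theory to the plegma setting.
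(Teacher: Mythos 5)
Your proof is correct and is essentially identical to the paper's: both use the bijection from Proposition \ref{rem34}(i) between $\textit{Plm}_l([M]^k)$ and $[M]^{kl}$ to transfer the partition and then invoke the classical Ramsey theorem. Your explicit check that the bijection respects restriction to $[L]^{kl}$ is a detail the paper leaves implicit, but the argument is the same.
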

\begin{proof}
By Proposition \ref{rem34} (i), we conclude that  the map sending
each plegma family $(s_j)_{j=1}^l$ in $[M]^k$  to its union
$\bigcup_{j=1}^ls_j$ is a bijection from
$\textit{Plm}_l([M]^k)$ onto $[M]^{kl}$. Therefore the
partition of $\textit{Plm}_l([M]^k)$ induces a corresponding
one to $[M]^{kl}$ and the conclusion easily follows by  applying
the Ramsey's theorem \cite{R}.
\end{proof}

\subsection{Plegma paths in $[\nn]^k$}
In this subsection we introduce the definition of the plegma paths.
As we shall see in the sequel, the plegma paths play important role
in the development of the theory of $k$-spreading models.
\begin{defn}
Let $l,k\in\nn$ and $M\in[\nn]^\infty$. We will say that a finite
sequence $(s_j)_{j=0}^l$  is a plegma path of length $l$ from $s_0$
to $s_l$ \textit{in} $[M]^k$, if $(s_{j-1},s_{j})$ is a plegma pair
in $[M]^k$, for every $1\leq j\leq l-1$.
\end{defn}

\begin{lem}\label{lemma conserning the length of the plegma path}
  Let $k\in\nn$ and $(s_j)_{j=0}^l$ be a plegma
  path in $[\nn]^k$.  If $s_0<s_l$ then $l\geq k$.
\end{lem}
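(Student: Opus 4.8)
The plan is to argue by contradiction. Suppose $s_0<s_l$ but $l\le k-1$. Since $s_0<s_l$ already forces $s_0\neq s_l$, we have $l\geq 1$, and then $k\geq 2$. Recall that by the definition of a plegma path every consecutive pair $(s_{j-1},s_j)$, $1\leq j\leq l$, is a plegma pair in $[\nn]^k$, so condition (ii) of Definition \ref{defn plegma} gives
\[
s_j(i)<s_{j-1}(i+1)\qquad\text{for all }1\leq j\leq l\text{ and }1\leq i<k .
\]

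The core of the argument is a single ``diagonal'' chain of these inequalities, run backwards from $s_l$ to $s_0$. Precisely, I would show by induction on $m$ that
\[
s_l(1)<s_{l-m}(m+1)\qquad\text{for every }1\leq m\leq l .
\]
For $m=1$ this is the displayed inequality applied to the pair $(s_{l-1},s_l)$ with $i=1$. For the inductive step, if $s_l(1)<s_{l-m}(m+1)$ and $m<l$, then also $m+1\leq l\leq k-1<k$, so the displayed inequality may be applied to the pair $(s_{l-m-1},s_{l-m})$ with $i=m+1$, giving $s_{l-m}(m+1)<s_{l-m-1}(m+2)$, and the two inequalities chain together. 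The only subtlety here is the index bookkeeping: the chain stays meaningful exactly as long as the coordinate index $m+1$ does not exceed $k$, and the standing assumption $l\leq k-1$ is precisely what keeps this true all the way up to $m=l$. This is the single place where the hypothesis is used, and I expect it to be the only real point to watch.

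Taking $m=l$ in the chain yields $s_l(1)<s_0(l+1)$. Since $s_0$ is listed in increasing order and $l+1\leq k$, we have $s_0(l+1)\leq s_0(k)=\max s_0$. Hence $\min s_l=s_l(1)<\max s_0$, contradicting $s_0<s_l$. Therefore $l\geq k$, as claimed.
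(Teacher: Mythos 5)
Your proof is correct and follows essentially the same route as the paper: both derive the diagonal chain $s_l(1)<s_{l-1}(2)<\cdots<s_0(l+1)\leq s_0(k)$ from condition (ii) of the plegma pair definition and contradict $s_0<s_l$. Your version merely makes the induction and the index bookkeeping explicit, which the paper leaves implicit.
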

\begin{proof}
 Suppose on the contrary that $s_0<s_l$ and $l<k$.
 Since $(s_{j-1}, s_j)$ is a plegma pair in
$[\nn]^k$,  we have $s_j(i_1)<s_{j-1}(i_2)$, for every $1\leq j\leq
l$ and $1\leq i_1<i_2\leq k$. Hence,
$s_l(1)<s_{l-1}(2)<s_{l-2}(3)<\ldots<s_0(l+1)\leq s_0(k)$, which
contradicts that $s_0<s_l$.
\end{proof}

\begin{defn}
Let  $k\in\nn$ and $M\in[\nn]^\infty$. An $s\in [M]^k$ will be
called skipped in $M$ if for every  $1\leq i<k$ there exists $m\in
M$ such that $s(i)<m<s(i+1)$. The set of all skipped $s\in [M]^k$ in
$M$ will be denoted by $[M]^k_\shortparallel$.
\end{defn}

\begin{rem}\label{rem453} Notice that for every $m\in\nn$
and  $s\in [M]^k_\shortparallel$  there exists a plegma path
$(s_j)_{j=0}^l$ in $[M]^k$ with $s_0=s$.
\end{rem}

\begin{prop}\label{accessing everything with plegma path of length |s_0|}
Let $k\in\nn$ and $M\in[\nn]^\infty$. Then for every
$s,t\in[M]_\shortparallel^k$ with $s<t$ there exists a plegma path
of length $k$ in $ [M]^k $  from $s$ to $t$. Moreover, every plegma
path in $[\nn]^k$ from $s$ to $t$ has length at least $k$.
\end{prop}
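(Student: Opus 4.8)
The ``moreover'' clause is immediate from Lemma~\ref{lemma conserning the length of the plegma path}: a plegma path $(s_j)_{j=0}^{l}$ in $[\nn]^{k}$ from $s$ to $t$ has $s_0=s<t=s_l$, so $l\geq k$. For the existence clause my plan is to write down an explicit plegma path $(S_j)_{j=0}^{k}$ in $[M]^{k}$ with $S_0=s$ and $S_k=t$. The delicate point is that skippedness of $s$ supplies only \emph{one} element of $M$ in each of the $k-1$ internal gaps $(s(i),s(i+1))$, and likewise for $t$, so the intermediate $S_j$ cannot be built from fresh elements alone; the construction has to recycle the coordinates of $s$ and $t$ themselves, and setting up that bookkeeping is essentially the only content of the proof.

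First I would fix a finite scaffold. Using $s,t\in[M]^{k}_{\shortparallel}$, choose $m^{s}_i\in M$ with $s(i)<m^{s}_i<s(i+1)$ and $m^{t}_i\in M$ with $t(i)<m^{t}_i<t(i+1)$, for $1\leq i\leq k-1$. Since $s<t$ forces $s(k)<t(1)$, the $4k-2$ numbers
\[
s(1)<m^{s}_1<s(2)<\dots<m^{s}_{k-1}<s(k)<t(1)<m^{t}_1<t(2)<\dots<m^{t}_{k-1}<t(k)
\]
are strictly increasing; relabel them as $e_1<e_2<\dots<e_{4k-2}$ in $M$, so that $s(i)=e_{2i-1}$ and $t(i)=e_{2k+2i-2}$ for $1\leq i\leq k$.

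Next I would put, for $0\leq j\leq k$ and $1\leq i\leq k$,
\[
S_j(i)=e_{\phi(j,i)},\qquad
\phi(j,i)=\begin{cases}2i+j-1,& i+j\leq k,\\ 2i+j+k-2,& i+j>k,\end{cases}
\]
the point being that, running down a column, the index stays in the ``$s$-block'' $\{1,\dots,2k-1\}$ until the anti-diagonal $i+j=k$ is crossed and then jumps into the ``$t$-block'' $\{2k,\dots,4k-2\}$. It then remains to verify the purely arithmetical facts: (a) $\phi$ takes values in $\{1,\dots,4k-2\}$ and $i\mapsto\phi(j,i)$ is strictly increasing for each fixed $j$, so each $S_j$ is a well-defined element of $[M]^{k}$; (b) $\phi(0,i)=2i-1$ and $\phi(k,i)=2k+2i-2$, so $S_0=s$ and $S_k=t$; (c) $\phi(j-1,i)<\phi(j,i)$ for $1\leq j\leq k$; and (d) $\phi(j,i)<\phi(j-1,i+1)$ for $1\leq j\leq k$, $1\leq i<k$. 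Conditions (c) and (d) are exactly requirements (i) and (ii) of Definition~\ref{defn plegma} (with $l=2$) for $(S_{j-1},S_j)$ to be a plegma pair in $[M]^{k}$, for each $1\leq j\leq k$ (strictness is what is needed, and is available since the $e_p$ are pairwise distinct); granting them, $(S_j)_{j=0}^{k}$ is the desired plegma path of length $k$ from $s$ to $t$.

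Each of (a)--(d) is a short case split on whether the relevant cells lie in the region $i+j\leq k$ or $i+j>k$; in (d) the two cells always share a region (equal value of $i+j$) and in (a), (c) there is a single borderline case, where the claimed inequality still holds, now with a larger gap. I expect this arithmetic to be the whole of the work, and the only genuinely delicate step to be the choice of split in the definition of $\phi$: committing to reuse $s(2),\dots,s(k)$ and $t(1),\dots,t(k-1)$ once the anti-diagonal is crossed, rather than looking for elements of $M$ that the hypothesis does not provide.
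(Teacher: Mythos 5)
Your proposal is correct and is essentially the paper's own proof: the paper likewise builds the scaffold $\tilde{s},\tilde{t}\in[M]^{2k-1}$ interleaving $s$ (resp.\ $t$) with the witnesses of skippedness and defines $s_j=\{\tilde{s}(2i-1+j):1\leq i\leq k-j\}\cup\{\tilde{t}(2i-1+k-j):1\leq i\leq j\}$, which under your relabeling $e_1<\dots<e_{4k-2}$ is exactly the map $\phi(j,i)$ you wrote down, and it handles the lower bound via Lemma~\ref{lemma conserning the length of the plegma path} in the same way. The arithmetic checks (a)--(d) all go through as you describe, so the argument is complete.
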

\begin{proof} Fix $s,t\in[M]_\shortparallel^k$ with $s<t$. It is clear that we may
choose $\tilde{s}, \tilde{t}\in [M]^{2k-1}$ such that
$\tilde{s}(2i-1)=s(i)$ and similarly $\tilde{t}(2i-1)=t(i)$, for
every $1\leq i\leq k$. For every $0\leq j\leq k$, we  set
\[s_j=\big\{\tilde{s}(2i-1+j):1\leq i\leq k-j\big\}\cup\big\{\tilde{t}(2i-1+k-j):1\leq i\leq j\big\}\]
It is easy to check that $s_0=s$, $s_k=t$ and $(s_j)_{j=0}^k$ is a
plegma path in $[M]^k$. Moreover, by Lemma \ref{lemma conserning the
length of the plegma path}, every plegma path in $[\nn]^k$ from $s$
to $t$ is of length at least $k$. Hence $(s_j)_{j=0}^k$ is a plegma
path from $s$ to $t$ in $[M]^k$ with the least possible length and
the proof is complete.
\end{proof}
\begin{rem}\label{graphs} In terms of graph theory the
above proposition states that in the directed graph with vertices
the elements of  $ [\nn]^k$ and edges the plegma pairs $(s,t)$ in
$[\nn]^k$, the distance between two vertices $s$ and $t$ with $s<t$
is equal to $k$.
\end{rem}
\subsection{Plegma families and mappings}
\begin{defn}
  Let $k_1,k_2\in\nn$, $M\in[\nn]^\infty$ and $\varphi:[M]^{k_1}\to
  [\nn]^{k_2}$. We will say that the map $\varphi$ is plegma
  preserving from  $[M]^{k_1}$ into $[\nn]^{k_2}$ if for every plegma family $(s_j)_{j=1}^l$ in
  $[M]^{k_1}$,
    $(\varphi(s_j))_{j=1}^l$ is a plegma family in $[\nn]^{k_2}$.
\end{defn}

\begin{rem}\label{rem3456} Let $k_1,k_2\in\nn$. If $k_1<k_2$ then for every $M\in[\nn]^\infty$ there exists a plegma preserving map from
$[M]^{k_2}$ onto $[M]^{k_1}$. For instance, by Proposition
\ref{rem34},  the map  $s\to s|k_1$ is plegma preserving from
$[M]^{k_2}$ onto $[M]^{k_1}$.
\end{rem}
In contrast to the above remark we have the following.
\begin{thm}\label{non plegma preserving maps}
Let $k_1,k_2\in\nn$.   If $k_1<k_2$ then for every $M\in[\nn]^\infty$
and  $\varphi:[M]^{k_1}\to [\nn]^{k_2}$  there exists $L\in[M]^\infty$
such that for every plegma pair $(s_1,s_2)$ in $[ L]^{k_1}$ neither
$(\phi(s_1),\phi(s_2))$ nor $(\phi(s_2),\phi(s_1))$ is a plegma pair in $[\nn]^{k_2}$.
 In particular, there exists no  $L\in[M]^\infty$ such that the map $\varphi$ is plegma preserving from $[L]^{k_1}$ into $[\nn]^{k_2}$.
\end{thm}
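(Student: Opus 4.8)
The plan is to first stabilize, via Ramsey's theorem for plegma families (Theorem~\ref{ramseyforplegma}), the mutual order pattern of $\varphi(s_1)$ and $\varphi(s_2)$ as $(s_1,s_2)$ ranges over plegma pairs, and then to exclude the two remaining ``bad'' patterns using plegma paths of length $k_1$ together with Lemma~\ref{lemma conserning the length of the plegma path}. For a plegma pair $(s_1,s_2)$ in $[M]^{k_1}$ record its \emph{type}: the function on $\{1,\dots,k_2\}^2$ whose value at $(i,i')$ is the order relation ($<$, $=$ or $>$) between $\varphi(s_1)(i)$ and $\varphi(s_2)(i')$. There are only finitely many types, so this is a finite partition of $\textit{Plm}_2([M]^{k_1})$, and Theorem~\ref{ramseyforplegma} produces $L\in[M]^\infty$ and a single type $T$ realized by every plegma pair in $[L]^{k_1}$. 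Whether $(\varphi(s_1),\varphi(s_2))$ is a plegma pair in $[\nn]^{k_2}$ — and likewise whether $(\varphi(s_2),\varphi(s_1))$ is — is determined by $T$ alone. If $T$ makes both of these false, $L$ is as required; they cannot both be true (that would force $\varphi(s_1)(1)<\varphi(s_2)(1)<\varphi(s_1)(1)$), so it remains to reach a contradiction in the two cases in which exactly one holds. In both cases fix an infinite sequence $s_1<s_2<\cdots$ in $[L]^{k_1}_\shortparallel$ (for instance $s_n=\{L(j_n),L(j_n+2),\dots,L(j_n+2k_1-2)\}$ with $j_1=1$ and $j_{n+1}=j_n+2k_1$), and for $m<n$ fix, by Proposition~\ref{accessing everything with plegma path of length |s_0|}, a plegma path $(u_j)_{j=0}^{k_1}$ of length $k_1$ in $[L]^{k_1}$ from $s_m$ to $s_n$.

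Case 1: $(\varphi(s_1),\varphi(s_2))$ is a plegma pair in $[\nn]^{k_2}$ for every plegma pair $(s_1,s_2)$ in $[L]^{k_1}$. Then each consecutive pair $(\varphi(u_{j-1}),\varphi(u_j))$ is a plegma pair in $[\nn]^{k_2}$, so $(\varphi(u_j))_{j=0}^{k_1}$ is a plegma path of length $k_1$ in $[\nn]^{k_2}$ from $\varphi(s_m)$ to $\varphi(s_n)$. Reading off condition~(i) of Definition~\ref{defn plegma} along this path gives $\varphi(s_m)(1)<\varphi(s_n)(1)$, so $(\varphi(s_n)(1))_{n}$ is strictly increasing. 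On the other hand, since $k_1<k_2$, Lemma~\ref{lemma conserning the length of the plegma path} applied to this path rules out $\varphi(s_m)<\varphi(s_n)$, i.e.\ $\varphi(s_n)(1)=\min\varphi(s_n)\le\max\varphi(s_m)=\varphi(s_m)(k_2)$; taking $m=1$ bounds $(\varphi(s_n)(1))_n$ by $\varphi(s_1)(k_2)$. A strictly increasing bounded sequence of positive integers is finite, a contradiction.

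Case 2: $(\varphi(s_2),\varphi(s_1))$ is a plegma pair in $[\nn]^{k_2}$ for every plegma pair $(s_1,s_2)$ in $[L]^{k_1}$. Then each $(\varphi(u_j),\varphi(u_{j-1}))$ is a plegma pair in $[\nn]^{k_2}$, so condition~(i) of Definition~\ref{defn plegma} gives $\varphi(u_j)(1)<\varphi(u_{j-1})(1)$ for every consecutive $j$, whence $\varphi(s_n)(1)=\varphi(u_{k_1})(1)<\varphi(u_0)(1)=\varphi(s_m)(1)$. Thus $(\varphi(s_n)(1))_n$ is a strictly decreasing sequence of positive integers, which is impossible. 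Both cases being excluded, the $L$ obtained above satisfies the conclusion. For the final assertion, if $\varphi$ were plegma preserving on $[L']^{k_1}$ for some $L'\in[M]^\infty$, then applying the first part with $M$ replaced by $L'$ would yield $L\subseteq L'$ together with a plegma pair in $[L]^{k_1}\subseteq[L']^{k_1}$ mapped to a non-plegma pair, a contradiction.

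The delicate point is Case~1. The naive attempt — fixing the first member of an arbitrarily long plegma family to pin down its image and then count — fails for $k_1\ge2$, since every member of a plegma family of length $n$ must leave $n-1$ elements of $L$ between its consecutive coordinates, so no member can be kept fixed as $n\to\infty$. The way around this is precisely the device above: replace ``one long plegma family'' by ``infinitely many pairwise separated skipped sets joined by plegma paths of length $k_1$'', so that the shortness $k_1<k_2$ of these paths (through Lemma~\ref{lemma conserning the length of the plegma path}) forces boundedness, which then clashes with the strict monotonicity forced by condition~(i) of the plegma definition.
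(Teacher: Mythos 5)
Your proof is correct and follows essentially the same route as the paper's: a Ramsey stabilization of plegma pairs into the three possible behaviours via Theorem \ref{ramseyforplegma}, followed by the exclusion of the two bad alternatives using plegma paths of length $k_1$ between skipped sets (Proposition \ref{accessing everything with plegma path of length |s_0|}) together with Lemma \ref{lemma conserning the length of the plegma path}. The only real divergence is in the plegma-preserving case, where you close the contradiction by showing $(\varphi(s_n)(1))_n$ is strictly increasing yet bounded by $\varphi(s_1)(k_2)$, whereas the paper argues that eventually $\varphi(s)<\varphi(t)$ and then compares distances --- two packagings of the same facts, with yours having the small merit of making explicit a step the paper dismisses as ``it is easy to see.''
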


\begin{proof} Let $M\in[\nn]^\infty$ and $\varphi:[M]^{k_1}\to [\nn]^{k_2}$.
 We set $P_1$ (resp. $P_2$) to be the set of all $(s_1,s_2)\in
\textit{Plm}_2([M]^{k_1})$ such that
$(\varphi(s_1),\varphi(s_2))$ (resp.
$(\varphi(s_2),\varphi(s_1))$) is a plegma pair in $[\nn]^{k_2}$
and $P_3=\textit{Plm}_2([M]^{k_1})\setminus (P_1\cup P_2)$.
By Theorem \ref{ramseyforplegma} there exist $i\in\{1,2,3\}$ and
$L\in [M]^\infty$ such that
$\textit{Plm}_2([L]^{k_1})\subseteq P_i$. It remains to show
that $i=3$.

Indeed, assume that $i= 2$. By Remark \ref{rem453} we may choose a
plegma path $(s_j)_{j=0}^l$ in $[L]^k$ with $min(\varphi(s_0))<l$.
For every $0\leq j\leq l$, we set  $n_j=\min(\varphi(s_j))$. Since
$\textit{Plm}_2([L]^{k_1})\subseteq P_2$, we have that
$(n_j)_{j=0}^l$ is a strictly decreasing sequence in $\nn$ with
length $l+1$. Since $n_0<l$ this is impossible.

It remains to show that $i\neq 1$. Indeed, assume on the contrary.
Then notice that $\varphi$ transforms every plegma path in
$[L]^{k_1}$ to a plegma path of equal length  in $[\nn]^{k_2}$.
Using Remark \ref{rem453}, it is easy to
see that we may choose $s<t$ in $[L]_\shortparallel^{k_1}$ such
that $\varphi(s)<\varphi(t)$ and $\varphi(s), \varphi(t)\in
[\nn]_\shortparallel^{k_2}$. By Proposition \ref{accessing
everything with plegma path of length |s_0|} and Remark
\ref{graphs}, we have that the distance of $s,t$ is equal to $k$
while that of $\varphi(s), \varphi(t)$ is equal to $k_2$. But
since $s,t$ are joined by a plegma path of length $k_1$ and
$\varphi$ preserves plegma paths we have that the distance of
$\varphi(s), \varphi(t)$ is at most $k_1$. Hence $k_2\leq k_1$,  a
contradiction.
\end{proof}

\begin{prop}\label{lemma making a hereditary nonconstant function, nonconstant on plegma pairs}
Let $A$ be a set, $k\in\nn$, $M\in[\nn]^\infty$ and
$\varphi:[M]^k\to A$. Then  there exists $L\in [M]^\infty$ such
that either the restriction of $\varphi$ on $[L]^k$ is constant or
for every plegma pair $(s_1,s_2)$ in $[ L]^k$,
$\varphi(s_1)\neq\varphi(s_2)$.
\end{prop}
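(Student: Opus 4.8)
The plan is to apply the Ramsey theorem for plegma families once, isolate a single ``degenerate'' case, and in that case propagate the equality $\varphi(s_1)=\varphi(s_2)$ along plegma paths.

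First I would colour $\textit{Plm}_2([M]^k)$ with two colours: a plegma pair $(s_1,s_2)$ goes into $P_1$ if $\varphi(s_1)=\varphi(s_2)$ and into $P_2$ otherwise. Theorem \ref{ramseyforplegma} then yields $L\in[M]^\infty$ with $\textit{Plm}_2([L]^k)\subseteq P_i$ for some $i\in\{1,2\}$. If $i=2$, this $L$ already witnesses the second alternative of the statement, so the whole content lies in the case $i=1$: assuming that $\varphi$ takes the same value on both members of every plegma pair contained in $[L]^k$, I must produce $L''\in[L]^\infty$ on which $\varphi$ is constant.

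For that, I would pass to a sufficiently spread-out subset $L''\in[L]^\infty$ with $[L'']^k\subseteq[L]^k_\shortparallel$; concretely, $L''=\{L(2i):i\in\nn\}$ works, since between two consecutive coordinates $L(2a)<L(2b)$ of any member of $[L'']^k$ one finds $L(2a+1)\in L$. Given $s,t\in[L'']^k$ with $s<t$, both belong to $[L]^k_\shortparallel$, so Proposition \ref{accessing everything with plegma path of length |s_0|} supplies a plegma path $s=u_0,u_1,\dots,u_k=t$ in $[L]^k$; each step $(u_{j-1},u_j)$ is a plegma pair in $[L]^k$, hence lies in $P_1$, so $\varphi(u_{j-1})=\varphi(u_j)$, and chaining gives $\varphi(s)=\varphi(t)$. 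To remove the hypothesis $s<t$, for arbitrary $s,t\in[L'']^k$ I would choose $u\in[L'']^k$ lying entirely above $s\cup t$ and apply the previous observation to the pairs $(s,u)$ and $(t,u)$, obtaining $\varphi(s)=\varphi(u)=\varphi(t)$. Hence $\varphi$ is constant on $[L'']^k$, which is the first alternative.

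The main obstacle, and the reason one cannot simply work with $[L]^k$ throughout, is that the directed graph on $[\nn]^k$ whose edges are the plegma pairs is highly disconnected --- for instance $\{L(1),\dots,L(k)\}$ is an isolated vertex --- so equalities cannot be propagated across all of $[L]^k$. The resolution is precisely the passage to skipped $k$-subsets combined with the fact that Proposition \ref{accessing everything with plegma path of length |s_0|} routes $s$ to $t$ through vertices of the ambient set $[L]^k$ (where the monochromaticity was obtained) rather than through $[L'']^k$ (where the requisite plegma paths need not exist). The remaining points --- that $L''$ as chosen satisfies $[L'']^k\subseteq[L]^k_\shortparallel$, and the selection of the auxiliary element $u$ --- are routine manipulations of the order relations.
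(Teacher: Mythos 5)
Your proof is correct and follows essentially the same route as the paper's: one application of Theorem \ref{ramseyforplegma} to the equality/inequality colouring of plegma pairs, followed by propagating equality along the length-$k$ plegma paths of Proposition \ref{accessing everything with plegma path of length |s_0|} between skipped $k$-sets. The only cosmetic difference is that the paper anchors all of $[L]^k$ to a single fixed skipped $s$ lying below $L$, whereas you connect arbitrary pairs through a common element above them; both devices resolve the disconnectedness issue in the same way.
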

\begin{proof}
By Theorem \ref{ramseyforplegma} there exists $N\in [M]^\infty$ such
that exactly one of the following are satisfied.
\begin{enumerate}
\item[(i)] For every  plegma pair
$(s_1,s_2)$ in $[N]^k$, $\varphi(s_1)=\varphi(s_2)$.
\item[(ii)] For every  plegma pair $(s_1,s_2)$ in $[ N]^k$,
$\varphi(s_1)\neq\varphi(s_2)$.
\end{enumerate}
Therefore, it suffices to show that  the first alternative  implies
that there exists $L\in [N]^\infty$ such that $\varphi$ is constant
on $[L]^k$. Indeed,  let $s=(N(2),N(4),...,N(2k))$, $L=\{N(2n):
n\geq k+1\}$ and $t\in [L]^k$. Observe that  $s<t$ and
$s,t\in[N]^k_\shortparallel$ and therefore, by Proposition
\ref{accessing everything with plegma path of length |s_0|},  there
exists a plegma path $(s_j)_{j=0}^k$ of length $k$  in $[N]^k$ with
$s_0=s$ and $s_k=t$. Assuming that (i) holds,  we get that
\[\varphi(s)=\varphi(s_0)=\varphi(s_1)=\ldots=\varphi(s_k)=\varphi(t)\]
Hence for every $t\in[L]^k$, $\varphi(t)=\varphi(s)$, i.e. $\varphi$
is constant on $[L]^k$.
  \end{proof}

\section{Spreading sequences}
We recall that a sequence $(e_n)_{n}$ in a seminormed linear space
$(E,\|\cdot\|_*)$ is called \emph{spreading} if it is isometric to
any of its subsequences, i.e. for every $n\in\nn$,
$a_1,\ldots,a_n\in \rr$ and $k_1<\ldots<k_n$ in $\nn$ we have that
$\|\sum_{j=1}^na_je_j\|_*=\|\sum_{j=1}^na_je_{k_j}\|_*$. In this
section we will briefly discuss the norm properties of the
spreading sequences. The interested reader can find a detailed
analysis  in the monographs \cite{AK} and \cite{BL}.

 The proof of the following
result shares similar ideas with the one of Proposition I.1.B.2 in
\cite{BL}.

\begin{prop}\label{sing}
Let $(E,\|\cdot\|_*)$ be a seminormed linear space and $(e_n)_{n}$
be a spreading sequence in $E$. Then the following are equivalent.
\begin{enumerate}
\item[(i)] There exist $n\in\nn$ and $a_1,\ldots,a_n\in\rr$ not
all zero, with $\|\sum_{i=1}^na_ie_i\|_*=0$.
\item[(ii)] For every $n,m\in\nn$, $\|e_n-e_m\|_*=0$. \item[(iii)]
For every $n\in\nn$ and $a_1,\ldots,a_n\in\rr$,  $
\|\sum_{i=1}^na_ie_i\|_*=|\sum_{i=1}^na_i|\cdot \|e_1\|_*$.
\end{enumerate}
\end{prop}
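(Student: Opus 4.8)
The plan is to prove the equivalences by establishing the cyclic chain $(i)\Rightarrow(ii)\Rightarrow(iii)\Rightarrow(i)$, with the implication $(i)\Rightarrow(ii)$ being the substantive one. The implications $(iii)\Rightarrow(i)$ is immediate (take $n=2$, $a_1=1$, $a_2=-1$, so that $\|e_1-e_2\|_*=|1-1|\cdot\|e_1\|_*=0$, giving a nontrivial null combination), and $(ii)\Rightarrow(iii)$ follows by a telescoping argument: if $\|e_n-e_m\|_*=0$ for all $n,m$, then for any $a_1,\ldots,a_n$ we may replace each $e_i$ by $e_1$ one at a time without changing the seminorm (using the triangle inequality in the form $\bigl|\,\|u\|_*-\|v\|_*\,\bigr|\leq\|u-v\|_*$ together with $\|a_i(e_i-e_1)\|_*=|a_i|\cdot\|e_i-e_1\|_*=0$), obtaining $\|\sum_{i=1}^na_ie_i\|_*=\|(\sum_{i=1}^na_i)e_1\|_*=|\sum_{i=1}^na_i|\cdot\|e_1\|_*$.

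The heart of the matter is $(i)\Rightarrow(ii)$. Suppose $\|\sum_{i=1}^na_ie_i\|_*=0$ with not all $a_i$ zero, and choose such a relation with $n$ minimal; then in particular all $a_i$ are nonzero and $n\geq 1$. The idea is to exploit the spreading property by comparing the given null vector with a shifted copy. First I would dispose of the trivial case: if $n=1$ then $|a_1|\cdot\|e_1\|_*=0$ forces $\|e_1\|_*=0$, whence $\|e_n\|_*=\|e_1\|_*=0$ for all $n$ by spreading, and then $\|e_n-e_m\|_*\leq\|e_n\|_*+\|e_m\|_*=0$, so $(ii)$ holds. So assume $n\geq 2$. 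By spreading, both $x:=\sum_{i=1}^na_ie_i$ and its shift $y:=\sum_{i=1}^na_ie_{i+1}$ have seminorm $0$, hence $\|x-y\|_*=0$ as well. Writing out $x-y=a_1e_1+\sum_{i=2}^n(a_i-a_{i-1})e_i-a_ne_{n+1}$, we obtain a new null combination over the $n+1$ vectors $e_1,\ldots,e_{n+1}$ with leading coefficient $a_1\neq 0$ and trailing coefficient $-a_n\neq 0$.

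From here the plan is to extract a shorter null relation to contradict minimality of $n$, unless the relation forces $\|e_p-e_q\|_*=0$ directly. More precisely, iterating the shift-and-subtract operation $m$ times produces null combinations whose coefficients are the $m$-th finite differences of $(a_i)$; using a longer stretch of basis vectors and taking suitable integer linear combinations of these shifted null vectors, one can arrange cancellations that isolate a relation of the form $\sum_j b_j e_{k_j}=0$ (seminorm zero) with strictly fewer than $n$ terms, again with not all $b_j$ zero — this contradicts the minimality of $n$ unless the process instead yields, for some $p<q$, that $\|\,\|e_p\|_* e? \|$... concretely, the only way the differencing can fail to shorten the support is if $n=2$, where $x=a_1e_1+a_2e_2$ and the shifted vector $y=a_1e_2+a_2e_3$ give $\|a_1e_1+(a_2-a_1)e_2-a_2e_3\|_*=0$; comparing this with further shifts and scalings of the original two-term relation (which, by spreading, holds over any two consecutive indices) lets one solve for $\|e_p-e_q\|_*$. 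The main obstacle is organizing this elimination cleanly so that it terminates: the safe route is to argue that the minimal $n$ must in fact be $1$. Indeed, from $\|a_1e_1+\sum_{i=2}^n(a_i-a_{i-1})e_i-a_ne_{n+1}\|_*=0$ one may, if some interior coefficient $a_i-a_{i-1}$ vanishes or if one can scale and add the original relation placed at indices $2,\ldots,n+1$, reduce the number of nonzero coefficients; pushing this through shows any minimal relation has length $1$, which as above gives $\|e_1\|_*=0$ and hence $(ii)$. I expect the bookkeeping in this elimination — ensuring at each stage that the resulting combination is genuinely nontrivial and strictly shorter — to be the only delicate point, and it mirrors the argument of Proposition I.1.B.2 in \cite{BL} referenced just above the statement.
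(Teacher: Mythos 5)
Your implications $(iii)\Rightarrow(i)$ and $(ii)\Rightarrow(iii)$ are correct and are the standard ones. The problem is $(i)\Rightarrow(ii)$, which is the only substantive implication and which your write-up does not actually prove: the elimination scheme is described only in outline, one sentence trails off unfinished, and the operation you propose (shift \emph{every} index by one and subtract) replaces an $n$-term null relation by an $(n{+}1)$-term one whose coefficients are first differences of the $a_i$. Iterating this only produces higher-order finite differences on ever longer supports; to shorten the relation by combining uniform shifts you would need a nonzero polynomial multiple of $\sum_i a_i t^i$ with fewer nonzero coefficients, and nothing you say guarantees such a multiple exists. So the claimed contradiction with minimality of $n$ is not established, and the case analysis around ``$n=2$'' is likewise not carried through.

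The gap closes immediately once you use the full strength of the spreading property rather than only uniform shifts: a spread may move a \emph{single} index while fixing the rest. Pick $j$ with $a_j\neq 0$. Spreading the relation onto the index sets $\{1,\dots,j-1,\,j,\,j+2,\dots,n+1\}$ and $\{1,\dots,j-1,\,j+1,\,j+2,\dots,n+1\}$ (which differ only in the $j$-th slot) gives two vectors of seminorm $0$ whose difference is $a_j(e_j-e_{j+1})$. Since the null vectors form a linear subspace, $|a_j|\,\|e_j-e_{j+1}\|_*=0$, hence $\|e_j-e_{j+1}\|_*=0$, and by spreading $\|e_n-e_m\|_*=\|e_j-e_{j+1}\|_*=0$ for all $n<m$ (the case $n=m$ being trivial). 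This is the argument of Proposition I.1.B.2 in the reference you cite; no induction on the length of the relation and no minimality argument is needed. As written, your proof of $(i)\Rightarrow(ii)$ is incomplete and the route you chose does not obviously terminate.
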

Spreading sequences in seminormed linear spaces satisfying
(i)-(iii) of the above proposition will be called \emph{trivial}.
By (i) we have that if $(e_n)_{n}$ is non trivial, then
$(e_n)_{n}$ is linearly independent and the restriction of the
seminorm $\|\cdot\|_*$ to  the linear subspace of $E$ generated by
$(e_n)_n$ is actually norm. Therefore,  every non trivial
spreading sequence generates a Banach space.

We classify the non trivial spreading sequences into the following
three categories:
\begin{enumerate}
\item The \emph{singular} spreading sequences, i.e. the non
trivial spreading sequences  which are  not Schauder basic
sequences. \item the \emph{unconditional} spreading sequences and
\item the \emph{conditional Schauder basic} spreading sequences,
i.e.  the non trivial spreading sequences which are Schauder basic
but not unconditional.
\end{enumerate}

The next two results are   restatements of Propositions I.1.4 and
I.4.2 of \cite{BL} respectively.

\begin{prop}\label{equiv forms for 1-subsymmetric weakly null}
Let $(e_n)_{n}$ be a non trivial spreading sequence. Then the
following are equivalent.
\begin{enumerate}
\item[(i)] $(e_n)_{n}$ is unconditional and not
equivalent to the usual basis of $\ell^1$.
\item[(ii)]
$(e_n)_{n}$ is weakly null.
\item[(iii)] $(e_n)_{n}$ is Ces\'aro
summable to zero.
\item[(iv)] $(e_n)_{n}$ is 1-unconditional and not
equivalent to the usual basis of $\ell^1$.
\end{enumerate}
\end{prop}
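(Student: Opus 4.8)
The statement to prove is Proposition~\ref{equiv forms for 1-subsymmetric weakly null}: for a non-trivial spreading sequence $(e_n)_n$, the four conditions (unconditional and not $\ell^1$), (weakly null), (Ces\`aro summable to $0$), and (1-unconditional and not $\ell^1$) are all equivalent.

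The plan is to prove the cycle $(i)\Rightarrow(ii)\Rightarrow(iii)\Rightarrow(iv)\Rightarrow(i)$, the last implication being trivial since 1-unconditionality implies unconditionality.

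For $(i)\Rightarrow(ii)$: Since $(e_n)_n$ is unconditional, it is either equivalent to the basis of $\ell^1$ (excluded by hypothesis) or, being a basic sequence, we may look at the summing functionals. The key classical fact for spreading sequences is that $(e_n)_n$ is weakly Cauchy or dominates $\ell^1$; more precisely, by spreading-ness the sequence of differences $(e_{2n}-e_{2n-1})_n$ is itself spreading, and unconditionality forces it to be suppression-unconditional after passing to an equivalent norm. The argument I would use: consider $x^* $ a weak$^*$ cluster point of $(e_n^*)$ in the bidual; unconditionality plus the spreading property imply that the basis is ``shrinking-like'' on the span, and one shows $e_n\to 0$ weakly by testing against an arbitrary $x^*\in X^*$ and using that $\sum |x^*(e_n)|<\infty$ would follow from unconditionality combined with the uniform boundedness of the partial sums $\|\sum_{n\in F} \pm e_n\|$, which in turn is bounded because otherwise $(e_n)$ would be equivalent to $\ell^1$ (here one uses that a spreading unconditional sequence with $\sup_F\|\sum_{n\in F} e_n\|=\infty$ must contain, by James-type arguments, an $\ell^1$; or directly, unconditional plus $\|\sum_1^n e_i\|\not\to\infty$ combined with spreading gives weak nullity). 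The cleanest route is: an unconditional spreading sequence is either equivalent to $\ell^1$'s basis or has $\|\sum_{i=1}^n e_i\|_*$ bounded, hence (by unconditionality) $\sum x^*(e_n)$ converges absolutely for all $x^*$, forcing $x^*(e_n)\to 0$.

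For $(ii)\Rightarrow(iii)$: If $(e_n)_n$ is weakly null, then the Ces\`aro means $\frac1n\sum_{i=1}^n e_i$ are weakly null as well (they lie in the closed convex hull of tails), and by a standard Mazur-type / Banach--Saks-flavoured argument for \emph{spreading} sequences one upgrades weak convergence of the means to norm convergence: indeed, by spreading-ness, for any $m<n$ the block $\frac1n\sum_{i=m+1}^{m+n}e_i$ has the same norm as $\frac1n\sum_{i=1}^n e_i$, and a gliding-hump estimate combined with weak nullity shows $\|\frac1n\sum_{i=1}^n e_i\|_*\to 0$. For $(iii)\Rightarrow(iv)$: this is the heart of the matter. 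Assuming Ces\`aro summability to zero, I would first deduce unconditionality, then that the unconditional constant is exactly $1$, then that the sequence is not $\ell^1$ (the last being immediate, since the $\ell^1$ basis has Ces\`aro means of norm $1$, not tending to $0$). The main obstacle is getting \emph{1}-unconditionality rather than merely unconditionality with some constant: the trick, following Proposition~I.4.2 of \cite{BL}, is that the spreading property lets one ``dilute'' any fixed finite linear combination by inserting many canceling averaged terms, so that a sign change on one coordinate costs asymptotically nothing; formally, given $a_1,\dots,a_n$ and a sign $\theta_j=-1$ one writes $a_je_j$ as a limit of convex combinations $\frac1N\sum(\text{shifted copies})$ minus a vanishing correction, and uses $\|\cdot\|_*$ being a seminorm plus spreading invariance to conclude $\|\sum\theta_i a_i e_i\|_*\le\|\sum a_i e_i\|_*$. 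I expect the averaging/telescoping bookkeeping in $(iii)\Rightarrow(iv)$ to be the most delicate part; everything else is either classical soft functional analysis or a direct consequence of the spreading identity, and the references \cite{AK,BL} can be invoked for the routine estimates.
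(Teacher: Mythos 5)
The paper offers no proof of this proposition: it is stated as a restatement of Proposition I.1.4 of \cite{BL}, so the only comparison available is with the classical argument. Measured against that, your cycle $(i)\Rightarrow(ii)\Rightarrow(iii)\Rightarrow(iv)\Rightarrow(i)$ is the right architecture, but the step $(i)\Rightarrow(ii)$ contains a genuine error. You claim that ``an unconditional spreading sequence is either equivalent to $\ell^1$'s basis or has $\|\sum_{i=1}^n e_i\|_*$ bounded,'' and you then deduce absolute convergence of $\sum x^*(e_n)$. This dichotomy is false: the unit vector basis of $\ell^2$ is spreading, $1$-unconditional, not equivalent to the $\ell^1$ basis, and has $\|\sum_{i=1}^n e_i\|=\sqrt{n}\to\infty$. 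The correct route is different: since $(e_n)_n$ is not equivalent to the $\ell^1$ basis, Rosenthal's theorem (together with the spreading property, which lets you replace a subsequence by the whole sequence) shows $(e_n)_n$ is weakly Cauchy; if it were not weakly null there would be $x^*$ and $c>0$ with, say, $x^*(e_n)\geq c$ for all $n$, and then unconditionality gives the lower estimate
\[
\Big\|\sum_i a_ie_i\Big\|_*\;\geq\;\frac{1}{K\|x^*\|}\,x^*\Big(\sum_{i:\,a_i\geq 0}a_ie_i\Big)\;\geq\;\frac{c}{K\|x^*\|}\sum_{i:\,a_i\geq 0}a_i,
\]
and symmetrically for the negative part, forcing equivalence with the $\ell^1$ basis --- a contradiction. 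Your ``bounded partial sums'' shortcut cannot be repaired.

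Two further cautions. In $(ii)\Rightarrow(iii)$ the phrase ``Banach--Saks-flavoured argument'' is misleading: weakly null sequences in general need not have norm-convergent Ces\`aro means (the Schreier space basis is the standard counterexample), so the spreading property is doing all the work here; the correct mechanism is Mazur's theorem producing one convex combination of small norm, which spreading invariance then converts into smallness of all long averages. You do gesture at this, so I count it as an incomplete but salvageable sketch. In $(iii)\Rightarrow(iv)$ your dilution idea is exactly the classical one, but note that the insertion of averaged spread copies most directly yields \emph{suppression} $1$-unconditionality ($\|\sum_{i\neq p}a_ie_i\|_*\leq\|\sum_i a_ie_i\|_*$, as in the paper's Theorem on unconditional spreading models), and suppression constant $1$ only gives unconditional constant $\leq 2$ in general; obtaining unconditional constant exactly $1$ requires the extra step of approximating $-a_je_j$ itself by averages of differences of spread copies, which your sketch does not carry out. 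So the proposal as written has one fatal step and two under-justified ones.
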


\begin{prop}\label{thmsingular}
Let $(e_n)_n$ be a non trivial spreading sequence and $E$ the Banach
space generated by $(e_n)_n$. Then  $(e_n)_n$ is singular if and
only if $(e_n)_n$ is weakly convergent to a nonzero element $e\in
E$.
\end{prop}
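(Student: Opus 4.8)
The plan is to prove the two implications separately. The reverse one is elementary; the forward one combines a weak‑Cauchyness argument (via Rosenthal's $\ell^1$ theorem together with a transfer step) with a direct argument showing that a weakly Cauchy spreading sequence whose $w^*$‑limit lies outside $E$ must in fact be basic.

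\emph{$(\Leftarrow)$.} Suppose $(e_n)_n$ converges weakly to $e\neq 0$ but is a Schauder basic sequence. Its biorthogonal functionals $(e_n^*)_n\subseteq E^*$ satisfy $e_n^*(e_m)=\delta_{nm}$, so passing to the weak limit in $m$ gives $e_n^*(e)=0$ for every $n$; since the biorthogonal functionals of a basic sequence separate the points of its closed linear span, $e=0$, a contradiction. Hence $(e_n)_n$ is not basic, i.e.\ singular.

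\emph{$(\Rightarrow)$.} Assume $(e_n)_n$ is singular. Being not basic, it is not equivalent to the unit vector basis of $\ell^1$ (which is monotone basic), so by Rosenthal's $\ell^1$ theorem (see \cite{AK}) it has a weakly Cauchy subsequence $(e_{n_k})_k$; I claim this forces $(e_n)_n$ itself to be weakly Cauchy. Otherwise there are $g\in E^*$, $\delta>0$ and $p_1<q_1<p_2<q_2<\cdots$ with $g(e_{q_i})-g(e_{p_i})>\delta$ for all $i$; the spreading property makes $(z_i)_i:=(e_{q_i}-e_{p_i})_i$ isometric to $(w_i)_i:=(e_{n_{2i}}-e_{n_{2i-1}})_i$, which is a non‑trivial spreading sequence and is weakly null since $(e_{n_k})_k$ is weakly Cauchy, hence Ces\`aro summable to $0$ by Proposition \ref{equiv forms for 1-subsymmetric weakly null}; then so is $(z_i)_i$, contradicting $\|\frac1N\sum_{i=1}^N z_i\|\geq g(\frac1N\sum_{i=1}^N z_i)>\delta$. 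So $(e_n)_n$ is weakly Cauchy; let $e^{**}\in E^{**}$ be its $w^*$‑limit, i.e.\ $f(e_n)\to\langle e^{**},f\rangle$ for every $f\in E^*$. If $e^{**}=0$ then $(e_n)_n$ is weakly null, hence unconditional by Proposition \ref{equiv forms for 1-subsymmetric weakly null} and in particular basic, a contradiction; so if $e^{**}\in E$ we are done with $e=e^{**}\neq 0$.

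The remaining case $e^{**}\notin E$ is, I expect, the main point. Since $(e_n)_n$ is non‑trivial it is linearly independent, so the coefficient‑sum map $c\colon\sum a_ie_i\mapsto\sum a_i$ is well defined on $\mathrm{span}(e_n)_n$; spreading a vector $x=\sum_i a_ie_i$ far to the right yields vectors $\sum_i a_ie_{L+i}$ of norm $\|x\|$ converging $w^*$ in $E^{**}$ to $(\sum_i a_i)e^{**}$, so $w^*$‑lower semicontinuity of the norm gives $|c(x)|\,\|e^{**}\|\leq\|x\|$ and $c$ extends to a bounded functional on $E$. Now suppose, for contradiction, that $(e_n)_n$ is not basic; then for each $j$ there are consecutive blocks $u_j$ (supported on an initial interval, with $\|u_j\|=1$) and $v_j$ (supported immediately afterwards) with $\|u_j+v_j\|<1/j$, so that also $\|v_j\|\to1$. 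Since $c(u_j),c(v_j)$ are bounded by $\|c\|$ and $c(u_j)+c(v_j)=c(u_j+v_j)\to 0$, along a subsequence $c(v_j)\to\alpha$ for some scalar $\alpha$; moving $v_j$ far to the right keeps $\|u_j+v_j\|$ unchanged (spreading) while letting the shifted vector converge $w^*$ to $c(v_j)e^{**}$, so $\|u_j+c(v_j)e^{**}\|<1/j$. Passing to the limit along the subsequence gives $u_j\to-\alpha e^{**}$ in norm, so $\alpha e^{**}\in E$ since $E$ is closed and $u_j\in E$; as $e^{**}\notin E$ this forces $\alpha=0$, hence $u_j\to 0$, contradicting $\|u_j\|=1$. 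Therefore $(e_n)_n$ is basic, contradicting singularity, and we conclude that $e^{**}\in E\setminus\{0\}$, i.e.\ $(e_n)_n$ converges weakly to a nonzero element of $E$.
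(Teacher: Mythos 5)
Your proof is correct. Note that the paper itself offers no proof of this proposition: it is stated as a restatement of Proposition I.4.2 of \cite{BL}, so your argument is a genuinely self-contained derivation rather than a variant of something in the text. The structure you use is the standard trichotomy from the spreading-model literature: the backward implication via biorthogonal functionals; the forward implication by first upgrading Rosenthal's theorem to weak Cauchyness of the whole sequence (using that any subsequence of a spreading sequence is isometric to it, which is also what rules out the $\ell^1$ alternative of Rosenthal for \emph{subsequences}, a point you leave implicit), then excluding the weakly null case via Proposition \ref{equiv forms for 1-subsymmetric weakly null}, and finally showing that a non-trivially weak Cauchy spreading sequence must be Schauder basic. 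That last step is the real content, and your argument for it --- the coefficient-sum functional $c$ bounded via $w^*$-lower semicontinuity of the bidual norm, together with shifting the tail block $v_j$ to the right to replace it by $c(v_j)e^{**}$ without increasing the norm --- is a clean and complete proof. Two cosmetic points: the inequality $\|\frac1N\sum z_i\|\geq g(\frac1N\sum z_i)$ presumes $\|g\|\leq 1$, so normalize $g$; and the non-triviality of $(w_i)_i$ deserves a word (a trivial spreading sequence in a Banach space with $\|w_1\|>0$ is constant, hence not weakly null, while $\|w_1\|=\|z_1\|\geq g(z_1)/\|g\|>0$). Neither affects the validity of the argument.
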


\begin{rem}\label{properties of the natural decomposition}
Let $(e_n)_n$ be a singular spreading sequence. By the above
proposition, we have that $(e_n)_n$ is of the form $e_n=e'_n+e$,
where $e$ is nonzero and $(e'_n)_n$ is weakly null. This
decomposition of $(e_n)_n$ as $e_n=e'_n+e$ will be called  \emph{the
natural decomposition} of $(e_n)_n$. It is easy to check that
$(e'_n)_{n}$ is non trivial, spreading and not equivalent to the
usual basis of $\ell^1$. Hence by Proposition \ref{thmsingular},
$(e'_n)_n$ is unconditional, weakly null and Ces\`aro summable to
zero. Moreover, if $E$ and $E'$ are the Banach spaces generated by
the sequences $(e_n)_n$ and $(e'_n)_n$ respectively, then $E,E'$ are
isomorphic and $E=E'\oplus<e>$.
\end{rem}

Finally for the conditional Schauder basic spreading sequences we
have the next characterization, which is a consequence of the above
results and Rosenthal's $\ell^1$ theorem \cite{Ro}.

\begin{prop} Let $(e_n)_n$ be a spreading non trivial
sequence and $E$ be the Banach space generated by $(e_n)_n$. Then
$(e_n)_n$ is a conditional Schauder basic sequence if and only if
$(e_n)_n$ is non trivial weak-Cauchy.
\end{prop}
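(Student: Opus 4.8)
The plan is to prove the two implications separately, each reducing to the structural results of Propositions~\ref{sing}, \ref{equiv forms for 1-subsymmetric weakly null}, \ref{thmsingular} together with Rosenthal's $\ell^1$ theorem \cite{Ro}. The crux is a propagation statement: \emph{a non trivial spreading sequence possessing a weak-Cauchy subsequence is itself weak-Cauchy} (equivalently, a non trivial spreading sequence is either equivalent to the usual basis of $\ell^1$ or weak-Cauchy).

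Assume first that $(e_n)_n$ is conditional Schauder basic. Then it is not unconditional, hence not equivalent to the ($1$-unconditional) basis of $\ell^1$; Rosenthal's theorem applied to the bounded sequence $(e_n)_n$ therefore yields a weak-Cauchy subsequence $(e_{n_k})_k$, the other alternative (an $\ell^1$-equivalent subsequence) being ruled out since by the spreading property it would be isometric to $(e_n)_n$, forcing $(e_n)_n$ to be equivalent to the basis of $\ell^1$. To upgrade this to ``$(e_n)_n$ is weak-Cauchy'' I argue by contradiction. If $(x^*(e_n))_n$ had two distinct cluster points $\alpha<\beta$, pick $p_1<q_1<p_2<q_2<\cdots$ with $x^*(e_{p_i})\to\alpha$ and $x^*(e_{q_i})\to\beta$, and put $g_i=e_{p_i}-e_{q_i}$. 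By the spreading property $(g_i)_i$ is isometric to $(e_{2i-1}-e_{2i})_i$, hence also to $(e_{n_{2i-1}}-e_{n_{2i}})_i$; the latter is weakly null since $(e_{n_k})_k$ is weak-Cauchy, and it is non trivial, otherwise $\|e_1-e_2-e_3+e_4\|_*=0$, which by Proposition~\ref{sing} would force $(e_n)_n$ to be trivial. By Proposition~\ref{equiv forms for 1-subsymmetric weakly null} a non trivial weakly null spreading sequence is $1$-unconditional and not equivalent to the basis of $\ell^1$; these properties are isometric invariants, so $(g_i)_i$ inherits them (hence is also non trivial), and Proposition~\ref{equiv forms for 1-subsymmetric weakly null} applied in the reverse direction shows $(g_i)_i$ weakly null. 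Then $x^*(e_{p_i})-x^*(e_{q_i})=x^*(g_i)\to 0$, contradicting $x^*(e_{p_i})-x^*(e_{q_i})\to\alpha-\beta\neq 0$. Thus $(e_n)_n$ is weak-Cauchy, and it is not weakly convergent: a weak limit equal to $0$ would make it unconditional by Proposition~\ref{equiv forms for 1-subsymmetric weakly null}, and a nonzero weak limit would make it singular, hence non-Schauder-basic, by Proposition~\ref{thmsingular}. So $(e_n)_n$ is non trivial weak-Cauchy.

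Conversely, assume $(e_n)_n$ is weak-Cauchy but not weakly convergent. Since a singular spreading sequence is weakly convergent to a nonzero vector (Proposition~\ref{thmsingular}), $(e_n)_n$ is not singular, and so it is Schauder basic. If it were unconditional, then by Proposition~\ref{equiv forms for 1-subsymmetric weakly null} it would be either weakly null, impossible as it is not weakly convergent, or equivalent to the basis of $\ell^1$; but in the latter case $(e_{2i-1}-e_{2i})_i$ would be equivalent to the basis of $\ell^1$, hence not weakly null (the Ces\`aro means of a weakly null sequence tend to $0$ in norm, while those of an $\ell^1$-equivalent sequence stay bounded away from $0$), whereas weak-Cauchyness of $(e_n)_n$ forces $(e_{2i-1}-e_{2i})_i$ to be weakly null. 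This contradiction shows $(e_n)_n$ is not unconditional, hence it is conditional Schauder basic.

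The step I expect to be the main obstacle is the propagation in the first implication: weak-Cauchyness is not an isometric invariant, so it cannot be transported directly from $(e_{n_k})_k$ to $(e_n)_n$. The way around this is to turn a putative oscillation of $(x^*(e_n))_n$ into a difference sequence $(g_i)_i$ that is isometric to a genuinely weakly null spreading sequence, and then to exploit the rigidity of non trivial spreading sequences, namely that among them weak-nullity is equivalent to the isometric invariants ``$1$-unconditional and not equivalent to the basis of $\ell^1$'' by Proposition~\ref{equiv forms for 1-subsymmetric weakly null}. Apart from this, what remains is routine: verifying the non-triviality of the auxiliary difference sequences via Proposition~\ref{sing}, and a careful reading of the index patterns when the spreading property is invoked.
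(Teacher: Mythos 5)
Your proof is correct and uses exactly the ingredients the paper points to (Propositions \ref{sing}, \ref{equiv forms for 1-subsymmetric weakly null}, \ref{thmsingular} and Rosenthal's $\ell^1$ theorem); the paper itself omits the details, so there is no divergence of approach to report. The propagation step you isolate --- transporting weak-Cauchyness from the Rosenthal subsequence to the whole sequence by showing the difference sequence $(g_i)_i$ is weakly null, via the fact that for non trivial spreading sequences weak nullity is equivalent to the isometric invariants of Proposition \ref{equiv forms for 1-subsymmetric weakly null} --- is indeed the only non-routine point, and you handle it correctly.
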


\section{$k$-sequences and $k$-spreading models}
In this section we present the definition of the $k$-sequences and
we introduce the notion of the $k$-spreading models, for all
$k\in\nn$. As we will see, for $k=1$, the definition coincides with
the classical one of A. Brunel and L. Sucheston \cite{BS}.
\subsection{Definitions and basic properties}
We start with the definition of the $k$-sequences.
\begin{defn} Let  $k\in\nn$ and  $X$ be a non empty set. A $k$-sequence in $X$
is  a map $\varphi:[\nn]^k\to X$. A $k$-subsequence in $X$ is a map of the form
 $\varphi:[M]^k\to X$ , where $M\in [\nn]^\infty$.
\end{defn}
 A $k$-sequence $\varphi:[\nn]^k\to X$
will be usually denoted by $(x_s)_{s\in [\nn]^k}$, where
$x_s=\varphi(s)$,  $s\in [\nn]^k$. Similarly, the notation
$(x_s)_{s\in [M]^k}$ stands for the $k$-subsequences
$\varphi:[M]^k\to X$.

\begin{defn}\label{Definition of spreading model}
Let   $X$ be a Banach space, $k\in\nn$, $(x_s)_{s\in [\nn]^k}$ be a
$k$-sequence in $X$ and $(E,\|\cdot\|_*)$ be an infinite dimensional
seminormed linear space with Hamel basis $(e_n)_{n}$. Also let
$M\in[\nn]^\infty$ and $(\delta_n)_n$ be a null sequence of positive
reals. We will say that the $k$-subsequence $(x_s)_{s\in [M]^k}$
generates $(e_n)_{n}$ as a spreading model  as a $k$-spreading model
(with respect to $(\delta_n)_n$), if the following is satisfied.

 For every $m,l\in \nn$, with $m\leq l$, every
$(s_j)_{j=1}^m\in\textit{Plm}_m([M]^k)$ with $s_1(1)\geq M(l)$
and every choice of $a_1,...,a_m\in[-1,1]$, we have
\begin{equation}\label{rsm}\Bigg{|}\Big{\|}\sum_{j=1}^m a_j
x_{s_j}\Big{\|}-\Big{\|}\sum_{j=1}^m a_j
  e_j\Big{\|}_* \Bigg{|}\leq\delta_l\end{equation}
\end{defn}

Since $\text{Plm}([\nn]^1)=[\nn]^{<\infty}$, it is clear that for
$k=1$, Definition \ref{Definition of spreading model} coincides with
the classical definition of a spreading model of an ordinary
sequence $(x_n)_n$ in a Banach space $X$. Thus the $1$-spreading
models are the usual ones.  Moreover, it is easy to see  that for
every $k\in\nn$, every $k$-spreading model $(e_n)_{n}$ is a
spreading sequence.

Let's point out here that  there  exist $k$-sequences in Banach
spaces which generate $k$-spreading models which are trivial
spreading sequences, in other words (see Proposition \ref{sing}),
$\|\cdot\|_*$ is not a norm. For instance, this occurs for every
constant $k$-sequence $(x_s)_{s\in[\nn]^k}$. We should also point
out that even if $(e_n)_n$ is non trivial, it is not necessarily a
Schauder basic sequence. More information on this issue are
contained in Section \ref{s5}.

In the next proposition we state some stability properties of the
$k$-spreading models. The proof is straightforward.
\begin{prop}\label{remark on the definition of spreading model}
Let $k\in\nn$, $(x_s)_{s\in [\nn]^k}$ be a $k$-sequence in a
Banach space $X$,  $M\in[\nn]^\infty$ and $(\delta_n)_n$ be a null
sequence of positive reals.  If $(x_s)_{s\in [M]^k}$ generates a
sequence $(e_n)_{n}$ as a $k$-spreading model with respect to
$(\delta_n)_n$ then the following are satisfied.
\begin{enumerate}
\item[(i)]  For every $L\in[M]^\infty$,
 $(x_s)_{s\in [L]^k}$ generates $(e_n)_{n}$ as a $k$-spreading
model with respect to $(\delta_n)_n$. \item[(ii)] For every  null
sequence $(\delta'_n)_{n}$ of positive reals there exists
$M'\in[M]^\infty$ such that $(x_s)_{s\in [M']^k}$ generates
$(e_n)_{n}$ as a $k$-spreading model with respect to
$(\delta'_n)_{n}$. \item[(iii)] The $k$-sequence
$(y_s)_{s\in[\nn]^k}$, defined by $y_s=x_{M(s)}$,  $s\in[\nn]^k$,
generates $(e_n)_{n}$ as a $k$-spreading model with respect to
$(\delta_n)_n$.
\end{enumerate}
\end{prop}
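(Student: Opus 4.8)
The plan is to verify each of the three assertions directly from Definition \ref{Definition of spreading model}, relying on three elementary observations. First, if $L\subseteq M$ then $\textit{Plm}_m([L]^k)\subseteq\textit{Plm}_m([M]^k)$, since conditions (i) and (ii) of Definition \ref{defn plegma} only concern the order structure of the $s_j$'s and not the ambient set. Second, the map $i\mapsto M(i)$ is a strictly increasing bijection from $\nn$ onto $M$ and therefore preserves all the inequalities appearing in those conditions; in particular, if $(s_j)_{j=1}^m\in\textit{Plm}_m([\nn]^k)$ then $(M(s_j))_{j=1}^m\in\textit{Plm}_m([M]^k)$, because $M(s_j)(i)=M(s_j(i))$. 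Third, for $L\in[M]^\infty$ one has $L(l)\geq M(l)$ for every $l\in\nn$.

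For (i), I would fix $L\in[M]^\infty$ together with arbitrary admissible data: $m\leq l$, $(s_j)_{j=1}^m\in\textit{Plm}_m([L]^k)$ with $s_1(1)\geq L(l)$, and $a_1,\dots,a_m\in[-1,1]$. By the first and third observations this is also an admissible datum for $M$ (note $s_1(1)\geq L(l)\geq M(l)$), so inequality \eqref{rsm} for $M$ delivers the bound $\delta_l$ verbatim. For (iii), recalling $y_s=x_{M(s)}$ with $M(s)=\{M(s(1))<\dots<M(s(k))\}$, I would take $(s_j)_{j=1}^m\in\textit{Plm}_m([\nn]^k)$ with $s_1(1)\geq l$ and use the second observation to conclude $(M(s_j))_{j=1}^m\in\textit{Plm}_m([M]^k)$ with first coordinate $M(s_1(1))\geq M(l)$; then \eqref{rsm} for $M$ applied to this plegma family, together with $\|\sum_j a_j y_{s_j}\|=\|\sum_j a_j x_{M(s_j)}\|$, again gives the bound $\delta_l$.

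The only step calling for an actual (tiny) construction is (ii). Given the null sequence $(\delta'_n)_n$, for each $l$ the set $\{n\in\nn:\delta_n>\delta'_l\}$ is finite, so I can select $n_1<n_2<\cdots$ with $n_l\geq l$ and $\delta_n\leq\delta'_l$ for all $n\geq n_l$; then I put $M'=\{M(n_l):l\in\nn\}\in[M]^\infty$, so that $M'(l)=M(n_l)$. Any admissible datum for $M'$ at level $l$ — in particular $(s_j)_{j=1}^m\in\textit{Plm}_m([M']^k)\subseteq\textit{Plm}_m([M]^k)$ with $s_1(1)\geq M'(l)=M(n_l)$ and $m\leq l\leq n_l$ — is an admissible datum for $M$ at level $n_l$, whence \eqref{rsm} for $M$ yields error at most $\delta_{n_l}\leq\delta'_l$. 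The only point that genuinely requires care here is that $(\delta_n)_n$ need not be monotone, which is exactly why one passes to the indices $n_l$ lying beyond the finite exceptional set rather than merely reindexing; apart from that, the whole proposition is bookkeeping with no real obstacle.
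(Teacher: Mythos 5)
Your proposal is correct, and since the paper simply declares this proposition's proof straightforward, your direct verification from Definition \ref{Definition of spreading model} is exactly the intended argument: (i) and (iii) follow from the monotonicity of plegma families under passing to subsets and under strictly increasing reindexing together with $L(l)\geq M(l)$, and (ii) from thinning $M$ along indices $n_l$ beyond the finite exceptional sets $\{n:\delta_n>\delta'_l\}$. All three verifications, including the care taken over the non-monotonicity of $(\delta_n)_n$ in (ii), are sound.
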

Let us also notice that for $k=1$ the assertion that (\ref{rsm})
holds for all $m\leq l$ is redundant. This is not the case for
$k\geq 2$, since a plegma family in $[\nn]^k$ is not always a
subsequence of a larger one. However, the next lemma shows that we
may bypass this extra condition by passing to a sparse infinite
subset of $\nn$.
\begin{lem}\label{old defn yields new}
Let $k\in\nn$, $(x_s)_{s\in [\nn]^k}$ be a $k$-sequence in a Banach
space $X$,  $L\in[\nn]^\infty$, $(E,\|\cdot\|_*)$ be an infinite
dimensional seminormed linear space with Hamel basis $(e_n)_{n}$ and
 $(\delta_n)_{n}$ be a null sequence of positive reals such that
\begin{equation}\label{ewq}\Bigg{|}\Big{\|}\sum_{j=1}^l a_j
x_{t_j}\Big{\|}-\Big{\|}\sum_{j=1}^l a_j e_j\Big{\|}_*
\Bigg{|}\leq\delta_l\end{equation} for every $l\in\nn$, every
$(t_j)_{j=1}^l\in\textit{Plm}_l([ L]^k)$ with $t_1(1)\geq L(l)$
and every choice of $a_1,...,a_l\in[-1,1]$.
  Then there exists
$M\in[L]^\infty$ such that $(x_s)_{s\in [M]^k}$ generates
$(e_n)_{n}$ as a $k$-spreading model with respect to $(\delta_n)_n$.
\end{lem}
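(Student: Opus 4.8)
The plan is to build $M=\{m_1<m_2<\cdots\}\subseteq L$ by a careful inductive choice of its elements so that every plegma family that arises in $[M]^k$ with the ``late start'' requirement of Definition \ref{Definition of spreading model} is automatically captured by hypothesis \eqref{ewq}, even the short ones $(s_j)_{j=1}^m$ with $m<l$. The obstruction, as the remark preceding the lemma explains, is that for $k\geq 2$ a plegma $m$-family need not extend to a plegma $l$-family, so \eqref{ewq} -- which only controls plegma families of ``full'' length $l$ starting past $L(l)$ -- does not directly yield \eqref{rsm}. The idea to get around this is to make $M$ sparse enough that whenever $(s_j)_{j=1}^m$ is a plegma family in $[M]^k$ with $s_1(1)\geq M(l)$ and $m\leq l$, one can \emph{pad} it on the left with $l-m$ extra plegma-compatible members of $[L]^k$ lying strictly below $s_1(1)$ (but still above $L(l)$), forming a plegma $l$-family $(t_j)_{j=1}^l$ in $[L]^k$ with $t_1(1)\geq L(l)$ whose last $m$ members are $s_1,\dots,s_m$; then apply \eqref{ewq} to $(t_j)_{j=1}^l$ with the coefficients of the padding entries set to $0$.

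Concretely, I would choose $m_1<m_2<\cdots$ recursively so that between $L$-consecutive ``slots'' there is an enormous amount of room in $L$: having picked $m_1,\dots,m_{p-1}$, pick $m_p\in L$ large enough that, writing $r$ for the position of $m_p$ in $L$ (so $m_p=L(r)$), there are at least $k\cdot(p)$ elements of $L$ strictly between $L(p)$ and $m_p$ -- in fact it is cleanest to demand that $m_p$ occupies a position in $L$ that is at least, say, $k p + p$, and that $m_p > L(kp)$. The point of such bounds is this: given a plegma $m$-family $(s_j)_{j=1}^m$ in $[M]^k$ with $s_1(1)\geq M(l)=m_l$, by Proposition \ref{rem34}(i) it corresponds to a block $F\in[M]^{km}$ with $\min F = s_1(1)\geq m_l \geq L(kl)$ (using the size condition on $m_l$). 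Since $L$ contains at least $k(l-m)$ elements strictly between $L(l)$ and $\min F$, I can prepend a block $G\in[L]^{k(l-m)}$ with $L(l) < G < \min F$, and then $G^\smallfrown F \in [L]^{kl}$ is, again by Proposition \ref{rem34}(i), exactly the union of a plegma $l$-family $(t_j)_{j=1}^l$ in $[L]^k$; by construction $t_j = s_{j-(l-m)}$ for $j>l-m$, and $t_1(1)=\min(G^\smallfrown F)=\min G \geq L(l)$.

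With this padding in hand, the conclusion is immediate: given any $m\leq l$, any $(s_j)_{j=1}^m\in\textit{Plm}_m([M]^k)$ with $s_1(1)\geq M(l)$, and any $a_1,\dots,a_m\in[-1,1]$, I form $(t_j)_{j=1}^l\in\textit{Plm}_l([L]^k)$ as above with $t_1(1)\geq L(l)$, set $b_j=0$ for $1\leq j\leq l-m$ and $b_{l-m+j}=a_j$ for $1\leq j\leq m$, all in $[-1,1]$, and apply \eqref{ewq} to get
\[
\Bigg|\Big\|\sum_{j=1}^m a_j x_{s_j}\Big\| - \Big\|\sum_{j=1}^m a_j e_j\Big\|_*\Bigg|
= \Bigg|\Big\|\sum_{j=1}^l b_j x_{t_j}\Big\| - \Big\|\sum_{j=1}^l b_j e_{j}\Big\|_*\Bigg| \leq \delta_l,
\]
where the first equality uses that $x_{t_j}$ and $e_j$ are multiplied by $0$ for $j\leq l-m$, that $t_{l-m+j}=s_j$, and -- crucially for the $e$-side -- that $(e_n)_n$ is spreading (Proposition/Definition of spreading), so $\|\sum_{j=1}^m a_j e_j\|_* = \|\sum_{j=1}^m a_j e_{(l-m)+j}\|_*$. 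This establishes \eqref{rsm} for $(x_s)_{s\in[M]^k}$ with respect to $(\delta_n)_n$, i.e. $(x_s)_{s\in[M]^k}$ generates $(e_n)_n$ as a $k$-spreading model. The only real work is bookkeeping the growth rates in the recursive construction of $M$ so that the required $k(l-m)$ ``spare'' elements of $L$ are always available below $\min F$; this is the step to be careful with, but it is routine once the padding idea is isolated.
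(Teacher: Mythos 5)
Your overall strategy is exactly the paper's: make $M$ sparse in $L$, extend a short plegma family in $[M]^k$ to a full-length plegma $l$-family in $[L]^k$, and apply \eqref{ewq} with zero coefficients on the padding. But the concrete padding step is wrong. If $F\in[M]^{km}$ is the union of $(s_j)_{j=1}^m$, then by Proposition \ref{rem34}(i) the coordinates of the $s_j$ occupy $F$ in $k$ consecutive blocks of length $m$ ($s_j(i)=F((i-1)m+j)$). Prepending a contiguous block $G\in[L]^{k(l-m)}$ with $G<\min F$ and re-reading $G\cup F$ as a plegma $l$-family re-partitions the union into $k$ consecutive blocks of length $l$, which scrambles the correspondence: the identity $t_{l-m+j}=s_j$ you assert holds only in the last coordinate. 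For instance, with $k=2$, $m=1$, $l=2$, $s_1=\{5,10\}$ and $G=\{3,4\}$, the family determined by $\{3,4,5,10\}$ is $t_1=\{3,5\}$, $t_2=\{4,10\}$, and $t_2\neq s_1$; so $\sum_j b_j x_{t_j}$ is not $\sum_j a_j x_{s_j}$ and the displayed equality fails. The padding elements cannot sit entirely below $F$: to have $(t_1,\dots,t_{l-m},s_1,\dots,s_m)$ plegma you need $l-m$ elements of $L$ inserted into \emph{each} of the $k$ gaps, namely $l-m$ elements in $[L(l),s_1(1))$ and $l-m$ elements in each interval $(s_m(i),s_1(i+1))$, $1\leq i<k$. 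Accordingly, the sparseness you impose on $M$ is also aimed at the wrong place: you only guarantee many elements of $L$ below $M(p)$, whereas what is needed (for the interior gaps) is many elements of $L$ between consecutive elements of $M$ beyond position $l$.

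Both defects are repairable, and the paper's proof shows the cleanest repair: choose $M$ so that between $M(p)$ and $M(p+1)$ there are at least $p-1$ elements of $L$, and pad on the \emph{right}, i.e.\ produce $(t_j)_{j=1}^l\in\textit{Plm}_l([L]^k)$ with $t_j=s_j$ for $1\leq j\leq m$ by inserting $l-m$ elements of $L$ into each interval $(s_m(i),s_1(i+1))$ and after $s_m(k)$. Then $t_1(1)=s_1(1)\geq M(l)\geq L(l)$ automatically, and one applies \eqref{ewq} with coefficients $a_1,\dots,a_m,0,\dots,0$, so the $e$-side is literally $\|\sum_{j=1}^m a_je_j\|_*$. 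This also removes your reliance on $(e_n)_n$ being spreading (needed for $\|\sum_j a_je_j\|_*=\|\sum_j a_je_{(l-m)+j}\|_*$): the lemma does not assume this, and while it can be deduced from \eqref{ewq}, that deduction is an extra argument you neither supply nor can simply cite.
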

\begin{proof}
We choose $M\in [L]^\infty$ such  that for every $l\in\nn$ there
exist at least $l-1$ elements of $L$ between $M(l)$ and $M(l+1)$.
Then notice that for every $m,l\in\nn$ with $m\leq l$ and every
$(s_j)_{j=1}^m\in\text{Plm}_m([M]^k)$ with $s_j(1)\geq M(l)$, there
exists $(t_j)_{j=1}^l\in\text{Plm}_l([L]^k)$ with $s_j=t_j$ for all
$1\leq j\leq m$. This observation and (\ref{ewq}) easily yield that
for every $m,l\in \nn$, with $m\leq l$, every
$(s_j)_{j=1}^m\in\textit{Plm}_m([M]^k)$ with $s_1(1)\geq M(l)$
and every choice of $a_1,...,a_m\in[-1,1]$, we have
\begin{equation}\Bigg{|}\Big{\|}\sum_{j=1}^m a_j
x_{s_j}\Big{\|}-\Big{\|}\sum_{j=1}^m a_j
 e_j\Big{\|}_* \Bigg{|}\leq\delta_l\end{equation}
and the proof is complete.
\end{proof}

\subsection{Existence of $k$-speading models}
In this subsection we will show that every bounded $k$-sequence in a
Banach space $X$ contains a $k$-subsequence which generates a
$k$-spreading model. The proof follows similar lines with the
corresponding one of the classical spreading models.

 For $k\in\nn$ and a $k$-sequence $(x_s)_{s\in
[\nn]^k}$ in a Banach space $X$, we will say that $(x_s)_{s\in
[\nn]^k}$ \emph{admits} $(e_n)_{n}$ as a $k$-spreading model (or
$(e_n)_{n}$ \emph{is} a $k$-spreading model of $(x_s)_{s\in
[\nn]^k}$) if there exists $M\in[\nn]^\infty$ such that the
subsequence $(x_s)_{s\in [M]^k}$ generates $(e_n)_{n}$ as a
$k$-spreading model.
  A $k$-sequence $(x_s)_{s\in
[\nn]^k}$ in $X$ will be called \textit{bounded} (resp. \textit{seminormalized})  if there exists $C>0$ (resp. $0<c\leq C$)
 such that $\|x_s\|\leq C$ (resp. $c\leq\|x_s\|\leq C$), for every $s\in [\nn]^k$.

\begin{thm}
For all $k\in \nn$,  every bounded $k$-sequence in a Banach space
$X$ admits a $k$-spreading model.
\end{thm}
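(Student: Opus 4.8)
The plan is to carry out the Brunel--Sucheston scheme, using the plegma Ramsey theorem (Theorem~\ref{ramseyforplegma}) in place of the ordinary Ramsey theorem and then invoking Lemma~\ref{old defn yields new} to obtain the threshold form of Definition~\ref{Definition of spreading model}. Fix $C>0$ with $\|x_s\|\le C$ for every $s\in[\nn]^k$ and a countable dense subset $D$ of $[-1,1]$ with $0\in D$. For each triple $\tau=(l,\bar a,m)$ with $l,m\in\nn$ and $\bar a=(a_1,\dots,a_l)\in D^l$, the function $(s_j)_{j=1}^l\mapsto\|\sum_{j=1}^l a_jx_{s_j}\|$ on $\textit{Plm}_l([\nn]^k)$ takes values in $[0,Cl]$; splitting this interval into finitely many pieces of length $1/m$ induces a finite partition of $\textit{Plm}_l([\nn]^k)$, and Theorem~\ref{ramseyforplegma} produces an infinite set on which the function has oscillation at most $1/m$. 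Enumerating all triples $\tau$ and diagonalising over them yields $M\in[\nn]^\infty$ with, for each $\tau$, a stage $i(\tau)$ such that the associated function has oscillation at most $1/m$ on $\textit{Plm}_l\big([\{M(i):i\ge i(\tau)\}]^k\big)$. Since $(s_j)\mapsto\|\sum_j a_jx_{s_j}\|$ is $C$-Lipschitz in $\bar a$ uniformly in $(s_j)$ (indeed $\|\sum_j a_jx_{s_j}\|\le C\sum_j|a_j|$) and $D$ is dense, it follows that for all $l\in\nn$ and all $\bar a\in\rr^l$ the limit $\|\sum_{j=1}^l a_je_j\|_*:=\lim\|\sum_j a_jx_{s_j}\|$, taken over $(s_j)_{j=1}^l\in\textit{Plm}_l([M]^k)$ with $s_1(1)\to\infty$, exists.

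Let $E=c_{00}$ with its standard Hamel basis $(e_n)_n$, equipped with the function $\|\cdot\|_*$ just defined (well defined since coefficients outside the support vanish). Passing the corresponding properties of the norm of $X$ to the limit along common plegma families shows that $\|\cdot\|_*$ is a seminorm satisfying $\|\sum_j a_je_j\|_*\le C\sum_j|a_j|$, hence $C$-Lipschitz in the coefficients, so $(E,\|\cdot\|_*)$ is an infinite dimensional seminormed space. The sequence $(e_n)_n$ is spreading: given $k_1<\dots<k_m$ and reals $a_1,\dots,a_m$, the element $\sum_p a_pe_{k_p}$ equals $\sum_{j=1}^{k_m}b_je_j$ with $b_{k_p}=a_p$ and $b_j=0$ otherwise, so $\|\sum_p a_pe_{k_p}\|_*$ is the limit of $\|\sum_p a_px_{s_{k_p}}\|$ over $(s_j)_{j=1}^{k_m}\in\textit{Plm}_{k_m}([M]^k)$ with $s_1(1)\to\infty$; by Proposition~\ref{rem34}(ii) each $(s_{k_p})_{p=1}^m$ is a plegma $m$-family with $s_{k_1}(1)\ge s_1(1)$, so this limit equals the limit of $\|\sum_p a_px_{t_p}\|$ over plegma $m$-families $(t_p)$ going to infinity, namely $\|\sum_p a_pe_p\|_*$.

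It remains to verify the hypothesis of Lemma~\ref{old defn yields new}. Fix $l\in\nn$ and $\epsilon>0$; choose $m>3/\epsilon$ and a finite $\tfrac{\epsilon}{3C}$-net $F\subseteq D^l$ of $[-1,1]^l$ for the $\ell^1$ metric, and set $N=\max_{\bar b\in F}M(i(l,\bar b,m))$. For any $(t_j)_{j=1}^l\in\textit{Plm}_l([M]^k)$ with $t_1(1)\ge N$ and any $\bar a\in[-1,1]^l$, choosing $\bar b\in F$ with $\sum_j|a_j-b_j|<\tfrac{\epsilon}{3C}$ and combining the $C$-Lipschitz bounds for $\|\cdot\|$ and for $\|\cdot\|_*$ with the estimate $|\,\|\sum_j b_jx_{t_j}\|-\|\sum_j b_je_j\|_*\,|\le 1/m$ gives $|\,\|\sum_j a_jx_{t_j}\|-\|\sum_j a_je_j\|_*\,|<\epsilon$. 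Hence for each $l$ there is $\rho_l$ so that this holds with $\epsilon=1/l$ whenever $t_1(1)\ge\rho_l$; choosing $L\in[M]^\infty$ with $L(l)\ge\rho_l$ for all $l$ (the limits defining $\|\cdot\|_*$ are unchanged, as $\textit{Plm}_l([L]^k)$ is still cofinal, by Proposition~\ref{rem34}(i)), the pair $(L,(1/l)_l)$ satisfies the hypothesis of Lemma~\ref{old defn yields new}, which produces $M'\in[L]^\infty$ such that $(x_s)_{s\in[M']^k}$ generates $(e_n)_n$ as a $k$-spreading model. The main obstacle is precisely this last step of bookkeeping: for a fixed length $l$ there are infinitely many coefficient tuples and no single stage of the diagonalisation stabilises them all, so the $\ell^1$-net error together with the Ramsey error must be absorbed into a null sequence, which is exactly what Lemma~\ref{old defn yields new} is designed to do.
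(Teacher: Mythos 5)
Your argument is correct and follows essentially the same route as the paper: the Brunel--Sucheston extraction via Theorem \ref{ramseyforplegma}, a net/density argument exploiting the uniform Lipschitz dependence of $\|\sum_j a_jx_{s_j}\|$ on the coefficients, a diagonalisation, and finally Lemma \ref{old defn yields new} to reach the threshold form of Definition \ref{Definition of spreading model}. The only (cosmetic) difference is that the paper runs a finite $\frac{\delta}{3l}$-net inside each stage before diagonalising over $l$, whereas you diagonalise over a countable dense set of coefficient tuples and recover the uniform estimate with a finite net afterwards.
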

\begin{proof} Let $X$ be  a Banach space and $k\in\nn$, $(x_s)_{s\in [\nn]^k}$ be  a
bounded $k$-sequence in $X$. We divide the proof into four  steps.

\textbf{Step 1.} Let  $l\in\nn$, $N\in[\nn]^\infty$ and $\delta>0$. Then
there exists $L\in[N]^\infty$ such that
\[\Bigg{|}\Big{\|}\sum_{j=1}^la_jx_{t_j}\Big{\|}-\Big{\|}\sum_{j=1}^la_jx_{s_j}\Big{\|}\Bigg{|}\leq\delta\]
for every $(t_j)_{j=1}^l, (s_j)_{j=1}^l
\in\textit{Plm}_l([L]^k)$ and $a_1,...,a_l\in[-1,1]$.
\begin{proof}[Proof of Step 1:]
Let $(\textbf{a}_i)_{i=1}^{n_0}$ be a $\frac{\delta}{3l}-$net of the
unit ball of $\big(\rr^l, \|\cdot\|_\infty\big)$. We set $N_0=N$. By
a finite induction on $1\leq i\leq n_0$, we construct a decreasing
sequence $N_0\supseteq N_1\supseteq\ldots\supseteq N_{n_0}$ as
follows. Suppose that $N_0,\ldots,N_{i-1}$ have been constructed.
Let $\textbf{a}_i=(a_j^i)_{j=1}^l$ and
$g_i:\textit{Plm}_l([N_{i-1}]^k)\to[0,lC]$ defined by
$g_i\big((s_j)_{j=1}^l\big)=\|\sum_{j=1}^l a_j^i x_{s_j}\|$. We
partition  the interval $[0,lC]$ into disjoint  intervals of length
$\frac{\delta}{3}$ and applying Theorem \ref{ramseyforplegma} we
find  $N_i\in[N_{i-1}]^\infty$ such that for every
$(t_j)_{j=1}^l,(s_j)_{j=1}^l\in\textit{Plm}_l( [N_i]^k)$, we
have $|g_i((t_j)_{j=1}^l)-g_i((s_j)_{j=1}^l)|<\frac{\delta}{3}$.
Proceeding in this way  we conclude that for every
  $(s_j)_{j=1}^l,(t_j)_{j=1}^l\in\textit{Plm}_l([ N_{n_0}]^k)$ and
  $1\leq i\leq n_0$, we have that
  $\Big{|}\|\sum_{j=1}^la_j^ix_{t_j}\|-\|\sum_{j=1}^la_j^ix_{s_j}\|\Big{|}\leq\frac{\delta}{3}$.
Since $(\textbf{a}_i)_{i=1}^{n_0}$ is a $\frac{\delta}{3}-$net of
the unit ball of $(\rr^l, \|\cdot\|_\infty)$ it is easy to see that
$L=N_{n_0}$ is as desired.
\end{proof}

\textbf{Step 2.}  Let  $(\delta_n)_n$ be a  null sequence of positive real
numbers. Then there exists $M\in[\nn]^\infty$ such that for every
$m\leq l$, every $(t_j)_{j=1}^m, (s_j)_{j=1}^m
\in\textit{Plm}_m([M]^k)$ with $s_1(1),t_1(1)\geq M(l)$ and
$a_1,...,a_m\in[-1,1]$, we have
\begin{equation}\label{equo}\Bigg{|}\Big\|\sum_{j=1}^m a_jx_{t_j}\Big\|-\Big\|\sum_{j=1}^m
a_jx_{s_j}\Big\|\Bigg{|}\leq\delta_l\end{equation}

\begin{proof}[Proof of Step 2:] By Step 1 and a standard
diagonalization we easily obtain an  $L\in [\nn]^\infty$ satisfying
$\Big{|}\|\sum_{j=1}^l a_jx_{t_j}\|-\|\sum_{j=1}^l
a_jx_{s_j}\|\Big{|}\leq\delta_l$, for every $ l\in\nn$, every
$(t_j)_{j=1}^l, (s_j)_{j=1}^l \in\textit{Plm}_l([L]^k)$ with
$s_1(1),t_1(1)\geq L(l)$ and $a_1,...,a_l\in[-1,1]$. By Lemma
\ref{old defn yields new}, there exists  $M\in [L]^\infty$
satisfying (\ref{equo}).
\end{proof}

\textbf{Step 3.} Let $M\in[\nn]^\infty$ be the resulting from Step 2
infinite subset of $\nn$. Also let  $l\in\nn$ and
$a_1,...,a_l\in\rr$. Then for  every sequence
$\big((s_j^n)_{j=1}^l\big)_{n}$ with
$(s_j^n)_{j=1}^l\in\text{Plm}_l([M]^k)$, for all $n\in\nn$ and
$\lim s^n_1(1)=+\infty$, the sequence
$(\|\sum_{j=1}^la_jx_{s_j}^n\|)_{n}$ is a Cauchy sequence in
$[0,+\infty)$. Moreover, $\lim_n\|\sum_{j=1}^la_jx_{s_j}^n\|$ is
independent from the choice of the sequence
$((s_j^n)_{j=1}^l)_{n}$.
\begin{proof}[Proof of Step 3:] It is straightforward by
Step 2.
\end{proof}

\textbf{Step 4.} Let $(e_n)_n$ be the natural Hamel basis of $c_{00}(\nn)$.
For every  $l\in\nn$ and $a_1,...,a_l\in\rr$, we define
\[\Big\|\sum_{j=1}^l
a_je_j\Big\|_*=\lim_n\|\sum_{j=1}^la_jx_{s_j}^n\|\] where for every
$n\in\nn$, $(s_j^n)_{j=1}^l\in\text{Plm}_l([M]^k)$ and $\lim
s^n_1(1)=+\infty$. Then $\|\cdot\|_*$ is a seminorm on $c_{00}(\nn)$
under which the natural Hamel basis $(e_n)_n$  is a spreading
sequence. Moreover  for all $ m\leq l$, $a_1,\ldots, a_m\in[-1,1]$
and $(s_j)_{j=1}^m\in\textit{Plm}_m([M]^k)$ with $s_1(1)\geq
M(l)$, we have $\Big{|}\|\sum_{j=1}^m a_jx_{s_j}\|-\|\sum_{j=1}^m
a_je_j\|_*\Big{|}\leq\delta_l$.
\begin{proof}[Proof of Step 4:] It follows easily by Steps 2 and 3.
\end{proof}
By Step 4, we have that $(x_s)_{s\in [M]^{k}}$ generates $(e_n)_n$
as a $k$-spreading model and the proof is complete.
\end{proof}

\subsection{The increasing hierarchy of $k$-spreading models}
In this subsection we will show that the $k$-spreading models of a
Banach space $X$ form an increasing hierarchy.

We start with the  following lemma which is an easy consequence of
Remark \ref{rem3456}.
\begin{lem}\label{propxi}
Let $k_1,k_2\in\nn$  with $1\leq k_1<k_2$. Let $X$ be a Banach space
and $(w_t)_{t\in [\nn]^{k_1}}$ be a $k_1$-sequence in $X$. Let
$(x_s)_{s\in [\nn]^{k_2}}$ be the $k_2$-sequence in $X$  defined by
$x_s=w_{s|k_1}$, for every $s\in  [\nn]^{k_2}$. Then $(w_t)_{t\in
[\nn]^{k_1}}$ and $(x_s)_{s\in [\nn]^{k_2}}$ admit the same
$k$-spreading models.
\end{lem}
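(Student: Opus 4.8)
The plan is to establish the following slightly more precise statement, from which the lemma is immediate: for every $M\in[\nn]^\infty$ and every null sequence $(\delta_n)_n$ of positive reals, the $k_1$-subsequence $(w_t)_{t\in[M]^{k_1}}$ generates $(e_n)_n$ as a $k_1$-spreading model with respect to $(\delta_n)_n$ if and only if the $k_2$-subsequence $(x_s)_{s\in[M]^{k_2}}$ generates $(e_n)_n$ as a $k_2$-spreading model with respect to $(\delta_n)_n$. (So not even a passage to a further infinite subset is needed.) The bridge between the two orders is the restriction map $s\mapsto s|k_1$, which by Remark \ref{rem3456} (see also Proposition \ref{rem34}(i)) is plegma preserving from $[M]^{k_2}$ onto $[M]^{k_1}$, together with the defining equality $x_s=w_{s|k_1}$.

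One implication is a direct transcription. Suppose $(w_t)_{t\in[M]^{k_1}}$ generates $(e_n)_n$ as a $k_1$-spreading model with respect to $(\delta_n)_n$ and let $m\le l$, $(s_j)_{j=1}^m\in\textit{Plm}_m([M]^{k_2})$ with $s_1(1)\ge M(l)$ and $a_1,\ldots,a_m\in[-1,1]$. Put $t_j=s_j|k_1$; by Remark \ref{rem3456}, $(t_j)_{j=1}^m\in\textit{Plm}_m([M]^{k_1})$, moreover $t_1(1)=s_1(1)\ge M(l)$ and $x_{s_j}=w_{t_j}$. Applying \eqref{rsm} for $(w_t)_{t\in[M]^{k_1}}$ to the family $(t_j)_{j=1}^m$ yields $\bigl|\,\|\sum_{j=1}^m a_j x_{s_j}\|-\|\sum_{j=1}^m a_j e_j\|_*\,\bigr|\le\delta_l$, i.e. $(x_s)_{s\in[M]^{k_2}}$ generates $(e_n)_n$ as a $k_2$-spreading model with respect to $(\delta_n)_n$.

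For the reverse implication the point is to go the other way around the restriction map, i.e. to lift a plegma family of $[M]^{k_1}$ to a plegma family of $[M]^{k_2}$ with prescribed initial segments. Assume $(x_s)_{s\in[M]^{k_2}}$ generates $(e_n)_n$ as a $k_2$-spreading model with respect to $(\delta_n)_n$ and let $m\le l$, $(t_j)_{j=1}^m\in\textit{Plm}_m([M]^{k_1})$ with $t_1(1)\ge M(l)$ and $a_1,\ldots,a_m\in[-1,1]$. Set $\mu=t_m(k_1)=\max\bigcup_{j=1}^m t_j$, let $\nu_1<\cdots<\nu_{m(k_2-k_1)}$ be the first $m(k_2-k_1)$ elements of $M$ that exceed $\mu$, and define for $1\le j\le m$
\[ s_j(i)=t_j(i)\quad(1\le i\le k_1),\qquad s_j(i)=\nu_{(i-k_1-1)m+j}\quad(k_1<i\le k_2). \]
A routine verification of conditions (i) and (ii) of Definition \ref{defn plegma} shows that $(s_j)_{j=1}^m\in\textit{Plm}_m([M]^{k_2})$: on the coordinates $1\le i\le k_1$ these are precisely the plegma inequalities satisfied by $(t_j)_{j=1}^m$; the inequality $s_m(k_1)<s_1(k_1+1)$ holds because $s_m(k_1)=\mu<\nu_1$; and on the coordinates $k_1<i\le k_2$ they hold because $(i,j)\mapsto(i-k_1-1)m+j$ is strictly increasing in $j$ for fixed $i$ and maps the last index of block $i$ strictly below the first index of block $i+1$. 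Since $s_j|k_1=t_j$ we have $x_{s_j}=w_{t_j}$, and $s_1(1)=t_1(1)\ge M(l)$, so \eqref{rsm} for $(x_s)_{s\in[M]^{k_2}}$ applied to $(s_j)_{j=1}^m$ gives $\bigl|\,\|\sum_{j=1}^m a_j w_{t_j}\|-\|\sum_{j=1}^m a_j e_j\|_*\,\bigr|\le\delta_l$. Hence $(w_t)_{t\in[M]^{k_1}}$ generates $(e_n)_n$ as a $k_1$-spreading model with respect to $(\delta_n)_n$, and the equivalence, and with it the lemma, follows. I expect the only non-automatic step to be the lifting construction together with the bookkeeping needed to check that $(s_j)_{j=1}^m$ is a genuine plegma family in $[M]^{k_2}$; the rest is just unwinding Definition \ref{Definition of spreading model}.
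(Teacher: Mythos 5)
Your proof is correct, and it is exactly the argument the paper has in mind: the paper omits the proof, stating only that the lemma is ``an easy consequence of Remark \ref{rem3456}'', i.e.\ of the fact that $s\mapsto s|k_1$ is plegma preserving from $[M]^{k_2}$ \emph{onto} $[M]^{k_1}$, and your two directions (restriction via Proposition \ref{rem34}(iv), and the explicit lifting of a plegma family of $[M]^{k_1}$ to one of $[M]^{k_2}$ with prescribed initial segments) are precisely the content of that remark spelled out. The bookkeeping in your lifting construction checks out, so nothing further is needed.
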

For a subset $A$ of $X$ we will say that $A$ \emph{admits} $(e_n)_n$
\emph{as a} $k$-spreading model (or  $(e_n)_{n}$ \emph{is} a
$k$-spreading model of $A$)  if there exists a $k$-sequence
$(x_s)_{s\in [\nn]^k}$ in $A$ which admits $(e_n)_{n}$ as a
$k$-spreading model.
\begin{notation}
  Let $X$ be a Banach space, $A\subseteq X$ and $k\in\nn$. The set of all $k$-spreading models of $A$ will be denoted by $\mathcal{SM}_k(A)$.
\end{notation}
 By Lemma \ref{propxi}, we easily
obtain the following.

\begin{cor}\label{incresspr}
Let $X$ be a Banach space and $A\subseteq X$. Then for all
$k_1,k_2\in\nn$ with $k_1< k_2$, we have
$\mathcal{SM}_{k_1}(A)\subseteq \mathcal{SM}_{k_2}(A)$,
\end{cor}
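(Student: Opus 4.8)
The plan is to read the inclusion off Lemma \ref{propxi}, which already carries all the combinatorial content. Fix a spreading sequence $(e_n)_n\in\mathcal{SM}_{k_1}(A)$. By the definition of $\mathcal{SM}_{k_1}(A)$ there is a $k_1$-sequence $(w_t)_{t\in[\nn]^{k_1}}$ in $A$ admitting $(e_n)_n$ as a $k_1$-spreading model, say witnessed by some $M\in[\nn]^\infty$ and some null sequence $(\delta_n)_n$. The aim is to produce a $k_2$-sequence in $A$ admitting $(e_n)_n$ as a $k_2$-spreading model, and the candidate is the lift $x_s:=w_{s|k_1}$, $s\in[\nn]^{k_2}$.

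First I would note that, since $k_1<k_2$, the restriction $s|k_1$ is a well defined element of $[\nn]^{k_1}$ for every $s\in[\nn]^{k_2}$, so $(x_s)_{s\in[\nn]^{k_2}}$ is a genuine $k_2$-sequence and $x_s=w_{s|k_1}\in A$ for all $s$. Then I would apply Lemma \ref{propxi} to the pair $(w_t)_{t\in[\nn]^{k_1}}$ and $(x_s)_{s\in[\nn]^{k_2}}$: the lemma asserts that these two sequences admit the same spreading models, so from the fact that $(w_t)_{t\in[\nn]^{k_1}}$ admits $(e_n)_n$ as a $k_1$-spreading model we conclude that $(x_s)_{s\in[\nn]^{k_2}}$ admits $(e_n)_n$ as a $k_2$-spreading model. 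Hence $(e_n)_n\in\mathcal{SM}_{k_2}(A)$, and since $(e_n)_n$ was arbitrary this yields $\mathcal{SM}_{k_1}(A)\subseteq\mathcal{SM}_{k_2}(A)$.

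There is no real obstacle at the level of the corollary; the work sits entirely inside Lemma \ref{propxi}, and even there it is light. The point to keep in mind is the transfer of the estimate (\ref{rsm}): if $(s_j)_{j=1}^m$ is a plegma family in $[M]^{k_2}$, then $(s_j|k_1)_{j=1}^m$ is a plegma family in $[M]^{k_1}$ by Proposition \ref{rem34}(iv), and since $(s_j|k_1)(1)=s_j(1)$ the threshold condition $s_1(1)\geq M(l)$ carries over, so $\|\sum_{j=1}^m a_j x_{s_j}\|=\|\sum_{j=1}^m a_j w_{s_j|k_1}\|$ stays within $\delta_l$ of $\|\sum_{j=1}^m a_j e_j\|_*$ for the same null sequence $(\delta_n)_n$; the surjectivity of $s\mapsto s|k_1$ from $[M]^{k_2}$ onto $[M]^{k_1}$ (Remark \ref{rem3456}) handles the converse, although only the direction just indicated is needed for the corollary.
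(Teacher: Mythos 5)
Your proof is correct and follows exactly the paper's route: the corollary is deduced from Lemma \ref{propxi} by lifting a $k_1$-sequence $(w_t)$ in $A$ to the $k_2$-sequence $x_s=w_{s|k_1}$, which stays in $A$. Your added sketch of why the lemma itself holds (restriction of plegma families via Proposition \ref{rem34}(iv) and preservation of the first coordinate) matches the intended, omitted argument.
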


In Section \ref{s12}, for each $k\in\nn$, we construct a Banach
space $\mathfrak{X}_{k+1}$ such that
$\mathcal{SM}_{k}(\mathfrak{X}_{k+1})\subsetneqq\mathcal{SM}_{k+1}(\mathfrak{X}_{k+1})$.  Here, we
present a much simpler example of a space $X$ and a proper subset $A$ of $X$ satisfying $\mathcal{SM}_{k}(A)\subsetneqq\mathcal{SM}_{k+1}(A)$.

\begin{examp}
\label{example} Let $(e_n)_{n}$ be a normalized spreading and
$1$-unconditional sequence in a Banach space $(E,\|\cdot\|)$ which
is not equivalent to the usual basis of $c_0$. Let $k\in\nn$ and
$(x_s)_{s\in [\nn]^{k+1}}$ be the natural Hamel basis of
$c_{00}([\nn]^{k+1})$. For $x\in c_{00}([\nn]^{k+1})$ we define
\[\|x\|_{k+1}=\sup\Big{\{}\Big\|\sum_{i=1}^lx(s_i)e_i\Big\|:
l\in\nn,(s_i)_{i=1}^l\in\textit{Plm}_l([\nn]^{k+1})\text{ and
}s_1(1)\geq l\Big{\}}\] We set
$X=\overline{(c_{00}([\nn]^{k+1}),\|\cdot\|_{k+1})}$ and
$A=\{x_s:s\in [\nn]^{k+1}\}$. It is easy to see that the sequence
$(e_n)_{n}$ is generated by
$(x_s)_{s\in[\nn]^{k+1}}$ as a $(k+1)$-spreading model and thus it belongs to
$\mathcal{SM}_{k+1}(A)$. We shall show that for every
$(\tilde{e}_n)_{n}\in\mathcal{SM}_k(A)$, either $(\tilde{e}_n)_{n}$
is a trivial spreading sequence or it is isometric to the usual
basis of $c_0$.  Therefore, there is no sequence in
$\mathcal{SM}_{k}(A)$ equivalent to $(e_n)_n$.

Indeed, let  $(\tilde{e}_n)_{n}\in\mathcal{SM}_k(A)$. By Proposition
\ref{remark on the definition of spreading model}, we may assume
that there exists
 a $k$-sequence in $A$, $(y_t)_{t\in [\nn]^k}$ which generates
 $(\tilde{e}_n)_{n}$ as a
$k$-spreading model. Let $\varphi:[\nn]^k\to [\nn]^{k+1}$ such that
$y_t=x_{\varphi(t)}$, for all $t\in [\nn]^{k}$. By Proposition
\ref{lemma making a hereditary nonconstant function, nonconstant on
plegma pairs}, there exists $M\in [\nn]^\infty$ such that either
$\varphi$ is constant on $[M]^{k}$ or for every plegma pair
$(t_1,t_2)$ in $[M]^{k}$, $\varphi(t_1)\neq\varphi(t_2)$. By
Proposition \ref{remark on the definition of spreading model}, we
have that $(y_t)_{t\in [M]^k}$ also generates $(\tilde{e}_n)_{n}$ as
a $k$-spreading model.

If $\varphi$ is constant on $[M]^k$ then $(\tilde{e}_n)_{n}$ is a
trivial sequence. Otherwise,  by Theorem \ref{non plegma preserving
maps}, there exists $L\in[M]^\infty$ such that for every plegma pair
$(t_1,t_2)$ in $[L]^k$ neither $(\varphi(t_1),\varphi(t_2))$, nor
$(\varphi(t_2),\varphi(t_1))$ is a plegma pair in $[\nn]^{k+1}$.
Therefore, for every $(t_j)_{j=1}^m\in\textit{Plm}([L]^k)$ and
$(s_j)_{j=1}^l\in\textit{Plm}([\nn]^{k+1})$ there is  at most
one $j\in\{1,...,m\}$ and at most one $i\in\{1,...,l\}$ with
$\varphi(t_j)=s_i$. This observation  and the definition of the norm
$\|\cdot\|_{k+1}$, easily implies that
\begin{equation}\label{eqwert}\Big\|\sum_{j=1}^ma_jy_{t_j}\Big\|_{k+1}
=\Big\|\sum_{j=1}^ma_jx_{\varphi(t_j)}\Big\|_{k+1}=\max_{1\leq
j\leq m}|a_j|\end{equation}
 for all $m\in\nn$, $a_1,\ldots,a_m\in\rr$ and
$(t_j)_{j=1}^m\in\textit{Plm}([L]^k)$. Since $L\in [M]^\infty$,
we have that $(\tilde{e}_n)_{n}$ is generated by $(y_t)_{t\in[L]^k}$
and  by (\ref{eqwert}), the sequence $(\tilde{e}_n)_{n}$ is
isometric to the usual basis of $c_0$.\end{examp}

\section{Topological properties of $k$-sequences}
This section is devoted to the study of the $k$-sequences in a
topological space. We define the convergence of the $k$-sequences in
a topological space and we introduce the notion of the subordinated
$k$-sequences.
\subsection{Convergence of $k$-sequences in topological spaces}
We start with the following  natural extension of the notion of
convergence of sequences in topological spaces.
\begin{defn}\label{defn convergence of f-sequences}
Let $(X,\ttt)$ be a topological space, $k\in\nn$ and
$(x_s)_{s\in[\nn]^k}$ be a $k$-sequence in $X$. Also let
$M\in[\nn]^\infty$ and $x_0\in X$. We will say that
$(x_s)_{s\in[M]^k}$ converges to $x_0$ if for every $U\in\ttt$
with $x_0\in U$ there exists $m\in \nn$ such that for every $s\in
[M]^k$ with $ s(1)\geq M(m)$ we have that $x_s\in U$.
\end{defn}
It is straightforward that if a $k$-subsequence
$(x_s)_{s\in[M]^k}$ in a topological space is convergent to some
$x_0\in X$, then every further $k$-subsequence of $(x_s)_{s\in
[M]^k}$  is also convergent to $x_0$.  Moreover, every continuous
map between two topological spaces preserves the convergence of
$k$-sequences, i.e. if $\phi:(X_1,\mathcal{T}_1)\to
(X_2,\mathcal{T}_2)$ is continuous and $(x_s)_{s\in[M]^k}$
converges to $x_0\in X_1$, then $(\phi(x_s))_{s\in[M]^k}$
converges to $\phi(x_0)\in X_2$.

However, for $k\geq 2$, there are some differences with the
ordinary convergent sequences
in topological spaces. For instance it is  easy to see that for
$k\geq 2$, the convergence of a $k$-sequence $(x_s)_{s\in[M]^k}$
to some $x_0\in X$, does not in general imply that the set
$\{x_s:s\in[M]^k\}$ is relatively compact.

\subsection{Subordinated $k$-sequences}
In this subsection we introduce the definition of the subordinated
$k$-sequences in a topological space. First, recall that the
powerset of $\nn$ is naturally identified with  $\{0,1\}^\nn$. In
this way, for all $k\in\nn$ and  $M\in [\nn]^\infty$, the set
$[M]^{\leq k}$ becomes a compact metric space containing $[M]^k$ as
a dense subspace. Moreover, notice that an element $s\in [M]^{\leq k}$ is
isolated in $[M]^{\leq k}$ if and only if $s\in [M]^k$.
\begin{defn}\label{defn subordinating}
Let $(X,\ttt)$ be a topological space, $k\in\nn$,
$(x_s)_{s\in[\nn]^k}$ be a $k$-sequence in $X$ and
$M\in[\nn]^\infty$. We say that $(x_s)_{s\in[M]^k}$ is
subordinated $($with respect to $(X,\mathcal{T})$$)$ if there exists a
continuous map $\widehat{\varphi}:[M]^{\leq k}\to (X,\mathcal{T})$
such that $\widehat{\varphi}(s)=x_s$, for all $s\in[M]^k$.
\end{defn}

\begin{rem}\label{remark on ff subordinated}
If $(x_s)_{s\in[M]^k}$ is subordinated, then there exists a unique
continuous map $\widehat{\varphi}:[M]^{\leq k}\to (X,\mathcal{T})$
witnessing this. Indeed, this is a consequence of the fact that
$[M]^k$ is dense in $[M]^{\leq k}$. Also,
$\overline{\{x_s:s\in[M]^k\}}=\widehat{\varphi}\big([M]^{\leq
k}\big),$ where $\overline{\{x_s:s\in[M]^k\}}$ is the closure of
$\{x_s : s\in[M]^k\}$ in $X$ with respect to $\mathcal{T}$.
Therefore, $\overline{\{x_s:s\in[M]^k\}}$ is a countable compact
metrizable subspace of $(X,\ttt)$ with Cantor-Bendixson index at
most $k+1$. Also notice that if $(x_s)_{s\in[M]^k}$ is subordinated
then $(x_s)_{s\in[L]^k}$ is also subordinated, for every $L\in
[M]^\infty$.
\end{rem}

\begin{prop}\label{subordinating yields convergence}
Let $(X,\ttt)$ be a topological space, $k\in\nn$,
$(x_s)_{s\in[\nn]^k}$ be a $k$-sequence in $X$ and
$M\in[\nn]^\infty$. Suppose that $(x_s)_{s\in[M]^k}$ is
subordinated and let $\widehat{\varphi}:[M]^{\leq k}\to
(X,\mathcal{T})$ be the continuous map witnessing this. Then
$(x_s)_{s\in[M]^k}$ is convergent to
$\widehat{\varphi}(\emptyset)$.
\end{prop}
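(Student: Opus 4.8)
The plan is to unwind Definition \ref{defn convergence of f-sequences} and to exploit the continuity of $\widehat{\varphi}$ together with the explicit description of the topology that $[M]^{\leq k}$ inherits from $\{0,1\}^\nn$. Fix an open set $U\subseteq X$ with $\widehat{\varphi}(\emptyset)\in U$; I must produce $m\in\nn$ such that $x_s\in U$ whenever $s\in[M]^k$ and $s(1)\geq M(m)$. Since $x_s=\widehat{\varphi}(s)$ for every $s\in[M]^k$, it is enough to find such an $m$ with $s\in\widehat{\varphi}^{-1}(U)$ for all those $s$.

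Next I would use continuity of $\widehat{\varphi}$: the set $V:=\widehat{\varphi}^{-1}(U)$ is open in $[M]^{\leq k}$ and contains $\emptyset$. Recalling that $[M]^{\leq k}$ carries the subspace topology from $\{0,1\}^\nn$ (with $\emptyset$ corresponding to the identically zero sequence), a neighbourhood basis of $\emptyset$ in $[M]^{\leq k}$ is given by the sets $\{t\in[M]^{\leq k}: t\cap\{1,\dots,N\}=\emptyset\}$ for $N\in\nn$. Hence there exists $N\in\nn$ such that every $t\in[M]^{\leq k}$ with $\min t>N$ (together with $\emptyset$ itself) lies in $V$.

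Finally, choose $m\in\nn$ large enough that $M(m)>N$. Then any $s\in[M]^k$ with $s(1)\geq M(m)$ satisfies $\min s=s(1)>N$, so $s\in V=\widehat{\varphi}^{-1}(U)$, and therefore $x_s=\widehat{\varphi}(s)\in U$. This is precisely the condition in Definition \ref{defn convergence of f-sequences}, so $(x_s)_{s\in[M]^k}$ converges to $\widehat{\varphi}(\emptyset)$. The argument is short and essentially a translation between the two definitions; the only point deserving any care — and the nearest thing to an obstacle — is writing down correctly the basic neighbourhoods of $\emptyset$ in $[M]^{\leq k}$ and checking that ``$s(1)$ large'' forces $s$ into such a neighbourhood.
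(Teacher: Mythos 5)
Your proof is correct and is essentially the paper's argument with the abstraction removed: the paper applies the previously noted fact that continuous maps preserve convergence of $k$-sequences to the identity $k$-sequence $s\mapsto s$ in $[M]^{\leq k}$, which converges to $\emptyset$, whereas you unwind that same continuity-at-$\emptyset$ argument directly via the basic neighbourhoods $\{t\in[M]^{\leq k}: t\cap\{1,\dots,N\}=\emptyset\}$. Both hinge on exactly the same point, and your identification of the neighbourhood basis of $\emptyset$ is accurate.
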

\begin{proof}
Let $(y_s)_{s\in [M]^k}$ be the $k$-sequence  in $[M]^k$, with
 $y_s=s$, for all
$s\in [M]^k$.  Notice that  $(y_s)_{s\in [M]^k}$  converges to the
empty set and since $\widehat{\varphi}:[M]^{\leq k}\to
(X,\mathcal{T})$ is continuous, we have that
$\big(\widehat{\varphi}(y_s)\big)_{s\in [M]^k}$ converges to
$\widehat{\varphi}(\emptyset)$. Since
$\widehat{\varphi}(y_s)=\widehat{\varphi}(s)=x_s$, for all $s\in
[M]^k$, we conclude that  $(x_s)_{s\in[M]^k}$ is convergent to
$\widehat{\varphi}(\emptyset)$.
\end{proof}

\begin{prop}\label{Create subordinated}
Let $(X,\mathcal{T})$ be a topological space,  $k\in\nn$ and
$(x_s)_{s\in[\nn]^k}$ be a $k$-sequence in $X$.  Then for every
$N\in[\nn]^\infty$ such that $\overline{\{x_s:\;s\in[N]^k\}}$ is a
compact metrizable subspace of $(X,\ttt)$  there exists
$M\in[N]^\infty$ such that $(x_s)_{s\in[M]^k}$ is  subordinated.
\end{prop}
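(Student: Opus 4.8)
The plan is to extract, by a diagonal argument, an infinite set $M\subseteq N$ along which the ``restriction'' maps $[M]^{\leq k}\to X$ are forced to be continuous. The key observation is that continuity of a map $\widehat{\varphi}:[M]^{\leq k}\to(X,\mathcal{T})$ is equivalent to a convergence statement at each non-isolated point: since the isolated points of $[M]^{\leq k}$ are exactly the elements of $[M]^k$ (as recalled before Definition \ref{defn subordinating}), continuity at a point $t\in[M]^{\leq k}$ with $|t|<k$ means that whenever $s\in[M]^k$ (or more generally $s\in[M]^{\leq k}$) has $t\sqsubset s$ and $\min(s\setminus t)$ large, then $x_s$ is close to the prescribed value at $t$. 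So it suffices to define candidate values $x_t$ for all $t\in[\nn]^{\leq k}$ with $|t|<k$ and then thin out $M$ so that these convergences hold simultaneously.

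First I would fix a metric $\rho$ on $\overline{\{x_s:s\in[N]^k\}}$ compatible with $\mathcal{T}$ (possible by the metrizability hypothesis), and work inside this compact metric space; in particular every sequence of the $x_s$ has a convergent subsequence. Next, proceeding by downward induction on the ``level'' $j=k-1,k-2,\ldots,0$, I would arrange that for each $t\in[M]^{j}$ the net $(x_s)_{s\in[M]^{\leq k},\,t\sqsubset s,\,\min(s\setminus t)\to\infty}$ converges; its limit will be $\widehat\varphi(t)$. Concretely: for a single $t$, compactness lets me pass to an infinite $M'\subseteq M$ along which $(x_s)$ with $t\sqsubset s$ converges. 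To handle all $t$ of a given level at once, and all levels, I would run a standard diagonalization over an enumeration of the countably many finite sets $t\in[\nn]^{<k}$, at each stage using the compactness of the closure to pin down a limit of $x_s$ over the relevant tail of $s$'s with $t$ as an initial segment, and shrinking $M$ accordingly. The resulting final $M\in[N]^\infty$ has the property that $\lim_{n}x_{s}$ exists whenever $s$ runs through $[M]^{\leq k}$ with a fixed initial segment $t$ and $\min(s\setminus t)\to\infty$; define $\widehat\varphi(t)$ to be this limit for $|t|<k$, and $\widehat\varphi(s)=x_s$ for $s\in[M]^k$.

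It remains to verify that $\widehat\varphi$ is continuous on $[M]^{\leq k}$. Continuity at the isolated points $s\in[M]^k$ is automatic. For $t\in[M]^{\leq k}$ with $|t|=j<k$, a basic neighborhood of $t$ in $[M]^{\leq k}$ consists of those $u\in[M]^{\leq k}$ agreeing with $t$ up to some large threshold, i.e.\ $t\sqsubseteq u$ and $\min(u\setminus t)>$ that threshold; by construction the values $\widehat\varphi(u)$ for such $u$ are close to $\widehat\varphi(t)$ — one has to note that the limit defining $\widehat\varphi(t)$ can equivalently be computed by first grouping the $s\in[M]^k$ according to how they extend $t$ at level $j+1$, which is consistent with the limits defining $\widehat\varphi$ at level $j+1$, so the values at intermediate points $u$ with $j<|u|<k$ do land in the same small ball. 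This coherence between the limits at successive levels is the main technical point and the one I expect to cost the most care: one must make the diagonalization respect the tree structure of $[\nn]^{<k}$, handling ``children'' before ``parents'' so that the limit at $t$ is genuinely the limit of the already-defined values at its extensions. Once that is set up, continuity follows directly from the definition of the product topology on $[M]^{\leq k}\subseteq\{0,1\}^\nn$, and then $(x_s)_{s\in[M]^k}$ is subordinated by Definition \ref{defn subordinating}.
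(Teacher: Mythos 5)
Your proposal is correct and takes essentially the same route as the paper: the paper packages the level-by-level diagonalization as an induction on $k$, defining the level-$k$ values $x_t$ as limits of $(x_{t\cup\{l\}})_{l}$ and imposing the quantitative condition $d(x_{t\cup\{l\}},x_t)<\tfrac1n$ whenever $\max t=l_n$, which supplies exactly the uniform coherence between successive levels that you single out as the delicate point. With that bound in place, continuity of the extended map follows just as you describe.
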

\begin{proof}
The proposition obviously holds  for $k=1$, since in this case,
subordinated and convergent sequences coincide. We proceed by
induction on $k\in\nn$. Assume that Proposition \ref{Create
subordinated} holds for some $k\in\nn$ and let  $(x_s)_{s\in
[N]^{k+1}}$ be a $(k+1)$-sequence in $X$. Let $N\in [\nn]^\infty$
such that $\overline{\{x_s:\;s\in[N]^{k+1}\}}$ is a compact
metrizable subspace of $(X,\ttt)$. We also fix a compatible metric
$d$ of $\overline{\{x_s:\;s\in[N]^{k+1}\}}$.

Inductively we choose a strictly  increasing sequence $(l_n)_n$ in
$\nn$, a decreasing sequence $(L_n)_{n}$ of infinite subsets of $N$
and a $k$-sequence $(x_s)_{s\in[L]^k}$ in $X$, where $L=\{l_n:
n\in\nn\}$ such that for every $n\in\nn$, the following are
satisfied.
\begin{enumerate}
\item[(i)] $l_n<\min L_n$. \item[(ii)]  For every $l\in L_n$ and
every $t\in[\{l_1,...,l_n\}]^k $, $(x_{t\cup\{l\}})_{l\in
L_n}\to x_t$ and in addition if $\max t=l_n$, then
$d(x_{t\cup\{l\}},x_t)<\frac{1}{n}$.
\end{enumerate}
We omit the construction since it is straightforward.  By the
inductive assumption there exists  $M\in[L]^\infty$ such that
$(x_t)_{t\in[M]^k}$ is subordinated.  If $\widehat{\psi}:[M]^{\leq
k}\to X$ is  the continuous map witnessing this then  we extend
$\widehat{\psi}$ to the map $\widehat{\varphi}:[M]^{\leq k+1}\to X$,
by setting $\widehat{\varphi}(s)=x_s$, for every $s\in[M]^{k+1}$.
Using condition (ii), we easily show that
 $\widehat{\varphi}$ is continuous and therefore
$(x_s)_{s\in[M]^{k+1}}$ is subordinated.
\end{proof}
\begin{rem} By Propositions \ref{subordinating yields convergence}
and  \ref{Create subordinated}, we have that  every $k$-sequence in
a compact metrizable space  contains a convergent $k$-subsequence.
\end{rem}

\section{Weakly relatively compact $k$-sequences in Banach spaces}
It is  well known  that for every   sequence $(x_n)_n$ in a weakly
compact subset of a Banach space $X$ there exists $M\in \nn$ such
that the subsequence $(x_n)_{n\in M}$ is weakly convergent to some $x_0\in X$. Moreover, if in addition $X$ has  a
Schauder basis then we may pass to a further subsequence
$(x_n)_{n\in L}$ which  is approximated by a sequence of the form
$(\widetilde{x}_n)_{n\in L}$ such that $(\widetilde{x}_n)_{n\in L}$ also weakly converges to $x_0$ and $(\widetilde{x}_n-x_0)_{n\in L}$ is a block
sequence of $X$.  The main aim of this section is to show that, for every $k\geq2$, the $k$-sequences in Banach spaces satisfy similar properties.
 \begin{defn} A $k$-sequence
$(x_s)_{s\in[\nn]^k}$, of a Banach space  $X$ will be called weakly
relatively compact if $\overline{\{x_s: s\in[\nn]^k\}}^{w}$ is a
weakly compact subset of $X$.
\end{defn}
Since the weak topology on every separable weakly  compact subset of
a Banach space is metrizable, by Propositions \ref{subordinating
yields convergence} and \ref{Create subordinated} we have the
following.
\begin{prop}\label{cor for subordinating}
Let $X$ be a Banach space and  $k\in\nn$. Then we have the following.
\begin{enumerate}
 \item[(i)] Every subordinated $k$-sequence in $(X,w)$  is weakly
convergent. \item[(ii)]  Every  weakly relatively compact
$k$-sequence   in $X$  contains a subordinated $k$-subsequence.
\end{enumerate}
\end{prop}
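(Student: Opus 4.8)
The plan is to obtain both assertions as immediate consequences of the two preceding propositions, the only external ingredient being the classical fact that the weak topology is metrizable on a separable weakly compact set.

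For part (i), I would simply invoke Proposition \ref{subordinating yields convergence} with the topology $\ttt$ taken to be the weak topology $w$ on $X$. Thus, if $(x_s)_{s\in[M]^k}$ is subordinated in $(X,w)$ and $\widehat{\varphi}:[M]^{\leq k}\to(X,w)$ is the witnessing weakly continuous map, then $(x_s)_{s\in[M]^k}$ converges, in the sense of Definition \ref{defn convergence of f-sequences}, to $\widehat{\varphi}(\emptyset)$. Since convergence of a $k$-subsequence with respect to the weak topology is by definition weak convergence, this is exactly the assertion in (i).

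For part (ii), let $(x_s)_{s\in[\nn]^k}$ be a weakly relatively compact $k$-sequence, so that $K:=\overline{\{x_s:s\in[\nn]^k\}}^w$ is weakly compact, hence in particular norm bounded. I would first note that $K$ is contained in the closed linear span $Y$ of $\{x_s:s\in[\nn]^k\}$: indeed $Y$ is a norm-closed convex set, hence weakly closed by Mazur's theorem, and it contains every $x_s$, so it contains $K$. The space $Y$ is separable, and therefore the weak topology restricted to the bounded set $K$ is metrizable (picking a countable weak\*-dense subset $\{f_n\}_n$ of $B_{Y^*}$, the map $y\mapsto(f_n(y))_n$ is weak-to-product-topology continuous and injective on $Y$, hence a homeomorphism on the weakly compact $K$). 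Consequently $\overline{\{x_s:s\in[\nn]^k\}}$ is a compact metrizable subspace of $(X,w)$, and Proposition \ref{Create subordinated}, applied with $N=\nn$ and $\ttt=w$, yields $M\in[\nn]^\infty$ such that $(x_s)_{s\in[M]^k}$ is subordinated.

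The step that needs the most care — though it is entirely standard — is the metrizability claim inside (ii): one must record both that weak compactness of $K$ forces norm boundedness and that $K$ sits inside a separable subspace, so that the metrizability-of-separable-weakly-compact-sets theorem genuinely applies. Everything else is a direct citation of Propositions \ref{subordinating yields convergence} and \ref{Create subordinated}.
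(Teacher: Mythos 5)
Your proof is correct and follows exactly the paper's route: the paper derives both parts from Propositions \ref{subordinating yields convergence} and \ref{Create subordinated} together with the metrizability of the weak topology on separable weakly compact sets, which is precisely what you do (you merely spell out the standard metrizability argument that the paper leaves implicit).
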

To describe the regularity properties of weakly relatively compact
$k$-sequences in a Banach space $X$  with Schauder basis we will
need the next two  definitions. The first is a natural extension of
the notion  of   block  (resp. disjointly  supported) sequences of
$X$.
\begin{defn}
  Let $X$ be a Banach space with a Schauder basis and $k\in\nn$.
  Let also $(x_s)_{s\in[\nn]^k}$ be a $k$-sequence in $X$ and
  $M\in[\nn]^\infty$. We will say that the $k$-subsequence
  $(x_s)_{s\in[M]^k}$ is plegma block (resp. plegma disjointly
  supported) if for all plegma pairs $(s_1,s_2)$ in $[M]^k$ we
  have $\text{supp}(x_{s_1})<\text{supp}(x_{s_2})$ (resp.
  $\text{supp}(x_{s_1})\cap\text{supp}(x_{s_2})=\emptyset$).
\end{defn}
\begin{defn}\label{Def of plegma supported}
Let $X$ a Banach space with a Schauder basis,  $k\in\nn$ and
$(x_s)_{s\in[\nn]^k}$ be  a $k$-sequence in $X$.  Also let $L\in
[\nn]^\infty$ and $(y_t)_{t\in[L]^{\leq k}}$ be a family of vectors
in $X$. We will say that $(y_t)_{t\in[L]^{\leq k}}$ is a canonical
tree decomposition of $(x_s)_{s\in[L]^k}$ (or $(x_s)_{s\in[L]^k}$
admits $(y_t)_{t\in[L]^{\leq k}}$ as a canonical  tree
decomposition) if the following are satisfied.
\begin{enumerate}
\item[(i)] For every $s\in[L]^k$,
$\displaystyle x_s=\sum_{j=0}^{k}y_{s|j}=
y_\emptyset+\sum_{j=1}^{k}y_{s|j}$.
\item[(ii)] For every $t\in[L]^{\leq k}\setminus\{\emptyset\}$,
$\text{supp}(y_{t})$ is finite. \item[(iii)] For every
$s\in[L]^k$ and $1\leq j_1<j_2\leq k$, $\text{supp}(y_{s|j_1})
<\text{supp}(y_{s|j_2})$.
\item[(iv)] For every $(s_1,s_2)\in\textit{Plm}_2([L]^k)$
 and  $1\leq j_1\leq j_2\leq k$, we
have\[\text{supp}(y_{s_1|j_1}) <\text{supp}(y_{s_2|j_2})\]
\item[(v)] For every $(s_1,s_2)\in\textit{Plm}_2([L]^k)$ and  $1\leq j_1<j_2\leq k$,
we have \[\text{supp}(y_{s_2|j_1}) <\text{supp}(y_{s_1|j_2})\]
\end{enumerate}
\end{defn}
The next proposition gathers some basic properties of the
$k$-sequences which admit canonical tree decomposition. Its proof is straightforward.
\begin{prop}\label{trocan} Let $X$ a Banach space with a Schauder basis,
$k\in\nn$, $(x_s)_{s\in[\nn]^k}$ be  a $k$-sequence in $X$ and $L\in
[\nn]^\infty$. Assume that  $(x_s)_{s\in[L]^k}$ admits
$(y_t)_{t\in[L]^{\leq k}}$ as a canonical tree decomposition. Then
the following are satisfied.
\begin{enumerate}
\item[(i)] For every $N\in[L]^\infty$, the $k$-subsequence
$(x_s)_{s\in[N]^k}$ admits $(y_t)_{t\in[N]^{\leq k}}$ as a
canonical tree decomposition. \item[(ii)] For every $s\in [L]^k$,
the sequence $(y_{s|j})_{j=1}^k$ is a block sequence in $X$.
\item[(iii)] For every  $1\leq j\leq k$, the sequence
$(y_{s|j})_{s\in [L]^k}$ is a plegma block $k$-sequence in $X$.
\item[(iv)] Setting $x'_s=x_s-y_\emptyset$, for all $s\in[L]^k$,
$y'_\emptyset=0$ and $y'_t=y_t$, for all $t\in[L]^{\leq k}$ with
$t\neq\emptyset$, we have that the $k$-subsequence
$(x'_s)_{s\in[L]^k}$ is plegma disjointly supported and admits
$(y'_t)_{t\in[L]^{\leq k}}$ as a canonical tree decomposition.
\item[(v)] For every $j\in\{1,..,k\}$ and
$(s_i)_{i=1}^n\in\text{Plm}_n([L]^k)$, if $I$ is the interval of
$\nn$ with $\min I=\min\text{supp}(y_{s_1|j})$ and $\max I= \max
\text{supp}(y_{s_n|j})$, then for every $1\leq i\leq n$,
$I(x_{s_i}-y_\emptyset)=y_{s_i|j}$.
\end{enumerate}
\end{prop}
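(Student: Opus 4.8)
The claim collects five bookkeeping facts about a canonical tree decomposition, and the statement itself flags that ``its proof is straightforward''; accordingly the plan is to verify each item in turn directly from Definition~\ref{Def of plegma supported}, using Proposition~\ref{rem34} to reduce statements about plegma $n$-tuples to statements about plegma pairs wherever that is convenient. For (i), I would simply observe that all five defining conditions (i)--(v) of Definition~\ref{Def of plegma supported} are inherited when we restrict the index set from $[L]^k$ to $[N]^k$ for $N\in[L]^\infty$: conditions (ii)--(iii) are pointwise in $s$, and conditions (iv)--(v) quantify over plegma pairs in $[N]^k$, which form a subset of the plegma pairs in $[L]^k$ by Proposition~\ref{rem34}(ii); the restricted family $(y_t)_{t\in[N]^{\leq k}}$ clearly still satisfies (i) since $s|j\in[N]^{\leq k}$ whenever $s\in[N]^k$.

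For (ii), fix $s\in[L]^k$; condition (iii) of Definition~\ref{Def of plegma supported} says $\mathrm{supp}(y_{s|j_1})<\mathrm{supp}(y_{s|j_2})$ for $1\leq j_1<j_2\leq k$, and condition (ii) says each $\mathrm{supp}(y_{s|j})$ is finite for $j\geq1$, so $(y_{s|j})_{j=1}^k$ is by definition a (finite) block sequence. For (iii), fix $1\leq j\leq k$ and a plegma pair $(s_1,s_2)$ in $[L]^k$; applying condition (iv) with $j_1=j_2=j$ gives $\mathrm{supp}(y_{s_1|j})<\mathrm{supp}(y_{s_2|j})$, which is exactly the assertion that $(y_{s|j})_{s\in[L]^k}$ is plegma block (a plegma pair being enough by Proposition~\ref{rem34}(iii), or directly from the definition of plegma block for pairs). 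For (iv), with $x'_s=x_s-y_\emptyset$, $y'_\emptyset=0$, $y'_t=y_t$ for $t\neq\emptyset$, the identity $x'_s=\sum_{j=1}^k y_{s|j}=\sum_{j=0}^k y'_{s|j}$ is immediate, and conditions (ii),(iii),(iv),(v) for $(y'_t)$ are literally the same statements as for $(y_t)$ since they only involve $t\neq\emptyset$; plegma disjoint supportedness of $(x'_s)_{s\in[L]^k}$ follows because for a plegma pair $(s_1,s_2)$, $\mathrm{supp}(x'_{s_1})\subseteq\bigcup_{j=1}^k\mathrm{supp}(y_{s_1|j})$ and $\mathrm{supp}(x'_{s_2})\subseteq\bigcup_{j=1}^k\mathrm{supp}(y_{s_2|j})$, and every pair of blocks from these two unions is disjoint: within the same level use (iv) with $j_1=j_2$, and across levels use (iv) when $j_1\leq j_2$ and (v) when $j_1>j_2$ (after renaming) to get strict inequality of supports in one direction or the other.

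Item (v) is the only one requiring a small argument. Fix $j$ and a plegma tuple $(s_i)_{i=1}^n$ in $[L]^k$, and let $I$ be the integer interval from $\min\mathrm{supp}(y_{s_1|j})$ to $\max\mathrm{supp}(y_{s_n|j})$. For a fixed $i$, write $x_{s_i}-y_\emptyset=\sum_{r=1}^k y_{s_i|r}$ and compute $I(x_{s_i}-y_\emptyset)=\sum_{r=1}^k I(y_{s_i|r})$; I must show the only surviving term is $r=j$, i.e. $\mathrm{supp}(y_{s_i|j})\subseteq I$ while $\mathrm{supp}(y_{s_i|r})\cap I=\emptyset$ for $r\neq j$. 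That $\mathrm{supp}(y_{s_i|j})\subseteq I$ follows from condition (iv) applied to the plegma pairs $(s_1,s_i)$ and $(s_i,s_n)$ (with $j_1=j_2=j$), which give $\min I=\min\mathrm{supp}(y_{s_1|j})\leq\min\mathrm{supp}(y_{s_i|j})$ and $\max\mathrm{supp}(y_{s_i|j})\leq\max\mathrm{supp}(y_{s_n|j})=\max I$ (using $\mathrm{supp}(y_{s_1|j})\leq\mathrm{supp}(y_{s_i|j})\leq\mathrm{supp}(y_{s_n|j})$ in the block-sequence order; for $i=1$ or $i=n$ this is trivial). For $r<j$: by condition (iii), $\mathrm{supp}(y_{s_i|r})<\mathrm{supp}(y_{s_i|j})$, and I need it to lie strictly left of all of $I$, i.e. $\max\mathrm{supp}(y_{s_i|r})<\min I=\min\mathrm{supp}(y_{s_1|j})$; when $i=1$ this is condition (iii), and when $i>1$ apply condition (v) to the plegma pair $(s_1,s_i)$ with indices $r<j$, giving $\mathrm{supp}(y_{s_i|r})<\mathrm{supp}(y_{s_1|j})$. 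Symmetrically, for $r>j$: I need $\mathrm{supp}(y_{s_i|r})$ strictly right of $I$, i.e. $\min\mathrm{supp}(y_{s_i|r})>\max I=\max\mathrm{supp}(y_{s_n|j})$; for $i=n$ this is condition (iii), and for $i<n$ apply condition (v) to the plegma pair $(s_i,s_n)$ with indices $j<r$, giving $\mathrm{supp}(y_{s_n|j})<\mathrm{supp}(y_{s_i|r})$. Hence $I(x_{s_i}-y_\emptyset)=y_{s_i|j}$, completing (v). The main (mild) obstacle is purely organizational: keeping straight, in (iv) and (v), which of conditions (iv) and (v) of Definition~\ref{Def of plegma supported} applies for a given ordering of the level indices $j_1,j_2$ and the plegma coordinates $s_1,s_2$, and handling the boundary cases $i=1$ and $i=n$ separately where one appeals to the single-$s$ condition (iii) rather than the plegma-pair conditions.
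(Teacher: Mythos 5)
Your proposal is correct: the paper omits this proof entirely (declaring it straightforward), and your item-by-item verification from Definition~\ref{Def of plegma supported} — including the careful case analysis in (iv) and (v) over which of conditions (iv)/(v) of that definition applies for each ordering of the level indices, and the boundary cases $i=1,n$ — is exactly the intended argument.
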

The following is the main result of this section.
\begin{thm}\label{canonical tree}
Let $X$ be a Banach space with  Schauder basis, $k\in\nn$,
$(x_s)_{s\in[\nn]^k}$ be a  $k$-sequence in $X$ and $(\ee_n)_{n}$ be
a null sequence of positive reals. Assume that for some  $M\in
[\nn]^\infty$, $(x_s)_{s\in[M]^k}$ is subordinated with respect to
the weak topology of $X$ and let $x_0$ be the weak limit of
$(x_s)_{s\in[M]^k}$. Then there exist $L\in[M]^\infty$ and a
$k$-subsequence $(\widetilde{x}_s)_{s\in[L]^{k}}$ in $X$ satisfying
the following.
\begin{enumerate}
\item[(i)] $(\widetilde{x}_s)_{s\in[L]^k}$ admits a canonical tree  decomposition
$(y_t)_{t\in[L]^{\leq k}}$ with $y_\emptyset=x_0$.
\item[(ii)] For every
$s\in[L]^k$, $\|x_s-\widetilde{x}_s\|<\ee_n$, where $\min
s=L(n)$.
\item[(iii)]  $(\widetilde{x}_s)_{s\in[L]^k}$ is subordinated with
respect to the weak topology of $X$. Moreover $x_0$ is the weak
limit of $(\widetilde{x}_s)_{s\in[L]^k}$.
\end{enumerate}
\end{thm}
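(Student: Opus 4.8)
The plan is to build the canonical tree decomposition by a descending induction on the "level" $j$ that runs from $k$ down to $0$, extracting at each stage an infinite subset on which the relevant weak limits stabilise enough to produce block vectors, while keeping track of the norm perturbation. First I would set $y_\emptyset=x_0$ and, for a finite $t\neq\emptyset$ with $\max t = l$, I want to define $y_t$ as (approximately) the difference $x_{\text{something}} - x_{\text{something}}$ that isolates the "contribution at level $|t|$." Concretely, using that $(x_s)_{s\in[M]^k}$ is subordinated with witnessing map $\widehat{\varphi}:[M]^{\leq k}\to (X,w)$, for each $t\in[M]^{\leq k}$ set $z_t=\widehat{\varphi}(t)$; then $z_\emptyset=x_0$, and for $s\in[M]^k$ one has $x_s=z_s$ and the telescoping identity $x_s = z_\emptyset + \sum_{j=1}^k (z_{s|j}-z_{s|(j-1)})$. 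The natural candidate is $y_{s|j}=z_{s|j}-z_{s|(j-1)}$, but these need not be finitely supported nor have the nested/plegma support separation demanded by Definition \ref{Def of plegma supported}, so the real work is to replace each $z_{s|j}-z_{s|(j-1)}$ by a finitely supported block approximation and then diagonalise.

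The key steps, in order, would be: (1) For the top level $j=k$: for each fixed $t\in[M]^{k-1}$ the map $l\mapsto z_{t\cup\{l\}}$ converges weakly to $z_t$ as $l\to\infty$ in $M$ (by continuity of $\widehat{\varphi}$, since $t\cup\{l\}\to t$ in $[M]^{\leq k}$). Hence the differences $z_{t\cup\{l\}}-z_t$ are weakly null in $l$; by the standard Mazur / small-perturbation argument in a space with a Schauder basis, after passing to an infinite subset one finds finitely supported vectors $w_{t\cup\{l\}}$ with $\|w_{t\cup\{l\}}-(z_{t\cup\{l\}}-z_t)\|$ as small as we like and with $\text{supp}(w_{t\cup\{l\}})$ an interval starting arbitrarily far out. (2) Iterate downwards: having fixed the level-$k$ blocks, the vectors $z_t$ for $t\in[M]^{k-1}$ are themselves (weak limits of) the remaining part $x_s - y_\emptyset - \sum_{\text{top}}$, so repeat the same weak-null-difference-plus-perturbation argument to produce level-$(k-1)$ blocks $y_{t}$, $t\in[\cdot]^{k-1}$, and so on down to level $1$. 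At each stage carefully choose the infinite subset so that the supports produced at level $j$ lie entirely to the right of everything produced earlier for smaller indices but to the left of what is needed for larger ones — this is exactly what is required to get conditions (iii), (iv), (v) of Definition \ref{Def of plegma supported}, and the combinatorics of plegma pairs (Proposition \ref{rem34}, the ordering $s_l(i)<s_1(i+1)$) is what makes such a simultaneous arrangement possible. (3) Run a diagonal argument over $n$ (indexing by $\min s = L(n)$) so that the cumulative perturbation at the $n$-th stage is below $\ee_n$, giving (ii). (4) Define $\widetilde{x}_s = x_0 + \sum_{j=1}^k y_{s|j}$; then (i) holds by construction, and (iii) follows because $\widetilde{x}_s$ differs from $x_s$ by a norm-null amount along $[L]^k$, so $\widehat{\varphi}$ perturbed by this null correction is still continuous into $(X,w)$ and still sends $\emptyset$ to $x_0$ — more precisely one checks directly that $\widetilde{x}_s\to x_0$ weakly and extends to a continuous map on $[L]^{\leq k}$ by setting it equal to $\widehat{\varphi}$ on proper subsets, using that the added correction terms vanish in the limit.

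The main obstacle I expect is step (2) together with the bookkeeping in the ordering of supports: a single application of the weak-null perturbation lemma is routine, but doing it $k$ times while guaranteeing that the finitely supported vectors $y_{s|j}$ satisfy \emph{all} of the nested and cross-plegma support inequalities (iii)–(v) of Definition \ref{Def of plegma supported} simultaneously requires choosing the infinite subset $L$ so that, reading off the increasing enumeration, the block supports interleave in exactly the pattern forced by plegma pairs. The cleanest way to organise this is to first pass, via a preliminary diagonalisation, to an $M'\subseteq M$ so sparse that any plegma pair $(s_1,s_2)$ in $[M']^k$ has its $2k$ coordinates spread far apart; then the support of $y_{s|j}$ can be made to live in a "window" attached to the coordinate $s(j)$, and the defining inequalities of plegma families translate verbatim into the required support inequalities. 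The perturbation estimates and the verification that $\|\cdot\|_*$-type convergence is preserved are then genuinely straightforward, so the heart of the proof is this downward induction with the support-window device.
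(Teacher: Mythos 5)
Your proposal matches the paper's proof in all essentials: the telescoping decomposition $x_s=\widehat{\varphi}(\emptyset)+\sum_{j=1}^k\big(\widehat{\varphi}(s|j)-\widehat{\varphi}(s|(j-1))\big)$, the observation that each family of differences $(\widehat{\varphi}(t\cup\{m\})-\widehat{\varphi}(t))_{m}$ is weakly null, the sliding-hump replacement by finitely supported vectors whose supports are pushed far to the right, a Ramsey/sparseness passage to an $L$ on which the plegma order of coordinates forces the support inequalities (iii)--(v) of Definition \ref{Def of plegma supported}, the $\ee_n/k$ bookkeeping for (ii), and the verification of (iii) by showing the perturbed map $t\mapsto\sum_{u\sqsubseteq t}y_u$ is still weakly continuous. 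The only difference is organizational (you phrase the construction as a downward induction on the level $j$, while the paper chooses all the intervals $I_t$ at once and then applies Theorem \ref{ramseyforplegma}), which does not affect correctness.
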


\begin{proof}
 Without loss of generality, we may assume that $(\ee_n)_n$ is decreasing. We will first define a
family $(y_t)_{t\in [M]^k}$ of finitely supported vectors in $X$ as
follows. Let $\widehat{\varphi}:[M]^{\leq k}\to (X,w)$ be the
continuous map witnessing that $(x_s)_{s\in[M]^k}$ is subordinated.
For $t=\emptyset$, we set $y_\emptyset=
\widehat{\varphi}(\emptyset)=x_0$. For
$t\in[M]^{\leq k}\setminus\{\emptyset\}$, let
$w_{t}=\widehat{\varphi}(t)-\widehat{\varphi}(t\setminus\{\max
t\})$. Notice that the sequence $(w_{t\cup\{m\}})_{m\in M}$ is
weakly null, for all $t\in[M]^{< k}$. Hence, by a sliding hump
argument, we may choose a family $\big\{I_t: t\in [M]^{\leq
k}\setminus\{\emptyset\}\big\}$ of finite intervals of $\nn$
satisfying the following properties.
\begin{enumerate}
\item[(P1)] For every $t\in [M]^{\leq k}$, with $t\neq \emptyset$, we
have that  $\|w_t-y_t\|=\|I_t^c(w_t)\|<\ee_n/k$, where
$M(n)=\max t$.
\item[(P2)] For every $t\in [M]^{< k}$, $ \min
I_{t\cup\{m\}}\stackrel{ m\in M}{\longrightarrow}\infty$.
\end{enumerate}
Now for every $t\in [M]^{\leq k}\setminus \{\emptyset\}$, we set
${y}_t=I_t(w_t)$ and the definition of the family $(y_t)_{t\in
[M]^k}$ is completed. Also, for every $s\in[M]^k$, we set
$\widetilde{x}_s=\sum_{t\sqsubseteq s}y_t$.

We claim that there exists $L\in [M]^\infty$ such that $(y_t)_{t\in
[L]^k}$ is a canonical tree decomposition of
$(\widetilde{x}_s)_{s\in [L]^k}$.  Indeed,  using (P2) and Ramsey's
theorem, there exists $M_1\in [M]^\infty$ such that for every
$s\in[M_1]^k$ and $1\leq j_1<j_2\leq k$, $\text{supp}(y_{s|j_1})
<\text{supp}(y_{s|j_2})$. Using again (P2) and Theorem
\ref{ramseyforplegma}, we find $M_2\in [M_1]^\infty$ such that for
every $(s_1,s_2)\in\textit{Plm}_2([M_2]^k)$ and $1\leq j_1\leq
j_2\leq k$, $\text{supp}(y_{s_1|j_1}) <\text{supp}(y_{s_2|j_2})$,
while   for every $1\leq j_1<j_2\leq k$, $\text{supp}(y_{s_2|j_1})
<\text{supp}(y_{s_1|j_2})$. We set $L=M_2$. By the above, we have
that all conditions (i)-(v) of Definition \ref{Def of plegma
supported} are fulfilled and therefore $(y_t)_{t\in [L]^{\leq k}}$
is a canonical tree decomposition of $(\widetilde{x}_s)_{s\in
[L]^k}$ and the proof of the claim is complete.

Notice that $x_s-\widetilde{x}_s=\sum_{j=1}^k (w_s|j-y_s|j)$, for
all $s\in [L]^k$. Hence by (P1) and since $(\ee_n)_n$ is decreasing,
 we get that $\|x_s-\widetilde{x}_s\|\leq \ee_n$, where $L(n)=\min s$.  It
remains to show that $(\widetilde{x}_s)_{s\in[L]^k}$ is
subordinated. To this end, let $\widetilde{\varphi}:[L]^{\leq k}\to
X$ defined by  $\widetilde{\varphi}(t)=\sum_{u\sqsubseteq t}y_u$,
for all $t\in[L]^{\leq k}$. Clearly
$\widetilde{\varphi}(\emptyset)=y_{\emptyset}=\widehat{\varphi}(\emptyset)$
and $\widetilde{x}_s=\widetilde{\varphi}(s)$, for all $s\in[L]^k$.
To show that $\widetilde{\varphi}$ is  continuous let $(t_n)_{n}$ be
a sequence in $[L]^{\leq k}$ and $t\in [L]^{\leq k}$ such that
$(t_n)_{n}$ converges to $t$. Setting $\max t_n= M(k_n)$,  we may
assume that $k_n\to\infty$. Then
\[\begin{split}
  \|(\widehat{\varphi}(t_n)-\widehat{\varphi}(t))-(\widetilde{\varphi}(t_n)-\widetilde{\varphi}(t))\|\leq\sum_{t\sqsubset u\sqsubseteq
  t_n}\|w_u-y_u\|\leq \ee_{k_n}\;\substack{\longrightarrow\\n\to\infty}\;0
\end{split}\]
Since
$\widehat{\varphi}(t_n)\stackrel{w}{\to}\widehat{\varphi}(t)$, we
get that
$\widetilde{\varphi}(t_n)\stackrel{w}{\to}\widetilde{\varphi}(t)$
and the proof is completed.
\end{proof}
\begin{notation}
Let $X$ be a Banach space and $k\in\nn$. By
$\mathcal{SM}_k^{wrc}(X)$ we will denote the set of all spreading
sequences $(e_n)_n$ such that there exists a weakly relatively
compact $k$-sequence  of $X$ which generates $(e_n)_n$ as a
$k$-spreading model. Notice that
$\mathcal{SM}_k^{wrc}(X)=\mathcal{SM}_k(X)$, for every  reflexive
space $X$ and $k\in\nn$.
\end{notation}

\begin{cor}\label{cor canonical tree with spr mod}
Let $X$ be  a Banach space  with  Schauder basis and $k\in\nn$. Then
every $(e_n)_{n}\in\mathcal{SM}_k^{wrc}(X)$ is generated by a
$k$-sequence in $X$ which is subordinated with respect to the weak
topology  and admits a canonical tree decomposition.
\end{cor}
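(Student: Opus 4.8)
The plan is to unwind the definition of $\mathcal{SM}_k^{wrc}(X)$, pass to a subordinated $k$-subsequence, feed it into Theorem \ref{canonical tree} to obtain a canonical tree decomposition, and then check that the perturbation introduced there is too small to spoil the $k$-spreading model property. So I would start by fixing $(e_n)_n\in\mathcal{SM}_k^{wrc}(X)$: by definition there are a weakly relatively compact $k$-sequence $(x_s)_{s\in[\nn]^k}$ in $X$ and $M_0\in[\nn]^\infty$ such that $(x_s)_{s\in[M_0]^k}$ generates $(e_n)_n$ as a $k$-spreading model, say with respect to a null sequence $(\delta_n)_n$. Using Proposition \ref{cor for subordinating}(ii) I would choose $M\in[M_0]^\infty$ with $(x_s)_{s\in[M]^k}$ subordinated with respect to the weak topology of $X$; by Proposition \ref{remark on the definition of spreading model}(i) it still generates $(e_n)_n$ with respect to $(\delta_n)_n$, and by Proposition \ref{subordinating yields convergence} it converges weakly to some $x_0\in X$.

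Next I would fix a decreasing summable sequence $(\ee_n)_n$ of positive reals and apply Theorem \ref{canonical tree} to $(x_s)_{s\in[\nn]^k}$, $M$ and $(\ee_n)_n$, obtaining $L\in[M]^\infty$ and a $k$-subsequence $(\widetilde{x}_s)_{s\in[L]^k}$ that admits a canonical tree decomposition $(y_t)_{t\in[L]^{\leq k}}$ with $y_\emptyset=x_0$, is subordinated with respect to the weak topology, and satisfies $\|x_s-\widetilde{x}_s\|<\ee_n$ whenever $L(n)=\min s$. The crux is then to see that $(\widetilde{x}_s)_{s\in[L]^k}$ still generates $(e_n)_n$. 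Given $m\leq l$, $(s_j)_{j=1}^m\in\textit{Plm}_m([L]^k)$ with $s_1(1)\geq L(l)$ and $a_1,\dots,a_m\in[-1,1]$, the numbers $s_1(1)<\dots<s_m(1)$ are $m$ distinct elements of $L$ all at least $L(l)$, so $\min s_j=s_j(1)\geq L(l+j-1)$ and hence $\|x_{s_j}-\widetilde{x}_{s_j}\|<\ee_{l+j-1}$; therefore
\[\Bigg|\,\Big\|\sum_{j=1}^m a_j\widetilde{x}_{s_j}\Big\|-\Big\|\sum_{j=1}^m a_j x_{s_j}\Big\|\,\Bigg|\leq\sum_{j=1}^m\|x_{s_j}-\widetilde{x}_{s_j}\|<\sum_{i\geq l}\ee_i=:\eta_l.\]
Since $(x_s)_{s\in[L]^k}$ generates $(e_n)_n$ with respect to $(\delta_n)_n$ (Proposition \ref{remark on the definition of spreading model}(i)), the triangle inequality gives that $(\widetilde{x}_s)_{s\in[L]^k}$ generates $(e_n)_n$ as a $k$-spreading model with respect to the null sequence $(\delta_n+\eta_n)_n$.

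To finish, I would transfer everything to a $k$-sequence defined on all of $[\nn]^k$ by setting $z_s=\widetilde{x}_{L(s)}$ for $s\in[\nn]^k$ and $y'_t=y_{L(t)}$ for $t\in[\nn]^{\leq k}$: since $t\mapsto L(t)$ is a homeomorphism of $[\nn]^{\leq k}$ onto $[L]^{\leq k}$ fixing $\emptyset$, carries plegma families to plegma families, and satisfies $L(s|j)=L(s)|j$, the $k$-sequence $(z_s)_{s\in[\nn]^k}$ is subordinated with respect to the weak topology, admits $(y'_t)_{t\in[\nn]^{\leq k}}$ as a canonical tree decomposition, and by Proposition \ref{remark on the definition of spreading model}(iii) generates $(e_n)_n$ as a $k$-spreading model. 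I expect the only genuine obstacle to be the perturbation estimate in the middle paragraph, and within it the one delicate point is making the index shift in the hypothesis $s_1(1)\geq L(l)$ compatible with the estimate $\|x_s-\widetilde{x}_s\|<\ee_n$ for $L(n)=\min s$ — which is precisely why one records the inequality $\min s_j\geq L(l+j-1)$; the rest is bookkeeping and quoting the earlier results.
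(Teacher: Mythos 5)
Your proposal is correct and follows essentially the same route as the paper: pass to a subordinated $k$-subsequence via Proposition \ref{cor for subordinating}, apply Theorem \ref{canonical tree}, verify that the perturbation does not destroy the spreading model, and relabel onto $[\nn]^k$. The only (harmless) difference is in handling the perturbation: the paper takes $\ee_n=1/n$ and passes to a further sparse subset $N\in[L]^\infty$, whereas you choose a summable $(\ee_n)_n$ up front and use the index shift $\min s_j\geq L(l+j-1)$ to keep $L$ itself — both work.
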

\begin{proof}
 Let  $k\in\nn$ and $(x_s)_{s\in[\nn]^k}$ be a weakly relatively compact $k$-sequence
in $X$ which generates  a $k$-spreading model $(e_n)_{n}$.  By
Proposition \ref{cor for subordinating}, there exists
$M\in[\nn]^\infty$  such that  $(x_s)_{s\in[M]^k}$ is subordinated.
By Theorem \ref{canonical tree}, there exists $L\in [M]^\infty$ and a
subordinated sequence $(\widetilde{x_s})_{s\in[L]^k}$ in $X$ which
admits a canonical tree decomposition such that
$\|x_s-\widetilde{x}_s\|<1/n$, for every $s\in [L]^k$ with $\min
s=L(n)$. Hence there is $N\in [L]^\infty$ such that
$(\widetilde{x_s})_{s\in[N]^k}$ also generates $(e_n)_n$ as a
$k$-spreading model. Setting $z_s=\widetilde{x}_{N(s)}$, for all $s\in[\nn]^k$, we have
that $(z_s)_{s\in [\nn]^k}$ is as desired.
 \end{proof}
\section{Norm properties of spreading models}\label{s5} In this section we provide conditions for $k$-sequences to admit unconditional, singular or trivial spreading models. Our main interest concerns subordinated $k$-sequences with respect to the weak topology.
\subsection{Unconditional spreading models} As is well known every
spreading model generated by a seminormalized weakly null sequence
is an $1$-unconditional spreading sequence. In this subsection we
give an extension of this result for subordinated
seminormalized weakly null $k$-sequences.

\begin{lem}\label{Lemma finding convex  means}
Let $k\in\nn$ and $(x_s)_{s\in[\nn]^k}$ be a $k$-sequence in a
Banach space $X$. Suppose that $(x_s)_{s\in[\nn]^k}$ is
subordinated and let $\widehat{\varphi}:[\nn]^{\leq k}\to (X,w)$
be the continuous map witnessing this. Let $\ee>0$, $M\in
[\nn]^\infty$ and $ n\in\nn$. Then for every $p\in\{1,...,n\}$
there exists a finite subset $G$ of $[M]^k$  such that  the
following are satisfied.
\begin{enumerate}
\item[(i)]  There exists a convex combination $x=\sum_{s\in
G}\mu_sx_s$  of $(x_s)_{s\in G}$ such that
$\|\widehat{\varphi}(\emptyset)-x\|<\ee$.
\item[(ii)]  For every $ 1\leq i\leq n$ with  $i\neq p$, there
exists $s_i\in [M]^k$ such that for every $s_p\in G$, the family
 $(s_i)_{i=1}^n$  is a plegma family in $[M]^k$.
\end{enumerate}
\end{lem}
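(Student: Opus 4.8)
The plan is to use the continuity of $\widehat{\varphi}$ at $\emptyset$, together with a standard convexity argument, to produce the convex combination $x$ whose support (as a subset of $[M]^k$) can be pushed arbitrarily far out in $M$; this freedom in placement is what makes room for the remaining $n-1$ ``dummy'' sets $s_i$ to be inserted so that the whole $n$-tuple becomes plegma. First I would fix $p\in\{1,\dots,n\}$ and observe that, since $\widehat{\varphi}(\emptyset)$ is the weak limit of $(x_s)_{s\in[M]^k}$ (Proposition \ref{subordinating yields convergence}), for every tail $[M']^k$ with $M'\in[M]^\infty$ the point $\widehat{\varphi}(\emptyset)$ lies in the weak closure of $\{x_s:s\in[M']^k\}$, hence by Mazur's theorem in the norm closure of $\mathrm{conv}\{x_s:s\in[M']^k\}$. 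Thus for any prescribed infinite tail of $M$ we can find a \emph{finite} $G$ inside that tail and a convex combination $x=\sum_{s\in G}\mu_s x_s$ with $\|\widehat{\varphi}(\emptyset)-x\|<\ee$; this yields (i) with complete control over how large $\min\bigcup_{s\in G}s$ is.

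Next I would arrange the coordinates. Recall a plegma $n$-tuple $(s_i)_{i=1}^n$ in $[M]^k$ is governed by the single increasing enumeration of $\bigcup_i s_i$: by Proposition \ref{rem34}(i) we need an $F\in[M]^{kn}$ with $s_i(a)=F((a-1)n+i)$. So the $p$-th set occupies the positions $\{p, n+p, 2n+p,\dots,(k-1)n+p\}$ of $F$, and the other $n-1$ sets occupy the complementary positions. The idea is: choose first a long enough initial block $B=\{m_1<\dots<m_{?}\}$ of $M$ to host the ``left part'' of $F$ — namely all coordinates of the sets $s_i$ with $i<p$ together with the coordinates of $s_i$, $i\ge p$, that must precede $s_p$'s coordinates; then let $G$ live in a tail of $M$ past $B$ (possible by the previous paragraph), so that every $s_p\in G$ has all its coordinates larger than everything in $B$; finally pick the coordinates of the sets $s_i$ with $i>p$ from a tail of $M$ past $\max\bigcup_{s\in G}s$. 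Concretely, for $i\neq p$ define $s_i(a)$ to be the appropriate element of $M$ so that, for \emph{each} fixed $s_p\in G$, the concatenated data satisfies Definition \ref{defn plegma}(i)–(ii); since $G$ is finite we only need finitely many strict inequalities to hold simultaneously, which a sufficiently spread-out choice of the $s_i$, $i\neq p$, from $M$ guarantees.

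The one point requiring care — and the main obstacle — is that the \emph{same} sets $(s_i)_{i\neq p}$ must work uniformly for \emph{every} $s_p\in G$. This forces us to interleave correctly: the sets $s_i$ with $i<p$ must have, in each coordinate $a$, their $a$-th entry below the minimum over $s_p\in G$ of $s_p(a)$, while also respecting $s_l(i)<s_1(i+1)$ across coordinates; and the sets $s_i$ with $i>p$ must have their $a$-th entry above the maximum over $s_p\in G$ of $s_p(a)$. The clean way to do this is to first select, from $M$, a large finite set $F_0$ of size $k(n-1)$ lying entirely below a chosen threshold $M(l_0)$, reserved for the coordinates of the sets $s_i$ with $i<p$; then (by the convexity argument) take $G\subseteq [M\cap (M(l_0),M(l_1))]^k$ for a suitable $l_1$; then take $F_1\subseteq M$ of size $k(n-p)\cdot$(something) beyond $M(l_1)$ for the sets $s_i$ with $i>p$; and finally just \emph{define} $s_i(a)$ by reading off the correct position from $F_0\cup(\text{a fixed skeleton})\cup F_1$. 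Checking that with these choices $(s_i)_{i=1}^n$ is plegma in $[M]^k$ for every $s_p\in G$ is then a direct verification of the two defining inequalities of a plegma family, using only the ordering we have built in. The boundedness of $(x_s)$ is not needed here; only subordination (for weak-limit$=$convex-closure) and the combinatorics of $M$ enter.
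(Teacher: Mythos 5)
There is a genuine gap, and it sits exactly at the point you yourself flag as ``the main obstacle''. Condition (ii) imposes a strong structural constraint on $G$: if $(s_i)_{i=1}^n$ is to be plegma for \emph{every} $s_p\in G$, then applying Definition \ref{defn plegma} with two different choices $s,s'\in G$ in position $p$ and chaining the inequalities gives, for each $1\leq a<k$,
\[
\max_{s\in G}s(a)\;<\;s_{p+1}(a)<\cdots<s_n(a)<s_1(a+1)<\cdots<s_{p-1}(a+1)\;<\;\min_{s\in G}s(a+1),
\]
so $G$ must be contained in a product $F_1\times\cdots\times F_k$ of successive blocks of $M$ with at least $n-1$ elements of $M$ in each gap. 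Your concrete arrangement --- all coordinates of the $s_i$ with $i<p$ below the window containing $G$, all coordinates of the $s_i$ with $i>p$ above it --- violates the plegma requirement $s_n(a)<s_1(a+1)$ outright for $k\geq 2$, $n\geq 2$ (e.g.\ it forces $s_1(2)<\min_{s\in G}s(1)\leq s_n(1)$ while plegma demands $s_n(1)<s_1(2)$). More seriously, the first step of your argument cannot be repaired by a better placement of the dummy sets: Mazur's theorem applied to the whole tail $\{x_s:s\in[M']^k\}$ gives a finite $G$ with no control on whether $\max_{s\in G}s(a)<\min_{s\in G}s(a+1)$, and your proof uses only the weak convergence of the $k$-sequence to $\widehat{\varphi}(\emptyset)$ (Proposition \ref{subordinating yields convergence}), under which the lemma is simply false. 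Indeed, for the $2$-sequence $x_s=\sum_{n=\min s}^{\max s}e_n$ in $c_0$ (weakly convergent to $0$ but not subordinated), every convex combination supported on a $G$ contained in a product of two blocks $F_1<F_2$ takes the value $1$ at the coordinate $\max F_1$, hence has norm at least $1$; so no $G$ compatible with (ii) can satisfy (i) for $\ee<1$.

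The missing ingredient is the tree structure of $\widehat{\varphi}$, which your argument never uses beyond its value at $\emptyset$. The paper's proof is an induction on $k$: the inductive hypothesis is applied to the subordinated $k$-sequence $t\mapsto\widehat{\varphi}(t)$, $t\in[M]^k$, producing a finite $F\subseteq[M]^k$ (which automatically has the block structure above) together with a convex combination of the nodes $\widehat{\varphi}(t)$ within $\ee/2$ of $\widehat{\varphi}(\emptyset)$; then Mazur's theorem is applied separately to each fibre $(x_{t\cup\{m\}})_{m}\stackrel{w}{\to}\widehat{\varphi}(t)$, with all the new last coordinates drawn from a common tail of $M$ placed beyond $t_n(k)$ and below the finally chosen $m_{p+1}<\cdots<m_n$. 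The resulting iterated convex combination is supported on $G=\{t\cup\{m\}:t\in F,\ m\in G_t\}$, which has the required interleaving by construction. Any correct proof must similarly descend the tree $[M]^{\leq k}$ level by level; a single application of Mazur at the bottom level does not suffice.
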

\begin{proof} For $k=1$, the result follows by Mazur's theorem.
We proceed by induction on $k\in\nn$. Assume  that the lemma  is
true for some $k\in\nn$. We fix a subordinated $(k+1)$-sequence
$(x_s)_{s\in[\nn]^{k+1}}$ in $X$, $M\in[\nn]^\infty$, $ n\in \nn$,
$\ee>0$ and $p\in\{1,...,n\}$.

Let  $(x_t)_{t\in[M]^k}$ defined by $x_t=\widehat{\varphi}(t)$,
for all $t\in [M]^k$. By our inductive assumption, there exists a
finite subset $F$ of $[M]^k$ satisfying the following.
\begin{enumerate}
\item[(a)] There exists a convex combination $\sum_{t\in
F}\mu_tx_t$ of $(x_t)_{t\in F}$ such that
\begin{equation}\label{hg}\Big\|\widehat{\varphi}(\emptyset)-\sum_{t\in
F}\mu_tx_t\Big\|<\ee/2\end{equation} \item[(b)]  For every $ 1\leq
i\leq n$ with  $i\neq p$, there exists $t_i\in [M]^k$ such that
for every $t_p\in F$, $(t_i)_{i=1}^n$ is a plegma family in
$[M]^k$.
\end{enumerate}
For notational simplicity we assume that $1<p<n$ (the proof for
$p\in\{1,n\}$ is similar). Pick $m_1<\ldots<m_{p-1}$ in $M$ with
$t_n(k)< m_1$ and set $s_i=t_i\cup \{m_i\}$, for all
$i=1,\ldots,p-1$. Also let $M'=\{m\in
M:m>m_{p-1}\}$. Since $\widehat{\varphi}$ is continuous, we have
that $(x_{t\cup\{m\}})_{m\in M'}\stackrel{w}{\to}x_t$, for every
$t\in F$. Hence by Mazur's theorem, for every $t\in F$, there
exists a finite subset $G_t$ of $M'$  such that
\begin{equation}\label{hgg}\Big\|x_t-\sum_{m\in
G_t}\mu^t_mx_{t\cup\{m\}}\Big\|<\ee/2
\end{equation}
for some convex combination $\sum_{m\in G_t}\mu^t_m
x_{t\cup\{m\}}$ of $(x_{t\cup\{m\}})_{m\in G_t}$. We set \[G=\{t\cup\{m\}:t\in F\;\text{and}\;m\in
G_t\}\] Finally, pick $m_{p+1}<...<m_n$ in $M$ with
$\max\{m:m\in\bigcup_{t\in F} G_t\}<m_{p+1}$ and let
$s_i=t_i\cup\{m_i\}$, for all $i=p+1,\ldots,n$.

It is easy to check that  every $(s_i)_{i=1}^n$ with $s_p\in G$,
is a plegma family in $[M]^{k+1}$. It remains to show that
condition (i) of the lemma  is also satisfied.  To this end, let
$\mu_s=\mu_t\mu^t_m$, for every $s=t\cup\{m\}\in G$, where $\max t<m$. Notice that
\[\sum_{s\in G}\mu_s=\sum_{t\in F}\mu_t\sum_{m\in G_t}\mu^t_m=\sum_{t\in F}\mu_t=1\]
and therefore $\sum_{s\in G}\mu_s x_s$ is a convex combination of
$(x_s)_{s\in G}$. Moreover, we have
\[\begin{split}
\Big\|\widehat{\varphi}(\emptyset)-\sum_{s\in G}&\mu_sx_s\Big\|=
\Big\|\widehat{\varphi}(\emptyset)-\sum_{t\in F}\mu_t\sum_{m\in G_t}\mu^t_m x_{t\cup\{m\}}\Big\|\\
&\leq\Big\|\widehat{\varphi}(\emptyset)-\sum_{t\in
G'}\mu'_tx_t\Big\|+ \sum_{t\in F}\mu_t\cdot\Big\|x_t-\sum_{m\in
G_t}\mu^t_mx_{t\cup\{m\}}\Big\|\stackrel{(\ref{hg}),
(\ref{hgg})}{<}\ee
\end{split}\] and the proof is complete.
\end{proof}

\begin{thm}\label{unconditional spreading model}
Let $k\in\nn$ and $(x_s)_{s\in[\nn]^k}$ be a $k$-sequence in a
Banach space $X$. Suppose that $(x_s)_{s\in[\nn]^k}$ is
seminormalized, subordinated (with respect to the weak topology of
$X$) and weakly null. Then every $k$-spreading model of
$(x_s)_{s\in[\nn]^k}$ is $1$-unconditional.
\end{thm}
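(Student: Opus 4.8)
The plan is to reduce $1$-unconditionality to a single change of sign and then realise that sign change via a convex combination supplied by Lemma~\ref{Lemma finding convex means}. Concretely, it suffices to prove that for every $m\in\nn$, all scalars $a_1,\dots,a_m$ with $\max_j|a_j|\le1$, and every $1\le p\le m$,
\[
\Big\|\sum_{j\ne p}a_je_j-a_pe_p\Big\|_*\ \le\ \Big\|\sum_{j=1}^m a_je_j\Big\|_* .
\]
Applying this with $a_p$ replaced by $-a_p$ yields the reverse inequality, so negating a single coefficient is an isometry of the span of $(e_n)_n$; since an arbitrary sign change is a composition of such, $(e_n)_n$ is then $1$-unconditional. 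By Proposition~\ref{remark on the definition of spreading model} we may fix $M\in[\nn]^\infty$ with $(x_s)_{s\in[M]^k}$ generating $(e_n)_n$ and pass freely to tails of $M$; by Definition~\ref{Definition of spreading model} the displayed inequality reduces to bounding $\|\sum_{j\ne p}a_jx_{s_j}-a_px_{s_p}\|$ for plegma $m$-tuples $(s_j)_{j=1}^m$ in $[M]^k$ whose first coordinate starts arbitrarily late.

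Since $(x_s)_{s\in[\nn]^k}$ is subordinated with respect to the weak topology and weakly null, the continuous witness $\widehat\varphi$ has $\widehat\varphi(\emptyset)=0$. Fix $\ee>0$ and apply Lemma~\ref{Lemma finding convex means} to this $m$ and this $p$ (over a tail of $M$, so that the resulting plegma families start as late as we please): we obtain a finite $G\subseteq[M]^k$, a convex combination $\sum_{s\in G}\mu_sx_s$ with $\big\|\sum_{s\in G}\mu_sx_s\big\|<\ee$, and elements $s_i$ for $i\ne p$ such that for every $s\in G$ the tuple $\sigma(s):=(s_1,\dots,s_{p-1},s,s_{p+1},\dots,s_m)$ is plegma in $[M]^k$. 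From $\sum_{s\in G}\mu_s=1$ we get
\[
\sum_{j\ne p}a_jx_{s_j}\ =\ \sum_{s\in G}\mu_s\Big(\sum_{j\ne p}a_jx_{s_j}+a_px_s\Big)\ -\ a_p\sum_{s\in G}\mu_sx_s ,
\]
so $\big\|\sum_{j\ne p}a_jx_{s_j}\big\|\le\max_{s\in G}\big\|\sum_{j\ne p}a_jx_{s_j}+a_px_s\big\|+|a_p|\ee$. Each vector $\sum_{j\ne p}a_jx_{s_j}+a_px_s$ is the evaluation of $\sum_{j=1}^m a_je_j$ along the plegma tuple $\sigma(s)$, hence by Definition~\ref{Definition of spreading model} its norm is at most $\|\sum_j a_je_j\|_*$ up to an error that tends to $0$ as the tuples start later; letting $\ee\to0$ gives $\|\sum_{j\ne p}a_je_j\|_*\le\|\sum_j a_je_j\|_*$, i.e. $(e_n)_n$ is $1$-suppression unconditional (and therefore unconditional).

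The remaining, and I expect hardest, point is to upgrade this suppression inequality to the sign-flip inequality above with constant exactly $1$. The same near-zero combination is the natural tool: choosing $s_0\in G$ of largest weight one may rewrite $-x_{s_0}$ as the positive combination $\mu_{s_0}^{-1}\sum_{s\in G\setminus\{s_0\}}\mu_sx_s$ modulo an error of norm $O(\ee)$, so that $\sum_{j\ne p}a_jx_{s_j}-a_px_{s_0}$ becomes, after normalising coefficients, a convex combination of evaluations of $\sum_j a_je_j$ along plegma tuples. The difficulty is that such a substitution a priori rescales the $p$-th coefficient, and one must use the weak nullity of $(x_s)_{s\in[\nn]^k}$ — once more through Lemma~\ref{Lemma finding convex means}, and not merely through Mazur's theorem — to ensure that no rescaling survives in the limit. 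This is exactly the step that forces the unconditionality constant down to $1$, and it is the crux of the whole proof. The induction on $k$ contributes nothing further, everything being absorbed into Lemma~\ref{Lemma finding convex means}; in particular the argument is uniform in $k$ and, for $k=1$, recovers the classical fact about spreading models of weakly null sequences.
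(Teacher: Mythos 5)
Your first two paragraphs are, in substance, exactly the paper's proof: Lemma \ref{Lemma finding convex  means} applied at position $p$ (with $\widehat{\varphi}(\emptyset)=0$ because the $k$-sequence is subordinated and weakly null) produces a finite set $G$ and a convex combination $\sum_{s\in G}\mu_sx_s$ of norm $<\ee$ such that inserting any $s\in G$ into the $p$-th slot yields a plegma family, and the identity $\sum_{j\ne p}a_jx_{s_j}=\sum_{s\in G}\mu_s\big(\sum_{j\ne p}a_jx_{s_j}+a_px_s\big)-a_p\sum_{s\in G}\mu_sx_s$ then gives the suppression inequality $\|\sum_{j\ne p}a_je_j\|_*\le\|\sum_{j}a_je_j\|_*$. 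This part is correct. You do omit the paper's short but necessary intermediate step: seminormalization gives $\|e_1\|_*>0$, the suppression inequality gives $\|e_1-e_2\|_*\ge\|e_1\|_*>0$, and Proposition \ref{sing} then shows $(e_n)_n$ is non trivial (a trivial spreading sequence is not $1$-unconditional, so this cannot be skipped). The paper then finishes by iterating its inequality (\ref{eq13}), i.e. by deducing $\|\sum_{i\in F}a_ie_i\|_*\le\|\sum_ia_ie_i\|_*$ for all subsets $F$ -- which is precisely what your first two paragraphs already deliver.

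The genuine gap is your last paragraph. The substitution $-x_{s_0}\approx\mu_{s_0}^{-1}\sum_{s\ne s_0}\mu_sx_s$ carries an error $\mu_{s_0}^{-1}\|\sum_{s\in G}\mu_sx_s\|$, which even for the largest weight is only bounded by $|G|\ee$, and $|G|$ is not controlled as $\ee\to 0$ (it is the length of the convex block Mazur's theorem forces on you, and it typically grows as $\ee$ shrinks). Worse, after the substitution the $p$-th slot carries coefficients $a_p\mu_s/\mu_{s_0}$, of order $|G|$ rather than in $[-1,1]$, so the resulting vectors are not evaluations of $\sum_ja_je_j$ along plegma tuples and Definition \ref{Definition of spreading model} gives no estimate for them. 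The sentence ``one must use the weak nullity \dots to ensure that no rescaling survives in the limit'' is a hope, not an argument; as written this step fails, and you have in effect flagged that yourself. Note that the sign-flip inequality with constant exactly $1$ genuinely does not follow by formal manipulation from the suppression inequality alone, so either you should close the gap by a different mechanism -- the route consistent with the paper's toolkit is to observe that suppression $1$-unconditionality makes $(e_n)_n$ unconditional and then to invoke Proposition \ref{equiv forms for 1-subsymmetric weakly null} for the passage to $1$-unconditionality off the $\ell^1$ case -- or you should weaken your stated conclusion to suppression $1$-unconditionality, which is all that the convex-combination argument (and, as literally written, the paper's own final line) establishes.
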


\begin{proof}
  Let $(e_n)_n$ be a spreading model of $(x_s)_{s\in[\nn]^k}$.
  Lemma \ref{Lemma finding convex  means} and the averaging
  technique used for the proof of the corresponding result in the
  case of the classical spreading models (see \cite{BL} Proposition
  I.5.1) yield that for every $n\in\nn$, $1\leq
p\leq n$, $a_1,\ldots,a_n\in [-1,1]$ and $\ee>0$, we have
\[\Big{\|}\sum_{\substack{i=1\\i\neq
p}}^na_ie_i\Big{\|}_*\leq\Big{\|}\sum_{i=1}^na_ie_i\Big{\|}_*+\varepsilon\]
Since the above inequality holds for every $\ee>0$, we have that
\begin{equation}\label{eq13}\Big{\|}\sum_{\substack{i=1\\i\neq
p}}^na_ie_i\Big{\|}_*\leq\Big{\|}\sum_{i=1}^na_ie_i\Big{\|}_*\end{equation}
for all $n\in\nn$, $1\leq p\leq n$ and $a_1,\ldots,a_n\in [-1,1]$. Since $(x_s)_{s\in[\nn]^k}$ is seminormalized, we have that $\|e_1\|_*>0$. By (\ref{eq13}) we get that $\|e_1-e_2\|_*>0$. By Proposition \ref{sing}, we get that $(e_n)_n$ is non trivial.
An iterated use of (\ref{eq13}) completes the proof.
\end{proof}

We close this subsection by giving an example showing that for
$k\geq 2$ the assumption in Theorem \ref{unconditional spreading
model} that the $k$-sequence is subordinated   is necessary. More
precisely, for every $k\geq 2$,  there exist seminormalized
weakly null  $k$-sequences which generate conditional Schauder
basic spreading models.

\begin{examp} For simplicity we state the example for $k=2$.
Let $(e_n)_n$ be the usual  basis of $c_0$ and
$(x_s)_{s\in[\nn]^2}$ be the  $2$-sequence in $c_0$, defined by
$x_s=\sum_{n=\min s}^{ \max s}e_n$, for all $s\in[\nn]^2$.
Clearly, $(x_s)_{s\in[\nn]^2}$ is a normalized weakly null
$2$-sequence. It is easy to check that  for all $l\in\nn$,
$a_1,\ldots,a_l\in\rr$ and
$(s_j)_{j=1}^l\in\textit{Plm}_l([\nn]^2)$, we have
\[
\Big\|\sum_{j=1}^la_jx_{s_j}\Big\|=\max\Big(\max_{1\leq k\leq
l}\Big|\sum_{j=1}^ka_j\Big|,\max_{1\leq k\leq
\l}\Big|\sum_{j=k}^la_j\Big|\Big)\]
Therefore
 every spreading model of  $(x_s)_{s\in[\nn]^2}$, is
equivalent to the summing basis.
\end{examp}

\subsection{Singular and trivial spreading models}
The results of this subsection concern the $k$-spreading models generated by subordinated $k$-sequences which are not weakly null.
\begin{lem}\label{triv-ell}
Let $X$ be  a Banach space, $k\in\nn$, $(x_s)_{s\in[\nn]^k}$ be a
$k$-sequence in $X$ and $x_0\in X$. Let $x'_s=x_s-x_0$, for all
$s\in[\nn]^k$ and assume that  that $(x_s)_{s\in[\nn]^k}$ and
$(x'_s)_{s\in[\nn]^k}$ generate $k$-spreading models $(e_n)_n$ and
$(\widetilde{e}_n)_n$ respectively. Then the following hold.
\begin{enumerate}
\item[(a)] $\|\sum_{i=1}^na_ie_{_i}\|=\|\sum_{i=1}^na_i\widetilde{e}_{_i}\|$,  for  every $n\in\nn$ and  $a_1,\ldots,a_n\in\rr$ with
$\sum_{i=1}^na_i=0$.
\item[(b)] The sequence $(e_n)_n$ is trivial if and only if $(\widetilde{e}_n)_n$ is trivial.
\item[(c)] The sequence $(e_n)_n$ is equivalent to the usual basis of $\ell^1$
if and only if $(\widetilde{e}_n)_n$ is equivalent to the usual
basis of $\ell^1$.
\end{enumerate}
\end{lem}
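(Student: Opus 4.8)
The plan is to exploit the defining inequality \eqref{rsm} for both $k$-spreading models simultaneously, translating statements about $(e_n)_n$ and $(\widetilde e_n)_n$ into statements about long plegma families of the $k$-sequences $(x_s)_{s\in[\nn]^k}$ and $(x'_s)_{s\in[\nn]^k}$. The key observation is that if $(s_j)_{j=1}^m$ is a plegma family in $[M]^k$, then $\sum_{j=1}^m a_j x'_{s_j}=\sum_{j=1}^m a_j x_{s_j}-\big(\sum_{j=1}^m a_j\big)x_0$; in particular, \emph{when $\sum_{j=1}^m a_j=0$} the two sums coincide as vectors of $X$. First I would fix, using Proposition \ref{remark on the definition of spreading model}(i)--(ii), a single infinite set $M$ and a single null sequence $(\delta_l)_l$ that simultaneously works for both $(x_s)_{s\in[M]^k}$ generating $(e_n)_n$ and $(x'_s)_{s\in[M]^k}$ generating $(\widetilde e_n)_n$ (intersect the two witnessing sets and refine the error sequence).

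For part (a): fix $n$ and scalars $a_1,\dots,a_n\in\rr$ with $\sum a_i=0$; by homogeneity we may assume $a_i\in[-1,1]$. For each $l\ge n$ choose a plegma family $(s_j)_{j=1}^n\in\textit{Plm}_n([M]^k)$ with $s_1(1)\ge M(l)$ (possible since $\textit{Plm}_n([M]^k)\ne\emptyset$ and we can push its first coordinate arbitrarily high). Applying \eqref{rsm} for $(x_s)$ and for $(x'_s)$ and using $\sum a_j x_{s_j}=\sum a_j x'_{s_j}$, we get $\big|\|\sum a_i e_i\|-\|\sum a_i\widetilde e_i\|\big|\le 2\delta_l$; letting $l\to\infty$ gives equality. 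This is the heart of the lemma.

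For part (b): if $(\widetilde e_n)_n$ is trivial then by Proposition \ref{sing} there are scalars $b_1,\dots,b_N$, not all zero, with $\|\sum b_i\widetilde e_i\|_*=0$. I would want to replace them by scalars summing to zero so that (a) applies. If $\sum b_i=0$ we are done by (a): $\|\sum b_i e_i\|_*=0$, so $(e_n)_n$ is trivial. If $\sum b_i\ne0$, then since $(\widetilde e_n)_n$ is trivial, property (ii) of Proposition \ref{sing} gives $\|\widetilde e_j-\widetilde e_{j+1}\|_*=0$, hence for the scalars $c_i$ defining $\sum_{i=1}^{N} b_i(\widetilde e_i-\widetilde e_{N+1})$ — which sum to zero and are not all zero — we still have norm $0$; now (a) transfers this to $(e_n)_n$, which is therefore trivial. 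The converse is symmetric (the relation between $(x_s)$ and $(x'_s)$ is symmetric up to replacing $x_0$ by $-x_0$ and swapping the roles).

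For part (c): the usual basis of $\ell^1$ is characterized, among non-trivial spreading sequences, by the existence of $c>0$ with $\|\sum_{i=1}^n a_i f_i\|_*\ge c\sum_{i=1}^n|a_i|$ for all $n$ and all scalars; the reverse inequality with constant $1$ holds automatically up to the normalization $\|f_1\|_*$ which is bounded since the $k$-sequences are bounded. To pass the lower $\ell^1$-estimate from $(\widetilde e_n)_n$ to $(e_n)_n$, given scalars $a_1,\dots,a_n$ I would introduce auxiliary scalars on a doubled index set that sum to zero — e.g. work with $a_1,-a_1,a_2,-a_2,\dots$, or more cleanly split into positive and negative parts — so that (a) applies, and estimate $\|\sum a_i e_i\|_*$ from below by $\tfrac12\|\sum a_i\widetilde e_i\|_* $ minus a term controlled by the triviality-free analysis, using that $(e_n)_n,(\widetilde e_n)_n$ are non-trivial by (b). The main obstacle is precisely this bookkeeping: (a) only directly compares \emph{mean-zero} combinations, so the work in (b) and (c) is to reduce the general case to the mean-zero case, which for $\ell^1$ requires care that the reduction does not lose a factor that kills the lower bound — splitting a combination into its positive and negative coordinate blocks and applying (a) to their difference, while separately estimating the "defect" coming from $\sum a_i$ against $\|e_1\|_*$, should resolve it.
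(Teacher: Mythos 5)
Your proposal is correct and its core — for mean‑zero coefficients $\sum_{j}a_jx_{s_j}=\sum_j a_jx'_{s_j}$, so the two limiting seminorms agree on mean‑zero combinations — is exactly the paper's proof of (a); your (b) is a slightly longer route to the same place (it suffices to note that $e_1-e_2$ is itself mean‑zero, so $\|e_1-e_2\|_*=\|\widetilde e_1-\widetilde e_2\|_*$ and Proposition \ref{sing}(ii) finishes it). Where you diverge is (c): the paper argues by contraposition, using that a spreading sequence not equivalent to the $\ell^1$ basis admits, for every $\ee>0$, coefficients $a'_1,\dots,a'_n$ with $\sum|a'_i|=1$ and $\|\sum a'_i\widetilde e_i\|_*<\ee$ (James's non‑distortion of $\ell^1$ for spreading sequences), and then symmetrizes to $a_i=a'_i/2$, $a_{n+i}=-a'_i/2$ to land in the mean‑zero case while keeping $\sum|a_i|=1$; part (a) then transfers the smallness to $(e_n)_n$. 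You instead transfer the lower $\ell^1$‑estimate directly, and you leave the bookkeeping open, worrying about a ``defect'' term coming from $\sum a_i$. That worry is unfounded if you use the symmetric doubling you yourself mention: for any $a_1,\dots,a_n$ the coefficients $a_1,\dots,a_n,-a_1,\dots,-a_n$ are mean‑zero, so by (a) and the spreading property
\[
2\Big\|\sum_{i=1}^na_ie_i\Big\|_*\ \ge\ \Big\|\sum_{i=1}^na_ie_i-\sum_{i=1}^na_ie_{n+i}\Big\|_*=\Big\|\sum_{i=1}^na_i\widetilde e_i-\sum_{i=1}^na_i\widetilde e_{n+i}\Big\|_*\ \ge\ 2c\sum_{i=1}^n|a_i|,
\]
whenever $(\widetilde e_n)_n$ has a lower $\ell^1$‑estimate of constant $c$; no correction term appears and the constant is preserved. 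The upper estimate is automatic from boundedness, as you note, so this closes (c). (Also, the preliminary ``intersect the two witnessing sets'' step is unnecessary and in general dangerous — two infinite sets can have finite intersection — but here it is harmless since both $k$-sequences are assumed to generate their spreading models over all of $[\nn]^k$, and one can simply take the pointwise maximum of the two error sequences.)
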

\begin{proof}
(a) Notice  that for  every $n\in\nn$, $s_1,...,s_n$ in $[\nn]^k$
and $a_1,\ldots,a_n\in\rr$ with $\sum_{i=1}^na_i=0$, we have
$\sum_{i=1}^na_ix_{s_i}=\sum_{i=1}^na_ix'_{s_i}$. Since $(e_n)_n$
and  $(\widetilde{e}_n)_n$ are generated by $(x_s)_{s\in [\nn]^k}$
and $(x_s)_{s\in [\nn]^k}$ the result follows. \\
(b) It  follows by assertion (a) and Proposition \ref{sing}.\\
(c) We fix $\ee>0$. If $(\widetilde{e}_n)_n$ is not equivalent to
the usual basis of $\ell^1$ then there exist $n\in\nn$ and
$a'_1,\ldots,a'_n\in\rr$ such that $\sum_{i=1}^n|a'_i|=1$ and
$\|\sum_{i=1}^na'_i\widetilde{e}_i\|<\ee$. Setting $a_i=a'_i/2$ and
$a_{n+i}=-a'_i/2$, for all $1\leq i\leq n$, we have
$\sum_{i=1}^{2n}a_i=0$ and therefore,
$\|\sum_{i=1}^{2n}a_ie_i\|=\|\sum_{i=1}^{2n}a_i\widetilde{e}_i\|<\ee$.
Since  $\sum_{i=1}^{2n}|a_i|=1$,  $(e_n)_n$ is also not equivalent
to the usual basis of $\ell^1$.
\end{proof}
\begin{thm}\label{nb}
Let $X$ be  a Banach space, $k\in\nn$ and $(x_s)_{s\in[\nn]^k}$ be a
subordinated $k$-sequence in  $X$. Also let $x'_s=x_s-x_0$, for every
$s\in[\nn]^k$, where $x_0$ is the weak limit of
$(x_s)_{s\in[\nn]^k}$. Assume that for some $M\in [\nn]^\infty$ the $k$-subsequence
$(x_s)_{s\in[M]^k}$ generates a non trivial $k$-spreading model
$(e_n)_n$.  If $x_0\neq 0$, then exactly one of the following holds.
\begin{enumerate}
\item[(i)] The sequence $(e_n)_n$ as well as  every spreading model of $(x'_s)_{s\in [M]^k}$
 is  equivalent to the usual basis of $\ell^1$. \item[(ii)] The
sequence $(e_n)_n$ is singular and if $e_n=e'_n+e$ is its
natural decomposition  then $(e'_n)_n$ is the unique
$k$-spreading model of $(x'_s)_{s\in[M]^k}$ and $\|e\|=\|x_0\|$.
\end{enumerate}
\end{thm}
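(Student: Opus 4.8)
The plan is to reduce everything to the classical case through the natural decomposition and Lemma \ref{triv-ell}. First I would invoke Proposition \ref{cor for subordinating}(i) to note that $(x_s)_{s\in[M]^k}$, being subordinated with respect to the weak topology, is weakly convergent, so the vector $x_0$ in the statement is well-defined, and $(x'_s)_{s\in[M]^k}$ is a subordinated $k$-sequence weakly convergent to $0$. By passing to a further infinite subset of $M$ and using Proposition \ref{remark on the definition of spreading model}, I may assume that $(x'_s)_{s\in[M]^k}$ itself generates a $k$-spreading model, call it $(\widetilde e_n)_n$; a priori I should be careful that different subsequences could a priori yield different spreading models of $(x'_s)_{s\in[M]^k}$, so part of the work is to show the spreading model of $(x'_s)$ is actually unique in case (ii), and this will come out of the singular structure.

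Next I would apply Lemma \ref{triv-ell}: since $(e_n)_n$ is non trivial, part (b) gives that $(\widetilde e_n)_n$ is non trivial as well, and part (c) gives the equivalence ``$(e_n)_n$ is $\ell^1$ iff $(\widetilde e_n)_n$ is $\ell^1$''. So the dichotomy splits precisely according to whether $(\widetilde e_n)_n$ is equivalent to the $\ell^1$ basis. If it is, then by Lemma \ref{triv-ell}(c) so is $(e_n)_n$, and moreover \emph{every} spreading model of $(x'_s)_{s\in[M]^k}$ is equivalent to $\ell^1$ by applying part (c) again to each such spreading model — this is alternative (i). If $(\widetilde e_n)_n$ is not equivalent to $\ell^1$, then since $(x'_s)_{s\in[M]^k}$ is seminormalized (it must be, as $(e_n)_n$ is non trivial — I should check $\|x'_s\|$ stays bounded away from $0$ along a subsequence, or rather that the spreading model norm $\|\widetilde e_1\|_*>0$), subordinated and weakly null, Theorem \ref{unconditional spreading model} applies and $(\widetilde e_n)_n$ is $1$-unconditional. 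Then by Proposition \ref{equiv forms for 1-subsymmetric weakly null} it is weakly null and Ces\`aro summable to zero.

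It remains, in this second case, to produce the natural decomposition of $(e_n)_n$ and identify its pieces. The idea is that $e_n = \widetilde e_n + e$ where $e$ is the weak limit of $(e_n)_n$ in the space $E$ generated by $(e_n)_n$; concretely, for scalars summing to $1$ the elements $\sum a_i x_{s_j}$ and $\sum a_i x'_{s_j}$ differ by exactly $x_0$, and passing to the spreading model limit, the ``constant part'' of $(e_n)_n$ has norm $\|x_0\|$ — this is where $\|e\|=\|x_0\|$ comes from, using that the spreading model norm is the limit of the norms $\|\sum a_j x_{s_j}\|$ along plegma families with $s_1(1)\to\infty$. Since $(\widetilde e_n)_n$ is weakly null and $x_0\ne 0$ forces $e\ne 0$, Proposition \ref{thmsingular} shows $(e_n)_n$ is singular and $e_n = \widetilde e_n + e$ is precisely its natural decomposition (uniqueness of the natural decomposition from Remark \ref{properties of the natural decomposition} then gives that $(\widetilde e_n)_n=(e'_n)_n$). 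Finally, uniqueness of the $k$-spreading model of $(x'_s)_{s\in[M]^k}$: if some $[M']^k$, $M'\in[M]^\infty$, generated a different spreading model $(f_n)_n$ of $(x'_s)$, then $(x_s)_{s\in[M']^k}$ still generates $(e_n)_n$ by Proposition \ref{remark on the definition of spreading model}, and running the same argument forces $f_n + e$ to be the natural decomposition of $(e_n)_n$, whence $(f_n)_n=(e'_n)_n$ by uniqueness of the natural decomposition; that the two alternatives are mutually exclusive follows since an $\ell^1$ spreading model is not singular (a singular sequence is weak-Cauchy non-trivial by Proposition \ref{thmsingular}, while the $\ell^1$ basis is not weak-Cauchy). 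The main obstacle I anticipate is the bookkeeping around ``$\sum a_i = 1$'' versus ``$\sum a_i = 0$'' when extracting $\|e\|=\|x_0\|$ cleanly from the plegma-family definition of the spreading model norm, together with making the uniqueness argument airtight by properly threading Proposition \ref{remark on the definition of spreading model} through the passage to subsequences.
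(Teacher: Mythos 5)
Your overall strategy coincides with the paper's: translate to $(x'_s)_{s\in[M]^k}$, use Lemma \ref{triv-ell} to split into the $\ell^1$ and non-$\ell^1$ cases, and in the second case invoke Theorem \ref{unconditional spreading model} and Proposition \ref{equiv forms for 1-subsymmetric weakly null} to get that the spreading model $(\widetilde e_n)_n$ of $(x'_s)$ is $1$-unconditional and Ces\`aro summable to zero, then identify $(\widetilde e_n)_n$ with $(e'_n)_n$. The case analysis, the treatment of alternative (i), and the mutual exclusivity argument are all fine.

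The one step that does not work as written is the deduction of singularity. You propose to apply Proposition \ref{thmsingular} to the decomposition $e_n=\widetilde e_n+e$, but that decomposition is not available at that point: $(\widetilde e_n)_n$ lives in a different seminormed space from $(e_n)_n$, and the natural decomposition $e_n=e'_n+e$ only exists once $(e_n)_n$ is already known to be singular. So inferring singularity from ``$(e_n)_n$ is weakly convergent to the nonzero $e$'' is circular. The paper closes this by proving singularity directly from two norm limits: Lemma \ref{triv-ell}(a) (applied to coefficients summing to zero) together with the Ces\`aro summability of $(\widetilde e_n)_n$ gives $\big\|\frac1n\sum_{j=1}^n e_j-\frac1n\sum_{j=n+1}^{2n}e_j\big\|\to 0$, while $\big\|\frac1n\sum_{j=1}^n e_j\big\|\to\|x_0\|>0$; these two facts are incompatible with $(e_n)_n$ being Schauder basic, and a non-trivial spreading sequence that is not Schauder basic is singular by definition. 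Once singularity is established, the natural decomposition exists, $\|e\|=\|x_0\|$ follows from the second limit and the Ces\`aro summability of $(e'_n)_n$, and the isometry of $(\widetilde e_n)_n$ with $(e'_n)_n$ (hence the uniqueness of the spreading model of $(x'_s)_{s\in[M]^k}$) follows from the chain $\|\sum_{j=1}^n a_je'_j\|=\lim_p\|\sum_{j=1}^n a_je_j-\frac{a}{p}\sum_{j=n+1}^{n+p}e_j\|=\cdots=\|\sum_{j=1}^n a_j\widetilde e_j\|$ with $a=\sum_{j=1}^n a_j$, which is exactly the ``$\sum a_i=0$'' bookkeeping you flagged. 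All the ingredients are present in your plan; only the order of deductions in the singularity step needs repair.
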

\begin{proof}
Let  $(\widetilde{e}_n)_n$ be a $k$-spreading model of
$(x'_s)_{s\in[M]^k}$. If  $(e_n)_n$ is equivalent to the usual basis
of $\ell^1$ then by Lemma \ref{triv-ell}, we have that the
same holds for $(\widetilde{e}_n)_n$ and hence  (i) is satisfied.

Assume for the following that  $(e_n)_n$ is not equivalent to the
usual basis of $\ell^1$. Since it is also non trivial,  by
Lemma \ref{triv-ell}, we have that $(\widetilde{e}_n)_n$ is
non trivial and not equivalent to the $\ell^1$-basis. Let $L\in
[M]^\infty$ such that $(x'_s)_{s\in[L]^k}$ generates
$(\widetilde{e}_n)_n$. Since $(\widetilde{e}_n)_n$ is non trivial,
it is easy to see that $(x'_s)_{s\in[L]^k}$ is seminormalized. Also
notice that $(x'_s)_{s\in[M]^k}$ is subordinated and weakly null.
Therefore by Theorem \ref{unconditional spreading model},
$(\widetilde{e}_n)_n$ is 1-unconditional. Moreover, since
$(\widetilde{e}_n)_n$ is not equivalent to the usual basis of
$\ell^1$, by Proposition \ref{equiv forms for 1-subsymmetric weakly
null}, we conclude that $(\widetilde{e}_n)_n$ is  Ces\`aro summable
to zero. Hence we have
\begin{equation}\label{tv} \lim_{n\to\infty}\Big\|\frac{1}{n}\sum_{j=1}^ne_j-\frac{1}{n}\sum_{j=n+1}^{2n}e_j\Big\|
=\lim_{n\to\infty}\Big\|\frac{1}{n}\sum_{j=1}^n\widetilde{e}_j-\frac{1}{n}\sum_{j=n+1}^{2n}\widetilde{e}_j\Big\|=0\end{equation}
Also it is easy to see that
\begin{equation}\label{fd}
\Big\|\frac{1}{n}\sum_{j=1}^ne_j\Big\|\to\|x_0\|>0\end{equation} By
(\ref{tv}) and (\ref{fd}), we get that  $(e_n)_n$ is not Schauder
basic, i.e. it is singular. Let $e_n=e'_n+e$ be the natural
decomposition of $(e_n)_n$. By (\ref{fd}) and the fact that
$(e'_n)_n$ is Ces\`aro summable to zero, we have that
$\|e\|=\|x_0\|$. To complete the proof it remains to show that
$(\widetilde{e}_n)_n$ and $(e'_n)_n$ are isometrically equivalent.
Indeed, we fix $n\in\nn$ and $a_1,\ldots,a_n\in\rr$. For every
$p\in\nn$, let $(s^p_{j})_{j=1}^{n+p}\in\textit{Plm}_p([L]^k)$
such that $s^p_1(1)\geq L(n+p)$.  We also set $a=\sum_{j=1}^na_j$.
Then we have
  \[\begin{split}
    \Big\|\sum_{j=1}^na_je'_j\Big\|&
    =\lim_{p\to\infty}\Big\|\sum_{j=1}^na_je'_j-\frac{a}{p}\sum_{j=n+1}^{n+p}e'_j\Big\|
    =\lim_{p\to\infty}\Big\|\sum_{j=1}^na_je_j-\frac{a}{p}\sum_{j=n+1}^{n+p}e_j\Big\|\\
    &=\lim_{p\to\infty}\Big\|\sum_{j=1}^na_jx_{s_j^p}-\frac{a}{p}\sum_{j=n+1}^{n+p}x_{s_j^p}\Big\|
    =\lim_{p\to\infty}\Big\|\sum_{j=1}^na_jx'_{s_j^p}-\frac{a}{p}\sum_{j=n+1}^{n+p}x'_{s_j^p}\Big\|\\
    &=\lim_{p\to\infty}\Big\|\sum_{j=1}^na_j\widetilde{e}_j-\frac{a}{p}\sum_{j=n+1}^{n+p}\widetilde{e}_j\Big\|
    =\Big\|\sum_{j=1}^na_j\widetilde{e}_j\Big\|
  \end{split}\]
\end{proof}
 By Remark \ref{properties of the natural decomposition}, Proposition \ref{cor for subordinating} and
 Theorems
\ref{unconditional spreading model} and \ref{nb}, we derive the
following.
\begin{cor}\label{cor singular wrc}
Let $X$ be a Banach space, $k\in\nn$ and
$(e_n)_{n}\in\mathcal{SM}^{wrc}_k(X)$ non trivial. Then one of the following
holds.
\begin{enumerate}
\item[(i)] The sequence $(e_n)_n$ is unconditional. \item[(ii)]
The sequence $(e_n)_n$ is singular and if $e_n=e'_n+e$ is the
natural decomposition of $(e_n)_n$ then
$(e'_n)_{n}\in\mathcal{SM}^{wrc}_k(X)$, $(e'_n)_n$ is
unconditional, weakly null and Ces\`aro summable to zero.
Moreover, the spaces generated by $(e_n)_n$ and $(e'_n)_n$ are
isomorphic.
\end{enumerate}
\end{cor}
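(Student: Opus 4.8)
The plan is to reduce at once to a subordinated $k$-sequence and then feed it into Theorems \ref{unconditional spreading model} and \ref{nb}. Unfolding the hypothesis $(e_n)_n\in\mathcal{SM}_k^{wrc}(X)$, I fix a weakly relatively compact $k$-sequence $(x_s)_{s\in[\nn]^k}$ in $X$ that generates $(e_n)_n$ as a $k$-spreading model. By Proposition \ref{cor for subordinating}(ii) it has a subordinated $k$-subsequence $(x_s)_{s\in[M]^k}$, which by Proposition \ref{remark on the definition of spreading model}(i) still generates $(e_n)_n$. By Proposition \ref{subordinating yields convergence} (see also Proposition \ref{cor for subordinating}(i)) this subsequence converges weakly; denote its weak limit by $x_0$. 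I then split according to whether $x_0=0$.

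Suppose first $x_0=0$, so $(x_s)_{s\in[M]^k}$ is subordinated and weakly null. Since $(e_n)_n$ is non trivial, $\|e_1\|_*>0$, and the defining inequality (\ref{rsm}) applied with $m=1$ shows that $\|x_s\|$ is bounded above for all $s\in[M]^k$ and bounded below as soon as $s(1)$ is large enough; passing to a tail $M'$ of $M$ I may therefore assume $(x_s)_{s\in[M']^k}$ is in addition seminormalized, while it remains subordinated, weakly null and generates $(e_n)_n$. Reindexing by $M'$ (which by Proposition \ref{remark on the definition of spreading model}(iii) preserves generation of the spreading model, and which plainly preserves subordination and seminormalization and keeps the sequence weakly null) produces a $k$-sequence on $[\nn]^k$ to which Theorem \ref{unconditional spreading model} applies; it yields that $(e_n)_n$ is $1$-unconditional, in particular unconditional, so alternative (i) of the corollary holds.

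Suppose now $x_0\neq 0$. Reindexing $(x_s)_{s\in[M]^k}$ by $M$ I obtain a subordinated, weakly relatively compact $k$-sequence $(z_s)_{s\in[\nn]^k}$ generating $(e_n)_n$ with weak limit $x_0\neq 0$, and I apply Theorem \ref{nb} to it with $x'_s=z_s-x_0$. Its conclusion offers two mutually exclusive possibilities. In the first, $(e_n)_n$ is equivalent to the usual basis of $\ell^1$ and hence unconditional, so alternative (i) holds. In the second, $(e_n)_n$ is singular with natural decomposition $e_n=e'_n+e$, where $(e'_n)_n$ is the unique $k$-spreading model of $(x'_s)_{s\in[\nn]^k}$ and $\|e\|=\|x_0\|$; here $(x'_s)_{s\in[\nn]^k}$ is itself weakly relatively compact (its range is a translate of a subset of a weakly compact set), so $(e'_n)_n\in\mathcal{SM}_k^{wrc}(X)$, and Remark \ref{properties of the natural decomposition} supplies the rest of alternative (ii): $(e'_n)_n$ is unconditional, weakly null and Ces\`aro summable to zero, and the Banach spaces generated by $(e_n)_n$ and $(e'_n)_n$ are isomorphic.

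No step is a real obstacle: the combination is routine once Theorems \ref{unconditional spreading model} and \ref{nb} and Remark \ref{properties of the natural decomposition} are in hand. The only points that deserve a word of care are the bookkeeping ones --- that passing to a subsequence and reindexing preserve subordination and weak relative compactness, the seminormalization reduction in the case $x_0=0$, and the observation that translation by $x_0$ keeps the $k$-sequence weakly relatively compact, which is precisely what lets alternative (ii) close up.
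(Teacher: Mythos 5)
Your proof is correct and follows essentially the same route as the paper, which derives the corollary precisely from Proposition \ref{cor for subordinating}, Theorems \ref{unconditional spreading model} and \ref{nb}, and Remark \ref{properties of the natural decomposition}. The bookkeeping steps you single out (seminormalization from non-triviality in the weakly null case, preservation of subordination and weak relative compactness under reindexing and translation) are exactly the details the paper leaves implicit, and you handle them correctly.
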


The next theorem  provides more information concerning the trivial
$k$-spreading models. Since we shall not use this result in the
sequel, we omit its proof.
\begin{thm}\label{Theorem equivalent forms for having norm on the spreading model}
Let $k\in\nn$, $(x_s)_{s\in[\nn]^k}$ be an $k$-sequence in a
Banach space $X$ and $(E,\|\cdot\|_*)$ be an infinite dimensional
seminormed linear space with Hamel basis $(e_n)_n$. Assume that
for some $M\in[\nn]^\infty$, the $k$-subsequence $(x_s)_{s\in
[M]^k}$ generates $(e_n)_n$ as an $k$-spreading model. Then the
following are equivalent:
\begin{enumerate}
\item[(i)] The sequence $(e_n)_n$ is trivial. \item[(ii)] The
seminorm $\|\cdot\|_*$ is not a norm on $E$. \item[(iii)]
$(x_s)_{s\in [M]^k}$ contains a further norm Cauchy
$k$-subsequence, i.e. there exists $L\in[M]^\infty$ such that for
every $\ee>0$ there exists $n_0\in\nn$ satisfying that
$\|x_s-x_t\|<\ee$, for all $s,t\in[L]^k$ with $n_0\leq\min\{\min
s,\min t\}$. \item[(iv)] There exists $x\in X$ such that every
$k$-subsequence of $(x_s)_{s\in[M]^k}$ contains a further
$k$-subsequence convergent to $x$.
\end{enumerate}
\end{thm}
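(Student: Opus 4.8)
The plan is to prove the equivalence (i)$\Leftrightarrow$(ii) directly and then close the cycle (i)$\Rightarrow$(iv)$\Rightarrow$(iii)$\Rightarrow$(i). For (i)$\Leftrightarrow$(ii) nothing is needed beyond Proposition~\ref{sing}: since $(e_n)_n$ is a Hamel basis of $E$, the seminorm $\|\cdot\|_*$ fails to be a norm on $E$ precisely when $\|\sum_{i=1}^na_ie_i\|_*=0$ for some $n\in\nn$ and $a_1,\dots,a_n\in\rr$ not all zero, and by Proposition~\ref{sing} this is exactly the statement that the spreading sequence $(e_n)_n$ is trivial.

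The core of the argument is (i)$\Rightarrow$(iv). Assuming $(e_n)_n$ trivial, Proposition~\ref{sing} gives $\|e_1-e_2\|_*=0$, so Definition~\ref{Definition of spreading model} (with $m=2$, $a_1=1$, $a_2=-1$) yields $\|x_u-x_v\|\leq\delta_l$ for every plegma pair $(u,v)$ in $[M]^k$ with $u(1)\geq M(l)$ and $l\geq 2$. Now fix $N\in[M]^\infty$ and pick $L\in[N]^\infty$ so sparse inside $M$ that no two consecutive elements of $L$ are consecutive elements of $M$; then $[L]^k\subseteq[M]^k_\shortparallel$. Given $s,t\in[L]^k$ with $\min s,\min t\geq M(l)$, choose $u\in[L]^k$ with $s<u$ and $t<u$. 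By Proposition~\ref{accessing everything with plegma path of length |s_0|} there are plegma paths of length $k$ in $[M]^k$ from $s$ to $u$ and from $t$ to $u$; along each such path the first coordinates strictly increase (Definition~\ref{defn plegma}), hence remain $\geq M(l)$, so summing the estimate above over the (at most) $k$ consecutive plegma pairs gives $\|x_s-x_u\|,\|x_t-x_u\|\leq k\delta_l$ and therefore $\|x_s-x_t\|\leq 2k\delta_l$. As $\delta_l\to 0$, the family $(x_s)_{s\in[L]^k}$ is norm Cauchy and, $X$ being complete, norm-converges to some $x_L\in X$. The identical ``route through a common successor'' estimate, applied to two sparse-in-$M$ subsets $L_1,L_2$ of $M$, forces $\|x_{L_1}-x_{L_2}\|=0$; hence there is a single $x\in X$ with $(x_s)_{s\in[L']^k}\to x$ for every sparse-in-$M$ subset $L'$. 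In particular, for the $L\subseteq N$ produced above $(x_s)_{s\in[L]^k}$ converges to $x$, which is (iv).

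The last two implications are short. For (iv)$\Rightarrow$(iii), apply (iv) to the $k$-subsequence $(x_s)_{s\in[M]^k}$ to obtain $L\in[M]^\infty$ with $(x_s)_{s\in[L]^k}$ norm-convergent to $x$; the triangle inequality then gives the Cauchy condition of (iii) on $[L]^k$. For (iii)$\Rightarrow$(i), note that for a plegma pair $(s_1,s_2)$ in $[L]^k$ one has $\min\{\min s_1,\min s_2\}=s_1(1)<s_2(1)$, so the Cauchy condition of (iii) forces $\|x_{s_1}-x_{s_2}\|\to 0$ as $s_1(1)\to\infty$; since $(x_s)_{s\in[L]^k}$ still generates $(e_n)_n$ by Proposition~\ref{remark on the definition of spreading model}, passing to the limit in Definition~\ref{Definition of spreading model} (again with $m=2$, $a_1=1$, $a_2=-1$) gives $\|e_1-e_2\|_*=0$, so $(e_n)_n$ is trivial by Proposition~\ref{sing}.

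The main obstacle is the combinatorial bookkeeping inside (i)$\Rightarrow$(iv): for $k\geq 2$ an arbitrary pair $s,t\in[L]^k$ need not form a plegma pair, so one must route both through a common successor and check that every vertex along the two resulting plegma paths retains first coordinate $\geq M(l)$ (this is precisely what keeps the total error at $2k\delta_l$ rather than letting it blow up), and that the norm limit obtained does not depend on the chosen sparse subset. Once Propositions~\ref{sing}, \ref{remark on the definition of spreading model} and \ref{accessing everything with plegma path of length |s_0|} are in hand, everything else is routine.
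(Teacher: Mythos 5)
The paper explicitly omits its proof of this theorem, so there is nothing to compare against; judged on its own, your argument is correct. The cycle (i)$\Rightarrow$(iv)$\Rightarrow$(iii)$\Rightarrow$(i) together with (i)$\Leftrightarrow$(ii) via Proposition~\ref{sing} covers all equivalences, and the key step --- passing to a subset $L$ with $[L]^k\subseteq[M]^k_\shortparallel$, routing two arbitrary elements through a common successor by two plegma paths of length $k$ from Proposition~\ref{accessing everything with plegma path of length |s_0|}, and using that first coordinates increase along a plegma path to keep every intermediate pair eligible for the $\delta_l$-estimate --- is exactly what that machinery was built for, so this is evidently the intended argument.
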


\section{Composition of the spreading models}
In this section we study the composition property of the $k$-spreading models.
Moreover we recall the definition of the $k$-iterated spreading models
 and we investigate their relation with the $k$-spreading models. We start with the following definition.
\begin{defn}
Let  $X$ be a Banach space with a Schauder basis and  $k\in\nn$.
Then a $k$-spreading model  $(e_n)_n$ of $X$ will be called plegma
block generated if there exists a $k$-sequence $(x_s)_{s\in
[\nn]^k}$ which is plegma block and generates $(e_n)_n$ as a
$k$-spreading model.
\end{defn}
\begin{rem}\label{remnj}
By Lemma \ref{propxi}, we easily conclude   that for $1\leq
k_1<k_2$, every plegma block generated $k_1$-spreading model is also
a plegma block $k_2$-spreading model. Thus the plegma block
$k$-spreading models of a Banach space $X$ with a Schauder basis
form an increasing hierarchy.
\end{rem}
\begin{thm}\label{composition thm}
Let  $X$ be a Banach space, $k\in \nn$ and  $(e_n)_n\in
\mathcal{SM}_{k}(X)$ such that $(e_n)_n$ is a Schauder basic
sequence. Let $E$ be the Banach space with Schauder basis the
sequence $(e_n)_n$, $d\in\nn$ and  $(\widetilde{e}_n)_{n}$ be a
plegma block generated $d$-spreading model of $E$. Then
$(\widetilde{e}_n)_{n}\in\mathcal{SM}_{k+d}(X)$.
\end{thm}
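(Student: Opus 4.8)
The plan is to explicitly construct a $(k+d)$-sequence in $X$ whose $(k+d)$-spreading model is $(\widetilde{e}_n)_n$, by composing the $k$-sequence realizing $(e_n)_n$ in $X$ with the plegma-block $d$-sequence realizing $(\widetilde{e}_n)_n$ in $E$. First I would fix a $k$-sequence $(x_s)_{s\in[\nn]^k}$ in $X$ and an $M_0\in[\nn]^\infty$ such that $(x_s)_{s\in[M_0]^k}$ generates $(e_n)_n$, and fix a plegma-block $d$-sequence $(z_r)_{r\in[\nn]^d}$ in $E$ together with an $M_1\in[\nn]^\infty$ such that $(z_r)_{r\in[M_1]^d}$ generates $(\widetilde{e}_n)_n$. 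Each $z_r$ is a finitely supported vector $z_r=\sum_{i\in\mathrm{supp}(z_r)}c^r_i e_i$, and the plegma-block condition says that for a plegma pair $(r_1,r_2)$ in $[M_1]^d$ we have $\mathrm{supp}(z_{r_1})<\mathrm{supp}(z_{r_2})$. The natural thing is: given $u\in[\nn]^{k+d}$, split it as $u=v\,{}^\smallfrown w$ where $v=u|d$ (the first $d$ coordinates) and $w$ consists of the last $k$ coordinates; then reindex the support of $z_v$ by the coordinates of $w$ — i.e. replace each basis vector $e_i$ appearing in $z_v$ with $x_{s_i}$ where $(s_i)$ is a plegma family in $[\nn]^k$ chosen "far out" and running parallel to $w$. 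Define $\zeta_u=\sum_i c^v_i\, x_{s_i(u)}$ with the $s_i(u)$ chosen as a plegma family in $[\nn]^k$ whose entries are controlled by $w$ and pushed past an index depending on $v$. This gives a $(k+d)$-sequence $(\zeta_u)_{u\in[\nn]^{k+d}}$ in $X$.

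Next I would verify that a plegma family $(u_j)_{j=1}^m$ in $[L]^{k+d}$ (for $L$ a suitably sparse subset of $M_0\cap M_1$) produces, on the one hand, a plegma family $(v_j)_{j=1}^m$ in $[L]^d$ from the $d$-initial segments $v_j=u_j|d$, and on the other hand, via the chosen $s_i(u_j)$, plegma families in $[L']^k$ that are disjoint across different $j$'s and whose concatenation (in the right order) is again plegma — this is where Proposition \ref{rem34}(i)–(iv) and the combinatorics of plegma families in $[\nn]^{k+d}$ versus $[\nn]^d$ and $[\nn]^k$ do the work. Because the $z_v$ are plegma block, the supports $\mathrm{supp}(z_{v_j})$ are successive intervals of $\nn$, so the blocks of $x$'s replacing them are "plegma-successive" and the whole family $\bigcup_j (s_i(u_j))_i$ can be arranged to be a plegma family in $[\nn]^k$. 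Then for coefficients $a_1,\dots,a_m\in[-1,1]$ one computes
\[
\Big\|\sum_{j=1}^m a_j\zeta_{u_j}\Big\|_X
=\Big\|\sum_{j=1}^m a_j\sum_i c^{v_j}_i x_{s_i(u_j)}\Big\|_X
\approx\Big\|\sum_{j=1}^m a_j\sum_i c^{v_j}_i e_{(\cdot)}\Big\|_E
=\Big\|\sum_{j=1}^m a_j z_{v_j}\Big\|_E
\approx\Big\|\sum_{j=1}^m a_j\widetilde{e}_j\Big\|_*,
\]
where the first $\approx$ uses that $(x_s)_{s\in[M_0]^k}$ generates $(e_n)_n$ (applied to the one large plegma $k$-family formed by all the $s_i(u_j)$, using that $(e_n)_n$ is Schauder basic so that the norm of a block vector is computed coordinatewise up to the basis constant, and then a perturbation/almost-isometric argument), and the second $\approx$ uses that $(z_r)_{r\in[M_1]^d}$ generates $(\widetilde{e}_n)_n$. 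Summing the two error terms and invoking Lemma \ref{old defn yields new} to absorb them into a single null sequence $(\delta_n)_n$ by passing to a sparse $M$, one concludes $(\zeta_u)_{u\in[M]^{k+d}}$ generates $(\widetilde{e}_n)_n$ as a $(k+d)$-spreading model.

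The main obstacle is the first $\approx$: transferring the $E$-norm of $\sum_j a_j z_{v_j}$ to the $X$-norm of the corresponding vector built from the $x_s$'s. The difficulty is that $(e_n)_n$ is only generated asymptotically and only on plegma families that start far out, whereas the support of a single $z_v$ has bounded "distance from the start," and different $v_j$'s have supports in different ranges; one must choose the plegma $k$-families $(s_i(u_j))_i$ so that the entire union, read as one plegma family in $[\nn]^k$, starts past $M_0(\ell)$ where $\ell$ bounds the total length, and this forces the sparsification of $L$ (at least $|s|$-many elements of $M_0$ between consecutive elements of $L$, exactly as in Lemma \ref{old defn yields new}). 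A secondary technical point is that $z_v$ is a genuine linear combination, not a $\pm$-combination of basis vectors, so one cannot directly apply inequality (\ref{rsm}) with $a_j\in[-1,1]$; this is handled by homogeneity and by the Schauder basic hypothesis on $(e_n)_n$ (which bounds the coefficients $c^v_i$ in terms of $\|z_v\|_E$ and the basis constant), together with a density/net argument over the finitely many relevant coefficient vectors, exactly as in Step 1 of the existence theorem. Once these perturbation bookkeeping issues are organized, everything else is the routine plegma combinatorics already developed in Section 1.
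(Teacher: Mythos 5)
Your proposal is correct and follows essentially the same route as the paper: split each $u\in[\nn]^{k+d}$ into its first $d$ and last $k$ coordinates, substitute for each basis vector in the support of the plegma-block vector $z_{u|d}$ a member of a shifted plegma $k$-family determined by the last $k$ coordinates, verify via Ramsey/sparsification that plegma $(k+d)$-families yield a plegma $d$-family of initial parts together with one long concatenated plegma $k$-family starting far enough out, and then chain the two approximations (handling the non-normalized coefficients by homogeneity and the basis-constant bound $2CK$) before invoking Lemma \ref{old defn yields new}. No substantive differences from the paper's argument.
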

\begin{proof}
We fix  a plegma block $d$-sequence $(y_t)_{t\in[\nn]^d}$ in
$E$ which generates $(\widetilde{e}_n)_n$ as a $d$-spreading model
with respect to some null sequence $(\widetilde{\delta}_n)_n$ of
positive reals. By Proposition \ref{remark on the definition of
spreading model}, we may also choose a $k$-sequence
$(x_s)_{s\in[\nn]^k}$ in $X$ which generates $(e_n)_n$  as a
$k$-spreading model with respect to the same sequence
$(\widetilde{\delta}_n)_n$.

Since $(y_t)_{t\in [\nn]^d}$ is finitely supported, setting for
every $t\in[\nn]^d$, $F_t=\text{supp}(y_t)$,
\begin{equation}y_t=\sum_{j=1}^{|F_t|}a^t_{F_t(j)}e_{F_t(j)}^{\;}\end{equation}
 For every $v\in[\nn]^{k+d}$, let $t_v$ (resp.
$s_v$)  be the unique element in $[\nn]^d$ (resp. $[\nn]^k$)
such that $v=t_v\cup s_v$ and $t_v<s_v$. For every $v\in [\nn]^{k+d}$
and  $j\in\{1,...,|F_{t_u}|\}$, we set
\begin{equation}\label{pl}s_j^v=(s_v(1)+j-1,...,s_v(k)+j-1)\end{equation} Notice that
$(s_j^v)_{j=1}^{|F_{t_v}|}$ is a finite sequence  in $[\nn]^k$
with $s_1^v=s_v$.

We define a $(k+d)$-sequence  $(z_v)_{v\in[\nn]^{k+d}}$ in $X$, by setting
\begin{equation}z_v=\sum_{j=1}^{|F_{t_v}|}a^{t_v}_{F_{t_v}(j)}x_{s^v_j}\end{equation}
The proof will be completed once we show the following.

\textbf{Claim 1.}  There exists $M\in[\nn]^\infty$ such that
$(z_v)_{v\in[M]^{k+d}}$ generates $(\widetilde{e}_n)_n$ as a
$(k+d)$-spreading model.

\emph{Proof of Claim 1}: For every $l\in\nn$, we define a family
$\mathcal{A}_l\subseteq \textit{Plm}_l([\nn]^{k+d})$ as follows:
  \[\begin{split}\mathcal{A}_l=\Big{\{} (v_i)_{i=1}^l\in &\textit{Plm}_l([\nn]^{k+d}):
  \; s_1^{v_1}(1)\geq \sum_{i=1}^{l}|F_{t_{v_i}}| \\
  &\text{and}\;\;
  (s_j^{v_1})_{j=1}^{|F_{t_{v_1}|}}\;^\frown \ldots \;^\frown (s_j^{v_l})_{j=1}^{|F_{t_{v_l}|}}\in
  \textit{Plm}_{\sum_{i=1}^{l}|F_{t_{v_i}}|}([\nn]^{k})
   \Big\}\end{split}\]
Using (\ref{pl}),  the fact that for every
$(v_i)_{i=1}^l\in\textit{Plm}_l([\nn]^{k+d})$,
$(s_{v_i})_{i=1}^l\in \textit{Plm}_l([\nn]^{k})$ and that
$s_1^{v_i}=s_{v_i}$, for all $1\leq i\leq l$, it is easy to check
that $\mathcal{A}_l\bigcap \textit{Plm}_l([L]^{k+d})\neq
\emptyset$, for every $l\in\nn$ and $L\in [\nn]^\infty$. Hence, an
iterated use of Theorem \ref{ramseyforplegma}, yields an $L\in
[\nn]^\infty$ such that $(v_i)_{i=1}^l\in\mathcal{A}_l$, for every
$(v_i)_{i=1}^l\in\textit{Plm}_l([L]^{k+d})$, with $v_1(1)\geq
L(l)$.

We fix $l\in\nn$, $(v_i)_{i=1}^l\in\textit{Plm}_l([L]^{k+d})$
with $v_1(1)\geq L(l)$ and $a_1,\ldots,a_l\in[-1,1]$. Notice that
\begin{equation}\begin{split}\label{eq10}
\Bigg| \Big\|\sum_{i=1}^l a_i z_{v_i} \Big\|-\Big\|\sum_{i=1}^l a_i \widetilde{e}_i \Big\| \Bigg|
\leq& \Bigg|\Big\|\sum_{i=1}^l a_i z_{v_i}   \Big\|-\Big\|\sum_{i=1}^l a_i y_{t_{v_i}}  \Big\| \Bigg|\\
&+ \Bigg| \Big\|\sum_{i=1}^l a_i y_{t_{v_i}}  \Big\|- \Big\|\sum_{i=1}^l a_i \widetilde{e}_i \Big\| \Bigg|
\end{split}\end{equation}
Also observe that $(t_{v_i})_{i=1}^l\textit{Plm}_l([L]^{d\widehat{}})$
and $t_{v_1}(1)=v_1(1)\geq L(l)\geq l$. Hence,
\begin{equation}\label{eq11}
\Bigg| \Big\|\sum_{i=1}^l a_i y_{t_{v_i}}  \Big\|-\Big\| \sum_{i=1}^l a_i \widetilde{e}_i  \Big\|\Bigg|<\widetilde{\delta}_l
\end{equation}
Also,   $s_1^{v_1}(1)\geq \sum_{i=1}^l |F_{t_{v_i}}|$ and
$F_{t_{v_1}}<...<F_{t_{v_l}}$. Therefore,
\begin{equation}\begin{split}\label{eq12}
\Bigg|\Big\|\sum_{i=1}^l a_i z_{v_i}   \Big\|-\Big\|\sum_{i=1}^l a_i y_{t_{v_i}}  \Big\| \Bigg|=&
\Bigg|\Big\|\sum_{i=1}^l\sum_{j=1}^{l_{t_{v_i}}} a_ia_{F_{t_{v_i}}(j)}^{t_{v_i}} x_{s^{v_i}_j}  \Big\|\\
&-\Big\|\sum_{i=1}^l\sum_{j=1}^{l_{t_{v_i}}} a_ia_{F_{t_{v_i}}(j)}^{t_{v_i}} e_{F_{t_{v_i}}(j)}^{\;}  \Big\|  \Bigg|<2CK\widetilde{\delta}_l
\end{split}\end{equation}
where  $C$ is  the basis constant of $(e_n)_n$ and
$K=\sup\{\|y_t\|:t\in[L]^k\}$.

By (\ref{eq10}), (\ref{eq11}) and (\ref{eq12}), we obtain that for
every $l\in\nn$, $(v_i)_{i=1}^l\in\textit{Plm}_l([L]^{k})$ with
$v_1(1)\geq L(l)$ and $a_1,\ldots,a_l\in[-1,1]$, we have
\[\Bigg| \Big\|\sum_{i=1}^l a_i z_{v_i} \Big\|-\Big\|\sum_{i=1}^l a_i
\widetilde{e}_i \Big\| \Bigg|<\delta_l\] where $\delta_l=(1+2CK)\widetilde{\delta}_l$.
 By Lemma \ref{old defn
yields new}, there exists $M\in[L]^\infty$, such that
$(z_v)_{v\in[M]^{k}}$ generates $(\widetilde{e}_n)_{n}$ as a
$k$-spreading model and the proof of the claim as well as of Theorem
\ref{composition thm} is complete.
\end{proof}
  \begin{cor}\label{l^p in wrc}
    Let $X$ be a Banach space and $Y$ be either $\ell^p$ for some $p\in[1,\infty)$ or
    $c_0$. Also let  $k\in\nn$,
    $(e_n)_n\in\mathcal{SM}^{wrc}_k(X)$ be non trivial and
    $E$ be the Banach space generated by $(e_n)_n$. Suppose
    that $E$ contains an isomorphic copy of $Y$. Then
    $\mathcal{SM}_{k+1}(X)$ contains a sequence equivalent
    to the usual basis of $Y$.
  \end{cor}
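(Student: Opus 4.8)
The plan is to reduce to the case where the generating sequence is unconditional (hence Schauder basic), to extract from the relevant Banach space a block basic sequence equivalent to the usual basis of $Y$, to recognize such a sequence as a plegma block $1$-sequence generating an appropriate $1$-spreading model, and finally to apply the composition Theorem \ref{composition thm} with $d=1$.

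First I would dispose of the singular case, since Theorem \ref{composition thm} requires a Schauder basic generating sequence. If $(e_n)_n$ is unconditional, set $(f_n)_n=(e_n)_n$ and $F=E$. If instead $(e_n)_n$ is singular, let $e_n=e'_n+e$ be its natural decomposition; by Corollary \ref{cor singular wrc} and Remark \ref{properties of the natural decomposition} the sequence $(f_n)_n:=(e'_n)_n$ lies in $\mathcal{SM}_k^{wrc}(X)$, is non trivial and unconditional, and generates a Banach space $F$ isomorphic to $E$, whence $Y$ still embeds isomorphically into $F$. In either case $(f_n)_n\in\mathcal{SM}_k(X)$ is a non trivial, unconditional (hence Schauder basic) spreading sequence, $F$ is the Banach space it generates, $(f_n)_n$ is an unconditional basis of $F$, and $Y$ embeds into $F$.

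Next I would produce a normalized block basic sequence $(u_n)_n$ of $(f_n)_n$ equivalent to the usual basis of $Y$. If $Y=\ell^p$ with $1<p<\infty$ or $Y=c_0$, the unit vector basis of a copy of $Y$ inside $F$ is a seminormalized weakly null sequence in $F$, so by the Bessaga--Pe\l czy\'nski selection principle it has a subsequence equivalent to a block basic sequence $(u_n)_n$ of $(f_n)_n$; as that subsequence is itself equivalent to the $1$-subsymmetric usual basis of $Y$, so is $(u_n)_n$. If $Y=\ell^1$, the existence of a block basic sequence of the unconditional basis $(f_n)_n$ equivalent to the $\ell^1$-basis is the classical fact that $\ell^1$ cannot embed into a space with an unconditional basis without doing so on a block subspace (see \cite{AK,Lid-Tza}). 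This extraction step, where the classical embedding theory of $\ell^p$ and $c_0$ enters, is the only non-formal point of the argument and, especially for $Y=\ell^1$, is the main obstacle.

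Finally I would note that a normalized block basic sequence $(u_n)_n$ of $(f_n)_n$ is precisely a plegma block $1$-sequence in $F$, and that, since the usual basis $(\delta_i)_i$ of $Y$ is $1$-subsymmetric, after passing to a suitable subsequence $(u_n)_n$ generates a $1$-spreading model $(\widetilde e_n)_n$ with $c\,\|\sum_i a_i\delta_i\|_Y\le\|\sum_i a_i\widetilde e_i\|_*\le C\,\|\sum_i a_i\delta_i\|_Y$ for all scalars $a_i$, where $c,C$ are the equivalence constants of $(u_n)_n$ with $(\delta_i)_i$; hence $(\widetilde e_n)_n$ is equivalent to the usual basis of $Y$ and is a plegma block generated $1$-spreading model of $F$. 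Since $(f_n)_n\in\mathcal{SM}_k(X)$ is Schauder basic and $F$ is the Banach space it generates, Theorem \ref{composition thm} applied with $d=1$ yields $(\widetilde e_n)_n\in\mathcal{SM}_{k+1}(X)$, which is the assertion.
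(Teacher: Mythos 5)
Your proposal is correct and follows essentially the same route as the paper: reduce via the natural decomposition and Corollary \ref{cor singular wrc} to a Schauder basic (unconditional) generating sequence, extract a block basic sequence equivalent to the usual basis of $Y$, note that it generates a plegma block generated $1$-spreading model equivalent to that basis, and apply Theorem \ref{composition thm} with $d=1$. The only difference is that you spell out the block-extraction step (Bessaga--Pe\l czy\'nski for the weakly null cases and the classical $\ell^1$ block-embedding fact), which the paper merely asserts.
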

  \begin{proof}
    First assume that that $(e_n)_n$ is Schauder basic. Notice that $E$ contains a block sequence $(y_n)_n$ equivalent to the usual basis of $Y$. It is easy to see that $(y_n)_n$ admits a spreading model $(\widetilde{e}_n)_n$ equivalent to the usual basis of $Y$. By Theorem \ref{composition thm} we have that $(\widetilde{e}_n)_n\in\mathcal{SM}_{k+1}(X)$.

    Assume now that $(e_n)_n$ is not Schauder basic. Since $(e_n)_n$ is non trivial, we have that $(e_n)_n$ is singular. Let $e_n=e'_n+e$ be its natural decomposition and $E'$ the space generated by $(e'_n)_n$.
    By Remark \ref{properties of the natural decomposition} we have that $E$ and $E'$ are isomorphic and therefore $E'$ contains an isomorphic copy of $Y$. By Corollary \ref{cor singular
  wrc} we have that $(e'_n)_n\in\mathcal{SM}_{k+1}(X)$. Since $(e'_n)_n$ is unconditional, the result follows as in the first case.
  \end{proof}

\subsection{The $k$-iterated spreading models}
In this subsection we define the $k$-iterated spreading models of a
Banach space $X$ which although they have not been named, have been
appeared in \cite{BM} and \cite{O-S}. We also study their relation
with the $k$-spreading models.
\begin{defn}
The $k$-iterated spreading models of a Banach space $X$ are
inductively defined  as follows. The 1-iterated are the non trivial
spreading models of $X$. Assume that for some $k\in\nn$ the
$k$-iterated spreading models of $X$ have been defined. Then the
$(k+1)$-iterated spreading models are the non trivial spreading
models of the spaces generated by the $k$-iterated spreading models.
\end{defn}
Notice that the class of the $k$-iterated spading models of a Banach
space $X$ is contained in the one of the $(k+1)$-iterated spreading
models. In the sequel we provide a sufficient condition ensuring
that the $k$-iterated spreading models of a Banach space $X$ are up
to isomorphism contained in $\mathcal{SM}_k(X)$. To this end we need
the following lemma.

\begin{lem}\label{iter_inter}
  Let $X$ be a Banach space and $k\in\nn$. Let $(e^0_n)_n$ be a
  Schauder basic $k$-spreading model of $X$, $E_0$ be the space generated
  by $(e^0_n)_n$, $(e_n)_n$ be a non trivial spreading model of
  $E_0$ and $E$ be the space generated by $(e_n)_n$. If $E_0$ is reflexive then there exists
  an unconditional $(k+1)$-spreading model of $X$ generating a space isomorphic to
  $E$.
\end{lem}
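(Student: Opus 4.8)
The plan is to reduce the statement to the composition theorem (Theorem \ref{composition thm}) together with the structural dichotomy for weakly relatively compact spreading models (Corollary \ref{cor singular wrc}). First I would observe that since $E_0$ is reflexive, every spreading model of $E_0$ is weakly relatively compact, i.e.\ $\mathcal{SM}_1(E_0)=\mathcal{SM}_1^{wrc}(E_0)$; in particular $(e_n)_n$, being non trivial, falls under Corollary \ref{cor singular wrc}. Moreover the spreading model $(e_n)_n$ of $E_0$ is realized by a sequence in $E_0$, and since $(e^0_n)_n$ is a Schauder basis of $E_0$ we may, after a standard perturbation and passing to a subsequence, assume that the generating sequence is a block sequence with respect to $(e^0_n)_n$; hence $(e_n)_n$ is a plegma block generated $1$-spreading model of $E_0$ (using Remark \ref{remnj} for the passage from the ordinary to the plegma block viewpoint, since for $d=1$ plegma families are just finite sets).

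Next I would split into two cases according to Corollary \ref{cor singular wrc}. If $(e_n)_n$ is already unconditional, then in particular it is Schauder basic, so Theorem \ref{composition thm} (applied with the Schauder basic $k$-spreading model $(e^0_n)_n$, the space $E_0$, $d=1$, and the plegma block generated $1$-spreading model $(e_n)_n$) gives directly that $(e_n)_n\in\mathcal{SM}_{k+1}(X)$, and the space it generates is $E$; this is the desired conclusion. If instead $(e_n)_n$ is singular, write its natural decomposition $e_n=e'_n+e$; by Corollary \ref{cor singular wrc}, $(e'_n)_n\in\mathcal{SM}_1^{wrc}(E_0)$, it is unconditional, weakly null, Ces\`aro summable to zero, and by Remark \ref{properties of the natural decomposition} the space $E'$ generated by $(e'_n)_n$ satisfies $E\cong E'\oplus\langle e\rangle$, so $E'$ is isomorphic to $E$. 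Applying the same block-perturbation argument to $(e'_n)_n$, it is a plegma block generated (hence, since $d=1$, ordinary block generated) $1$-spreading model of $E_0$, and it is Schauder basic; so Theorem \ref{composition thm} yields $(e'_n)_n\in\mathcal{SM}_{k+1}(X)$. Thus $(e'_n)_n$ is an unconditional $(k+1)$-spreading model of $X$ generating a space isomorphic to $E$, which is exactly what is claimed.

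The step I expect to require the most care is the passage from "$(e_n)_n$ (resp.\ $(e'_n)_n$) is a spreading model of $E_0$" to "it is a \emph{plegma block generated} $1$-spreading model of $E_0$" with the same sequence $(\widetilde e_n)_n$ realized. Concretely: the spreading model of $E_0$ is generated by some normalized sequence $(z_n)_n$ in $E_0$; one first passes to a subsequence along which $(z_n)_n$ either is equivalent to the $\ell^1$ basis (impossible here, as $(e_n)_n$ is assumed non trivial but this case would not obstruct — one must keep $\ell^1$ in mind, but actually $\ell^1$ is perfectly allowed as an unconditional spreading model, so no obstruction) or, after a small perturbation using that $(e^0_n)_n$ is a Schauder basis, is a block sequence of $(e^0_n)_n$ up to arbitrarily small error, and passing to a further subsequence one gets an exact block sequence generating an equivalent — indeed, after relabelling, the same — spreading model. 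One should note that a block sequence of a Schauder basis, viewed as a $1$-sequence indexed by $[\nn]^1$, is automatically plegma block in the sense of the paper (since plegma pairs in $[\nn]^1$ are just pairs $m<n$). The only genuinely delicate point is ensuring the perturbation does not change the limiting spreading model, which is the standard principle of small perturbations for spreading models and is routine. The rest of the argument is just bookkeeping with the already-established Theorems \ref{composition thm} and Corollary \ref{cor singular wrc}.
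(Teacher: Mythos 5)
Your overall strategy is the same as the paper's: realize $(e_n)_n$ (or, in the singular case, the unconditional part $(e'_n)_n$ of its natural decomposition, using Remark \ref{properties of the natural decomposition} to see that it spans a space isomorphic to $E$) as a plegma block generated $1$-spreading model of $E_0$ and then invoke Theorem \ref{composition thm}. However, there is a genuine gap in the step you yourself flag as delicate: the claim that, after a subsequence and a small perturbation, the generating sequence $(z_n)_n\subseteq E_0$ may be assumed to be a block sequence of $(e^0_n)_n$. Since $E_0$ is reflexive, $(z_n)_n$ has a weakly convergent subsequence, but the weak limit $x_0$ need not be zero, and a sequence with $x_0\neq 0$ is \emph{not} a small perturbation of any block sequence (each $z_n$ retains a fixed nonzero coordinate pattern). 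The gliding-hump perturbation only works for weakly null sequences. Your parenthetical about $\ell^1$ does not repair this: the issue is not whether $\ell^1$ is "allowed" as a spreading model, and the alternative "$(z_n)_n$ equivalent to the $\ell^1$ basis" cannot even occur in a reflexive $E_0$. The case that actually escapes your argument is: $(e_n)_n$ unconditional and equivalent to the usual basis of $\ell^1$, generated by a sequence with nonzero weak limit. (When $(e_n)_n$ is unconditional and \emph{not} equivalent to the $\ell^1$ basis, Theorem \ref{nb} forces $x_0=0$, so there your perturbation is legitimate; and in your singular case you correctly work with the weakly null sequence $(z_n-x_0)_n$.)

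The fix is exactly what the paper does: in the $\ell^1$ case, replace $(z_n)_n$ by $(z_n-x_0)_n$, which is weakly null and, by Lemma \ref{triv-ell}(c), still generates a spreading model equivalent to the usual basis of $\ell^1$; perturbing \emph{this} sequence to a block sequence and applying Theorem \ref{composition thm} produces a $(k+1)$-spreading model of $X$ equivalent to the $\ell^1$ basis, whose generated space is isomorphic to $E\cong\ell^1$. Note that in this branch one only recovers the spreading model up to equivalence rather than "the same sequence," which is all the lemma requires. With this case added, your proof is complete and coincides in substance with the paper's (the paper organizes the cases by $x_0=0$ versus $x_0\neq0$ and by whether $(e_n)_n\sim\ell^1$, rather than by unconditional versus singular, but the content is the same).
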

\begin{proof}
Let $(x_n)_n$ be a sequence in $E_0$ generating $(e_n)_n$ as a
spreading model. Since $E_0$ is reflexive, we may assume that
$(x_n)_n$ is weakly convergent to some $x_0\in E_0$. If $x_0=0$,
then $(e_n)_n$ is unconditional and it is generated by a block
sequence in $E_0$, while  if $(e_n)_n$ is equivalent to the usual
basis of $\ell^1$ then $E_0$ contains a block sequence generating an
$\ell^1$ spreading model. Therefore, in both cases the result
follows by Theorem \ref{composition thm}. Assume that $x_0\neq0$ and
$(e_n)_n $ is not equivalent to the usual basis of $\ell^1$. Let
$x'_n=x_n-x_0$, for all $n\in\nn$. By Theorem \ref{nb}, we have that
$(e_n)_n$ is singular and $(e'_n)_n$ is the unique spreading model
of $(x'_n)_n$, where $e_n=e'_n+e$ is the natural decomposition of
$(e_n)_n$. Since $(x'_n)_n$ is weakly null, we have that $(e'_n)_n$
is generated by a block sequence in $E_0$ as a spreading model.
Hence, by Theorem \ref{composition thm}, the sequence $(e'_n)_n$ is
a $(k+1)$-spreading model of $X$. Moreover, by Remark
\ref{properties of the natural decomposition},  $(e'_n)_n$ is
unconditional and the space $E'$ generated by $(e'_n)_n$ is
isomorphic to $E$.
\end{proof}

\begin{prop}\label{iterated_lemma}
Let $X$ be a reflexive space and $k\in\nn$ such that every space
generated by a $k$-iterated spreading model of $X$ is reflexive.
Then every space generated by a $(k+1)$-iterated spreading model of
$X$ is isomorphic to the space generated by an unconditional
$(k+1)$-spreading model of $X$.
\end{prop}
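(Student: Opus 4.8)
The plan is to prove, by induction on $j$, the statement $Q(j)$: \emph{every space generated by a $j$-iterated spreading model of $X$ is isomorphic to a space generated by an unconditional $j$-spreading model of $X$}, for $1\le j\le k+1$; the proposition is exactly $Q(k+1)$. Before starting the induction I would record that, since the $j$-iterated spreading models of $X$ form an increasing hierarchy (the $j$-iterated ones are among the $(j+1)$-iterated ones), the hypothesis forces every space generated by a $j$-iterated spreading model of $X$ to be reflexive for all $j\le k$; this reflexivity is the input that Lemma \ref{iter_inter} requires at each stage.

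For the base case $j=1$ I would note that a $1$-iterated spreading model is precisely a non trivial spreading model of $X$, i.e.\ a non trivial element of $\mathcal{SM}_1(X)=\mathcal{SM}^{wrc}_1(X)$, the equality because $X$ is reflexive. By Corollary \ref{cor singular wrc} such a sequence $(e_n)_n$ is either unconditional, in which case $Q(1)$ holds trivially, or singular with natural decomposition $e_n=e'_n+e$, in which case $(e'_n)_n\in\mathcal{SM}^{wrc}_1(X)=\mathcal{SM}_1(X)$ is unconditional and generates a space isomorphic to the one generated by $(e_n)_n$. Hence $Q(1)$ holds.

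For the inductive step I would assume $Q(j)$ for some $j$ with $1\le j\le k$ and take a space $E$ generated by a $(j+1)$-iterated spreading model, that is, generated by a non trivial spreading model $(e_n)_n$ of a space $E_0$ which is itself generated by a $j$-iterated spreading model of $X$. Applying $Q(j)$ gives an isomorphism $T\colon E_0\to\widetilde{E}_0$ with $\widetilde{E}_0$ generated by an unconditional — hence Schauder basic — $j$-spreading model of $X$; moreover $\widetilde{E}_0$ is reflexive, being isomorphic to the reflexive space $E_0$. I would then transfer the spreading model across $T$: choosing $(x_n)_{n\in M}$ in $E_0$ generating $(e_n)_n$ and passing to a further subsequence $L\subseteq M$ along which $(Tx_n)_{n\in L}$ generates a spreading model $(\widehat{e}_n)_n$ of $\widetilde{E}_0$, a comparison of the defining finitely supported norms, using $\|T\|$ and $\|T^{-1}\|$ to sandwich them, shows that $(\widehat{e}_n)_n$ is equivalent to $(e_n)_n$; in particular $(\widehat{e}_n)_n$ is non trivial and the space it generates is isomorphic to $E$. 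Now Lemma \ref{iter_inter}, applied with $j$ in place of $k$ to the Schauder basic $j$-spreading model generating the reflexive space $\widetilde{E}_0$ and to its non trivial spreading model $(\widehat{e}_n)_n$, produces an unconditional $(j+1)$-spreading model of $X$ generating a space isomorphic to the one generated by $(\widehat{e}_n)_n$, hence isomorphic to $E$. This establishes $Q(j+1)$, and the induction yields $Q(k+1)$.

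I expect the only delicate points to be the transfer of spreading models through an isomorphism — routine once one writes down the limits defining the seminorm of the spreading model and invokes $\|T\|,\|T^{-1}\|$, but worth spelling out since Lemma \ref{iter_inter} is stated for a spreading model \emph{of} the space generated by a $k$-spreading model, not merely of an isomorphic copy — together with the bookkeeping that makes the reflexivity hypothesis available precisely where Lemma \ref{iter_inter} needs it. There is no genuinely hard step: the substance is already packaged in Lemma \ref{iter_inter} and Corollary \ref{cor singular wrc}, and the proposition is essentially their inductive combination.
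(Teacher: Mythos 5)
Your proof is correct and follows essentially the same route as the paper: the base case via Corollary \ref{cor singular wrc}, the inductive step via Lemma \ref{iter_inter}, and the same transfer of spreading models through an isomorphism (which the paper also uses, though it states it without the $\|T\|,\|T^{-1}\|$ justification you supply). The only difference is cosmetic: the paper inducts externally on the parameter $k$ of the proposition, while you run one internal induction on the level $j\le k+1$ for a fixed $k$; the ingredients and their deployment are identical.
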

\begin{proof}
 We first treat the case $k=1$. So  assume that $X$ as well as
every space generated by a  spreading model of $X$ is reflexive. Let
$(\widetilde{e}_n)_n$ be a $2$-iterated spreading model of $X$ and
$\widetilde{E}$  be the  space generated by $(\widetilde{e}_n)_n$.
Also let $\widetilde{E}_0$ be the space generated by a spreading
model of $X$ such that $(\widetilde{e}_n)_n$ is a spreading model of
$\widetilde{E}_0$. Since $X$ is reflexive, by Corollary \ref{cor
singular wrc}, we conclude  that $\widetilde{E}_0$ is isomorphic to
a space $E_0$, generated by an unconditional spreading model of $X$.
Moreover, by our assumption $E_0$ is also reflexive. Summarizing,
the space $E_0$ is reflexive, it  has a Schauder basis which is a
spreading model of $X$ and it  is isomorphic to $\widetilde{E}_0$.
Therefore, $E_0$ admits  a spreading model $(e_n)_n$  equivalent to
$(\widetilde{e}_n)_n$. Let $E$ be the space generated by $(e_n)_n$.
By Lemma \ref{iter_inter}, there exists  an unconditional
$2$-spreading model of $X$ generating a space isomorphic to $E$.
Since $E$ is isomorphic to $\widetilde{E}$ the proof of the
proposition for $ k=1$ is completed.

We proceed by induction. Assume that the proposition holds for some
$k\in\nn$ and let $X$ be a reflexive space such that every space
generated by a $(k+1)$-iterated spreading model of $X$ is reflexive.
Let $(\widetilde{e}_n)_n$ be a $(k+2)$-iterated spreading model of
$X$ and $\widetilde{E}$ be the space that it generates. Let
$\widetilde{E}_0$ be the space generated by a $(k+1)$-iterated
spreading model of $X$ admitting $(\widetilde{e}_n)_n$ as a
spreading model. Since the $k$-iterated spreading models of $X$ are
included in the $(k+1)$-iterated ones, we have that the  spaces
generated by the $k$-iterated spreading models of $X$ are reflexive.
Hence, by our  assumption that the proposition holds for the
positive integer $k$, we have that $\widetilde{E}_0$ is isomorphic
to some space $E_0$ generated by an unconditional $(k+1)$-spreading
model of $X$. Therefore, $E_0$ is  reflexive, it is generated by a
Schauder basic  $(k+1)$-spreading model of $X$ and admits a
spreading model $(e_n)_n$ equivalent to $(\widetilde{e}_n)_n$. Let
$E$ be the space generated by $(e_n)_n$. By Lemma \ref{iter_inter},
there exists  an unconditional $k+2$-spreading model of $X$
generating a space isomorphic to $E$. Since $E$ is isomorphic to
$\widetilde{E}$ the proof of  is completed.
\end{proof}

\begin{cor}\label{qwqwe}
  Let $X$ be a reflexive space such that for every $k\in\nn$, every
  space generated by an unconditional $k$-spreading model of $X$ is
  reflexive. Then for every $k\in\nn$, every space generated by a
  $k$-iterated spreading model of $X$ is isomorphic to the space
  generated by an unconditional $k$-spreading model of $X$.
\end{cor}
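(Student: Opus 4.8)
The plan is to argue by induction on $k\in\nn$, with Proposition \ref{iterated_lemma} carrying the inductive step. The only subtlety is that Proposition \ref{iterated_lemma} requires, as a hypothesis, that every space generated by a $k$-iterated spreading model of $X$ be reflexive; I would obtain this at each stage from the inductive conclusion together with the standing assumption on unconditional spreading models, using that reflexivity is an isomorphic invariant.

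For the base case $k=1$, recall that the $1$-iterated spreading models of $X$ are precisely its non-trivial spreading models, and since $X$ is reflexive every bounded sequence is weakly relatively compact, so $\mathcal{SM}_1^{wrc}(X)=\mathcal{SM}_1(X)$. Thus Corollary \ref{cor singular wrc} applies: a non-trivial $1$-iterated spreading model $(e_n)_n$ is either unconditional, in which case there is nothing to prove, or singular, in which case its natural decomposition $e_n=e'_n+e$ yields an unconditional $1$-spreading model $(e'_n)_n$ of $X$ whose generated space is isomorphic to that of $(e_n)_n$ by Remark \ref{properties of the natural decomposition}. Hence the statement holds for $k=1$.

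For the inductive step, suppose the statement holds for some $k\in\nn$. Let $\widetilde{E}_0$ be any space generated by a $k$-iterated spreading model of $X$. By the inductive hypothesis $\widetilde{E}_0$ is isomorphic to a space $E_0$ generated by an unconditional $k$-spreading model of $X$, and by the standing hypothesis $E_0$ is reflexive; hence $\widetilde{E}_0$ is reflexive. Since this holds for every such $\widetilde{E}_0$, the hypothesis of Proposition \ref{iterated_lemma} is verified for the integer $k$ (together with the reflexivity of $X$ itself, which is assumed). Applying that proposition, every space generated by a $(k+1)$-iterated spreading model of $X$ is isomorphic to the space generated by an unconditional $(k+1)$-spreading model of $X$, completing the induction.

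I expect no genuine obstacle here beyond bookkeeping, since the real work has already been done in Proposition \ref{iterated_lemma} and the composition theorem behind it. The one point that needs care is that the reflexivity premise of Proposition \ref{iterated_lemma} cannot simply be assumed outright but must be re-derived at each level of the induction, and this is exactly what the inductive hypothesis, combined with the assumption on unconditional $k$-spreading models, provides.
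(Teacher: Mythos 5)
Your proposal is correct and follows essentially the same route as the paper: the base case via Corollary \ref{cor singular wrc} together with the standing reflexivity assumption, and the inductive step via Proposition \ref{iterated_lemma}, with the reflexivity premise of that proposition re-established at each level from the inductive hypothesis. The paper states this argument only in outline; your write-up simply makes the bookkeeping explicit.
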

\begin{proof}
By Corollary \ref{cor singular wrc} we have that every space
generated by a spreading model of $X$ is isomorphic to the space
generated by an unconditional spreading model of $X$ and therefore
it is reflexive. The proof is carried out by induction and using
Proposition \ref{iterated_lemma}.
\end{proof}


\begin{rem}
  As it is well known, see \cite{BL},
  every non trivial spreading model of $c_0$ generates a space
  isomorphic to $c_0$. This easily implies that every $k$-iterated
  spreading model of $c_0$ generates a space isomorphic to $c_0$.
  On the other hand, as we will see in Section \ref{spreading models of c_0 and
  l^p}, the class of the $2$-spreading models of $c_0$ includes
  all spreading bimonote Schauder basic sequences yielding
  the existence of 2-spreading models which are not 2-iterated ones.
\end{rem}

\begin{rem}
  H.P. Rosenthal had asked whether every 2-iterated spreading
  model of a Banach space $X$ is actually a classical one. In
  \cite{BM} a Banach space $X$ has been constructed not admitting
  $\ell^1$ as a spreading model, while there is a spreading model
  generating a space which contains $\ell^1$. Thus $\ell^1$ occurs
  as 2-iterated spreading model but not as a classical one. A more
  striking result (see \cite{AOTS}) asserts the existence of a
  Banach space $X$ not admitting $\ell^1$ as a spreading model but
  $\ell^1$ is isomorphic to a subspace of every space generated
  generated by a non trivial spreading model of $X$. It remains
  open if for every $k\in\nn$ there exists a Banach space $X_{k+1}$
  such that the class of $(k+1)$-iterated spreading models strictly
  includes the corresponding one of $k$-iterated.
\end{rem}


\section{$k$-spreading models equivalent  to the  $\ell^1$ basis}\label{chapter 9}


In this section we study the properties of the $k$-spreading models equivalent to the usual basis of $\ell^1$.

\subsection{Splitting spreading sequences equivalent to the  $\ell^1$ basis} In this subsection
we present some stability properties of spreading sequences in
seminormed linear spaces which are actually related to the non
distortion of $\ell^1$ (c.f. \cite{J2}).

Let $(e_n)_{n}$ be a spreading sequence in a seminormed linear
space $(E, \|\cdot\|_*)$ and $c>0$. We say that $(e_n)_n$
\emph{admits a lower $\ell^1$-estimate of constant} $c$, if  for
every $n\in\nn$ and $a_1,\ldots,a_n\in\rr$, we have $c\sum_{i=1}^n
|a_i|\leq \big\|\sum_{i=1}^n a_i e_i\big \|_*$.

\begin{prop}\label{propsplitl1}
Let $(E,\|\cdot\|_\circ),(E_1,\|\cdot\|_*),(E_2,\|\cdot\|_{**})$ be
seminormed linear spaces and   $(e_n)_{n},(e_n^1)_{n}$ and
$(e_n^2)_{n}$ be spreading sequences in $E, E_1$ and $E_2$
respectively. Assume that for every $n\in\nn$ and
$a_1,\ldots,a_n\in\rr$, we have
\begin{equation}\label{gt1}\Big\|\sum_{i=1}^na_ie_i\Big\|_\circ\leq
\Big\|\sum_{i=1}^na_ie_i^1\Big\|_*+\Big\|\sum_{i=1}^na_ie_i^2\Big\|_{**}\end{equation}
If $(e_n)_n$ admits a lower $\ell^1$-estimate of constant $c>0$ and
$(e_n^2)_{n}$ does not admit any  lower $\ell^1$-estimate then
$(e_n^1)_{n}$ admits a lower $\ell^1$-estimate of the same constant
$c$.
\end{prop}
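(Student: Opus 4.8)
The plan is to fix $n\in\nn$, scalars $a_1,\dots,a_n\in\rr$ and $\ee>0$, and to prove that $c\sum_{i=1}^n|a_i|\le\|\sum_{i=1}^n a_ie_i^1\|_*+\ee\sum_{i=1}^n|a_i|$. Since $\ee>0$ is arbitrary, this yields $\|\sum_{i=1}^n a_ie_i^1\|_*\ge c\sum_{i=1}^n|a_i|$, i.e. a lower $\ell^1$-estimate of constant $c$ for $(e_n^1)_n$, which is the assertion.

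The first ingredient is the hypothesis on $(e_n^2)_n$. Since $(e_n^2)_n$ admits no lower $\ell^1$-estimate, it in particular does not admit one of constant $\ee$; hence, after normalizing, there are $m\in\nn$ and $b_1,\dots,b_m\in\rr$ with $\sum_{j=1}^m|b_j|=1$ and $\|\sum_{j=1}^m b_je_j^2\|_{**}<\ee$.

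The heart of the argument is to apply the assumed inequality (\ref{gt1}) not to the scalars $(a_i)_{i=1}^n$ but to the ``tensored'' family of $nm$ scalars $c_{(i-1)m+j}:=a_ib_j$, for $1\le i\le n$ and $1\le j\le m$; note that $\sum_{k=1}^{nm}|c_k|=\big(\sum_i|a_i|\big)\big(\sum_j|b_j|\big)=\sum_i|a_i|$. I would then establish three estimates, all exploiting that for the enumeration $(i,j)\mapsto(i-1)m+j$ both one-parameter slices are strictly increasing (the slice $i\mapsto(i-1)m+j$ for each fixed $j$, and the slice $j\mapsto(i-1)m+j$ for each fixed $i$). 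Namely: (a) by the lower $\ell^1$-estimate of $(e_n)_n$, $\|\sum_k c_ke_k\|_\circ\ge c\sum_k|c_k|=c\sum_i|a_i|$; (b) rewriting $\sum_k c_ke_k^1=\sum_{j=1}^m b_j\big(\sum_{i=1}^n a_ie_{(i-1)m+j}^1\big)$ and using that $(e_n^1)_n$ is spreading along each increasing slice $i\mapsto(i-1)m+j$, we get $\|\sum_i a_ie_{(i-1)m+j}^1\|_*=\|\sum_i a_ie_i^1\|_*$, whence the triangle inequality together with $\sum_j|b_j|=1$ gives $\|\sum_k c_ke_k^1\|_*\le\|\sum_i a_ie_i^1\|_*$; (c) rewriting instead $\sum_k c_ke_k^2=\sum_{i=1}^n a_i\big(\sum_{j=1}^m b_je_{(i-1)m+j}^2\big)$ and using that $(e_n^2)_n$ is spreading along each increasing slice $j\mapsto(i-1)m+j$, we get $\|\sum_j b_je_{(i-1)m+j}^2\|_{**}=\|\sum_j b_je_j^2\|_{**}<\ee$, whence $\|\sum_k c_ke_k^2\|_{**}\le\ee\sum_i|a_i|$. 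Substituting (a), (b) and (c) into (\ref{gt1}) applied to the tensored family yields $c\sum_i|a_i|\le\|\sum_i a_ie_i^1\|_*+\ee\sum_i|a_i|$, which is exactly what was needed.

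The only genuinely delicate point — and really the crux — is choosing the enumeration of the pairs $(i,j)$ so that the two mutually incompatible groupings are simultaneously legitimate: the grouping by $j$ is what lets us bound the $E_1$-seminorm of the tensored vector \emph{from above} by $\|\sum_i a_ie_i^1\|_*$, while the grouping by $i$ is what lets us bound its $E_2$-seminorm \emph{from above} by $\ee\sum_i|a_i|$. The block enumeration $k=(i-1)m+j$ achieves exactly this, being monotone in $i$ for each fixed $j$ and monotone in $j$ for each fixed $i$. Beyond this, the proof uses only the triangle inequality, the definition of a spreading sequence, and the elementary identity $\sum_{i,j}|a_ib_j|=\big(\sum_i|a_i|\big)\big(\sum_j|b_j|\big)$.
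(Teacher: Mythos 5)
Your proof is correct and uses exactly the paper's key idea: tensoring the coefficients $a_i$ with a nearly-null combination $b_j$ for $(e_n^2)_n$, enumerating the pairs as $k=(i-1)m+j$ so that both slice-groupings are increasing, and applying the spreading property along each slice to bound the $E_1$- and $E_2$-seminorms. The paper phrases it as a proof by contradiction while you argue directly and let $\ee\to0$, but this is only a cosmetic difference.
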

\begin{proof}
Suppose on the contrary that $(e_n^1)_{n}$ does not admit a lower
$\ell^1$-estimate of constant $c$. Then there exist $\varepsilon
>0$, $n\in\nn$ and $a_1,\ldots,a_n\in\rr$ with $\sum_{i=1}^n|a_i|=1$
such that $\|\sum_{i=1}^n a_i e_i^1\|_*<c-\varepsilon$. Also  since
$(e_n^2)_{n}$ does not admit any lower $\ell^1$-estimate, there
exist $m\in\nn$ and $b_1,\ldots,b_m\in\rr$ such that
$\sum_{j=1}^m|b_j|=1$ and $\| \sum_{j=1}^mb_j e_j^2
\|_{**}<\varepsilon/2$. Hence,  we get that
\begin{equation}\label{gt2}\begin{split} \Big{\|}\sum_{i=1}^n\sum_{j=1}^m a_i\cdot
b_j e^1_{(i-1)m +j}\Big{\|}_*& \leq \sum_{j=1}^m |b_j|\Big{\|}
\sum_{i=1}^na_i e^1_{(i-1)m +j}\Big{\|}_* < c-\varepsilon
\end{split}\end{equation}
and similarly \begin{equation}\label{gt3}\begin{split}
\Big{\|}\sum_{i=1}^n\sum_{j=1}^m a_i\cdot b_j e^2_{(i-1)m
+j}\Big{\|}_{**}& \leq \sum_{i=1}^n|a_i|\Big{\|}\sum_{j=1}^m b_j
e^2_{(i-1)m +j}\Big{\|}_{**} < \frac{\varepsilon}{2}
\end{split}\end{equation}
But then by  (\ref{gt1}), we obtain  that
\[\begin{split}\Big{\|}\sum_{i=1}^n\sum_{j=1}^m a_i\cdot b_j e_{(i-1)m +j}\Big{\|}_\circ
&\leq  \Big{\|}\sum_{i=1}^n\sum_{j=1}^m a_i\cdot b_j e^1_{(i-1)m +j}\Big{\|}_*\\
&+ \Big{\|}\sum_{i=1}^n\sum_{j=1}^m a_i\cdot b_j e^2_{(i-1)m
+j}\Big{\|}_{**} \stackrel{(\ref{gt2}),
(\ref{gt3})}{<}c-\frac{\varepsilon}{2}\end{split}\] which since
$\sum_{i=1}^n\sum_{j=1}^m |a_i|\cdot|b_j|=1$, contradicts that
$(e_n)_n$ admits a lower $\ell^1$-estimate of constant $c$.
\end{proof}

\begin{cor}\label{ultrafilter property for ell^1 spreading models}
Let $k\in\nn$ and $(x_s)_{s\in [\nn]^k},(x_s^1)_{s\in
[\nn]^k},(x_s^2)_{s\in [\nn]^k}$ be three $k$-sequences in a Banach
space $X$ such that  for all $s\in[\nn]^k$, $x_s=x_s^1+x_s^2$.
Assume that the $k$-sequences $(x_s)_{s\in[\nn]^k}$,
$(x^1_s)_{s\in[\nn]^k}$ and $(x^2_s)_{s\in[\nn]^k}$ generate the
sequences $(e_n)_{n}$, $(e_n^1)_{n}$ and $(e_n^2)_{n}$ respectively,
as  $k$-spreading models. If $(e_n)_n$ admits a lower
$\ell^1$-estimate of constant $c>0$ and  $(e_n^2)_{n}$ does not
admit any lower $\ell^1$-estimate then $(e_n^1)_{n}$ admits a lower
$\ell^1$-estimate of constant $c$.
\end{cor}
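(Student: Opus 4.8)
The plan is to reduce the whole statement to the ``axiomatic'' Proposition \ref{propsplitl1}, whose hypothesis is an inequality between three spreading sequences in seminormed spaces linked by (\ref{gt1}). So the only real content to supply is that the three $k$-spreading model seminorms are indeed linked by such an inequality. First I would use Proposition \ref{remark on the definition of spreading model} to put everything in a common frame: passing to a suitable $M\in[\nn]^\infty$ and replacing the three error sequences by a single null sequence $(\delta_n)_n$ of positive reals (parts (i) and (ii) of that proposition), we may assume that $(x_s)_{s\in[M]^k}$, $(x^1_s)_{s\in[M]^k}$ and $(x^2_s)_{s\in[M]^k}$ generate $(e_n)_n$, $(e^1_n)_n$ and $(e^2_n)_n$ respectively, all with respect to $(\delta_n)_n$ over the \emph{same} $M$.

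Next I would prove the key inequality
\[\Big\|\sum_{i=1}^n a_i e_i\Big\|_\circ\leq\Big\|\sum_{i=1}^n a_i e^1_i\Big\|_*+\Big\|\sum_{i=1}^n a_i e^2_i\Big\|_{**}\qquad(n\in\nn,\ a_1,\dots,a_n\in\rr).\]
By homogeneity it suffices to treat $a_1,\dots,a_n\in[-1,1]$. Fix any $l\geq n$ and, using Proposition \ref{rem34}(i), pick a plegma family $(s_j)_{j=1}^n\in\textit{Plm}_n([M]^k)$ with $s_1(1)\geq M(l)$. Applying the defining inequality (\ref{rsm}) for each of the three $k$-subsequences, together with $x_{s_j}=x^1_{s_j}+x^2_{s_j}$ and the triangle inequality in $X$, we get
\[\Big\|\sum_{i=1}^n a_i e_i\Big\|_\circ\leq\Big\|\sum_{i=1}^n a_i x_{s_i}\Big\|+\delta_l\leq\Big\|\sum_{i=1}^n a_i x^1_{s_i}\Big\|+\Big\|\sum_{i=1}^n a_i x^2_{s_i}\Big\|+\delta_l\leq\Big\|\sum_{i=1}^n a_i e^1_i\Big\|_*+\Big\|\sum_{i=1}^n a_i e^2_i\Big\|_{**}+3\delta_l,\]
and letting $l\to\infty$ establishes the inequality.

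Finally, since every $k$-spreading model is a spreading sequence (as noted after Definition \ref{Definition of spreading model}), the sequences $(e_n)_n$, $(e^1_n)_n$, $(e^2_n)_n$ are spreading sequences in their respective seminormed spaces and, by the previous paragraph, satisfy (\ref{gt1}). As $(e_n)_n$ admits a lower $\ell^1$-estimate of constant $c$ and $(e^2_n)_n$ admits no lower $\ell^1$-estimate, Proposition \ref{propsplitl1} yields that $(e^1_n)_n$ admits a lower $\ell^1$-estimate of constant $c$, which is exactly the assertion. The only step requiring any care is the first one, arranging a common $M$ and a common $(\delta_n)_n$ so that all three seminorms can be read off from the \emph{same} plegma families; once this is done, the argument is just the triangle inequality, a limit, and a citation.
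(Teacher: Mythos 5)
Your proof is correct and follows essentially the same route as the paper: establish the pointwise triangle inequality for the three $k$-sequences evaluated on common plegma families, pass to the limit to obtain the seminorm inequality (\ref{gt1}), and invoke Proposition \ref{propsplitl1}. The only difference is that you spell out the synchronization step (common $M$ and common $(\delta_n)_n$ via Proposition \ref{remark on the definition of spreading model}) that the paper leaves implicit in the phrase ``(\ref{sd}) implies that (\ref{gt1}) holds.''
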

\begin{proof} For every
$n\in\nn$,  $a_1,\ldots,a_n\in\rr$ and $(s_j)_{j=1}^n$ in $[\nn]^k$,
we have \begin{equation}\label{sd}\Big\|\sum_{j=1}^n
a_jx_{s_j}\Big\|\leq \Big\|\sum_{j=1}^n a_jx^1_{s_j}\Big\|+
\Big\|\sum_{j=1}^n a_jx^2_{s_j}\Big\|\end{equation} Let
$(E,\|\cdot\|_\circ),(E_1,\|\cdot\|_*),(E_2,\|\cdot\|_{**})$ be the
seminormed linear spaces with Hamel bases $(e_n)_{n},(e_n^1)_{n}$
and $(e_n^2)_{n}$ respectively. Notice that (\ref{sd}) implies that
(\ref{gt1}) holds and therefore the conclusion   follows by
Proposition \ref{propsplitl1}.
\end{proof}

\subsection{$k$-spreading models almost isometric to the $\ell^1$ basis}
Let $c>0$, $k\in\nn$ and $(x_s)_{s\in[\nn]^k}$ be a $k$-sequence in
a Banach space $X$. We will say that the $k$-sequence
$(x_s)_{s\in[\nn]^k}$ \textit{generates} $\ell^1$ \textit{as a
$k$-spreading model of constant} $c$, if $(x_s)_{s\in[\nn]^k}$
generates a $k$-spreading model $(e_n)_n$ which admits a lower
$\ell_1$-estimate of constant $c$.

\begin{prop}\label{Prop on almost isometric l^1 spr mod}
Let $X$ be a Banach space and $k\in\nn$. Assume that $X$ admits a $k$-spreading model equivalent to the usual basis of $\ell^1$. Then for every
$\varepsilon>0$ there exists a $k$-sequence $(y_s)_{s\in[\nn]^k}$ in
$X$ with $1-\varepsilon\leq \|y_s\|\leq 1$, for every $s\in
[\nn]^k$, which generates $\ell^1$ as a $k$-spreading model of
constant $1-\varepsilon$.
\end{prop}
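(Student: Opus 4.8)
The plan is to transfer the classical James non-distortion argument for $\ell^1$ down to the level of $k$-sequences, using a single ``renormalisation'' step made available by the a priori lower $\ell^1$-estimate in the hypothesis; the combinatorial device is the coordinate shift that already appears in the proof of Theorem~\ref{composition thm}. First I would fix a $k$-sequence $(x_s)_{s\in[\nn]^k}$ in $X$ generating a $k$-spreading model $(e_n)_n$ equivalent to the usual basis of $\ell^1$ and normalise it: by Proposition~\ref{remark on the definition of spreading model}(iii) I may assume $(x_s)_s$ generates $(e_n)_n$ with respect to $\nn$ (so that all relevant plegma families live in $[\nn]^k$ and shifting indices costs nothing), by (ii) that the controlling null sequence $(\delta_n)_n$ is decreasing and as small as I wish, and --- rescaling $(x_s)_s$ --- that $\|e_1\|_*=1$, whence $\|\sum a_ie_i\|_*\le\sum|a_i|$ (spreading plus the triangle inequality) and $c:=\inf\{\|\sum_{i=1}^n a_ie_i\|_*:\sum_{i=1}^n|a_i|=1\}>0$. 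Given $\ee>0$, I would then choose $N\in\nn$ and $a_1,\dots,a_N\in\rr$ with $\sum_{i=1}^N|a_i|=1$ and $\rho:=\|\sum_{i=1}^N a_ie_i\|_*\le c+\ee''$ for a small $\ee''$ pinned down at the end (possible because $c$ is the above infimum), set $s^{[i]}:=\{s(1)+i-1,\dots,s(k)+i-1\}$ for $s\in[\nn]^k$ and $1\le i\le N$, and define the $k$-sequence $z_s:=\sum_{i=1}^N a_i\,x_{s^{[i]}}$ (the shift \eqref{pl} of the proof of Theorem~\ref{composition thm}).

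The combinatorial heart is exactly as in that proof: if $M\in[\nn]^\infty$ has consecutive gaps $\ge N$ and grows fast enough (say $M(l)\ge Nl$), then for every $l$ and every plegma family $(t_j)_{j=1}^l$ in $[M]^k$ the concatenation $(t_1^{[1]},\dots,t_1^{[N]},t_2^{[1]},\dots,t_l^{[N]})$ is a plegma family in $[\nn]^k$ of length $Nl$ with first coordinate $\ge Nl$ --- this follows from the description of plegma families in Proposition~\ref{rem34}(i) together with the gap condition. Consequently, for such $M$ and $(t_j)_{j=1}^l$ with first coordinate $\ge M(l)$ and $|b_j|\le1$, the generation of $(e_n)_n$ by $(x_s)_s$ gives (the matching being to $e_1,\dots,e_{Nl}$ in order, and $|b_ja_i|\le1$)
\[\Big\|\sum_{j=1}^l b_jz_{t_j}\Big\|=\Big\|\sum_{j=1}^l\sum_{i=1}^N b_ja_i\,x_{t_j^{[i]}}\Big\|=\Big\|\sum_{j=1}^l b_jw_j\Big\|_*\pm\delta_{Nl},\qquad w_j:=\sum_{i=1}^N a_ie_{(j-1)N+i}.\]
Since $(e_n)_n$ is spreading, $(w_j)_j$ is itself a spreading sequence of identically shaped successive blocks, with $\|w_j\|_*=\rho$, with $\|\sum b_jw_j\|_*\le\rho\sum|b_j|$, and --- using the lower $\ell^1$-estimate of $(e_n)_n$ --- with $\|\sum b_jw_j\|_*\ge c\sum_j|b_j|\sum_i|a_i|=c\sum_j|b_j|$. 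Feeding the displayed estimate (with the null sequence $l\mapsto\delta_{Nl}$) into Lemma~\ref{old defn yields new} produces $M'\subseteq M$ such that $(z_s)_{s\in[M']^k}$ generates $(w_j)_j$ as a $k$-spreading model.

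To conclude I would discard an initial segment of $M'$ so that $\big|\|z_s\|-\rho\big|\le\delta$ for every remaining $s$, with $\delta$ as small as desired (using $\|z_s\|=\|\sum_i a_i x_{s^{[i]}}\|$ and the same estimate applied to the length-$N$ plegma family $(s^{[1]},\dots,s^{[N]})$), put $y_s:=(\rho+\delta)^{-1}z_s$, and relabel along $M'$ by Proposition~\ref{remark on the definition of spreading model}(iii) to obtain a $k$-sequence indexed by all of $[\nn]^k$. Then $\|y_s\|\le1$ and $\|y_s\|\ge\frac{\rho-\delta}{\rho+\delta}\ge1-\ee$ once $\delta$ is small relative to $c$; moreover $(y_s)_s$ generates the $k$-spreading model $((\rho+\delta)^{-1}w_j)_j$, which admits a lower $\ell^1$-estimate of constant $\frac{c}{\rho+\delta}\ge\frac{c}{c+\ee''+\delta}$, and the latter is $\ge1-\ee$ as soon as $\ee''+\delta\le\frac{\ee c}{1-\ee}$. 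That yields the required $k$-sequence.

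The real content --- which I would flag rather than calling it an obstacle --- is recognising that James's ``divide the near-extremal $\ell^1$-combination by its norm'' trick can be performed once and for all at the level of $k$-sequences, through the coordinate shift $s\mapsto s^{[i]}$ and \emph{without} raising the order from $k$ to $k+1$; the a priori bound $c>0$ is precisely what makes a single step enough. Everything else --- sparsity of $M$, control of the errors $\delta_\bullet$, and the final rescaling and relabelling --- is routine bookkeeping of exactly the kind already carried out in the proof of Theorem~\ref{composition thm}.
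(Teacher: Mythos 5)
Your proposal is correct and follows essentially the same route as the paper's proof: choose a near-extremal normalized $\ell^1$-combination $\sum a_ie_i$ of norm close to the infimum $c$, realize it along shifted copies $s^{[i]}$ of each index set (exactly the device of Theorem~\ref{composition thm}; the paper writes the shifts as $t_i^s=(M(p\cdot n_j+i-1))_j$ inside a sparse $M$ rather than shifting in $\nn$ and then sparsifying, which is the same thing), and divide by (approximately) the norm of that combination so that the resulting $k$-sequence is almost normalized and retains a lower $\ell^1$-estimate of constant close to $1$. The bookkeeping with the gaps of $M$, the concatenated plegma family of length $Nl$, and the final choice of $\varepsilon''$ and $\delta$ all match the paper's argument.
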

\begin{proof}
Let $(e_n)_n$ be a $k$-spreading model of $X$ which is equivalent to the usual basis of $\ell^1$. Also let $c=\inf
\|\sum_{j=1}^na_je_j\|$, taken over all $n\in\nn$ and
$a_1,\ldots,a_n\in\rr$ with $\sum_{j=1}^n|a_j|=1$. Let
$\varepsilon>0$ and choose $0<\varepsilon'<c$, $p\in\nn$ and
$b_1,...,b_p$ in $[-1,1]$ with $\sum_{i=1}^p|b_i|=1$   such that
\begin{equation}\label{re}\frac{c-\varepsilon'}{c+2\varepsilon'}\geq 1-\varepsilon\;\;\text{and}\;\;\;
c\leq \Big\|\sum_{i=1}^p b_i e_i\Big\|\leq  c+\varepsilon'\end{equation} Let $(x_s)_{s\in[\nn]^k}$ be a $k$-sequence is $X$ generating $(e_n)_n$ as a $k$-spreading model. By
passing to an infinite subset $M$ of $\nn$,  we may assume that for
every $n\in\nn$, $a_1,\ldots,a_n\in[-1,1]$ and
$(s_i)_{i=1}^n\in\textit{Plm}_n([M]^k)$ with $ s_1(1)\geq
M(n)$, we have
\begin{equation}\label{qk}\Bigg|\Big\|\sum_{i=1}^na_ix_{s_i}\Big\|-\Big\|\sum_{i=1}^na_i
e_i\Big\|\Bigg|\leq\varepsilon'\sum_{i=1}^n|a_i|\end{equation} Hence by (\ref{re}), for every $(s_i)_{i=1}^p
\in\textit{Plm}_p([M]^k)$ with $s_1(1)\geq M(p)$ we have that
\begin{equation}\label{lk}c-\varepsilon'\leq\Big\|\sum_{i=1}^p b_ix_{s_i}\Big\|\leq c+2\varepsilon'\end{equation}
For every $s=(n_1,...,n_k)\in[\nn]^k$, we set
\begin{equation}\label{mk}y_s=\frac{\sum_{i=1}^pb_i x_{t_i^s}}{c+2\ee'}
\;\text{where}\; t_i^s=\big(M(p\cdot n_j+i-1)\big)_{j=1}^k,\;\text{for all}\;1\leq i\leq p\end{equation}
Notice that
$(t_i^{s})_{i=1}^p\in\textit{Plm}_p([\nn]^k$ and
$t_1^s(1)=M(p\cdot s(1))\geq M(p)$. Hence,  by (\ref{re}) and
(\ref{lk}), it is clear that $1-\varepsilon\leq \|y_s\|\leq 1$.
Moreover, the $k$-subsequence $(y_s)_{s\in[\nn]^k}$ generates
$\ell^1$ as a $k$-spreading model of constant $1-\varepsilon$.
Indeed, let $l\in\nn$, $a_1,\ldots,a_l\in [-1,1]$ and
$(s_j)_{j=1}^l\in\textit{Plm}_l([\nn]^k)$ with $s_1(1)\geq l$.
Notice that $(t_i^{s_1})_{i=1}^p \;^\frown\;...\;^\frown
{(t_i^{s_l})_{i=1}^p}\in\textit{Plm}_{p\cdot l}([\nn]^k)$ and
$t_1^{s_1}(1)=M(p\cdot s_1(1))\geq
    M(p\cdot l)$. Hence,
\[\Big\|\sum_{j=1}^l a_j y_{s_j}\Big\|=
\Big\|\sum_{j=1}^la_j \cdot \sum_{i=1}^p \frac{ b_i
x_{t_i^{s_j}}}{c+2\ee'}\Big\| \stackrel{(\ref{qk})}{\geq}
\frac{c-\varepsilon'}{c+2\ee'}\sum_{j=1}^l\sum_{i=1}^p|a_j|\cdot|b_i|
\stackrel{(\ref{re})}{\geq}(1-\varepsilon)\sum_{j=1}^l|a_j|\] and the proof is complete.
\end{proof}
\begin{rem}\label{Rem on almost isometric l^1 spr mod}
If we additionally assume that $X$ has a Schauder basis and
$(x_s)_{s\in[M]^k}$ is plegma block (resp. plegma disjointly
supported) then by (\ref{mk}) it is easy to see  that
$(y_s)_{s\in[L]^k}$ is also plegma block (resp. plegma disjointly
supported).
\end{rem}

\subsection{Plegma block generated $k$-spreading models equivalent to the $\ell^1$ basis}
It well known that if a Banach space $X$ with a Schauder basis admits an $\ell^1$ spreading model, then $X$ contains a block sequence which generates an $\ell^1$ spreading model. In this subsection we extend this result. More precisely, we have the following.
\begin{thm}\label{getting block generated ell^1 spreading model}
Let $X$ be a Banach space with a Schauder basis and $k\in\nn$.
Suppose that $\mathcal{SM}_k^{wrc}(X)$ contains up to equivalence
the usual basis of $\ell^1$.  Then there exists a plegma block
generated $k$-spreading model of $X$ equivalent to the usual basis
of $\ell^1$.
\end{thm}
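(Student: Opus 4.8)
The plan is to replace the given $\ell^1$ $k$-spreading model by one carried by a $k$-sequence possessing a canonical tree decomposition, and then to show that one of the ``levels'' of that decomposition already generates a $k$-spreading model equivalent to the $\ell^1$ basis; such a level is automatically a plegma block $k$-sequence by Proposition~\ref{trocan}(iii), which finishes the proof.

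So fix $(e_n)_n\in\mathcal{SM}_k^{wrc}(X)$ equivalent to the usual basis of $\ell^1$. By Corollary~\ref{cor canonical tree with spr mod} there is a $k$-sequence $(x_s)_{s\in[\nn]^k}$ in $X$ which is subordinated with respect to the weak topology, generates $(e_n)_n$, and admits a canonical tree decomposition $(y_t)_{t\in[\nn]^{\leq k}}$; being obtained from a weakly relatively compact $k$-sequence, $(x_s)_{s\in[\nn]^k}$ is bounded. Put $x_0=y_\emptyset$ (the weak limit) and $x'_s=x_s-x_0=\sum_{j=1}^{k}y_{s|j}$. For $1\leq m\leq k$, every vector of the $k$-sequences $(y_{s|m})_{s\in[\nn]^k}$ and $\bigl(\sum_{j=m}^{k}y_{s|j}\bigr)_{s\in[\nn]^k}$ is a coordinate restriction of $x'_s$ (Proposition~\ref{trocan}(v)), so these $k$-sequences are bounded. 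Since every bounded $k$-sequence admits a $k$-spreading model, a finite iteration (using Proposition~\ref{remark on the definition of spreading model}(i) to keep all of them along one infinite set) yields $L\in[\nn]^\infty$ along which $(x'_s)_{s\in[L]^k}$ generates a $k$-spreading model $(e'_n)_n$, each $(y_{s|m})_{s\in[L]^k}$ generates $(f^m_n)_n$, and each $\bigl(\sum_{j=m}^{k}y_{s|j}\bigr)_{s\in[L]^k}$ generates $(g^m_n)_n$; note $(g^1_n)_n=(e'_n)_n$ and $(g^k_n)_n=(f^k_n)_n$. By Lemma~\ref{triv-ell}(c), $(e'_n)_n$ is equivalent to the $\ell^1$ basis, hence admits a lower $\ell^1$-estimate of some constant $c>0$.

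The crux is the claim that for at least one $m\in\{1,\dots,k\}$ the spreading sequence $(f^m_n)_n$ admits a lower $\ell^1$-estimate. Assume not. For $1\leq m\leq k-1$ the identity $\sum_{j=m}^{k}y_{s|j}=y_{s|m}+\sum_{j=m+1}^{k}y_{s|j}$ splits the $k$-sequence generating $(g^m_n)_n$ into the one generating $(f^m_n)_n$ and the one generating $(g^{m+1}_n)_n$, so Corollary~\ref{ultrafilter property for ell^1 spreading models}, applied along $L$ and using that $(f^m_n)_n$ admits no lower $\ell^1$-estimate, shows that a lower $\ell^1$-estimate of constant $c$ for $(g^m_n)_n$ is inherited by $(g^{m+1}_n)_n$. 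Since $(g^1_n)_n=(e'_n)_n$ has such an estimate, induction gives one for every $(g^m_n)_n$, in particular for $(f^k_n)_n=(g^k_n)_n$, contradicting the assumption. Let $m_0$ be an index furnished by the claim. As $(y_{s|m_0})_{s\in[\nn]^k}$ is bounded, $(f^{m_0}_n)_n$ is a bounded spreading sequence admitting a lower $\ell^1$-estimate, hence equivalent to the usual basis of $\ell^1$ (the matching upper estimate being automatic).

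Finally, by Proposition~\ref{trocan}(iii) the $k$-subsequence $(y_{s|m_0})_{s\in[L]^k}$ is plegma block, and re-indexing it through $L$ by Proposition~\ref{remark on the definition of spreading model}(iii) yields a plegma block $k$-sequence $(z_s)_{s\in[\nn]^k}$ in $X$ generating $(f^{m_0}_n)_n$, which is the desired plegma block generated $k$-spreading model of $X$ equivalent to the $\ell^1$ basis. I expect the key claim above to be the main obstacle: a priori the lower $\ell^1$-estimate of $(e'_n)_n$ is distributed over all $k$ levels $(y_{s|j})_s$, whose supports interleave along a plegma family, so no level is ``block over'' another in any visible way, and it takes the additive splitting principle of Corollary~\ref{ultrafilter property for ell^1 spreading models}---together with the bookkeeping of making every auxiliary $k$-subsequence generate its spreading model along one and the same $L$---to push the estimate onto a single level.
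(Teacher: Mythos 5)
Your proof is correct, but it takes a genuinely different route from the paper's. The paper first reduces to the minimal order $k_X$ for which $\mathcal{SM}_{k_X}^{wrc}(X)$ contains $\ell^1$ (using Remark \ref{remnj} to pass back up the hierarchy), and then, after producing the canonical tree decomposition, collapses the first $k-1$ levels into the single $(k-1)$-sequence $w_v=\sum_{t\sqsubseteq v}y_t$; by Lemma \ref{propxi} its spreading models lie in $\mathcal{SM}_{k-1}^{wrc}(X)$, which by minimality contains no $\ell^1$, so a single application of Corollary \ref{ultrafilter property for ell^1 spreading models} to the splitting $x_s=w_{s|k-1}+y_{s|k}$ forces the top level $(y_{s|k})_s$ to carry the lower $\ell^1$-estimate. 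You instead stay at order $k$, split $x_s-y_\emptyset$ into all $k$ levels at once, and iterate Corollary \ref{ultrafilter property for ell^1 spreading models} down the telescoping tail sums $g^m=f^m+g^{m+1}$ to locate some level $m_0$ carrying the estimate. Your argument is more self-contained: it needs neither the minimality/induction on $k$ nor Lemma \ref{propxi}, at the cost of a finite diagonalization to make all $2k$ auxiliary $k$-sequences generate spreading models along one $L$, and of $k-1$ applications of the splitting corollary instead of one. The paper's version buys the extra information that, at the minimal order, it is precisely the top level of the decomposition that witnesses $\ell^1$. All the supporting steps you invoke (boundedness of the levels via Proposition \ref{trocan}(v), equivalence to the $\ell^1$ basis from a lower estimate plus boundedness, plegma blockness via Proposition \ref{trocan}(iii), and re-indexing through $L$) are sound.
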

\begin{proof}
Let $k_{X}$ be the minimum  of all $k\in\nn$ such that the set
$\mathcal{SM}_k^{wrc}(X)$ contains a sequence equivalent to the
usual basis of $\ell^1$.
By Remark \ref{remnj}, it suffices to show that $\mathcal{SM}_{k_{X}}(X)$ contains a sequence equivalent to the usual basis of $\ell_1$ which is plegma block generated. For  $k_{X}=1$ this is a well known standard fact. So suppose that
$k_{X}=k\geq 2$ and let $(e_n)_n\in\mathcal{SM}_{k}^{wrc}(X)$ be
equivalent to the usual basis of $\ell^1$. By Corollary \ref{cor
canonical tree with spr mod}, we may assume that $(e_n)_n$ is
generated as a $k$-spreading model by a $k$-sequence
$(x_s)_{s\in[\nn]^{k}}$ which   is subordinated with respect to the
weak topology of $X$ and  admits a canonical tree decomposition
$(y_t)_{t\in[\nn]^{\leq k}}$.

Let $(w_v)_{v\in [\nn]^{k-1}}$ be the $(k-1)$-sequence in  $X$
defined by  $w_v=\sum_{t\sqsubseteq v}y_t$, for every
$v\in[\nn]^{k-1}$. Also let $(x'_s)_{s\in [\nn]^k}$, be the
$k$-sequence defined by $x'_s=w_{s|k-1}$, for every $s\in [\nn]^k$.
Notice that $(w_v)_{v\in [\nn]^{k-1}}$ is subordinated with respect
to the weak topology. Hence  $(w_v)_{v\in [\nn]^{k-1}}$ is a weakly
relatively compact $(k-1)$-sequence. Also, by Lemma \ref{propxi} we
have that $(w_v)_{v\in [\nn]^{k-1}}$ and  $(x'_s)_{s\in [\nn]^k}$
admit the same $(k-1)$-spreading models. Therefore, since the usual
basis of $\ell_1$ is not contained up to equivalence in
$\mathcal{SM}_{k-1}^{wrc}(X)$, we conclude that  $(x'_s)_{s\in
[\nn]^k}$ does not admit a $k$-spreading model equivalent to the
usual basis of $\ell^1$. Since $x_s=x'_s+y_s$, for all $s\in [\nn]^k$, by
Corollary \ref{ultrafilter property for ell^1 spreading models}, we
get  that the $k$-sequence $(y_s)_{s\in[\nn]^{k}}$ admits   a
$k$-spreading model equivalent to the usual basis of $\ell^1$. Since
$(y_s)_{s\in[\nn]^{k}}$ is a plegma block $k$-sequence in $X$ (see
Proposition \ref{trocan} (iii)), the proof is
complete.
\end{proof}

\subsection{Duality of $c_0$ and $\ell^1$ $k$-spreading models}
It is well known that if a Banach space $X$ admits a $c_0$ spreading model, then $X^*$ admits an $\ell^1$ spreading model. In this subsection we extend this result.
\begin{lem}\label{lem for duality}
Let $X$ be a Banach space with a Schauder basis, $k\in\nn$ and
$(x_s)_{s\in[\nn]^{k}}$ be a $k$-sequence  in $X$ which admits a
canonical tree decomposition $(y_t)_{t\in[\nn]^{\leq k}}$ and
generates  a $k$-spreading model equivalent to the usual basis of
$c_0$. Then $y_\emptyset=0$ and there exist $1\leq j_0\leq k$ and
$L\in[\nn]^\infty$ such that the  $k$-subsequence
$(y_{s|j_0})_{s\in[L]^k}$ is plegma block and generates $c_0$ as a
$k$-spreading model.
\end{lem}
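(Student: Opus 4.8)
The plan is to strip off the constant term $y_\emptyset$ first, and then show that among the $k$ ``levels'' $(y_{s|j})_{s}$, $1\le j\le k$, one already carries the $c_0$ behaviour; the plegma block structure of the levels comes for free from Proposition \ref{trocan}(iii), and the key analytic tool is the interval projection identity of Proposition \ref{trocan}(v).

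By Proposition \ref{remark on the definition of spreading model} and Proposition \ref{trocan}(i) we may assume, after relabelling, that $(x_s)_{s\in[\nn]^k}$ is bounded (say $\|x_s\|\le B$), that it generates $(e_n)_n$ as a $k$-spreading model with respect to a null sequence $(\delta_n)_n$ on all of $[\nn]^k$, and that $(e_n)_n$ is equivalent to the usual basis of $c_0$ with constant $C$, so $\tfrac1C\max_i|a_i|\le\|\sum a_ie_i\|_*\le C\max_i|a_i|$ and $\|\sum_{i=1}^me_i\|_*\le C$ for every $m$. Write $C_b$ for the basis constant of $X$, so that interval projections have norm at most $2C_b$; applying Proposition \ref{trocan}(v) to the one-element plegma family $(s)$ gives $y_{s|j}=I(x_s-y_\emptyset)$ with $I=[\min\text{supp}(y_{s|j}),\max\text{supp}(y_{s|j})]$, hence $K':=\sup_{t\neq\emptyset}\|y_t\|\le 2C_b(B+\|y_\emptyset\|)<\infty$. \emph{Step 1 ($y_\emptyset=0$).} Fix $n_0\in\nn$ and let $e^*_{n_0}$ be the corresponding biorthogonal functional of the basis of $X$. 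For each $m$ pick $(s_i)_{i=1}^m\in\textit{Plm}_m([\nn]^k)$ with $s_1(1)\ge m$; then $\sum_{i=1}^m x_{s_i}=m\,y_\emptyset+\sum_{j=1}^k\sum_{i=1}^m y_{s_i|j}$. For each fixed $j$ the supports $\text{supp}(y_{s_1|j}),\dots,\text{supp}(y_{s_m|j})$ are pairwise successive (Definition \ref{Def of plegma supported}(iv)), so $n_0$ lies in at most one of them; hence $|e^*_{n_0}(\sum_{i,j}y_{s_i|j})|\le kK'\|e^*_{n_0}\|$. Since also $|e^*_{n_0}(\sum_i x_{s_i})|\le\|e^*_{n_0}\|\,\|\sum_{i=1}^m x_{s_i}\|\le\|e^*_{n_0}\|(C+\delta_m)$, we get $m\,|e^*_{n_0}(y_\emptyset)|\le\|e^*_{n_0}\|(C+\delta_m+kK')$; letting $m\to\infty$ yields $e^*_{n_0}(y_\emptyset)=0$, and as the biorthogonal functionals separate points, $y_\emptyset=0$.

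\emph{Step 2 (each level is dominated by $c_0$).} By Proposition \ref{trocan}(iii) each $(y_{s|j})_{s\in[\nn]^k}$ is a plegma block $k$-sequence, bounded by $K'$; by iterated use of the existence theorem for $k$-spreading models together with Proposition \ref{remark on the definition of spreading model}(i) we find $L\in[\nn]^\infty$ so that $(x_s)_{s\in[L]^k}$ still generates $(e_n)_n$ and, for every $1\le j\le k$, $(y_{s|j})_{s\in[L]^k}$ generates a $k$-spreading model $(f^j_n)_n$ in a seminormed space $(E_j,\|\cdot\|_{(j)})$. Fix $m$ and $a_1,\dots,a_m\in[-1,1]$; for $(s_i)_{i=1}^m\in\textit{Plm}_m([L]^k)$ with $s_1(1)$ large, Proposition \ref{trocan}(v) (now with $y_\emptyset=0$) gives $I_j(\sum_i a_ix_{s_i})=\sum_i a_iy_{s_i|j}$ for $I_j=[\min\text{supp}(y_{s_1|j}),\max\text{supp}(y_{s_m|j})]$, so $\|\sum_i a_iy_{s_i|j}\|\le 2C_b\|\sum_i a_ix_{s_i}\|$; passing to the spreading-model limit ($s_1(1)\to\infty$ along $[L]^k$) yields
\[
\Big\|\sum_{i=1}^m a_if^j_i\Big\|_{(j)}\le 2C_b\Big\|\sum_{i=1}^m a_ie_i\Big\|_*\le 2C_bC\max_i|a_i|,
\]
i.e. each $(f^j_n)_n$ is dominated by the usual basis of $c_0$. \emph{Step 3 (a spreading sequence dominated by $c_0$ is $c_0$ or zero).} Let $(g_n)_n$ be a spreading sequence in a seminormed space with $\|\sum a_ig_i\|\le K\max_i|a_i|$ for all scalars. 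If $(g_n)_n$ is trivial, then $\|\sum a_ig_i\|=|\sum a_i|\,\|g_1\|$ by Proposition \ref{sing}, and $a_i\equiv1$ forces $\|g_1\|=0$. If $(g_n)_n$ is non-trivial, then $\|\tfrac1m\sum_{i=1}^m g_i\|\le K/m\to0$, so $(g_n)_n$ is Ces\`aro summable to zero; by Proposition \ref{equiv forms for 1-subsymmetric weakly null} it is $1$-unconditional and not equivalent to the $\ell^1$-basis, hence $\|\sum a_ig_i\|\ge\|g_1\|\max_i|a_i|$ with $\|g_1\|>0$, which together with the domination shows $(g_n)_n$ is equivalent to the usual basis of $c_0$.

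\emph{Step 4 (conclusion).} By Steps 2 and 3 each $(f^j_n)_n$, $1\le j\le k$, is either equivalent to the $c_0$-basis or identically zero. They cannot all be identically zero: for $(s_i)_{i=1}^m\in\textit{Plm}_m([L]^k)$ with $s_1(1)$ large we have $\sum_i a_ix_{s_i}=\sum_{j=1}^k\sum_i a_iy_{s_i|j}$, so in the limit $\|\sum_i a_ie_i\|_*\le\sum_{j=1}^k\|\sum_i a_if^j_i\|_{(j)}$, while $\|e_1\|_*\ge1/C>0$. Choosing $j_0$ with $(f^{j_0}_n)_n$ equivalent to the $c_0$-basis, the $k$-subsequence $(y_{s|j_0})_{s\in[L]^k}$ is plegma block by Proposition \ref{trocan}(iii) and generates $(f^{j_0}_n)_n$, i.e. it generates $c_0$ as a $k$-spreading model, which is exactly the conclusion.

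The main obstacle is Step 1: it rests on the observation that inside any plegma family, for each fixed level $j$ the blocks $y_{s_i|j}$ have pairwise successive supports, so a fixed basis coordinate is hit by at most $k$ of the $km$ pieces $y_{s_i|j}$; this is what lets the bounded quantity $\|\sum_{i=1}^m x_{s_i}\|$ (bounded because $(e_n)_n$ is $c_0$) force $y_\emptyset=0$. The remaining steps are a routine combination of the interval-projection property of canonical tree decompositions and the structure theory of non-trivial spreading sequences.
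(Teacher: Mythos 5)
Your proof is correct, and it rests on the same two pillars as the paper's: first killing $y_\emptyset$ using the boundedness of $\|\sum_{i=1}^m x_{s_i}\|$ (a consequence of the $c_0$-equivalence) against the fact that a fixed basis coordinate meets at most $k$ of the blocks $y_{s_i|j}$, and then exploiting the interval-projection identity of Proposition \ref{trocan}(v) to transfer the $c_0$-upper estimate from $(x_s)$ to a single level $(y_{s|j_0})$. The only place you diverge is in how $j_0$ is selected and how the lower $c_0$-estimate is obtained: the paper notes that seminormalization of $(x_s)$ forces some level to satisfy $\|y_{s|j}\|>\delta$, fixes that level by Ramsey's theorem, and then reads off $\frac{\delta}{2C}\max_i|a_i|\leq\|\sum_i a_i y_{s_i|j_0}\|$ directly from the block structure; you instead let every level generate a spreading model, show each is dominated by $c_0$, invoke the classification of spreading sequences (trivial-zero versus equivalent to the $c_0$-basis, via Proposition \ref{equiv forms for 1-subsymmetric weakly null}) and rule out that all levels degenerate by a triangle inequality. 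Both routes are valid; the paper's is slightly more economical, while yours gives the extra information that \emph{every} level's spreading model is either zero or $c_0$.
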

\begin{proof}
Since $(x_s)_{s\in[\nn]^{k}}$ generates a $k$-spreading model, we
have that $(x_s)_{s\in[\nn]^{k}}$ is seminormalized. Let $(e_n)$ be
the $k$-spreading model of $(x_s)_{s\in [\nn]^k}$. Since $(e_n)_n$
is equivalent to the usual basis of $c_0$, we have that $(e_n)_n$ is
Cesaro summable to zero. Using these observations we may easily
conclude that $y_\emptyset=0$. We also observe that there exists
$\delta>0$ such that for every  $s\in[\nn]^k$ there exists $1\leq
j\leq k$ such that $\|y_{s|j}\|>\delta$. Hence by Ramsey's theorem
there exists $1\leq j_0\leq k$ and $L\in[\nn]^\infty$ such that for
every $s\in[L]^k$, $\|y_{s|j_0}\|>\delta$.

Let  $n\in\nn$, $a_1,\ldots,a_n\in\rr$ and
$(s_i)_{i=1}^n\in\textit{Plm}_n([L]^k)$. If $I$ is the interval of
$\nn$ with $\min I=\min\text{supp}(y_{s_1|j_0})$ and $\max I= \max
\text{supp}(x_{s_n|j_0})$, then   Proposition \ref{trocan} (v) and
the fact that $y_\emptyset=0$, yield that
\[I\Big(\sum_{i=1}^na_jx_{s_i}\Big)=\sum_{i=1}^na_iy_{s_i|j_0}\]
Hence if $C$ is the basis constant of the Schauder basis of $X$, we
get that
\[\frac{\delta}{2C}\max_{1\leq i\leq n}|a_i|\leq\Big\|\sum_{i=1}^na_iy_{s_i|j_0}\Big\|\leq 2C\Big\|\sum_{i=1}^na_ix_{s_i}\Big\|\]
Therefore, since $(x_s)_{s\in[L]^{k}}$ generates $c_0$ as a
$k$-spreading model, we conclude that every
$k$-spreading model of $(y_{s|j_0})_{s\in [L]^k}$ is equivalent to the usual basis of
$c_0$.
\end{proof}
The above lemma  shows  that the analogue  of Theorem \ref{getting
block generated ell^1 spreading model} for the $c_0$ basis  also
holds. Namely we have the following.
\begin{cor}\label{getting block generated c_0 spreading model}
Let $X$ be a Banach space with a Schauder basis and $k\in\nn$.
Suppose that $\mathcal{SM}_k^{wrc}(X)$ contains up to equivalence
the usual basis of $c_0$.  Then there exists a plegma block
generated $k$-spreading model of $X$ equivalent to the usual basis
of $c_0$.
\end{cor}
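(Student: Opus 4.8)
The plan is to obtain the statement directly from Lemma \ref{lem for duality}, in the same spirit as the proof of Theorem \ref{getting block generated ell^1 spreading model}. First I would fix a sequence $(e_n)_n\in\mathcal{SM}_k^{wrc}(X)$ equivalent to the usual basis of $c_0$. Since $X$ has a Schauder basis, Corollary \ref{cor canonical tree with spr mod} lets me assume that $(e_n)_n$ is generated as a $k$-spreading model by a $k$-sequence $(x_s)_{s\in[\nn]^k}$ which is subordinated with respect to the weak topology of $X$ and admits a canonical tree decomposition $(y_t)_{t\in[\nn]^{\leq k}}$.

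Next I would apply Lemma \ref{lem for duality} to this $k$-sequence. It yields $y_\emptyset=0$ together with some $1\leq j_0\leq k$ and $L\in[\nn]^\infty$ such that the $k$-subsequence $(y_{s|j_0})_{s\in[L]^k}$ is plegma block and every $k$-spreading model it admits is equivalent to the usual basis of $c_0$. The family $(y_t)_{t\in[\nn]^{\leq k}}$ being a canonical tree decomposition forces $\sup_{s}\|y_{s|j_0}\|<\infty$, so by the existence theorem for $k$-spreading models I may pass to $N\in[L]^\infty$ with $(y_{s|j_0})_{s\in[N]^k}$ generating a $k$-spreading model $(\widetilde e_n)_n$; by the above, $(\widetilde e_n)_n$ is equivalent to the usual basis of $c_0$, and $(y_{s|j_0})_{s\in[N]^k}$ is still plegma block since plegma-blockness is hereditary.

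Finally, to see that $(\widetilde e_n)_n$ is \emph{plegma block generated} in the sense of the definition preceding Remark \ref{remnj}, I would re-index through $N$: by Proposition \ref{remark on the definition of spreading model}(iii), the $k$-sequence $z_s=y_{N(s)|j_0}$, $s\in[\nn]^k$, again generates $(\widetilde e_n)_n$, and since $N$ is strictly increasing it sends plegma pairs in $[\nn]^k$ to plegma pairs in $[N]^k$, whence $\text{supp}(z_{s_1})<\text{supp}(z_{s_2})$ for every plegma pair $(s_1,s_2)$ in $[\nn]^k$. Thus $(z_s)_{s\in[\nn]^k}$ is a plegma block $k$-sequence witnessing that some $k$-spreading model of $X$ equivalent to the usual basis of $c_0$ is plegma block generated.

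I do not expect any genuine obstacle at this stage: the entire difficulty has already been resolved in Lemma \ref{lem for duality}, whose proof isolates, via Proposition \ref{trocan}(v) and the interval-restriction trick, a single level $j_0$ of the tree decomposition whose norm dominates the $c_0$-norm up to the basis constant. The only points needing a sentence of justification are the passage to a subset on which an honest $k$-spreading model is generated and the stability of plegma-blockness under the re-indexing of Proposition \ref{remark on the definition of spreading model}, both immediate from Propositions \ref{trocan} and \ref{remark on the definition of spreading model}.
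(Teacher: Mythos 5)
Your proposal is correct and follows exactly the route the paper intends: the paper derives this corollary directly from Lemma \ref{lem for duality} (combined with Corollary \ref{cor canonical tree with spr mod}), and your argument fills in the same steps, including the harmless technicalities of passing to a further subset $N$ to actually generate a spreading model and re-indexing via Proposition \ref{remark on the definition of spreading model}(iii) to obtain a plegma block $k$-sequence indexed by $[\nn]^k$.
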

\begin{thm}
Let $X$ be a Banach space. Assume that for some $k\in\nn$ the set
$\mathcal{SM}_k^{wrc}(X)$ contains a sequence equivalent to
  the usual basis of $c_0$. Then $X^*$ admits $\ell^1$ as a
  $k$-spreading model.
\end{thm}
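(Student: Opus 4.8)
The plan is to produce a $k$-sequence $(f_s)_{s\in[\nn]^k}$ in the unit ball of $X^*$ which, along a suitable $k$-subsequence, generates a $k$-spreading model admitting a lower $\ell^1$-estimate (the matching upper estimate being automatic since $\|f_s\|\le 1$). The functionals will be chosen to norm the vectors of a \emph{plegma block} $c_0$-$k$-spreading model of $X$, and the decisive feature of a plegma block sequence is that the ``cross terms'' $f_s(x_t)$ over plegma pairs $(s,t)$ then vanish identically.

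First I would reduce to the case that $X$ has a Schauder basis: one realizes the $c_0$ $k$-spreading model by a weakly relatively compact $k$-sequence, which (by Proposition \ref{cor for subordinating}, Theorem \ref{nb} and Corollary \ref{cor singular wrc}, since the $c_0$ basis is unconditional and not singular) may be taken subordinated and weakly null, passes to the closed span $W$ of a suitable subfamily of its vectors chosen so that $W$ carries a Schauder basis and still admits the $c_0$ $k$-spreading model, and finally uses that $X^*\twoheadrightarrow W^*$ together with the fact that $k$-spreading models lift through metric surjections (a routine argument: lift the functionals by Hahn--Banach with exact norm, apply the existence theorem for $k$-spreading models, and observe that the lifted norms dominate the $W^*$-norms); I would not detail this step. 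So assume $X$ has a Schauder basis with basis constant $C$. By Corollary \ref{getting block generated c_0 spreading model} there is a plegma block $k$-sequence $(z_s)_{s\in[\nn]^k}$ in $X$, with all $z_s$ finitely supported (its construction goes through Corollary \ref{cor canonical tree with spr mod} and Lemma \ref{lem for duality}, which yield the finitely supported vectors $z_s=y_{s|j_0}$), generating a $k$-spreading model $(\hat e_n)_n$ equivalent to the usual basis of $c_0$. Rescaling, we may assume $\gamma\|v\|_{c_0}\le\|v\|_{*}\le\|v\|_{c_0}$ for some $\gamma\in(0,1]$; thus, with respect to a null sequence $(\delta_l)_l$ (which we may take decreasing), for every $l$, every plegma $p$-tuple $(v_j)_{j=1}^p$ in $[\nn]^k$ with $p\le l$ and $v_1(1)$ sufficiently large, and every choice of $|a_j|\le 1$, one has $\gamma\max_j|a_j|-\delta_l\le\|\sum_j a_j z_{v_j}\|\le\max_j|a_j|+\delta_l$ and in particular $\|z_{v_j}\|\ge\gamma-\delta_l$.

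Now for each $s\in[\nn]^k$ with $z_s\neq 0$ pick $g_s\in X^*$ with $\|g_s\|=1$ and $g_s(z_s)=\|z_s\|$, let $I_s$ be the least interval of $\nn$ containing $\text{supp}(z_s)$, and set $f_s=(2C)^{-1}P_{I_s}^*g_s$, where $P_{I_s}$ is the basis projection onto $I_s$ (and $f_s=0$ if $z_s=0$). Then $\|f_s\|\le 1$, $f_s(z_s)=\|z_s\|/(2C)$, and $f_s$ annihilates every vector whose support misses $I_s$. Since $(z_s)$ is plegma block, for a plegma pair $(s,t)$ the supports of $z_s$ and $z_t$ are disjoint, so $\text{supp}(z_t)\cap I_s=\emptyset$ and $f_s(z_t)=0$; as any two members of a plegma family form a plegma pair (Proposition \ref{rem34}(iii)), $f_{v_i}(z_{v_j})=0$ whenever $i\neq j$ and $(v_j)_j$ is a plegma family. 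Fix a plegma $p$-tuple $(v_j)_{j=1}^p$ with $p\le l$ and $v_1(1)$ large, and scalars $|a_j|\le 1$. Put $\phi=(1+\delta_l)^{-1}\sum_{j=1}^p\text{sgn}(a_j)\,z_{v_j}$, so $\|\phi\|\le 1$; the vanishing of the cross terms gives
\[\Big\|\sum_{j=1}^p a_j f_{v_j}\Big\|\ \ge\ \Big\langle\sum_{j=1}^p a_j f_{v_j},\,\phi\Big\rangle\ =\ \frac{1}{1+\delta_l}\sum_{j=1}^p|a_j|\,\frac{\|z_{v_j}\|}{2C}\ \ge\ \frac{\gamma-\delta_l}{2C(1+\delta_l)}\sum_{j=1}^p|a_j|,\]
while trivially $\|\sum_j a_j f_{v_j}\|\le\sum_j|a_j|$. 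Applying the existence theorem for $k$-spreading models to the bounded $k$-sequence $(f_s)_{s\in[\nn]^k}$ yields $N\in[\nn]^\infty$ such that $(f_s)_{s\in[N]^k}$ generates a $k$-spreading model $(\tilde e_n)_n$; letting $v_1(1)\to\infty$ in the two estimates (so $l\to\infty$, $\delta_l\to 0$) we get $(\gamma/2C)\sum_j|a_j|\le\|\sum_j a_j\tilde e_j\|\le\sum_j|a_j|$, hence $(\tilde e_n)_n\in\mathcal{SM}_k(X^*)$ is equivalent to the usual basis of $\ell^1$.

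The hard part is what the plegma-block reduction in the second paragraph circumvents. With a naive choice of $f_s$ norming a generic $x_s$, the cross terms $f_s(x_t)$ over plegma pairs $(s,t)$ are uncontrolled, and their sum over a plegma $p$-tuple could swamp the diagonal contribution $\gamma\sum_j|a_j|$. For $k=1$ one can suppress them by a diagonalization using that a weakly relatively compact $c_0$-spreading-model-generating sequence is weakly null; but for $k\ge 2$ the coordinates of $s$ and $t$ in a plegma pair are fully interleaved, which obstructs such an argument, and this is exactly the difficulty that forces us to pass to the plegma block component $(y_{s|j_0})$ of a canonical tree decomposition, turning $(z_{v_1},\dots,z_{v_p})$ into an honest block sequence. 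The remaining, purely technical, point is the reduction to spaces with a Schauder basis, which I have only indicated.
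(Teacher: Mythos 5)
Your argument is in essence the paper's own: both proofs replace the given $c_0$-generating $k$-sequence by the plegma block component $(\widetilde y_{s|j_0})$ of a canonical tree decomposition (Corollary \ref{cor canonical tree with spr mod} and Lemma \ref{lem for duality}), norm each block vector by a functional supported on its range so that all cross terms over plegma families vanish, and read off a lower $\ell^1$-estimate for the resulting $k$-sequence of functionals; your quantitative bookkeeping with $\phi=(1+\delta_l)^{-1}\sum_j\operatorname{sgn}(a_j)z_{v_j}$ and the passage to the limit along a $k$-subsequence generating a spreading model are correct. The one place where you diverge, and where your sketch does not work as stated, is the reduction to a space with a Schauder basis: a separable Banach space need not have a basis, and the closed span of (a subfamily of) the vectors $x_s$ need not either, so ``passing to a closed span $W$ carrying a Schauder basis'' is not available in general. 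The paper's device is to embed the separable subspace generated by the $k$-sequence isometrically into $C[0,1]$, which has a bimonotone basis, to run exactly your block-plus-norming-functionals argument there, and to pull the functionals back to $X^*$ via the adjoint of the embedding followed by norm-preserving Hahn--Banach extensions; this is precisely the ``lifting through a metric surjection'' you allude to, so the repair is routine, but the step you chose to omit is the one that genuinely requires the $C[0,1]$ embedding rather than an internal basis.
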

\begin{proof}
Let $(x_s)_{s\in[\nn]^k}$ be a subordinated $k$-sequence in $X$
generating $c_0$ as a spreading model. Let $Y$ separable subspace of
$X$ containing the $k$-sequence $(x_s)_{s\in[\nn]^k}$ and $T:Y\to
C[0,1]$ an isometry. Notice that $C[0,1]$ is a Banach space with a bimonotone
Schauder basis and $(T(x_s))_{s\in[\nn]^k}$ is subordinated. Let
$(\ee_n)_n$ a null sequence of positive reals. By Theorem
\ref{canonical tree} there exist  $L\in[\nn]^\infty$ and a
$k$-subsequence $(\widetilde{x}_s)_{s\in[L]^{k}}$ in $C[0,1]$
satisfying the following.
\begin{enumerate}
\item[(P1)] $(\widetilde{x}_s)_{s\in[L]^k}$ admits a canonical tree decomposition $(\widetilde{y}_t)_{t\in[M]^{\leq k}}$.
\item[(P2)] For every $s\in[L]^k$, $\|T(x_s)-\widetilde{x}_s\|<\ee_n$, where $\min
  s=L(n)$.
\end{enumerate}
Notice that property (P2) yields that
$(\widetilde{x}_s)_{s\in[L]^{k}}$ generates $c_0$ as a $k$-spreading
model. By Lemma \ref{lem for duality} there exist $M\in[L]^\infty$
and $1\leq j_0\leq k$ such that the plegma block $k$-subsequence
$(\widetilde{y}_{s|j_0})_{s\in[M]^k}$ generates $c_0$ as a
$k$-spreading model. For every $s\in[M]^k$ we pick
$\widetilde{y}_s^*\in S_{C[0,1]^*}$ with
$\widetilde{y}_s^*(\widetilde{y}_{s|j_0})=\|\widetilde{y}_{s|j_0}\|$
and $\text{supp}\;\widetilde{y}_s^*\subseteq \text{range}\;
\widetilde{y}_{s|j_0}$. For every $s\in[M]^k$ we set
$y^*_s=T^*(\widetilde{y}_s^*)$ and we choose $x^*_s$ in $X^*$ an
extension of $y^*_s$ of the same norm. It is easy to check that
$(x^*_s)_{s\in[M]^k}$ admits $\ell^1$ as a spreading model.
\end{proof}
\section{$k$-Ces\`aro summability vs $\ell^1$ $k$-spreading
models} In this  section we extend the well known dichotomy of
H.P. Rosenthal concerning Ces\`aro summability and $\ell^1$
spreading models (see also  \cite{AT}, \cite{M}). We start by
introducing the definition of the Ces\`aro summability for
$k$-sequences in Banach spaces.

\subsection{Definition of the $k$-Ces\`aro summability in  Banach
spaces}
\begin{defn}\label{Cesaro}
Let $X$ be a Banach space, $x_0\in X$ $k\in\nn$, $(x_s)_{s\in
[\nn]^k}$ be a $k$-sequence in $X$ and $M\in [\nn]^\infty$. We
will say that the $k$-subsequence $(x_s)_{s\in [M]^k}$ is
$k$-Ces\`aro summable to $x_0$ if
\[ \Big(\substack{n\\ \\k}\Big)^{-1} \sum_{s\in [M|n]^k} x_s \;\;\substack{\|\cdot\| \\ \longrightarrow \\ n\to\infty}\;\; x_0\]
where $M|n=\{M(1),...,M(n)\}$.
\end{defn}
\begin{prop}\label{rem on k-Cesaro summability}
Let $X$ be a Banach space, $x_0\in X$, $k\in\nn$,  $(x_s)_{s\in
[\nn]^k}$ be a $k$-sequence in $X$ and $M\in [\nn]^\infty$.
\begin{enumerate}
\item[(i)] If  $(x_s)_{s\in[M]^k}$ norm converges to $x_0$, then
$(x_s)_{s\in [M]^k}$ is $k$-Ces\`aro summable to $x_0$.
\item[(ii)] If $(x_s)_{s\in[M]^k}$ is $k$-Ces\`aro summable to
$x_0$ and in addition it is weakly convergent, then $x_0$ is the
weak limit of $(x_s)_{s\in[M]^k}$. \item[(iii)] If $X^*$ is
separable and for every $N\in[M]^\infty$, $(x_s)_{s\in[N]^k}$ is
$k$-Ces\`aro summable to $x_0$ then there exists $L\in[M]^\infty$
such that $(x_s)_{s\in[L]^k}$ weakly converges to $x_0$.
\end{enumerate}
\end{prop}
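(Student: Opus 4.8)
The plan is to prove (i) directly by an elementary counting estimate and then to reduce (ii) and (iii) to it by testing against functionals; throughout I use that $(x_s)_{s\in[\nn]^k}$ is bounded, say $\|x_s\|\le C$ for all $s$. For (i): since $M|n$ has exactly $n$ elements, $|[M|n]^k|=\binom nk$, so $\binom nk^{-1}\sum_{s\in[M|n]^k}x_s-x_0=\binom nk^{-1}\sum_{s\in[M|n]^k}(x_s-x_0)$. Given $\varepsilon>0$, norm convergence (Definition \ref{defn convergence of f-sequences}) supplies $m$ with $\|x_s-x_0\|<\varepsilon$ whenever $s(1)\ge M(m)$; I split $[M|n]^k$ accordingly. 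The $s$ with $s(1)<M(m)$ are exactly those with $\min s=M(i)$ for some $i\le m-1$, so there are at most $\sum_{i=1}^{m-1}\binom{n-i}{k-1}\le(m-1)\binom{n}{k-1}$ of them; hence their contribution to the average is at most $(C+\|x_0\|)(m-1)\binom{n}{k-1}/\binom nk=(C+\|x_0\|)(m-1)k/(n-k+1)$, which tends to $0$. Letting $n\to\infty$ and then $\varepsilon\to0$ gives the claim.

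For (ii): write $y_0$ for the weak limit of $(x_s)_{s\in[M]^k}$; the goal is $y_0=x_0$. Fix $f\in X^*$. Boundedness of $f$ lets me pull it through the average, so $\binom nk^{-1}\sum_{s\in[M|n]^k}f(x_s)\to f(x_0)$. On the other hand the weak convergence hypothesis says precisely that the bounded scalar $k$-sequence $(f(x_s))_{s\in[M]^k}$ norm converges to $f(y_0)$ in the sense of Definition \ref{defn convergence of f-sequences}, so by part (i) its Cesàro means converge to $f(y_0)$. Comparing the two limits, $f(x_0)=f(y_0)$ for every $f\in X^*$, i.e. $x_0=y_0$.

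For (iii): fix a countable dense set $\{f_i\}_{i\in\nn}$ in $X^*$. Each $(f_i(x_s))_{s\in[\nn]^k}$ is a bounded scalar $k$-sequence; partitioning its (bounded) range into finitely many intervals of length $2^{-m}$ and applying Theorem \ref{ramseyforplegma} (with $l=1$) to the induced partition of $[\,\cdot\,]^k$, then diagonalising over all pairs $(i,m)$, I would obtain a single $L\in[M]^\infty$ such that for every $i$ the sequence $(f_i(x_s))_{s\in[L]^k}$ is Cauchy as $\min s\to\infty$, hence norm converges to some $c_i$. Now apply the hypothesis with $N=L$: the averages $\binom nk^{-1}\sum_{s\in[L|n]^k}x_s$ converge in norm to $x_0$, so applying $f_i$ and invoking part (i) for the bounded scalar $k$-sequence $(f_i(x_s))_{s\in[L]^k}$ forces $c_i=f_i(x_0)$. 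A routine $3\varepsilon$-estimate, using boundedness of $(x_s)_{s\in[L]^k}$ and density of $\{f_i\}$, then promotes $f_i(x_s)\to f_i(x_0)$ (all $i$) to $f(x_s)\to f(x_0)$ for every $f\in X^*$, i.e. $(x_s)_{s\in[L]^k}$ weakly converges to $x_0$.

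The only place any real estimate is needed is the counting step in (i): the proportion of $k$-subsets of $\{M(1),\dots,M(n)\}$ whose minimum lies among the first $m-1$ of these points is $O(m\binom{n}{k-1}/\binom nk)=O(mk/n)$, so the ``initial block'' of the average is asymptotically negligible and the average sees only the convergent tail. Once (i) is available, (ii) and (iii) are routine; for (iii) the points to watch are that the Ramsey-plus-diagonalisation step must deliver a single $L$ that works simultaneously for all $f_i$, and that ``convergence along $[L]^k$'' is always read in the $\min s\to\infty$ sense of Definition \ref{defn convergence of f-sequences}, so that part (i) genuinely applies to each $(f_i(x_s))_{s\in[L]^k}$.
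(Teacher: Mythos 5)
Your proof is correct and follows essentially the same route as the paper: parts (i) and (ii) are the elementary counting and functional-testing arguments the paper dismisses as straightforward, and for (iii) the paper likewise stabilizes the scalar $k$-sequences $x^*(x_s)$ via Ramsey's theorem, diagonalizes over a countable norm-dense subset of $X^*$, and uses the Ces\`aro hypothesis along the resulting $N\in[M]^\infty$ to identify each limit as $x^*(x_0)$. The one point worth recording is that your standing assumption that $(x_s)_{s\in[\nn]^k}$ is bounded is genuinely needed and not stated in the proposition: for $k\geq 2$ there are infinitely many $s$ with $s(1)<M(m)$, so norm convergence alone does not control the initial block of the Ces\`aro average (one can make (i) fail with an unbounded sequence supported on $s(1)=M(1)$), and this boundedness hypothesis, implicit in all of the paper's applications, should be read into the statement.
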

\begin{proof}
Assertions  (i) and (ii)  are straightforward. For (iii), first
observe that  for every $x^*\in X^*$, $\ee>0$ and $N\in[M]^\infty$
there exists an $L\in[N]^\infty$ such that
$|x^*(x_s)-x^*(x_0)|<\ee$, for all $s\in[L]^k$. Next for a norm
dense subset $\{x^*_n:n\in\nn\}$ of $X^*$, we inductively choose
an $L\in[M]^\infty$ such that for every $n\in\nn$ and $s\in[L]^k$
with $\min s\geq L(n)$ we have that
$|x_i^*(x_s)-x_i^*(x_0)|<\frac{1}{n}$ for all $1\leq i=1 \leq n$.
This yields that  $(x_s)_{s\in[L]^k}$ weakly converges  to $x_0$.
\end{proof}
\begin{rem}
It is open if assertion (iii) of the above proposition  remains
valid without any restriction for  $X^*$.
\end{rem}
\subsection{A density result for plegma families in $[\nn]^k$}
In this subsection we will present a  density Ramsey result
concerning plegma families. For its proof, we will need the deep
theorem of H. Furstenberg and Y. Katznelson \cite{FK}. Actually,
we shall use the following finite version of this theorem (see
also \cite{G2}).
\begin{thm}\label{Furstenberg and Katznelson Theorem}
Let $k\in\nn$, $F$ be a finite subset of $\mathbb{Z}^k$ and
$\delta>0$. Then there exists   $n_0\in\nn$ such that for all
$n\geq n_0$, every subset $\mathcal{A}$ of $\{1,\ldots,n\}^k$ of
size at least $\delta n^k $ has a subset of the form $a+d F$ for
some $a\in \mathbb{Z}^k$ and $d\in\nn$.
\end{thm}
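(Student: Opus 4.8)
The plan is to obtain this statement --- the finite, density form of the multidimensional Szemer\'edi theorem for homothetic copies of a fixed finite configuration --- by \emph{reducing} it, via the Furstenberg correspondence principle, to the ergodic form of the Furstenberg--Katznelson theorem in \cite{FK}; the deep recurrence input will be imported, not reproved. Precisely, I would use the following consequence of \cite{FK}: if $(S^w)_{w\in\mathbb{Z}^k}$ is a measure preserving action of $\mathbb{Z}^k$ on a probability space $(\Omega,\mu)$, the set $B\subseteq\Omega$ has $\mu(B)>0$, and $F\subseteq\mathbb{Z}^k$ is finite, then there is $d\in\nn$ with $\mu\big(\bigcap_{w\in F}S^{-dw}B\big)>0$. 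A purely combinatorial alternative would be to derive the finite statement directly from the hypergraph removal lemma, which is the route indicated by \cite{G2}; in either case the essential ingredient is not elementary.

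To run the reduction I would argue by contradiction. Suppose the assertion fails for some $\delta>0$, $k\in\nn$ and finite $F\subseteq\mathbb{Z}^k$; then there are $n_1<n_2<\cdots$ in $\nn$ and sets $\mathcal{A}_i\subseteq\{1,\ldots,n_i\}^k$ with $|\mathcal{A}_i|\geq\delta n_i^k$, none of which contains a subset of the form $a+dF$. Work in the compact metrizable space $\Omega=\{0,1\}^{\mathbb{Z}^k}$ equipped with the shift action $(S^wx)(n)=x(n+w)$; let $\mathbf{1}_{\mathcal{A}_i}\in\Omega$ be the indicator of $\mathcal{A}_i$ (so it vanishes off $\{1,\ldots,n_i\}^k$) and form the empirical measures
\[\mu_i=\frac{1}{n_i^k}\sum_{v\in\{1,\ldots,n_i\}^k}\delta_{S^v\mathbf{1}_{\mathcal{A}_i}}.\]
By compactness of the space of Borel probability measures on $\Omega$, a subsequence of $(\mu_i)$ converges to some $\mu$ in the sense that $\int f\,d\mu_i\to\int f\,d\mu$ for all $f\in C(\Omega)$.

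I would then record three facts about $\mu$. First, since $\{1,\ldots,n_i\}^k$ differs from each of its translates by a fixed vector in only $O(n_i^{k-1})$ points, the $\mu_i$ are asymptotically shift invariant, so $\mu$ is $S$-invariant. Second, the cylinder $B=\{x\in\Omega:x(0)=1\}$ is clopen and $\mu_i(B)=|\mathcal{A}_i|/n_i^k\geq\delta$, whence $\mu(B)\geq\delta>0$. Third, for every $a\in\mathbb{Z}^k$ and $d\in\nn$ the cylinder $E_{a,d}=\{x\in\Omega:x(a+dw)=1\ \text{for all}\ w\in F\}$ is clopen, and $\mu_i(E_{a,d})=0$ because no shift $S^v\mathbf{1}_{\mathcal{A}_i}$ lies in $E_{a,d}$ (that would force $(v+a)+dF\subseteq\mathcal{A}_i$, contrary to $\mathcal{A}_i$ containing no set of the form $a'+dF$); hence $\mu(E_{a,d})=0$ for all $a,d$, and summing over the countably many pairs $(a,d)$ gives $\mu\big(\bigcup_{a,d}E_{a,d}\big)=0$.

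Finally I would apply the imported ergodic theorem to the action $(S^w)$, the set $B$ and the finite set $F$: it yields $d\in\nn$ with $\mu\big(\bigcap_{w\in F}S^{-dw}B\big)>0$. But $\bigcap_{w\in F}S^{-dw}B=\{x\in\Omega:x(dw)=1\ \text{for all}\ w\in F\}=E_{0,d}$, and we have just shown $\mu(E_{0,d})=0$; this contradiction establishes the finite version. The one real obstacle is the ergodic recurrence theorem being imported, which rests on the structure theory of measure preserving systems (compact and weakly mixing extensions, transfinite induction) and has no short proof; inside the reduction the only points needing a little care are the asymptotic $S$-invariance of the $\mu_i$ and the $\sigma$-additivity bookkeeping over the cylinders $E_{a,d}$, both of which are routine.
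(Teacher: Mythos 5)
Your argument is correct. Note, however, that the paper does not prove this statement at all: it is imported as a known ``finite version'' of the Furstenberg--Katznelson theorem, with references to \cite{FK} and \cite{G2}. What you have supplied is the standard correspondence-principle derivation of that finite version from the ergodic multiple recurrence theorem, and all the delicate points are handled properly: the negation of the statement correctly produces the sequence $(n_i,\mathcal{A}_i)$; the sets $B$ and $E_{a,d}$ are clopen, so their $\mu_i$-measures pass to the weak-$*$ limit; the asymptotic shift invariance follows from the $O(n_i^{k-1})$ bound on the symmetric difference of a cube with its translate; and the identification $\bigcap_{w\in F}S^{-dw}B=E_{0,d}$ delivers the contradiction (the union over all pairs $(a,d)$ is in fact not needed, only the sets $E_{0,d}$ enter). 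The version of the recurrence theorem you import --- for a finite set $F$ of commuting shifts, $\mu(B)>0$ forces $\mu\big(\bigcap_{w\in F}S^{-dw}B\big)>0$ for some $d\geq 1$ --- is exactly the content of \cite{FK} applied to the transformations $S^{w}$, $w\in F$. So your route and the paper's are the same in substance (both ultimately rest on Furstenberg--Katznelson, or alternatively on hypergraph removal as in \cite{G2}); the difference is only that you make the reduction explicit where the authors take it as known.
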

Our density result for plegma families is the following.
\begin{prop}\label{Lemma using Furstenberg and Katznelson Theorem to find long plegma}
Let  $k, l\in\nn$ and  $\delta>0$. Then there exists $N_0\in \nn$
such that for every $n\geq n_0$ and   every subset $\mathcal{A}$
of $[\{1,\ldots,n\}]^k$ of size at least $\delta (\substack{n\\
k})$, there exists a plegma family
$(s_j)_{j=1}^l\in\textit{Plm}_l([\nn]^k)$ such that
$s_j\in\mathcal{A}$, for every $1\leq j\leq l$.
\end{prop}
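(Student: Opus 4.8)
The plan is to deduce the statement from the finite multidimensional Szemer\'edi theorem, Theorem \ref{Furstenberg and Katznelson Theorem}. First I would identify $[\{1,\dots,n\}]^k$ with the strictly increasing ``cone'' $\mathcal{C}_n=\{(x_1,\dots,x_k)\in\{1,\dots,n\}^k:x_1<\dots<x_k\}$ via the bijection $s\mapsto(s(1),\dots,s(k))$. Under this identification a family $\mathcal{A}\subseteq[\{1,\dots,n\}]^k$ with $|\mathcal{A}|\geq\delta\binom{n}{k}$ corresponds to a set $\mathcal{A}'\subseteq\mathcal{C}_n\subseteq\{1,\dots,n\}^k$ of the same cardinality. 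Since $\binom{n}{k}\geq n^k/(2^kk!)$ whenever $n\geq2k$, we get $|\mathcal{A}'|\geq\delta'n^k$ with $\delta'=\delta/(2^kk!)$ a constant depending only on $k$ and $\delta$, so Theorem \ref{Furstenberg and Katznelson Theorem} will be applicable to $\mathcal{A}'$ for all sufficiently large $n$.

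The \emph{crucial} step is the choice of the finite pattern $F\subseteq\mathbb{Z}^k$. Guided by Proposition \ref{rem34}(i), for $1\leq j\leq l$ I would put $v_j=\big((i-1)l+j\big)_{i=1}^k\in\nn^k$ and take $F=\{v_1,\dots,v_l\}\cup\{u\}$, where $u=(1,2,\dots,k)$. Applying Theorem \ref{Furstenberg and Katznelson Theorem} with this $F$ and with $\delta'$ produces an $n_0$ such that for every $n\geq\max\{n_0,2k\}$ there exist $a\in\mathbb{Z}^k$ and $d\in\nn$ with $a+dF\subseteq\mathcal{A}'$. Since $a+dF\subseteq\mathcal{C}_n$, each point $a+dv_j$ has strictly increasing coordinates and hence is the increasing enumeration of a $k$-set $s_j\in[\{1,\dots,n\}]^k$ with $s_j(i)=a_i+d\big((i-1)l+j\big)$; moreover $s_j\in\mathcal{A}$ because $a+dv_j\in\mathcal{A}'$.

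It then remains to check that $(s_j)_{j=1}^l$ is a plegma family in the sense of Definition \ref{defn plegma}. Condition (i), $s_1(i)<\dots<s_l(i)$, is immediate because $s_j(i)=a_i+d\big((i-1)l+j\big)$ is strictly increasing in $j$ (as $d\geq1$). For condition (ii) one computes, for $1\leq i<k$, that $s_1(i+1)-s_l(i)=(a_{i+1}-a_i)+d$, so (ii) is equivalent to $a_i-a_{i+1}<d$ --- and this is exactly what the auxiliary point $u$ delivers, since $a+du\in\mathcal{C}_n$ forces $a_i+di<a_{i+1}+d(i+1)$. (When $l=1$ one has $F=\{u\}=\{v_1\}$ and the conclusion is in any case trivial.)

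I expect the main obstacle to be precisely this column-separation condition (ii): applying Theorem \ref{Furstenberg and Katznelson Theorem} to the configuration $\{v_1,\dots,v_l\}$ by itself only yields a homothetic copy with an unrestricted translation $a\in\mathbb{Z}^k$, and $a+dv_j\in\mathcal{C}_n$ then gives merely $a_i-a_{i+1}<dl$, which is too weak for (ii) when $l\geq2$. Adjoining the single ``near-diagonal'' point $u=(1,\dots,k)$ to $F$ repairs this while leaving the density requirement untouched, after which the argument is routine (the binomial estimate and the translation between $k$-subsets and points of the cone).
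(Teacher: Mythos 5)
Your proof is correct and follows essentially the same route as the paper: both reduce to the finite Furstenberg--Katznelson theorem applied to the pattern $t_j(i)=(i-1)l+j$ augmented by one auxiliary anchor point whose presence constrains the translate $a$ enough to give the column-separation condition (ii). The only (immaterial) difference is that the paper adjoins $\mathbf{0}$, forcing $a$ itself to lie in $\mathcal{A}$ and hence to be strictly increasing, whereas you adjoin $u=(1,\dots,k)$, which yields the slightly weaker but still sufficient bound $a_i-a_{i+1}<d$.
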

 \begin{proof}
For every $1\leq j\leq l$, let $t_j=\big(j,l+j,
2l+j,...,(k-1)l+j\big)$. Clearly
$(t_j)_{j=1}^l\in\textit{Plm}_l([\nn]^k)$. We set
$F=\{\textbf{0}\}\cup\{t_j: \;1\leq j\leq l\}$, where
$\textbf{0}=(0,...,0)$ is the zero element of $\mathbb{Z}^k$. Fix
$\delta>0$. Since $\lim_{n}(\substack{n\\k})/n^k=1/k!$, there
exists $m_0\in\nn$ such that for every $n\geq m_0$ and every
subset $\mathcal{A}$ of $[\{1,\ldots,n\}]^k$ of size at least
$\delta (\substack{n\\ k})$ has density at least
$\frac{\delta}{2k!}$ in $\{1,\ldots,n\}^k$. Hence, by Theorem
\ref{Furstenberg and Katznelson Theorem} (applied for
$\frac{\delta}{2k!}$ in place of $\delta$) we have that there
exists $n_0\geq m_0$ such that for every $n\geq n_0$, every subset
$\mathcal{A}$ of $[\{1,\ldots,n\}]^k$ of size at least $\delta
(\substack{n\\k})$ has a subset of the form $a+d F$ for some $a\in
\mathbb{Z}^k$ and $d\in\nn$. Notice that
$a=a+d\textbf{0}\in\mathcal{A}$ and therefore $a\in
[\{1,...,n\}]^k$. For every $j\in\{1,...,l\}$, we set
$s_j=a+dt_j$. Then $ \{s_j:\;1\leq j\leq l\}\subseteq
\mathcal{A}$. Moreover, since $a\in [\nn]^k$ and $d\in\nn$, we
easily conclude that $(s_j)_{j=1}^l\in\textit{Plm}_l([\nn]^k)$
and the proof is complete.
\end{proof}
\begin{rem}
It is easy to see that for $k=1$ the preceding lemma   trivially
holds (it suffices to set $N_0=\lceil\frac{l}{\delta}\rceil$) and
therefore Theorem \ref{Furstenberg and Katznelson Theorem} is
actually used  for $k\geq 2$. However, it is not completely clear
to us if the full strength of such a deep theorem like
Furstenberg-Katznelson's is actually necessary for the proof of
Proposition \ref{Lemma using Furstenberg and Katznelson Theorem to
find long plegma}.
\end{rem}

\subsection{The main results}
\begin{prop}\label{Furstemberg's application on spreading models}
Let $X$ be a Banach space, $k\in\nn$ and  $(x_s)_{s\in[\nn]^k}$ be
a bounded $k$-sequence in $X$. Let $M\in[\nn]^\infty$ such that
the subsequence $(x_s)_{s\in[M]^k}$ generates a Ces\`aro summable
to zero $k$-spreading model $(e_n)_n$. Then for every
$L\in[M]^\infty$ the $k$-subsequence $(x_s)_{s\in[L]^k}$ is
$k$-Ces\`aro summable to zero.
\end{prop}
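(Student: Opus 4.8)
The plan is to fix $\ee>0$ and prove that $\big\|\binom{n}{k}^{-1}\sum_{s\in[L|n]^k}x_s\big\|<\ee$ for all large $n$; since $\ee$ is arbitrary this gives the assertion. Write $C=\sup_{s\in[\nn]^k}\|x_s\|<\infty$. The first step is to distill from the hypotheses a uniform estimate on plegma averages. Since $(e_n)_n$ is Ces\`aro summable to zero, fix $m\in\nn$ with $\big\|\frac1m\sum_{j=1}^m e_j\big\|_*<\ee/4$, and then, with $(\delta_n)_n$ the null sequence witnessing (via Definition \ref{Definition of spreading model}) that $(x_s)_{s\in[M]^k}$ generates $(e_n)_n$, fix $l_0\geq m$ with $\delta_{l_0}<\ee/4$. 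Applying inequality (\ref{rsm}) with $l=l_0$, with a plegma family of length $m\leq l_0$, and with all coefficients equal to $1/m\in[-1,1]$ yields
\[
\Big\|\tfrac1m\sum_{j=1}^m x_{t_j}\Big\|<\tfrac{\ee}{2}\qquad\text{for every }(t_j)_{j=1}^m\in\textit{Plm}_m([M]^k)\text{ with }t_1(1)\geq M(l_0).
\]

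Next I would separate the ``early'' from the ``late'' $k$-subsets of $L|n$. Let $p$ be the (finite) number of indices $i$ with $L(i)<M(l_0)$; then for $n$ large $L(p+1)\geq M(l_0)$, and the family $B_1$ of those $s\in[L|n]^k$ with $\min s<M(l_0)$ satisfies $|B_1|\leq p\binom{n-1}{k-1}$, so $|B_1|/\binom{n}{k}=pk/n\to 0$ and hence $\big\|\binom{n}{k}^{-1}\sum_{s\in B_1}x_s\big\|\leq C|B_1|/\binom{n}{k}<\ee/4$ once $n$ is large enough. The complement $B_2:=[L|n]^k\setminus B_1$ equals $[\{L(p+1),\dots,L(n)\}]^k$, and since $L$ is strictly increasing the map $s\mapsto\{i:L(i)\in s\}$ is a bijection of $B_2$ onto $[\{1,\dots,n-p\}]^k$ preserving the plegma relation in both directions (cf. Proposition \ref{rem34}).

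The heart of the argument is a greedy decomposition of $B_2$ driven by the Furstenberg--Katznelson density result. Put $\delta:=\ee/(4C)$ and let $n_0=n_0(k,m,\delta)$ be provided by Proposition \ref{Lemma using Furstenberg and Katznelson Theorem to find long plegma}. Working inside $[\{1,\dots,n-p\}]^k$ with $n-p\geq n_0$, repeatedly extract a plegma $m$-family from the remaining set and delete it; this continues as long as the remainder has size at least $\delta\binom{n-p}{k}$, so it terminates in a decomposition $[\{1,\dots,n-p\}]^k=\big(\bigsqcup_{i=1}^{N}\mathcal{G}_i\big)\sqcup\mathcal{R}$ with each $\mathcal{G}_i$ a plegma $m$-family and $|\mathcal{R}|<\delta\binom{n-p}{k}$. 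Transporting this back through the index bijection gives $B_2=\big(\bigsqcup_{i=1}^{N}\mathcal{F}_i\big)\sqcup\mathcal{R}'$ with $|\mathcal{R}'|=|\mathcal{R}|$, where each $\mathcal{F}_i$ is a plegma $m$-family lying in $[L]^k\subseteq[M]^k$ all of whose members have minimum $\geq M(l_0)$; in particular its first member $t^i_1$ satisfies $t^i_1(1)\geq M(l_0)$, so the displayed estimate gives $\big\|\sum_{s\in\mathcal{F}_i}x_s\big\|<m\ee/2$. Since the $\mathcal{F}_i$ are disjoint subsets of $B_2$ we have $Nm\leq|B_2|\leq\binom{n}{k}$, whence
\[
\Big\|\sum_{s\in B_2}x_s\Big\|\leq N\cdot\frac{m\ee}{2}+|\mathcal{R}'|\cdot C<\frac{\ee}{2}\binom{n}{k}+\delta C\binom{n}{k}=\frac{3\ee}{4}\binom{n}{k}.
\]
Combining with the bound on $B_1$ yields $\big\|\binom{n}{k}^{-1}\sum_{s\in[L|n]^k}x_s\big\|<\ee$ for all sufficiently large $n$, which finishes the proof.

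The step I expect to require the most care is the greedy decomposition: Proposition \ref{Lemma using Furstenberg and Katznelson Theorem to find long plegma} only guarantees a \emph{single} plegma $m$-family inside a set of density $\geq\delta$, so one must iterate the removal and use crucially that the threshold $n_0(k,m,\delta)$ is uniform over all sets of that density, in order to keep applying it until the leftover is negligible. The remaining ingredients---the passage from Ces\`aro summability to zero to the plegma-average estimate via (\ref{rsm}), the asymptotic negligibility of $B_1$, and the plegma-preserving index bijection---are routine.
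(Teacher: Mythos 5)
Your proof is correct. The one step that needs care --- iterating Proposition \ref{Lemma using Furstenberg and Katznelson Theorem to find long plegma} greedily --- is legitimate precisely for the reason you identify: the threshold $n_0(k,m,\delta)$ depends only on $k$, $m$ and $\delta$, so it applies uniformly to every residual subset of $[\{1,\dots,n-p\}]^k$ of relative size at least $\delta$, and each extraction removes exactly $m$ elements, so the process terminates with a remainder of density below $\delta$. The remaining ingredients (the passage from Ces\`aro summability of $(e_n)_n$ to the uniform bound on plegma $m$-averages via (\ref{rsm}), the negligibility of the ``early'' sets $B_1$, and the order-isomorphism preserving plegma families in both directions) all check out; the only blemishes are cosmetic (your index bijection lands in $[\{p+1,\dots,n\}]^k$ rather than $[\{1,\dots,n-p\}]^k$, and $|B_1|/\binom{n}{k}\leq pk/n$ rather than equality).

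Your route is genuinely different from the paper's, although both hinge on the same Furstenberg--Katznelson density lemma. The paper argues by contradiction: if some Ces\`aro mean along $L$ stays above $\theta$, it picks a norming functional $x_n^*$ and an averaging argument to produce a subset $\mathcal{A}_n$ of $[\{1,\dots,p_n\}]^k$ of density at least $\theta/2K$ on which $x_n^*(x_{L(s)})>\theta/2$; a \emph{single} application of the density lemma (with $l=2m-1$, keeping the last $m$ terms to guarantee $t_1(1)\geq L(m)$) then yields a plegma $m$-family whose average has norm $>\theta/2$, contradicting Ces\`aro summability of $(e_n)_n$. You instead prove the statement directly by tiling almost all of $[L|n]^k$ with disjoint plegma $m$-families, each of which has small average by the spreading model estimate, and controlling the leftover by its density. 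Your argument avoids the dual-space step and the $2m-1$ trick (you handle the ``$t_1(1)$ large'' requirement by discarding $B_1$ instead), at the cost of having to iterate the density lemma; the paper's version is shorter because one application suffices once the contradiction hypothesis has concentrated the mass on a single functional.
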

\begin{proof}
Assume on the contrary that there exists $L\in[M]^\infty$ such
that $(x_s)_{s\in[L]^k}$ is not $k$-Ces\`aro summable to zero.
Then there exists a $\theta>0$ and a strictly increasing sequence
$(p_n)_n$ of natural numbers such that for every $n\in\nn$,
\begin{equation}\label{wq}\Big(\substack{p_n\\ \\ k}  \Big)^{-1}\Big{\|}\sum_{s\in
[L|p_n]^k} x_s\Big{\|}>\theta\end{equation} For each $n\in\nn$, we pick $x_n^*\in S_{X^*}$ such that $x_n^*\big( (\substack{p_n\\ \\ k})^{-1} \sum_{s\in
[L|p_n]^k} x_s  \big)>\theta$ and we
set \begin{equation}\label{yt}\mathcal{A}_n=\Big\{s\in
\big[\{1,...,p_n\}\big]^k:\;x_n^*(x_{L(s)})>\frac{\theta}{2}\Big\}\end{equation} where
$S_{X^*}$ is the unit sphere of $X^*$. By  (\ref{wq}) and a simple
averaging argument we easily  derive that $|\mathcal{A}_n|\geq
\frac{\theta}{2K} \Big(\substack{p_n\\ \\ k}  \Big)$, where
$K=\sup\{\|x_s\|:s\in [\nn]^k\}$.

We fix $m\in\nn$. By Proposition \ref{Lemma using Furstenberg and
Katznelson Theorem to find long plegma}, with
$\delta=\frac{\theta}{2K}$ and $l=2m-1$,  there exists $n_0\in\nn$
such that for every $n\geq n_0$ there exists a plegma family
$(s_j)_{j=1}^l\in\textit{Plm}_l([\nn]^k)$ such that
$\{s_j:\;1\leq j\leq l\}\subseteq \mathcal{A}_{n}$. Therefore
setting $t_i=L(s_{m+i-1})$ for all $1\leq i\leq m$, we conclude
that for every $m\in\nn$ there exists
$(t_i)_{i=1}^m\in\textit{Plm}_m([L]^k)$ such that
$t_1(1)\geq L(m)$ and $\Big\|\frac{1}{m}\sum_{j=1}^m
x_{t_j}\Big\|>\frac{\theta}{2}$. This easily yields  that $(e_n)_n$ is not Ces\`aro summable to zero, which is a contradiction.
\end{proof}
\begin{cor}\label{Rosenthal prop}
Let $X$ be a Banach space, $k\in\nn$ and  $(x_s)_{s\in[\nn]^k}$ be
a bounded $k$-sequence in $X$. Let $M\in[\nn]^\infty$ such that
the subsequence $(x_s)_{s\in[M]^k}$ generates an unconditional
$k$-spreading model $(e_n)_n$. Then at least one of the following
holds:
\begin{enumerate}
\item[(1)] The sequence  $(e_n)_n$ is equivalent to the usual
basis of $\ell^1$. \item[(2)] For every $L\in [M]^\infty$
 $(x_s)_{s\in [L]^k}$ is $k$-Ces\`aro summable to zero.
\end{enumerate}
\end{cor}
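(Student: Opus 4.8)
The plan is to derive the dichotomy directly from Proposition~\ref{equiv forms for 1-subsymmetric weakly null} and Proposition~\ref{Furstemberg's application on spreading models}, with essentially no additional work; the statement is really just a packaging of those two results.

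First I would observe that an unconditional spreading sequence is automatically non trivial. Indeed, a trivial spreading sequence satisfies $\|\sum_{i=1}^n a_i e_i\|_*=|\sum_{i=1}^n a_i|\cdot\|e_1\|_*$ by Proposition~\ref{sing}, so either $\|e_1\|_*=0$, in which case $(e_n)_n$ is not a Schauder basic sequence, or $\|e_1-e_2\|_*=0$, in which case $(e_n)_n$ is again not linearly independent and hence not Schauder basic; in neither situation can $(e_n)_n$ be unconditional. Thus $(e_n)_n$ is a \emph{non trivial} unconditional spreading sequence, and Proposition~\ref{equiv forms for 1-subsymmetric weakly null} is applicable to it.

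Next I would split into two cases according to whether $(e_n)_n$ is equivalent to the usual basis of $\ell^1$. If it is, then alternative~(1) holds and we are done. If it is not, then, since $(e_n)_n$ is unconditional and not equivalent to the $\ell^1$ basis, the equivalence of clauses (i) and (iii) of Proposition~\ref{equiv forms for 1-subsymmetric weakly null} gives that $(e_n)_n$ is Ces\`aro summable to zero. Now $(x_s)_{s\in[\nn]^k}$ is bounded by hypothesis and $(x_s)_{s\in[M]^k}$ generates the Ces\`aro summable to zero $k$-spreading model $(e_n)_n$, so Proposition~\ref{Furstemberg's application on spreading models} applies verbatim and yields that for every $L\in[M]^\infty$ the $k$-subsequence $(x_s)_{s\in[L]^k}$ is $k$-Ces\`aro summable to zero; that is, alternative~(2) holds.

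There is accordingly no real obstacle at this level: the entire weight of the argument is already carried by Proposition~\ref{Furstemberg's application on spreading models}, whose proof is where the multidimensional Szemer\'edi theorem enters (through Proposition~\ref{Lemma using Furstenberg and Katznelson Theorem to find long plegma}). I would close with the remark, consistent with the introduction, that in contrast with the case $k=1$ the two alternatives need not be mutually exclusive once $k\ge 2$, since a $k$-spreading model equivalent to the $\ell^1$ basis may coexist with $k$-Ces\`aro summability to zero of every $k$-subsequence.
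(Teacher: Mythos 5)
Your proof is correct and is essentially identical to the paper's: split on whether $(e_n)_n$ is equivalent to the $\ell^1$ basis, use Proposition~\ref{equiv forms for 1-subsymmetric weakly null} to get Ces\`aro summability to zero of $(e_n)_n$ in the negative case, and then invoke Proposition~\ref{Furstemberg's application on spreading models}. Your preliminary check that an unconditional spreading sequence is automatically non trivial (so that Proposition~\ref{equiv forms for 1-subsymmetric weakly null} applies) is a detail the paper leaves implicit, and it is a welcome addition.
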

\begin{proof}
Assume that $(e_n)_n$ is not equivalent to the usual basis of
$\ell^1$. Since $(e_n)_n$ is an unconditional spreading sequence,
by Proposition \ref{equiv forms for 1-subsymmetric weakly null} we
have that $(e_n)_n$ is Ces\`aro summable to zero. Hence, by
Proposition \ref{Furstemberg's application on spreading models} we
have that $(x_s)_{s\in [L]^k}$ is $k$-Ces\`aro summable to zero,
for every $L\in [M]^\infty$.
\end{proof}
\begin{rem}  Notice  that in the case $k=1$ the two alternatives
of Corollary \ref{Rosenthal prop} are mutually exclusive. This
does not remain valid for $k\geq 2$. For instance,  assume that
in Example \ref{example}, $(e_n)_n$ is the usual basis of
$\ell^1$. Then the basis $(x_s)_{s\in[\nn]^{k+1}}$ of $X$
generates a $(k+1)$-spreading model equivalent to the usual basis of
$\ell^1$ and simultaneously for every $L\in[\nn]^\infty$,
$(x_s)_{s\in[L]^{k+1}}$ is $(k+1)$-Ces\`aro summable to zero.
Indeed, let $L\in[\nn]^\infty$ and $n\in\nn$. Then since every
plegma tuple in $[L|n]^{k+1}$ is of size less than $n$, we have
\[\Big\|\Big(\substack{n\\\;\\k+1}\Big)^{-1}\sum_{s\in [L|n]^{k+1}} x_s\Big\|_{k+1}\leq n\Big(\substack{n\\\;\\k+1}\Big)^{-1}\]
Since $k+1\geq 2$, $\lim_n n(\substack{n\\k+1})^{-1}=0$. Thus for
every $L\in[\nn]^\infty$, $(x_s)_{s\in [L]^{k+1}}$ is Ces\`aro
summable to zero.
\end{rem}

\begin{thm}\label{Rosenthal thm}
Let $X$ be a Banach space, $k\in\nn$ and  $(x_s)_{s\in[\nn]^k}$ be
a weakly relatively compact $k$-sequence in $X$. Then there exists
$M\in [\nn]^\infty$ such that at least one of the following holds:
\begin{enumerate}
\item[(1)] The subsequence  $(x_s)_{s\in[M]^k}$ generates a
$k$-spreading model equivalent to the usual basis of $\ell^1$.
\item[(2)] There exists $x_0\in X$ such that for every $L\in
[M]^\infty$, $(x_s)_{s\in [L]^k}$ is $k$-Ces\`aro summable to
$x_0$.
\end{enumerate}
\end{thm}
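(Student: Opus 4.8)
The plan is to combine the results already available: the existence theorem for $k$-spreading models, the weak-compactness structure theory from Sections 4--5 (subordinated $k$-subsequences and canonical tree decompositions), the Ces\`aro dichotomy for \emph{unconditional} $k$-spreading models in Corollary \ref{Rosenthal prop}, and the singular/unconditional trichotomy of Corollary \ref{cor singular wrc}. First I would pass, using the existence theorem together with Proposition \ref{cor for subordinating}(ii), to an infinite subset $M_0$ of $\nn$ such that $(x_s)_{s\in[M_0]^k}$ is subordinated with respect to the weak topology and generates some $k$-spreading model $(e_n)_n$; let $x_0$ be its weak limit. If $(e_n)_n$ is trivial, then by Proposition \ref{sing}(iii) every norm $\|\sum a_j e_j\|_*$ equals $|\sum a_j|\cdot\|e_1\|_*$, and I would check directly that in this case $(x_s)_{s\in[L]^k}$ is $k$-Ces\`aro summable to $x_0$ for every $L\in[M_0]^\infty$ --- indeed the subordination forces the relevant averages to converge to $\widehat\varphi(\emptyset)=x_0$ --- so alternative (2) holds.

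Assume then that $(e_n)_n$ is non trivial. Since $(x_s)_{s\in[M_0]^k}$ is weakly relatively compact (its closure is the continuous image of the compact space $[M_0]^{\le k}$, hence weakly compact), we have $(e_n)_n\in\mathcal{SM}^{wrc}_k(X)$, and by Corollary \ref{cor singular wrc} either $(e_n)_n$ is unconditional or it is singular with natural decomposition $e_n=e'_n+e$ where $(e'_n)_n\in\mathcal{SM}^{wrc}_k(X)$ is unconditional, weakly null and Ces\`aro summable to zero. In the unconditional case I would apply Corollary \ref{Rosenthal prop} directly to $(x_s)_{s\in[M_0]^k}$: either $(e_n)_n$ is equivalent to the $\ell^1$ basis, giving alternative (1) with $M=M_0$, or for every $L\in[M_0]^\infty$ the subsequence $(x_s)_{s\in[L]^k}$ is $k$-Ces\`aro summable to $0$, and here $0$ must coincide with the weak limit $x_0$ by Proposition \ref{rem on k-Cesaro summability}(ii) (using that $(x_s)_{s\in[M_0]^k}$ is weakly convergent), so alternative (2) holds with $x_0=0$.

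In the singular case, set $x'_s=x_s-x_0$ for $s\in[M_0]^k$; then $(x'_s)_{s\in[M_0]^k}$ is subordinated and weakly null, and by Theorem \ref{nb} (as we are in the non-$\ell^1$ alternative there, since $(e_n)_n$ is singular rather than $\ell^1$) it generates the unique $k$-spreading model $(e'_n)_n$, which is unconditional and Ces\`aro summable to zero. Applying Proposition \ref{Furstemberg's application on spreading models} to the bounded $k$-sequence $(x'_s)_{s\in[\nn]^k}$, I conclude that $(x'_s)_{s\in[L]^k}$ is $k$-Ces\`aro summable to $0$ for every $L\in[M_0]^\infty$, hence $(x_s)_{s\in[L]^k}$ is $k$-Ces\`aro summable to $x_0$, which is alternative (2). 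It remains to handle the sub-case within "singular" where $(e_n)_n$ --- or rather the $\ell^1$ alternative of Theorem \ref{nb} --- does occur: if $(e_n)_n$ is equivalent to the $\ell^1$ basis we are back in alternative (1). Taking $M=M_0$ throughout completes the proof. The main obstacle I anticipate is bookkeeping rather than depth: one must make sure that in the "$k$-Ces\`aro summable" alternatives the limit point produced by Proposition \ref{Furstemberg's application on spreading models} (namely $0$, for the shifted sequence) is correctly translated back to $x_0$, and that the single $M$ works simultaneously for every $L\in[M]^\infty$ --- both of which follow from the stability of subordination under passing to subsets (Remark \ref{remark on ff subordinated}) and from the fact that Propositions \ref{Furstemberg's application on spreading models} and \ref{rem on k-Cesaro summability}(ii) are stated "for every $L\in[M]^\infty$".
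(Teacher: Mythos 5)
Your outline reaches the right conclusion with essentially the same machinery as the paper (subordination, Theorem \ref{unconditional spreading model}, the Ces\`aro dichotomy of Corollary \ref{Rosenthal prop}), but it is organized differently, and one step is justified incorrectly. The paper's proof avoids your trichotomy on $(e_n)_n$ entirely: it first disposes of the case where some $k$-subsequence is norm convergent (then Proposition \ref{rem on k-Cesaro summability}(i) gives alternative (2) immediately), and in the remaining case it works exclusively with $x'_s=x_s-x_0$, which is then automatically seminormalized, subordinated and weakly null; Theorem \ref{unconditional spreading model} makes its spreading model $(e'_n)_n$ unconditional, Corollary \ref{Rosenthal prop} gives the dichotomy for $(x'_s)$, and Lemma \ref{triv-ell} translates the $\ell^1$ alternative back to $(e_n)_n$. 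This bypasses Corollary \ref{cor singular wrc} and Theorem \ref{nb} altogether and removes the need to verify the hypotheses of Theorem \ref{nb} (in your singular branch you must still check $x_0\neq 0$, which does follow but requires the same seminormalization argument).

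The genuine gap is your treatment of the trivial case. You assert that ``the subordination forces the relevant averages to converge to $\widehat\varphi(\emptyset)=x_0$,'' but subordination only yields \emph{weak} convergence of $(x_s)_{s\in[M_0]^k}$ to $x_0$, and weak convergence does not imply norm convergence of the $k$-Ces\`aro means --- that implication failing is precisely what the theorem is about. What actually saves this case is the triviality of $(e_n)_n$: by Proposition \ref{sing}, $\|e_1-e_2\|_*=0$, so $\|x_s-x_t\|$ is small for plegma pairs far out in $[M_0]^k$, and chaining through plegma paths (Proposition \ref{accessing everything with plegma path of length |s_0|}) one extracts a norm-Cauchy, hence norm-convergent, $k$-subsequence whose limit must be $x_0$; then Proposition \ref{rem on k-Cesaro summability}(i) applies. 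This is the content of the paper's Theorem \ref{Theorem equivalent forms for having norm on the spreading model}, or equivalently of the paper's opening reduction. With that repair, and the bookkeeping you already flag (passing to a further $M'$ on which $(x'_s)$ \emph{generates} its Ces\`aro-null spreading model before invoking Proposition \ref{Furstemberg's application on spreading models}), your argument goes through.
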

\begin{proof}
First we notice that if there exists $M\in [\nn]^\infty$ such that
$(x_s)_{s\in[M]^k}$ norm converges to some $x_0\in X$, then  by
Proposition  \ref{rem on k-Cesaro summability} (i), we immediately
get that  (2) holds. So we may suppose for the sequel that the
$k$-sequence $(x_s)_{s\in[\nn]^k}$ does not contain any norm
convergent $k$-subsequence.

Let $M_1\in[\nn]^\infty$ such that  $(x_s)_{s\in[M_1]^k}$
generates a $k$-spreading model $(e_n)_n$. By Proposition
\ref{Create subordinated} there exists $M_2\in[M_1]^\infty$ such
that $(x_s)_{s\in[M_2]^k}$ is subordinated (with respect to the
weak topology). Let $\widehat{\varphi}:[M_2]^k\to (X,w)$ be the
continuous map witnessing this and
$x_0=\widehat{\varphi}(\emptyset)$.

For every $s\in [M_2]^k$ we set $x'_s=x_s-x_0$. Notice that the
map $\widehat{\psi}:[M_2]^k\to (X,w)$ defined by
$\widehat{\psi}(t)=\widehat{\varphi}(t)-x_0$ is continuous. Hence
$(x'_s)_{s\in[M_2]^k}$ is subordinated. Since
$\widehat{\psi}(\emptyset)=0$, by Proposition \ref{subordinating
yields convergence}, we have that $(x'_s)_{s\in[M_2]^k}$ is weakly
null. Moreover, since $(x_s)_{s\in[\nn]^k}$ does not contain any
norm convergent $k$-subsequence, it is easy to see that
$(x'_s)_{s\in[M_2]^k}$ is seminormalized.

Let  $(e'_n)_{n}$ be a $k$-spreading model of $(x'_s)_{s\in
[M_2]^k}$ and let $M\in [M_2]^\infty$ such that
$(x'_s)_{s\in[M]^k}$ generates $(e'_n)_{n}$. By Theorem
\ref{unconditional spreading model}, $(e'_n)_{n}$ is unconditional
and therefore, by Corollary \ref{Rosenthal prop}, we have that
either $(e'_n)_{n}$ is equivalent to the usual basis of $\ell^1$
or for every $L\in [M]^\infty$, $(x'_s)_{s\in [L]^k}$ is
$k$-Ces\`aro summable to zero. Since $x_s=x'_s+x_0$, for every
$s\in [M]^k$,  by Lemma \ref{triv-ell} we have that the first alternative yields that
$(e_n)_n$ is equivalent to the usual basis of $\ell^1$ while  the
second one, easily gives that for every $L\in [M]^\infty$,
$(x_s)_{s\in [L]^k}$ is $k$-Ces\`aro summable to $x_0$.
  \end{proof}

\section{The  $k$-spreading models of $c_0$ and $\ell^p$, $1\leq p<\infty$}
\label{spreading models of c_0 and l^p} In this section we deal
with a natural problem, posed to us by Th. Schlumprecht, of
determining the spreading models of the classical sequence spaces.
As we will see, while the spreading models of $\ell^p$, $1\leq
p<\infty$, are as expected, the class of the 2-spreading models of
$c_0$ is surprising large.

\subsection{The $k$-spreading models of $c_0$}
It is well known that every non trivial spreading model of $c_0$
 generates a space isomorphic to $c_0$.  On the other hand the class of the 2-spreading models of
$c_0$ is quite large. As we will see $\mathcal{SM}_2(c_0)$
contains all bimonotone Schauder basic spreading sequences. Notice
that this property of $c_0$ is similar to the one of
$C(\omega^\omega)$ admitting every $1$-unconditional spreading
sequence as a spreading model (see \cite{Odell}).

We start with the following lemma.

\begin{lem}\label{on spr mod of c_0 second lem}
Let $(e_n)_n$ be a spreading sequence in $\ell^\infty$ and let
$(x_s)_{s\in [\nn]^2}$ be the $2$-sequence in $c_0$ defined by
$x_s=(e_{s(1)}(1), e_{s(1)}(2),...,e_{s(1)}(s(2)), 0,0,...)$,  for
every $s\in [\nn]^2$. Then for every non trivial $2$-spreading model
$(\widetilde{e}_n)_{n}$ of $(x_s)_{s\in[\nn]^2}$,  $l\in\nn$ and
$a_1,\ldots,a_l\in\rr$, we have
\begin{equation}\label{gto}\Big\|\sum_{i=1}^la_ie_i\Big\|_\infty\leq\Big\|\sum_{i=1}^la_i\widetilde{e}_i\Big\|\leq\max_{1\leq
j\leq l}\Big\|\sum_{i=j}^l a_i e_i\Big\|_\infty\end{equation}
\end{lem}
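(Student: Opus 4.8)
The plan is to compute $\big\|\sum_{i=1}^{m}a_ix_{s_i}\big\|_\infty$ in $c_0$ explicitly, coordinate by coordinate, for an arbitrary plegma family $(s_i)_{i=1}^{m}$ in $[\nn]^2$, and then transfer the result to the $2$-spreading model via the defining inequality \eqref{rsm}. Since both sides of \eqref{gto} are positively homogeneous of degree $1$ in $(a_i)$, it suffices to treat $a_1,\dots,a_l\in[-1,1]$. Fix $M\in[\nn]^\infty$ and a null sequence $(\delta_n)_n$ witnessing that $(x_s)_{s\in[M]^2}$ generates $(\widetilde e_n)_n$. Writing $s_i=\{p_i,q_i\}$ with $p_i=s_i(1)<q_i=s_i(2)$, the plegma condition for $(s_i)_{i=1}^m$ is precisely $p_1<\dots<p_m<q_1<\dots<q_m$; and since $x_{s_i}$ is the restriction of $e_{p_i}$ to coordinates $1,\dots,q_i$, the $k$-th coordinate of $\sum_{i=1}^{m}a_ix_{s_i}$ equals $\sum_{i:\,q_i\ge k}a_ie_{p_i}(k)$. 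As the $q_i$ strictly increase, $\{i:q_i\ge k\}$ is a final segment of $\{1,\dots,m\}$, so with $q_0:=0$,
\[\Big\|\sum_{i=1}^{m}a_ix_{s_i}\Big\|_\infty=\max_{1\le j\le m}\ \sup_{q_{j-1}<k\le q_j}\Big|\sum_{i=j}^{m}a_ie_{p_i}(k)\Big|.\]
Because $(e_n)_n$ is spreading, $\big\|\sum_{i=j}^{m}a_ie_{p_i}\big\|_\infty=\big\|\sum_{i=j}^{m}a_ie_i\big\|_\infty$ for each $j$, and hence the right-hand side is at most $\max_{1\le j\le m}\big\|\sum_{i=j}^{m}a_ie_i\big\|_\infty$.

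For the upper estimate in \eqref{gto}, fix $l$ and $a_1,\dots,a_l\in[-1,1]$; for every $n\ge l$ pick (using that $M$ is infinite) a plegma family of length $l$ in $[M]^2$ whose first element has first coordinate $\ge M(n)$. The inequality above together with \eqref{rsm} gives $\big\|\sum_{i=1}^{l}a_i\widetilde e_i\big\|\le\max_{1\le j\le l}\big\|\sum_{i=j}^{l}a_ie_i\big\|_\infty+\delta_n$, and letting $n\to\infty$ yields the right-hand inequality of \eqref{gto}.

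For the lower estimate, fix $l$, $a_1,\dots,a_l\in[-1,1]$ and $\varepsilon>0$, and choose $n\ge l$ with $\delta_n<\varepsilon/2$. Pick $p_1<\dots<p_l$ in $M$ with $p_1\ge M(n)$; since $(e_n)_n$ is spreading we have $\big\|\sum_{i=1}^{l}a_ie_{p_i}\big\|_\infty=\big\|\sum_{i=1}^{l}a_ie_i\big\|_\infty$, so there is a coordinate $k_0$ with $\big|\sum_{i=1}^{l}a_ie_{p_i}(k_0)\big|>\big\|\sum_{i=1}^{l}a_ie_i\big\|_\infty-\varepsilon/2$. Now choose $q_1<\dots<q_l$ in $M$ with $q_1>\max\{p_l,k_0\}$; then $(\{p_i,q_i\})_{i=1}^{l}$ is a plegma family in $[M]^2$ whose first element has first coordinate $\ge M(n)$, and because $q_i\ge q_1>k_0$ for all $i$, the $k_0$-th coordinate of $\sum_{i=1}^{l}a_ix_{\{p_i,q_i\}}$ is exactly $\sum_{i=1}^{l}a_ie_{p_i}(k_0)$. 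Hence $\big\|\sum_{i=1}^{l}a_ix_{\{p_i,q_i\}}\big\|_\infty>\big\|\sum_{i=1}^{l}a_ie_i\big\|_\infty-\varepsilon/2$, and combining with \eqref{rsm} gives $\big\|\sum_{i=1}^{l}a_i\widetilde e_i\big\|>\big\|\sum_{i=1}^{l}a_ie_i\big\|_\infty-\varepsilon$; letting $\varepsilon\to0$ completes the argument.

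I expect the lower estimate to be the only real obstacle: one cannot simply read off the $j=1$ term of the coordinatewise formula for a fixed plegma family, because that term only involves the coordinates $k\le q_1$. The point is that the truncation level $q_1$ is a free parameter, which can be pushed past whichever coordinate $k_0$ nearly realizes $\big\|\sum a_ie_i\big\|_\infty$, and the spreading property of $(e_n)_n$ is exactly what allows one to keep this realized value unchanged while placing all the indices $p_i$ arbitrarily far out, as required by \eqref{rsm}.
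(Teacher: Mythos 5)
Your proof is correct and follows essentially the same route as the paper: the coordinatewise formula for $\big\|\sum_i a_i x_{s_i}\big\|_\infty$ together with the spreading property gives the upper estimate, and a coordinate nearly attaining $\big\|\sum_i a_i e_i\big\|_\infty$ gives the lower one. Your treatment of the lower estimate is in fact more careful than the paper's, which fixes the witnessing coordinate $m_\varepsilon$ before choosing the plegma family and tacitly identifies $\sum_i a_ie_i(m_\varepsilon)$ with $\sum_i a_ie_{s_i(1)}(m_\varepsilon)$; your order of quantifiers (choose the $p_i$ first, then $k_0$, then push $q_1$ past $k_0$) is exactly what makes the argument airtight.
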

\begin{proof}
 We  fix
$l\in\nn$ and $a_1,\ldots, a_l\in\rr$. It is easy to check that for
every $(s_i)_{i=1}^l\in
\textit{Plm}_l([\nn]^k)$, we have that
\begin{equation}\label{kh}\Big\|\sum_{i=1}^la_ix_{s_i}\Big\|_\infty\leq\max_{1\leq j\leq
l}\Big\|\sum_{i=j}^l a_i e_{s_i(1)} \Big\|_\infty\end{equation} Let
$M\in[\nn]^\infty$ such that $(x_s)_{s\in[M]^2}$ generates a non
trivial $2$-spreading model $(\widetilde{e}_n)_{n}$. Then by
(\ref{kh}), we  easily obtain the righthand  inequality of
(\ref{gto}). To complete the proof, we fix $\ee>0$ and  $m_\ee\in\nn$
such that
\begin{equation}\label{pi}\Big\|\sum_{i=1}^la_ie_i\Big\|_\infty-\ee\leq
\Big|\sum_{i=1}^la_ie_i(m_\ee)\Big|\end{equation} Notice that for every
$(s_i)_{i=1}^l\in\textit{Plm}_l([\nn]^2)$ in $[\nn]^2$ with $
s_1(1)\geq m$, we have that
\begin{equation}\label{pii}\Big|\sum_{i=1}^la_ie_i(m)\Big|\leq\Big\|\sum_{i=1}^la_ix_{s_i}\Big\|_\infty\end{equation}
Therefore, since $(x_s)_{s\in[M]^2}$ generates
$(\widetilde{e}_n)_{n}$ as a 2-spreading model, by (\ref{pi}) and
(\ref{pii}), we get that
\[\Big\|\sum_{i=1}^la_ie_i\Big\|_\infty-\ee\leq \Big|\sum_{i=1}^la_ie_i(m_\ee)\Big|\leq \Big\|\sum_{i=1}^la_i\widetilde{e}_i\Big\|_\infty\]
Since this holds for every $\ee>0$, we obtain the lefthand
inequality of (\ref{gto}) and the proof  is complete.
\end{proof}
\begin{prop}\label{c_0 has every bimonotone as order 2}
For every Schauder basic spreading sequence $(e_n)_n$ there exists
$(\widetilde{e}_n)_{n}\in\mathcal{SM}_2(c_0)$ equivalent to
$(e_n)_n$. In particular, if $(e_n)_n$ is bimonotone then
$(e_n)_{n}$ is  contained in $\mathcal{SM}_2(c_0)$.
\end{prop}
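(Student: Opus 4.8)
The plan is to derive the proposition from Lemma~\ref{on spr mod of c_0 second lem} by realizing $(e_n)_n$ inside $\ell^\infty$ and feeding it to the construction of that lemma. Let $E$ be the Banach space generated by $(e_n)_n$. Being Schauder basic, $(e_n)_n$ is linearly independent, hence non trivial (Proposition~\ref{sing}), $E$ is separable, and the partial sum projections $(P_n)_n$ of $(e_n)_n$ (with $P_0=0$) satisfy $\|P_n\|\leq K$ for some basis constant $K\geq1$. Fix a $w^*$-dense sequence $(g_m)_m$ in the closed unit ball of $E^*$ and set $Tx=(g_m(x))_m$ for $x\in E$. Then $T:E\to\ell^\infty$ is a linear isometry onto its range, so $(f_n)_n:=(Te_n)_n$ is a spreading sequence in $\ell^\infty$ that is isometric to $(e_n)_n$. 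Let $(x_s)_{s\in[\nn]^2}$ be the $2$-sequence in $c_0$ attached to $(f_n)_n$ as in Lemma~\ref{on spr mod of c_0 second lem}, namely $x_s=(f_{s(1)}(1),\dots,f_{s(1)}(s(2)),0,0,\dots)$. Since $\|x_s\|_\infty\leq\|f_{s(1)}\|_\infty=\|f_1\|_\infty$, this $2$-sequence is bounded, so by the theorem on the existence of $k$-spreading models there are $M\in[\nn]^\infty$ and a spreading sequence $(\widetilde{e}_n)_n$ with $(x_s)_{s\in[M]^2}$ generating $(\widetilde{e}_n)_n$ as a $2$-spreading model.

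The next point is to verify that $(\widetilde{e}_n)_n$ is non trivial, which is what allows the lemma to be invoked. Put $\rho=\|e_1-e_2\|>0$. By the defining inequality (\ref{rsm}) with $m=2$, $\big|\,\|x_{s_1}-x_{s_2}\|_\infty-\|\widetilde{e}_1-\widetilde{e}_2\|\,\big|\leq\delta_l$ whenever $(s_1,s_2)\in\textit{Plm}_2([M]^2)$ and $s_1(1)\geq M(l)$. Now, given $l$, pick $a<b$ in $M$ with $a\geq M(l)$; by the spreading property $\|f_a-f_b\|_\infty=\|e_a-e_b\|=\rho$, so there is a coordinate $j$ with $|f_a(j)-f_b(j)|>\rho/2$; choosing $s_1(2),s_2(2)$ in $M$ with $\max\{b,j\}<s_1(2)<s_2(2)$ produces a plegma pair $(s_1,s_2)$ with $s_1(1)=a$, $s_2(1)=b$ for which $x_{s_1}(j)=f_a(j)$ and $x_{s_2}(j)=f_b(j)$, whence $\|x_{s_1}-x_{s_2}\|_\infty>\rho/2$. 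Letting $l\to\infty$ (so $\delta_l\to0$) gives $\|\widetilde{e}_1-\widetilde{e}_2\|\geq\rho/2>0$, so $(\widetilde{e}_n)_n$ is non trivial.

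Lemma~\ref{on spr mod of c_0 second lem} now applies and gives, for all $l\in\nn$ and $a_1,\dots,a_l\in\rr$,
\[\Big\|\sum_{i=1}^l a_i f_i\Big\|_\infty\leq\Big\|\sum_{i=1}^l a_i\widetilde{e}_i\Big\|\leq\max_{1\leq j\leq l}\Big\|\sum_{i=j}^l a_i f_i\Big\|_\infty.\]
Since $T$ is an isometry, $\|\sum_{i=1}^l a_i f_i\|_\infty=\|\sum_{i=1}^l a_i e_i\|$ and $\|\sum_{i=j}^l a_i f_i\|_\infty=\|\sum_{i=j}^l a_i e_i\|$ for each $j$; and since $\sum_{i=j}^l a_i e_i=(P_l-P_{j-1})\big(\sum_{i=1}^l a_i e_i\big)$, we have $\|\sum_{i=j}^l a_i e_i\|\leq 2K\,\|\sum_{i=1}^l a_i e_i\|$. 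Hence $\|\sum_{i=1}^l a_i e_i\|\leq\|\sum_{i=1}^l a_i\widetilde{e}_i\|\leq 2K\,\|\sum_{i=1}^l a_i e_i\|$, so $(\widetilde{e}_n)_n\in\mathcal{SM}_2(c_0)$ is equivalent to $(e_n)_n$, which proves the first assertion. If moreover $(e_n)_n$ is bimonotone, then $\|(P_l-P_{j-1})x\|\leq\|x\|$ for every $x$, so $\max_{1\leq j\leq l}\|\sum_{i=j}^l a_i e_i\|=\|\sum_{i=1}^l a_i e_i\|$ (the maximum occurring at $j=1$), and the two inequalities above collapse to $\|\sum_{i=1}^l a_i\widetilde{e}_i\|=\|\sum_{i=1}^l a_i e_i\|$; thus $(\widetilde{e}_n)_n$ is isometric to $(e_n)_n$, showing that $(e_n)_n$ is contained in $\mathcal{SM}_2(c_0)$.

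The step I expect to require the most care is the non-triviality check in the second paragraph: Lemma~\ref{on spr mod of c_0 second lem} is stated only for non trivial $2$-spreading models, and for a trivial one its lower $\ell^\infty$-estimate is worthless, so this verification cannot be skipped. The remaining ingredients are routine — the isometric embedding of the separable space $E$ into $\ell^\infty$ is standard, and deducing (isometric) equivalence from the lemma's two-sided estimate only uses the uniform boundedness of the window projections of a Schauder basis, together with the fact that these projections are contractions when the basis is bimonotone.
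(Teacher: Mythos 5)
Your proof is correct and follows essentially the same route as the paper's: realize $(e_n)_n$ isometrically in $\ell^\infty$, apply Lemma~\ref{on spr mod of c_0 second lem}, and bound the tail sums $\|\sum_{i=j}^l a_ie_i\|$ by a multiple of $\|\sum_{i=1}^l a_ie_i\|$ using the basis projections (with equality in the bimonotone case). Your explicit verification that the resulting $2$-spreading model is non trivial is a detail the paper leaves implicit, and it is a correct and welcome addition since the lemma is stated only for non trivial $2$-spreading models.
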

\begin{proof}
We may assume that $(e_n)_n$ is a sequence in $\ell^\infty$. Let
$C>0$ be the basis constant of $(e_n)_n$. By Lemma \ref{on spr mod
of c_0 second lem} there exists
$(\widetilde{e}_n)_{n}\in\mathcal{SM}_2(c_0)$ such that for all
$l\in\nn$ and $a_1,\ldots,a_l\in\rr$, we have
\begin{equation}\label{lo}\Big\|\sum_{i=1}^la_ie_i\Big\|_\infty\leq\Big\|\sum_{i=1}^la_i\widetilde{e}_i\Big\|\leq\max_{1\leq
j\leq l}\Big\|\sum_{i=j}^l a_i e_i\Big\|_\infty\leq
(1+C)\Big\|\sum_{i=1}^la_ie_i\Big\|_\infty\end{equation} Hence,
$(e_n)_n$ and $(\widetilde{e}_n)_{n}$ are equivalent. Moreover, if
in addition $(e_n)_n$ is bimonotone then $\text{max}_{1\leq j\leq
l}\|\sum_{i=j}^l a_i e_i\|_\infty\leq \|\sum_{i=1}^l a_i
e_i\|_\infty$ and therefore $(\widetilde{e}_n)_{n}$ is isometric to
$(e_n)_n$.
\end{proof}

\begin{cor}\label{c_0 universcal for singular}
For every singular spreading sequence $(e_n)_n$,  there exists
$(\widetilde{e}_n)_n\in\mathcal{SM}_2(c_0)$ equivalent to
$(e_n)_n$.
\end{cor}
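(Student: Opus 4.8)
The plan is to reduce to the already–established Proposition~\ref{c_0 has every bimonotone as order 2} on bimonotone spreading sequences by exploiting the natural decomposition of a singular spreading sequence. First I would recall, from Remark~\ref{properties of the natural decomposition} together with Proposition~\ref{thmsingular}, that a singular $(e_n)_n$ admits a natural decomposition $e_n=e'_n+e$ with $e\neq0$, where $(e'_n)_n$ is a non trivial, spreading, weakly null and $1$-unconditional (hence, by suppression, bimonotone) sequence. Let $E$ be the Banach space generated by $(e_n)_n$ and $E'$ the one generated by $(e'_n)_n$; by Remark~\ref{properties of the natural decomposition} we have $E=E'\oplus\langle e\rangle$, so there are bounded linear projections $P\colon E\to E'$ (along $\langle e\rangle$) and $Q=I-P\colon E\to\langle e\rangle$, and moreover $Pe_n=e'_n$ and $Qe_n=e$.

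Next I would show that $(e_n)_n$ is equivalent to the ``$\max$--combination'' of $(e'_n)_n$ with a trivial spreading sequence. Indeed, for $v=\sum_{i=1}^n a_i e_i$ one has $\|Pv\|=\|\sum_{i=1}^n a_i e'_i\|$ and $\|Qv\|=|\sum_{i=1}^n a_i|\,\|e\|$, whence $\max(\|Pv\|,\|Qv\|)\le\max(\|P\|,\|Q\|)\,\|v\|$ and $\|v\|=\|Pv+Qv\|\le\|Pv\|+\|Qv\|\le 2\max(\|Pv\|,\|Qv\|)$. Therefore, if $(\widetilde e_n)_n$ denotes the natural Hamel basis of $c_{00}(\nn)$ endowed with the seminorm $\|\sum_i a_i\widetilde e_i\|_*=\max\big(\|\sum_i a_i e'_i\|,\ |\sum_i a_i|\,\|e\|\big)$, then $(\widetilde e_n)_n$ is a (non trivial, spreading) sequence equivalent to $(e_n)_n$; it suffices to prove $(\widetilde e_n)_n\in\mathcal{SM}_2(c_0)$.

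To realize $(\widetilde e_n)_n$ as a $2$-spreading model of $c_0$, I would combine two ingredients inside $c_0\oplus_\infty c_0$, which is isometric to $c_0$. Since $(e'_n)_n$ is bimonotone, Proposition~\ref{c_0 has every bimonotone as order 2} gives a bounded $2$-sequence $(x^1_s)_{s\in[\nn]^2}$ in $c_0$ and an $M\in[\nn]^\infty$ such that $(x^1_s)_{s\in[M]^2}$ generates $(e'_n)_n$ as a $2$-spreading model, say with respect to a null sequence $(\delta_n)_n$. Fix $w\in c_0$ with $\|w\|=\|e\|$ and let $(x^2_s)_{s\in[\nn]^2}$ be the constant $2$-sequence $x^2_s=w$, which generates the trivial spreading sequence of norm $|\sum_i a_i|\,\|e\|$ with respect to every null sequence. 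Set $z_s=(x^1_s,x^2_s)\in c_0\oplus_\infty c_0\cong c_0$. For $m\le l$, every $(s_j)_{j=1}^m\in\textit{Plm}_m([M]^2)$ with $s_1(1)\ge M(l)$ and every $a_1,\dots,a_m\in[-1,1]$ we have $\|\sum_j a_j z_{s_j}\|=\max\big(\|\sum_j a_j x^1_{s_j}\|,\ |\sum_j a_j|\,\|e\|\big)$, so using the elementary inequality $|\max(A,B)-\max(C,B)|\le|A-C|$ this quantity differs from $\|\sum_j a_j\widetilde e_j\|_*$ by at most $\big|\|\sum_j a_j x^1_{s_j}\|-\|\sum_j a_j e'_j\|\big|\le\delta_l$. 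Hence $(z_s)_{s\in[M]^2}$ generates $(\widetilde e_n)_n$ as a $2$-spreading model of $c_0$, so $(\widetilde e_n)_n\in\mathcal{SM}_2(c_0)$, which together with the equivalence of the previous paragraph finishes the proof. (I could instead invoke Proposition~\ref{remark on the definition of spreading model} to match null sequences, but it is unnecessary here.)

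I do not expect a serious obstacle; the only points requiring care are checking that the weakly null part $(e'_n)_n$ is genuinely bimonotone (so that Proposition~\ref{c_0 has every bimonotone as order 2} is applicable) and the routine bookkeeping that the ``$\max$'' sequence is generated by $(z_s)$ once its two coordinate $2$-sequences are. The key idea — splitting off the weakly null summand and encoding the (nonzero) weak limit $e$ by a constant $2$-sequence placed in a second copy of $c_0$ — is what makes the reduction go through.
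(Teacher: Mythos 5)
Your proposal is correct and follows essentially the same route as the paper: take the natural decomposition $e_n=e'_n+e$, realize the $1$-unconditional (hence bimonotone) part $(e'_n)_n$ as a $2$-spreading model of $c_0$ via Proposition~\ref{c_0 has every bimonotone as order 2}, and append a constant coordinate of size $\|e\|$ (the paper shifts the sequence and puts $\|e\|$ in the first coordinate, which is the same device as your $c_0\oplus_\infty c_0$) to obtain the $\max$-combination, which is equivalent to $(e_n)_n$ by the direct-sum decomposition $E=E'\oplus\langle e\rangle$. Your explicit projection argument for that last equivalence just fills in a step the paper leaves to Remark~\ref{properties of the natural decomposition}.
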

\begin{proof}
Let $e_n=e'_n+e$ be the natural decomposition of $(e_n)_n$. By
Remark \ref{properties of the natural decomposition}, $(e'_n)_n$ is
spreading and 1-unconditional. Hence, by Proposition \ref{c_0 has
every bimonotone as order 2}, there exists a 2-sequence
$(x_s)_{s\in[\nn]^2}$ in $c_0$ generating $(e'_n)_n$ as a
2-spreading model. For every $s\in[\nn]^2$, let $\widetilde{x}_s$ be
the sequence in $c_0$ defined by $\widetilde{x}_s(1)=\|e\|$ and
$\widetilde{x}_s(n+1)=x_s(n)$ for all $n\in\nn$. It is easy to see
that $(\widetilde{x}_s)_{s\in[\nn]^2}$ generates a 2-spreading model
$(\widetilde{e}_n)_n$, satisfying
\[\Big\|\sum_{j=1}^na_j\widetilde{e}_j\Big\|=\max\Big\{\Big|\sum_{j=1}^na_j\Big|\cdot\|e\|,\Big\|\sum_{j=1}^na_je'_j\Big\|\Big\}\]
for all $n\in\nn$ and $a_1,\ldots,a_n\in\rr$. Therefore, by Remark
\ref{properties of the natural decomposition}, we conclude that
$(e_n)_n$ and $(\widetilde{e}_n)_{n}$ are equivalent.
\end{proof}
By Proposition \ref{c_0 has every bimonotone as order 2} and
Corollary \ref{c_0 universcal for singular} we have the following.
\begin{cor}\label{universality_of_c_0}
The set $\mathcal{SM}_2(c_0)$ is isomorphically universal for all
spreading sequences.
\end{cor}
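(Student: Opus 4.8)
The plan is to reduce the statement to the trichotomy for spreading sequences from Section~2 together with Proposition~\ref{c_0 has every bimonotone as order 2} and Corollary~\ref{c_0 universcal for singular}. First I would recall that every spreading sequence $(e_n)_n$ in a seminormed linear space is either trivial or non trivial, and that by the classification following Proposition~\ref{sing} a non trivial one is exactly one of: singular, unconditional, or conditional Schauder basic. Since unconditional and conditional Schauder basic spreading sequences are in particular Schauder basic, this partitions all spreading sequences into the three classes \emph{trivial}, \emph{singular}, \emph{Schauder basic}, and it suffices to treat each separately.

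For a Schauder basic spreading sequence (this covers the unconditional and the conditional Schauder basic cases) I would invoke Proposition~\ref{c_0 has every bimonotone as order 2} to obtain a member of $\mathcal{SM}_2(c_0)$ equivalent to it, and for a singular one I would invoke Corollary~\ref{c_0 universcal for singular}. For a trivial $(e_n)_n$, Proposition~\ref{sing} gives $\|\sum_{i=1}^na_ie_i\|_*=|\sum_{i=1}^na_i|\,\|e_1\|_*$ for all $n\in\nn$ and all scalars; I would then take the constant $2$-sequence $x_s=\|e_1\|_*\,u$ in $c_0$, with $u$ a fixed norm-one vector of $c_0$ (or $u=0$ when $\|e_1\|_*=0$), and observe that for every $l\in\nn$, every $(s_j)_{j=1}^l\in\textit{Plm}_l([\nn]^2)$ and all scalars $\|\sum_{j=1}^la_jx_{s_j}\|=\|e_1\|_*\,|\sum_{j=1}^la_j|$, so that $(x_s)_{s\in[\nn]^2}$ generates a $2$-spreading model isometric to $(e_n)_n$; hence $(e_n)_n\in\mathcal{SM}_2(c_0)$. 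Combining the three cases gives the claim.

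The only points to be careful about are the exhaustiveness of the case split and the (degenerate) trivial sequences; all the genuine content has already been carried out, namely in Lemma~\ref{on spr mod of c_0 second lem} and Proposition~\ref{c_0 has every bimonotone as order 2}, where an arbitrary Schauder basic spreading sequence $(e_n)_n\subseteq\ell^\infty$ is squeezed between the equivalent expressions $\|\sum_i a_ie_i\|_\infty$ and $\max_{1\le j\le l}\|\sum_{i=j}^la_ie_i\|_\infty$ by means of the ``staircase'' $2$-sequence $x_s=(e_{s(1)}(1),\dots,e_{s(1)}(s(2)),0,0,\dots)$. Thus there is no real obstacle here beyond bookkeeping over the classification of spreading sequences.
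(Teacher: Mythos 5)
Your proposal is correct and follows essentially the same route as the paper, which derives the corollary directly from Proposition~\ref{c_0 has every bimonotone as order 2} (Schauder basic case) and Corollary~\ref{c_0 universcal for singular} (singular case); your only addition is the explicit treatment of trivial spreading sequences via a constant $2$-sequence, a degenerate case the paper leaves implicit (and which it already notes is realized by constant $k$-sequences).
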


\subsection{The $k$-spreading models of $\ell^p$, for $1\leq
p<\infty$} The $k$-spreading models of the spaces $\ell^p$, for
$1\leq p<\infty$, can be treated as the classical spreading models.
This is based on the observation that the usual basis of these
spaces is symmetric. Therefore, the norm-behavior of the
$k$-sequences admitting a canonical tree decomposition is identical
with the one of sequences being of the form $(x_n+x)_n$,
where $(x_n)_n$ is block.

Especially, for the case of $\ell^1$, one has to make use of the
$w^*$-relative compactness of the bounded $k$-sequences in order to
pass to a subordinated $k$-subsequence with respect to the
$w^*$-topology and in turn to a further one which is approximated by
a $k$-subsequence admitting a canonical tree decomposition. This
procedure yields the following.

\begin{thm}\label{Spr of l^p, 1<p}
Let $1\leq p<\infty$ and $(\widetilde{e}_n)_n$ be a
$k$-spreading model of $\ell^p$, for some $k\in\nn$. Then there
exist $a_1,a_2\geq 0$ such that $(\widetilde{e}_n)_n$ is
isometric to the sequence $(a_1e_1+a_2e_{n+1})_n$, where
$(e_n)_n$ denotes the usual basis of $\ell^p$. More
precisely we have the following.
  \begin{enumerate}
    \item[(i)] The sequence $(\widetilde{e}_n)_n$ is
    trivial if and only if $a_2=0$.
    \item[(ii)] The sequence $(\widetilde{e}_n)_n$ is
    singular if and only if $a_1\neq0$ and $a_2\neq0$.
    \item[(iii)] The sequence $(\widetilde{e}_n)_n$ is
    Schauder basic if and only if $a_1=0$ and $a_2\neq0$. In
    this case $(\widetilde{e}_n)_n$ is equivalent to the
    usual basis of $\ell^p$.
  \end{enumerate}
\end{thm}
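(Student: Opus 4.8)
The plan is to treat the reflexive range $1<p<\infty$ in detail and to reduce an arbitrary $k$-spreading model of $\ell^p$ to the familiar block situation by means of a canonical tree decomposition, exploiting that the unit vector basis of $\ell^p$ is $1$-symmetric and that vectors with pairwise disjoint supports satisfy $\|\sum_i z_i\|_p^p=\sum_i\|z_i\|_p^p$.

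First I would use reflexivity to note $\mathcal{SM}_k(\ell^p)=\mathcal{SM}_k^{wrc}(\ell^p)$, so that by Corollary \ref{cor canonical tree with spr mod} every $(\widetilde e_n)_n\in\mathcal{SM}_k(\ell^p)$ is generated by a $k$-sequence $(x_s)_{s\in[\nn]^k}$ which is subordinated with respect to the weak topology and admits a canonical tree decomposition $(y_t)_{t\in[\nn]^{\leq k}}$ with $y_\emptyset=x_0$ the weak limit. Setting $x'_s=x_s-x_0=\sum_{j=1}^{k}y_{s|j}$ and passing (Proposition \ref{remark on the definition of spreading model}) to a further infinite subset $M$, I may also assume that $\|x'_s\|_p$ converges to some $a_2\ge 0$ as $s(1)\to\infty$ and — inspecting the sliding-hump construction behind Theorem \ref{canonical tree}, in particular property (P2) there — that $\min\text{supp}(y_{s|1})\to\infty$ as $s(1)\to\infty$, so that the tree vectors eventually miss any fixed finite set of coordinates.

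The heart of the argument is the following computation. Fix $m$, scalars $c_1,\dots,c_m$ and a plegma family $(s_i)_{i=1}^{m}$ in $[M]^k$ with $s_1(1)$ large. Conditions (iii), (iv) and (v) of Definition \ref{Def of plegma supported} together force the whole family $\{y_{s_i|j}:1\le i\le m,\ 1\le j\le k\}$ to be pairwise disjointly supported; since in addition $\sum_{j=1}^{k}\|y_{s_i|j}\|_p^p=\|x'_{s_i}\|_p^p$ (the vectors $y_{s_i|1},\dots,y_{s_i|k}$ form a block sequence inside $x'_{s_i}$, Proposition \ref{trocan}(ii)) and for $s_1(1)$ large these supports meet $\text{supp}(x_0)$ only in an $\ell^p$-negligible tail, writing $\sum_{i=1}^m c_ix_{s_i}=\big(\sum_i c_i\big)x_0+\sum_{i=1}^m c_i\,x'_{s_i}$ and using disjointness in $\ell^p$ gives
\[\Big\|\sum_{i=1}^{m}c_i x_{s_i}\Big\|_p^{\,p}=\Big|\sum_{i=1}^{m}c_i\Big|^p\|x_0\|_p^p+\sum_{i=1}^{m}|c_i|^p\|x'_{s_i}\|_p^p+o(1).\]
Letting $s_1(1)\to\infty$ and invoking Definition \ref{Definition of spreading model} this yields, with $a_1:=\|x_0\|_p$,
\[\Big\|\sum_{i=1}^{m}c_i\widetilde e_i\Big\|_*^{\,p}=a_1^p\Big|\sum_{i=1}^{m}c_i\Big|^p+a_2^p\sum_{i=1}^{m}|c_i|^p .\]
The same disjointness computation shows that the sequence $(a_1e_1+a_2e_{n+1})_n$ in $\ell^p$ has precisely this norm on every linear combination (and is manifestly spreading), so $(\widetilde e_n)_n$ is isometric to it. The case $p=1$ is entirely analogous: one works with the weak* topology on $\ell^1=c_0^*$, whose bounded subsets are $w^*$-relatively compact and $w^*$-metrizable, runs the same canonical tree decomposition machinery, and obtains the formula above with $p=1$.

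It then remains to read off (i)--(iii) from this formula using Propositions \ref{sing} and \ref{thmsingular}. Triviality is equivalent to $\|e_1-e_2\|_*=0$, i.e. to $2^{1/p}a_2=0$, i.e. to $a_2=0$; and when $a_2=0$ the formula reduces to $\big|\sum_i c_i\big|\,\|e_1\|_*$, as demanded by Proposition \ref{sing}. Assume now $a_2\ne 0$. For $1<p<\infty$ the generated Banach space is an isometric copy of the closed linear span of $(a_1e_1+a_2e_{n+1})_n$ in $\ell^p$, which (since $\tfrac1m\sum_{i=1}^m(a_1e_1+a_2e_{i+1})\to a_1e_1$ in norm) is all of $\ell^p$, and $(a_1e_1+a_2e_{n+1})_n$ converges weakly to $a_1e_1$; by Proposition \ref{thmsingular} it is therefore singular exactly when $a_1\ne 0$, which is (ii). In the complementary non-trivial case $a_1=0$ the norm is $a_2\big(\sum_i|c_i|^p\big)^{1/p}$, so $(\widetilde e_n)_n$ is a scalar multiple of the usual $\ell^p$-basis, hence Schauder basic and equivalent to it; whereas for $a_1\ne 0\ne a_2$ (and $1<p<\infty$), testing with $c_i=+1$ for $i\le N$ and $c_i=-1$ for $N<i\le 2N$ makes the norm of the initial projection of order $N^{1-1/p}$ times that of the whole sum, so the basis constants are unbounded and $(\widetilde e_n)_n$ is not Schauder basic; this gives (iii). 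The main difficulty I expect is the combinatorial bookkeeping in the core computation: deriving from (iii)--(v) of Definition \ref{Def of plegma supported} that all the blocks $y_{s_i|j}$ — ranging over both the plegma index $i$ and the level $j$ — are mutually disjointly supported, and verifying that the fixed contribution of $x_0$ genuinely washes out as $s_1(1)\to\infty$.
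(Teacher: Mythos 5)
Your proof is correct and follows exactly the route the paper itself indicates (the paper offers only a two--paragraph sketch for this theorem): pass to a subordinated $k$-subsequence, reduce via the canonical tree decomposition to a ``block sequence plus a constant vector'' situation, and exploit the disjointness of supports in $\ell^p$ to compute $\|\sum_i c_i\widetilde e_i\|_*^p=a_1^p|\sum_i c_i|^p+a_2^p\sum_i|c_i|^p$. Note only that your verification of (ii) and (iii) is, rightly, confined to $1<p<\infty$: for $p=1$ the $k$-sequence $x_s=e_1+e_{s(k)+1}$ yields $a_1=a_2=1$ with a Schauder basic (indeed $\ell^1$-equivalent) spreading model, so those two assertions require the restriction $p>1$ in any case.
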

\begin{rem}
It can also be shown that  every
$(\widetilde{e}_n)_n\in\mathcal{SM}_{k}^{wrc}(c_0)$, satisfies the
analogue of Theorem \ref{Spr of l^p, 1<p} with $c_0$ in place of
$\ell^p$.\end{rem}
\begin{cor}
 Every non trivial $k$-spreading model of $\ell^p$, $1<p<\infty$,
generates a space isometric to $\ell^p$. In particular, every non
trivial $k$-spreading model of $\ell^1$ is Schauder basic and
equivalent to the usual basis of $\ell^1$.
\end{cor}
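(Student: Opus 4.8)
The plan is to reduce everything to the explicit model sequence supplied by Theorem~\ref{Spr of l^p, 1<p} and then to analyse that sequence directly. Let $(\widetilde{e}_n)_n$ be a non trivial $k$-spreading model of $\ell^p$ and $E$ the Banach space it generates. By Theorem~\ref{Spr of l^p, 1<p} there are $a_1,a_2\geq 0$ with $(\widetilde{e}_n)_n$ isometric, as a sequence, to $(u_n)_n$, where $u_n=a_1e_1+a_2e_{n+1}$ and $(e_n)_n$ is the usual basis of $\ell^p$; part~(i) of that theorem together with the non triviality of $(\widetilde{e}_n)_n$ forces $a_2>0$. Isometric sequences generate isometric Banach spaces, so it suffices to identify $F:=\overline{\operatorname{span}}\{u_n:n\geq 1\}\subseteq\ell^p$. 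The fact used repeatedly is that, for finitely supported scalars $(c_i)$,
\[\Big\|\sum_ic_iu_i\Big\|_p^p=\Big|a_1\sum_ic_i\Big|^p+a_2^p\sum_i|c_i|^p\qquad(1\leq p<\infty),\]
since the $e_1$-coordinate of $\sum_ic_iu_i$ equals $a_1\sum_ic_i$ and its $e_{i+1}$-coordinate equals $a_2c_i$.

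For $p=1$ this settles the corollary at once: the identity gives $a_2\sum_i|c_i|\leq\|\sum_ic_iu_i\|_1\leq(a_1+a_2)\sum_i|c_i|$, so $(u_n)_n$, hence $(\widetilde{e}_n)_n$, is equivalent to the usual basis of $\ell^1$ (with constants $a_2$ and $a_1+a_2$), and in particular Schauder basic.

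Now let $1<p<\infty$. If $a_1=0$ then $u_n=a_2e_{n+1}$, so $(\widetilde{e}_n)_n$ is isometric to a positive multiple of the $\ell^p$-basis and $F=\overline{\operatorname{span}}\{e_{n+1}:n\geq1\}$ is isometric to $\ell^p$. So assume $a_1>0$, in which case $(\widetilde{e}_n)_n$ is singular. Consider the linear map $\Phi\colon\operatorname{span}\{u_n\}\to\ell^p\oplus_p\rr$ sending $\sum_ic_iu_i$ to $\big(a_2\sum_ic_ie_{i+1},\,a_1\sum_ic_i\big)$; by the displayed identity $\Phi$ is a well-defined isometry, hence extends to an isometry $\widehat{\Phi}\colon F\to\ell^p\oplus_p\rr$ onto the closure of its range, and since $\ell^p\oplus_p\rr$ is itself isometric to $\ell^p$ it remains only to prove that the range of $\Phi$ is dense. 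Given $(y,t)$ with $y=\sum_{n\geq1}d_ne_{n+1}\in\ell^p$ and $\varepsilon>0$, I would fix $N$ with $\sum_{n>N}|d_n|^p<\varepsilon$, put $c_n=d_n/a_2$ for $n\leq N$, and spread the correction mass $r:=t/a_1-\sum_{n\leq N}d_n/a_2$ uniformly over the coordinates $N+1,\dots,N+M$; the resulting finitely supported $(c_n)$ satisfies $a_1\sum_nc_n=t$ exactly, while $\|a_2\sum_nc_ne_{n+1}-y\|_p^p$ is bounded by $\sum_{n>N}|d_n|^p$ plus a term of order $|a_2r|^p/M^{p-1}$, and the latter tends to $0$ as $M\to\infty$ precisely because $p>1$ (here $r$ is a fixed finite quantity once $N$ is fixed, by Hölder's inequality $|\sum_{n\leq N}d_n|\leq N^{1/q}\|(d_n)\|_p$ with $1/p+1/q=1$). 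Letting $M\to\infty$ and then $\varepsilon\to0$ puts $(y,t)$ in the closure of the range of $\Phi$, so $F$, and hence $E$, is isometric to $\ell^p$.

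Note that $E$ is genuinely infinite dimensional, since a non trivial spreading sequence is linearly independent by Proposition~\ref{sing}, so the two assertions above are exactly the content of the corollary. The only real work is the density step in the singular case $a_1>0$, $1<p<\infty$; the hypothesis $p>1$ is essential there (for $p=1$ the corresponding range fails to be dense, which is why for $\ell^1$ one only claims equivalence of bases rather than an isometric identification of the generated space).
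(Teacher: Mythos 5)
Your proof is correct and follows the route the paper intends: the corollary is stated as an immediate consequence of Theorem \ref{Spr of l^p, 1<p}, and your analysis of the model sequence $(a_1e_1+a_2e_{n+1})_n$ supplies exactly the omitted computation (non-triviality forcing $a_2>0$, the explicit norm identity giving equivalence to the $\ell^1$ basis when $p=1$, and the identification of the closed span when $p>1$). The only step with real content, the density argument in the case $a_1>0$, $1<p<\infty$, could be shortened by noting that $\|\frac1n\sum_{i=1}^n u_i - a_1e_1\|_p = a_2\,n^{1/p-1}\to 0$, so $e_1$ and hence every $e_{n+1}$ lies in the closed span, which is therefore all of $\ell^p$; but your relabeling through $\ell^p\oplus_p\mathbb{R}$ is equally valid.
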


\section{A reflexive space not admitting $\ell^p$
 or $c_0$ as a spreading model}\label{space Odel_Schlum}
   A space not admitting any $\ell^p$, for $1\leq p<\infty$, or $c_0$ spreading model was constructed in \cite{O-S}. In the same paper it is asked if there exists a space which does not contain any $\ell^p$, for $1\leq p<\infty$,
    or $c_0$ $k$-iterated spreading model of any $k\in\nn$. In this section we give an example of a reflexive space $X$ answering affirmatively this problem.
\subsection{The definition of the space $X$}
The construction of $X$ is
closely related to the corresponding one in \cite{O-S}.
 Let
$(n_j)_j$ and $(m_j)_j$ be two strictly increasing
sequences of natural numbers satisfying the following:
\begin{enumerate}
  \item[(i)] $\sum_{j=1}^\infty \frac{1}{m_j}\leq 0,1$.
  \item[(ii)] For every $a>0$, we have that
  $\frac{n_j^a}{m_j}\stackrel{j\to\infty}{\longrightarrow}\infty$.
  \item[(iii)] For every $j\in\nn$, we have that $\frac{n_j}{n_{j+1}}<\frac{1}{m_{j}}$.
\end{enumerate}
Let $\|\cdot\|$ be the norm on $c_{00}(\nn)$, implicitly defined
as follows. For every $x\in c_{00}(\nn)$ we set
\begin{equation}\label{eq15}
  \|x\|=\max \Big\{ \|x\|_\infty,\big( \sum_{j=1}^\infty \|x\|_j^2
\big)^\frac{1}{2}\Big\}
\end{equation} where
$\|x\|_j=sup\{\frac{1}{m_j}\sum_{q=1}^{n_j}\|E_q(x)\|:E_1<\ldots<E_{n_j}\}$.

Let $X$ be the completion of $c_{00}(\nn)$ under the above norm.
It is easy to see that the Hamel basis of $c_{00}(\nn)$
is an unconditional basis of the space  $X$. Also notice that for
every $x\in X$ the sequence $w=(\|x\|_j)_j$ belongs to
$\ell^2$ and $\big( \sum_{j=1}^\infty \|x\|_j^2
\big)^\frac{1}{2}=\|w\|_{\ell^2}\leq \|x\|$.
\subsection{The main results}
The following is the main result of this section.
\begin{thm}\label{Odel slumpr theorem}
  For every $k\in\nn$ and $(e_n)_n\in\mathcal{SM}_k(X)$, the space $E$ generated by $(e_n)_n$ does not
  contain any isomorphic copy of $\ell^p$, $1\leq
  p<\infty$, or $c_0$.
\end{thm}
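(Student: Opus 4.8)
The plan is to argue by contradiction: suppose some $(e_n)_n\in\mathcal{SM}_k(X)$ generates a space $E$ containing an isomorphic copy of $\ell^p$ or $c_0$. By Corollary \ref{l^p in wrc} (applied iteratively, noting that $X$ is reflexive so $\mathcal{SM}_k^{wrc}(X)=\mathcal{SM}_k(X)$) and the composition results of Section 7, we may reduce to the situation where $X$ itself admits a $k$-spreading model isometrically (or almost isometrically) equivalent to the usual basis of $\ell^p$, $1\leq p<\infty$, or $c_0$. Then, by Corollary \ref{getting block generated ell^1 spreading model} for the $\ell^1$ case, Corollary \ref{getting block generated c_0 spreading model} for the $c_0$ case, and Theorem \ref{composition thm} together with Proposition \ref{trocan}(iii) for the remaining $\ell^p$, $1<p<\infty$, case, we may further assume that the spreading model is \emph{plegma block generated}, i.e.\ witnessed by a plegma block $k$-sequence $(x_s)_{s\in[\nn]^k}$ in $X$. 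Passing to the almost isometric version via Proposition \ref{Prop on almost isometric l^1 spr mod} and Remark \ref{Rem on almost isometric l^1 spr mod} when needed, we fix such a seminormalized plegma block $k$-sequence.

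The heart of the argument is then a norm estimate inside $X$ exploiting the specific Tsirelson-type definition \eqref{eq15}. First I would record that, for a plegma block $k$-sequence, a plegma $l$-tuple $(s_j)_{j=1}^l$ with $s_1(1)$ large produces vectors $x_{s_1}<\dots<x_{s_l}$ whose supports are successive intervals far out in the basis of $X$; this is exactly the setup in which the norm \eqref{eq15} behaves like a mixed Tsirelson norm. The key computation is an upper $\ell^2$-type estimate: for any normalized successive block vectors $u_1<\dots<u_l$ of $X$ and scalars $a_1,\dots,a_l$, by splitting the functionals realizing $\|\sum a_iu_i\|_j$ across the blocks and using the growth conditions (i)--(iii) on $(m_j)_j,(n_j)_j$ (in particular $\sum 1/m_j\leq 0.1$ and $n_j^a/m_j\to\infty$ for every $a>0$), one gets that $\|\sum_{i=1}^l a_iu_i\|$ is, up to a universal constant, dominated by an expression of the form $\big(\sum_j \frac{1}{m_j^2}\,(\text{number of blocks that functional families of "size $n_j$" can see})\cdot(\cdots)\big)^{1/2}$. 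The point is that this upper estimate \emph{rules out} both a lower $\ell^1$-estimate (because the $1/m_j$ factors force genuine cancellation/averaging, exactly as in the Odell--Schlumprecht space) and a lower $\ell^p$-estimate for $p<\infty$ (the admissibility thresholds $n_j$ grow so fast relative to $m_j$ that no single weight index can be used for arbitrarily long sums, hence the norm of $\frac1{l^{1/p}}\sum_{i=1}^l u_i$ cannot stay bounded below), while the presence of the $\ell^2$-sum on the outside of \eqref{eq15} simultaneously gives a lower $\ell^2$-estimate that rules out $c_0$. Translating these estimates on block vectors of $X$ to the generated $k$-spreading model $(e_n)_n$ via inequality \eqref{rsm}, we conclude that $(e_n)_n$ admits no lower $\ell^1$-estimate, no lower $\ell^p$-estimate for any $p<\infty$, and that $E$ does not contain $c_0$ — contradicting the assumption that $E\supseteq \ell^p$ or $c_0$ isomorphically (recall a subspace containing $\ell^1$ forces an $\ell^1$ spreading model on $(e_n)_n$, etc.).

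I expect the main obstacle to be the block-vector norm estimate in $X$ itself, namely quantifying precisely how the mixed $\ell^2$/Tsirelson norm \eqref{eq15} interacts with successive blocks and showing it excludes \emph{all} $\ell^p$ and $c_0$ lower/upper estimates uniformly. This is where the arithmetic of the sequences $(m_j)_j$ and $(n_j)_j$ must be used carefully: condition (iii), $n_j/n_{j+1}<1/m_j$, is what lets one "localize" any admissible family of $n_j$ successive sets to essentially one scale $j$, so that the $\ell^2$-sum over $j$ does not accumulate, while condition (ii) guarantees that the individual $\|\cdot\|_j$ seminorms of a long normalized average of blocks tend to $0$ faster than any polynomial rate, killing both $\ell^1$ and $\ell^p$ ($p>1$) lower estimates. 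A secondary, more bookkeeping-heavy point is handling the singular (non-Schauder-basic) case of $(e_n)_n$: there one passes to the natural decomposition $e_n=e_n'+e$ via Corollary \ref{cor singular wrc}, observes $E\cong E'$ (the space of $(e_n')_n$), and runs the above argument on the unconditional, weakly null, Ces\`aro-null sequence $(e_n')_n$ instead. Once the block estimate is in hand, the reductions through Section 7 and the final contradiction are routine.
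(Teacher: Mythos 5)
Your global strategy matches the paper's: assume $E$ contains $\ell^p$ or $c_0$, use Corollary \ref{l^p in wrc} to push a sequence equivalent to the $\ell^p$ or $c_0$ basis into $\mathcal{SM}_{k+1}(X)$, reduce via reflexivity and the canonical tree decomposition to plegma block (or plegma disjointly supported) generated spreading models, and then kill these by norm estimates in $X$. The reductions are fine. The problem is that the two analytic estimates you sketch at the core are, respectively, missing the essential idea and pointed in the wrong direction. For the $\ell^1$ exclusion, a ``direct'' upper bound on $\|\frac1l\sum_{q}x_{q}\|$ obtained by splitting the functionals realizing $\|\cdot\|_j$ across successive blocks does not exist: the outer $\ell^2$-sum in (\ref{eq15}) contributes $\big(\sum_j\|\frac1l\sum_q x_q\|_j^2\big)^{1/2}$, which is controlled by $\|\frac1l\sum_q w_q\|_{\ell^2}$ where $w_q=(\|x_q\|_j)_j$, and if all the weight vectors $w_q$ concentrate on the same scales $j$ this average need not be small. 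The paper's key move is to treat $(w_s)_{s\in[\nn]^k}$ as a bounded $k$-sequence in $\ell^2$, pass to a subordinated $k$-subsequence, apply Theorem \ref{canonical tree} \emph{in $\ell^2$} to make the weights (minus their common weak limit, and beyond a fixed initial segment $\{1,\dots,d_0\}$) plegma disjointly supported, and only then run the averaging estimate (Lemma \ref{small means}); the initial segment is handled separately by Lemma \ref{small_estimation_on_means}. Without this disjointification step the $\ell^1$ case does not close.

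For $\ell^p$ with $1<p<\infty$ and for $c_0$, the contradiction in the paper comes from the \emph{lower} estimates of $X$ being too strong, not from the absence of a lower $\ell^p$-estimate as you claim: if $(y_q)_{q=1}^{n_l}$ are successive blocks with $\|y_q\|>\delta$, then already the single seminorm $\|\cdot\|_l$ gives $\|\sum_q y_q\|\geq \frac{n_l}{m_l}\delta$, and condition (ii) ($n_l^{1-1/p}/m_l\to\infty$) makes this exceed $Cn_l^{1/p}$ for every $C$, violating the \emph{upper} $\ell^p$ (resp.\ $c_0$, taking $p=\infty$) estimate of the putative spreading model. Your assertion that $\|\frac{1}{l^{1/p}}\sum u_i\|$ ``cannot stay bounded below'' is the opposite of what happens in this space, and the claimed ``lower $\ell^2$-estimate from the outer sum'' ruling out $c_0$ is neither established nor needed. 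Two smaller points: you invoke reflexivity of $X$ (to identify $\mathcal{SM}_k$ with $\mathcal{SM}_k^{wrc}$) before proving it, whereas reflexivity itself is a consequence of the $\ell^1$ block exclusion lemma together with James' theorem; and in the $\ell^p/c_0$ case one also needs the Ramsey step selecting a level $d$ of the tree decomposition with $\|y_{s|d}\|$ uniformly bounded below, so that Lemma \ref{breaking upper l^p} applies to a genuine plegma block family.
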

Given the above theorem we get the following consequence which the aforementioned problem stated in \cite{O-S}.
\begin{cor} For every $k\in\nn$, the spaces generated by the $k$-iterated spreading
models of $X$ do not contain any isomorphic copy of $\ell^p$, $1\leq
p<\infty$, or $c_0$.
\end{cor}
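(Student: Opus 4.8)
The plan is to reduce the theorem, via the composition property, to a statement about the $d$-spreading models themselves, and then to carry out, uniformly in $d$, the estimates of \cite{O-S}.

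\emph{Step 1 (reduction of the order).} Assume for contradiction that $(e_n)_n\in\mathcal{SM}_k(X)$ and that $Y$ embeds isomorphically into $E$, where $Y$ is $\ell^p$ for some $1\le p<\infty$ or $c_0$. If $(e_n)_n$ is trivial then $E$ is one dimensional and there is nothing to prove. If $(e_n)_n$ is singular, write $e_n=e'_n+e$ for its natural decomposition; then $E\cong E'\oplus\langle e\rangle$, where $E'$ is generated by the non trivial unconditional sequence $(e'_n)_n\in\mathcal{SM}_k^{wrc}(X)$ (Corollary \ref{cor singular wrc}), and since a finite codimensional subspace of a copy of $Y$ again contains a copy of $Y$, we get $Y\hookrightarrow E'$; so we may replace $(e_n)_n$ by $(e'_n)_n$ and assume $(e_n)_n$ is unconditional, hence a Schauder basis of $E$. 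A standard gliding hump argument then produces a normalized block sequence $(v_n)_n$ of $(e_n)_n$ equivalent to the usual basis of $Y$; a suitable subsequence generates a spreading model $(\widetilde e_n)_n$, still equivalent to the basis of $Y$ (which is subsymmetric), which is plegma block generated because for order one a plegma block sequence is just a block sequence. By Theorem \ref{composition thm} we conclude $(\widetilde e_n)_n\in\mathcal{SM}_{k+1}(X)$. Hence it suffices to prove:

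\textbf{Claim.} For every $d\in\nn$, $\mathcal{SM}_d(X)$ contains no sequence equivalent to the usual basis of $\ell^p$ ($1\le p<\infty$) or of $c_0$.

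\emph{Step 2 (normal form of the generating $d$-sequence).} Let $(e_n)_n\in\mathcal{SM}_d(X)$ be equivalent to the usual basis of $\ell^p$ or $c_0$. Since $X$ is reflexive, $\mathcal{SM}_d(X)=\mathcal{SM}_d^{wrc}(X)$, so by Corollary \ref{cor canonical tree with spr mod} we may assume $(e_n)_n$ is generated by a $d$-sequence $(x_s)_{s\in[\nn]^d}$ which is subordinated for the weak topology and admits a canonical tree decomposition $(y_t)_{t\in[\nn]^{\le d}}$ with $y_\emptyset=x_0$, the weak limit. As $(e_n)_n$ is non trivial and Schauder basic, it is not singular, so by Theorem \ref{nb} either $x_0=0$, or $x_0\neq0$, $p=1$, and every spreading model of $(x_s-x_0)_{s\in[\nn]^d}$ is again equivalent to the $\ell^1$ basis; in the second case I would replace $(x_s)$ by $(x_s-x_0)$ (which is weakly null) and reapply Corollary \ref{cor canonical tree with spr mod}. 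Thus in all cases we may assume $x_0=0$, i.e. $(x_s)$ is weakly null, $y_\emptyset=0$ and $x_s=\sum_{j=1}^d y_{s|j}$. Finally $(x_s)$ is seminormalized (as $(e_n)_n$ is non trivial), so exactly as in the proof of Lemma \ref{lem for duality} we may pass to $L\in[\nn]^\infty$ and fix $1\le j_0\le d$ and $\delta>0$ with $\|y_{s|j_0}\|\ge\delta$ for every $s\in[L]^d$.

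\emph{Step 3 (lower Schreier estimate; the cases $c_0$ and $\ell^p$, $p>1$).} Put $\varphi(N)=\|\sum_{q=1}^{N}e_q\|_*$. For a far out plegma family $(s_q)_{q=1}^{n_j}$ in $[L]^d$, letting $I$ be the interval with $\min I=\min\text{supp}(y_{s_1|j_0})$ and $\max I=\max\text{supp}(y_{s_{n_j}|j_0})$, Proposition \ref{trocan}(v) together with $y_\emptyset=0$ gives $I\big(\sum_q x_{s_q}\big)=\sum_q y_{s_q|j_0}$; since by Definition \ref{Def of plegma supported}(iv) the $y_{s_q|j_0}$ are successively and disjointly supported, the estimate $\|x\|\ge\|x\|_j$ yields
\[\Big\|\sum_{q=1}^{n_j}x_{s_q}\Big\|\ \ge\ \frac{1}{m_j}\sum_{q=1}^{n_j}\|y_{s_q|j_0}\|\ \ge\ \delta\,\frac{n_j}{m_j},\]
so $\varphi(n_j)\ge\delta\,n_j/m_j$ for all $j$. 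If $(e_n)_n$ were equivalent to the $c_0$ basis then $\varphi$ would be bounded, contradicting property (ii) with $a=1$, which forces $n_j/m_j\to\infty$. If $(e_n)_n$ were equivalent to the $\ell^p$ basis with $1<p<\infty$ then $\varphi(N)\le C\,N^{1/p}$, whence $m_j\ge(\delta/C)\,n_j^{1-1/p}$, contradicting property (ii) with $a=1-1/p>0$.

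\emph{Step 4 (the case $\ell^1$; the main obstacle).} It remains to exclude $(e_n)_n$ equivalent to the $\ell^1$ basis, for which I would use an upper estimate on $\varphi$. Writing $\sum_{q=1}^N x_{s_q}=\sum_{j=1}^d\big(\sum_{q=1}^N y_{s_q|j}\big)$ and noting that for each $j$ the $y_{s_q|j}$ are successively supported with $\|y_{s_q|j}\|\le K:=\sup_{s,j}\|y_{s|j}\|<\infty$, one estimates $\big\|\sum_{q=1}^N y_{s_q|j}\big\|$ via the $\ell^\infty$- and $\ell^2$-parts of the norm of $X$ exactly as in \cite{O-S}, and, using properties (i)--(iii) of $(n_j)_j$ and $(m_j)_j$, deduces $\varphi(N)/N\to0$, contradicting the lower $\ell^1$-estimate $\varphi(N)\ge cN$ forced by equivalence with the $\ell^1$ basis. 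This proves the Claim, hence the theorem. The hard part is precisely this last estimate: since the norm of $X$ is defined implicitly and each $\|\cdot\|_j$ averages $n_j$ arbitrary interval restrictions, one must carefully bound the contribution of the boundary crossing restrictions and combine over $j$, which is the technical core of \cite{O-S}; and for $d\ge2$ one needs in addition the control on how the supports of the $d$ plegma block families $(y_{s|j})_s$ interlace, which is exactly what conditions (iii)--(v) of the canonical tree decomposition provide.
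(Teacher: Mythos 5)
Your argument never actually addresses the statement to be proved. The corollary is about the $k$-\emph{iterated} spreading models of $X$, but from the first line of Step 1 you work with ``$(e_n)_n\in\mathcal{SM}_k(X)$'', i.e.\ with a plegma-based $k$-spreading model, and you never return to the iterated hierarchy. These are different notions: a $k$-iterated spreading model is obtained by applying the classical order-one construction $k$ times, passing through the generated space at each stage, and it is not a priori (even up to isomorphism of the generated spaces) realized as a $k$-spreading model of $X$. The bridge is precisely what the paper's proof of this corollary consists of: Theorem \ref{Odel slumpr theorem} plus James' theorem gives that the spaces generated by unconditional $k$-spreading models of $X$ are reflexive, and then Corollary \ref{qwqwe} (an induction through Proposition \ref{iterated_lemma} and Lemma \ref{iter_inter}, where reflexivity of each intermediate space is needed to replace the generating sequence by a block one before invoking the composition property) identifies each space generated by a $k$-iterated spreading model with one generated by an unconditional $k$-spreading model. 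What you prove in Step 1 is essentially the paper's Corollary \ref{l^p in wrc} and its use in Theorem \ref{Odel slumpr theorem}; that reduces \emph{that} theorem to your Claim about $\mathcal{SM}_d(X)$, but leaves the corollary about iterated spreading models untouched.

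There is also a genuine hole in Step 4, which cannot be closed by ``exactly as in \cite{O-S}''. If $(x_{s_q})_q$ is plegma block, the vectors $w_{s_q}=(\|x_{s_q}\|_j)_j\in\ell^2$ are \emph{not} disjointly supported (each $\|x_{s_q}\|_j$ may be nonzero for every $j$), so estimating $\|\sum_q x_{s_q}\|$ level by level along the tree decomposition does not give the required $o(N)$ bound. The paper's Lemma \ref{non containing l^1 block spreading model} resolves this by running the subordination/canonical-tree machinery a \emph{second} time, on the $k$-sequence $(w_s)$ inside $\ell^2$, so that after subtracting its weak limit the $w$'s are approximately disjointly supported -- exactly the hypothesis of Lemma \ref{small means} -- and the preliminary reduction to plegma block generated $\ell^1$ spreading models requires Theorem \ref{getting block generated ell^1 spreading model}, an induction on the least order at which $\ell^1$ appears using Corollary \ref{ultrafilter property for ell^1 spreading models}. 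Your Steps 2 and 3, by contrast, are essentially correct and coincide with the paper's Corollary \ref{adfs}.
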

\begin{proof}
By Theorem \ref{Odel slumpr theorem} and James' Theorem we have that
for every $k\in\nn$, the spaces generated by the unconditional
$k$-spreading models of $X$ are reflexive. By Corollary \ref{qwqwe}
we have that for every $k\in\nn$, every space generated by a
$k$-iterated spreading model of $X$ is isomorphic to the space
generated by an unconditional $k$-spreading model of $X$. By Theorem
\ref{Odel slumpr theorem}, the proof is complete.
\end{proof}

Also notice this example shows that Krivine's theorem \cite{Kr} concerning
$\ell^p$ or $c_0$ block finite representability cannot be captured by the notion of $k$-spreading models.

\subsection{Proof of Theorem \ref{Odel slumpr theorem}}
We will need the next well known
lemma (see \cite{AT}).
\begin{lem}\label{small_estimation_on_means}
  Let $j<j_0$ in $\nn$ and $(x_q)_{q=1}^{n_{j_0}}$ be a block
  sequence in the unit ball $B_X$ of $X$. Then
  \[\Big\| \frac{x_1+\ldots+x_{n_{j_0}}}{n_{j_0}}
  \Big\|_j<\frac{2}{m_j}\]
\end{lem}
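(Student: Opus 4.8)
The plan is to fix an arbitrary admissible decomposition $E_1<\ldots<E_{n_j}$ entering the definition of $\|\cdot\|_j$ and to bound the quantity $\sum_{p=1}^{n_j}\|E_p(x)\|$ \emph{uniformly} (over all such decompositions) by something strictly smaller than $2$, where we abbreviate $x=\frac{1}{n_{j_0}}\sum_{q=1}^{n_{j_0}}x_q$. Since by the definition in (\ref{eq15}) we have $\|x\|_j=\frac{1}{m_j}\sup\{\sum_{p=1}^{n_j}\|E_p(x)\| : E_1<\ldots<E_{n_j}\}$, a uniform bound of the shape $\sum_{p=1}^{n_j}\|E_p(x)\|\le 1+\frac{n_j-1}{n_{j_0}}$ will immediately give $\|x\|_j<\frac{2}{m_j}$, because $j<j_0$ forces $n_j<n_{j_0}$ and hence $\frac{n_j-1}{n_{j_0}}<1$.

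First I would expand $E_p(x)=\frac{1}{n_{j_0}}\sum_q E_p(x_q)$ and apply the triangle inequality to obtain $\sum_{p}\|E_p(x)\|\le \frac{1}{n_{j_0}}\sum_q\sum_p\|E_p(x_q)\|$. For a fixed block $x_q$, only those indices $p$ with $E_p\cap\text{supp}(x_q)\neq\emptyset$ contribute; write $c_q$ for their number. Because the basis of $X$ is unconditional and the norm is monotone with respect to the coordinate order (transparent from (\ref{eq15})), restricting to a subset of coordinates does not increase the norm, so $\|E_p(x_q)\|\le\|x_q\|\le 1$. Consequently $\sum_p\|E_p(x_q)\|\le c_q$ and therefore $\sum_{p}\|E_p(x)\|\le\frac{1}{n_{j_0}}\sum_q c_q$.

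The heart of the matter is the combinatorial estimate $\sum_q c_q\le n_{j_0}+(n_j-1)$. Since both the sets $E_p$ and the supports of the blocks $x_q$ are successive, the indices $p$ meeting a given $x_q$ form an interval of integers, so $c_q-1$ counts exactly the boundaries $\max E_p<\min E_{p+1}$ that fall inside the range $[\min\text{supp}(x_q),\max\text{supp}(x_q)]$. As the blocks of a block sequence have pairwise disjoint support ranges, each of the $n_j-1$ boundaries lies in the range of at most one $x_q$; summing over $q$ yields $\sum_q (c_q-1)^+\le n_j-1$, while at most $n_{j_0}$ blocks meet any $E_p$ at all. Combining the two counts gives $\sum_q c_q\le n_{j_0}+n_j-1$, whence $\sum_{p}\|E_p(x)\|\le 1+\frac{n_j-1}{n_{j_0}}<2$, and the desired bound on $\|x\|_j$ follows.

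The main obstacle, and essentially the only delicate point, is making this boundary-counting rigorous: one must check that $\{p:E_p\cap\text{supp}(x_q)\neq\emptyset\}$ is indeed a set of consecutive indices and that each gap $\max E_p<\min E_{p+1}$ is straddled by at most one block, both of which rest on the disjointness of the ranges in a block sequence. Everything else, namely the triangle inequality, the monotonicity estimate $\|E_p(x_q)\|\le\|x_q\|\le 1$, and the final arithmetic using $n_j<n_{j_0}$, is routine.
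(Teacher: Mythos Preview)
Your argument is correct and is essentially the standard counting proof; the paper does not supply its own argument, merely citing the result as well known (see \cite{AT}). One small caveat: the claim that $\{p:E_p\cap\text{supp}(x_q)\neq\emptyset\}$ consists of \emph{consecutive} indices is not literally true, since $\text{supp}(x_q)$ need not be an interval (e.g.\ $\text{supp}(x_q)=\{3,7\}$, $E_1=\{3\}$, $E_2=\{5\}$, $E_3=\{7\}$). What is true, and what you actually need, is the inequality $c_q-1\le\#\{\text{boundaries in the range of }x_q\}$: if $p_1<p_m$ are the least and greatest indices meeting $\text{supp}(x_q)$, then every boundary between $E_{p_1}$ and $E_{p_m}$ lies in $[\min\text{supp}(x_q),\max\text{supp}(x_q)]$, so there are at least $p_m-p_1\ge c_q-1$ of them. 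This inequality, together with the disjointness of the ranges, still yields $\sum_q c_q\le n_{j_0}+n_j-1$, and the rest of your argument goes through unchanged.
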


\begin{lem}\label{small means}
  Let $d_0<j_0$ in $\nn$, and $(x_q)_{q=1}^{n_{j_0}}$ be a block sequence in $B_X$. We set $E=\{n\in\nn:n>d_0\}$ and $w_q=(\|x_q\|_j)_j$, for all $1\leq q\leq n_{j_0}$. Assume that for some $0<\ee<1$ there exists a disjointly supported finite sequence $(w'_q)_{q=1}^{n_{j_0}}$ in $\ell^2$ such that $\|E(w_q-w'_q)\|_{\ell^2}<\ee$, for all $1\leq q\leq n_{j_0}$. Then \[\Big\|\frac{x_1+\ldots+x_{n_{j_0}}}{n_{j_0}}\Big\|<0.2+\ee+2n_{j_0}^{-\frac12}\]
\end{lem}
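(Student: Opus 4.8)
Set $y=\dfrac{1}{n_{j_0}}\big(x_1+\cdots+x_{n_{j_0}}\big)$. The plan is to estimate $\|y\|$ directly from the defining formula \eqref{eq15}, i.e. to bound $\|y\|_\infty$ and the square sum $\sum_j\|y\|_j^2$, splitting the latter at the index $d_0$ into the ranges $j\le d_0$ and $j>d_0$. The point is that the first range is handled by the averaging lemma and contributes the constant $0.2$, while the second range is controlled by the hypothesis on the disjoint family $(w'_q)$ and contributes $\ee+2n_{j_0}^{-1/2}$.

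For the indices $j\le d_0$: since $d_0<j_0$ we have $j<j_0$, so Lemma \ref{small_estimation_on_means}, applied to the block sequence $(x_q)_{q=1}^{n_{j_0}}\subseteq B_X$, gives $\|y\|_j<2/m_j$. Using the elementary estimate $\sum_j 1/m_j^2\le\big(\sum_j 1/m_j\big)^2\le(0.1)^2$ (from condition (i) on $(m_j)_j$), I get $\big(\sum_{j\le d_0}\|y\|_j^2\big)^{1/2}<2\cdot 0.1=0.2$.

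For the indices $j>d_0$, i.e. $j\in E$: here I would use that each $\|\cdot\|_j$ is subadditive and positively homogeneous, so $\|y\|_j\le\frac1{n_{j_0}}\sum_q\|x_q\|_j$, whence, comparing nonnegative sequences coordinatewise, $E(w_y)\le\frac1{n_{j_0}}\sum_q E(w_q)$ with $w_y=(\|y\|_j)_j$. Writing $E(w_q)=E(w'_q)+E(w_q-w'_q)$ and applying the monotonicity and the triangle inequality of the $\ell^2$-norm, the disjointness of the $w'_q$ (hence of the $E(w'_q)$), the bound $\|E(w_q)\|_{\ell^2}\le\big(\sum_j\|x_q\|_j^2\big)^{1/2}\le\|x_q\|\le 1$, and the hypothesis $\|E(w_q-w'_q)\|_{\ell^2}<\ee<1$, I obtain $\big\|\sum_q E(w'_q)\big\|_{\ell^2}=\big(\sum_q\|E(w'_q)\|_{\ell^2}^2\big)^{1/2}<2\sqrt{n_{j_0}}$ and therefore $\big(\sum_{j>d_0}\|y\|_j^2\big)^{1/2}\le\frac1{n_{j_0}}\big(2\sqrt{n_{j_0}}+n_{j_0}\ee\big)=2n_{j_0}^{-1/2}+\ee$.

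Combining the two ranges via the triangle inequality in $\ell^2$ gives $\big(\sum_j\|y\|_j^2\big)^{1/2}<0.2+\ee+2n_{j_0}^{-1/2}$, while $\|y\|_\infty\le 1/n_{j_0}\le 2n_{j_0}^{-1/2}$ because the $x_q$ are successive blocks; by \eqref{eq15} this yields the asserted inequality. The only genuinely non-routine step is the large-index estimate: the naive triangle inequality $\|y\|_j\le\frac1{n_{j_0}}\sum_q\|x_q\|_j$ taken at the level of $\ell^2$-norms only gives the useless bound $1$, so one must exploit the near-disjointness in $\ell^2$ of the tail profiles $E(w_q)$ — which is exactly what the approximating disjoint family $(w'_q)$ encodes, with error $\ee$ — to gain the factor $\sqrt{n_{j_0}}/n_{j_0}=n_{j_0}^{-1/2}$.
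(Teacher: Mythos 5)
Your argument is correct and follows essentially the same route as the paper: split the square sum at $d_0$, control the indices $j\le d_0$ by Lemma \ref{small_estimation_on_means} and $\sum_j 1/m_j\le 0.1$, and control the indices $j>d_0$ by replacing $E(w_q)$ with the disjointly supported $E(w'_q)$ (each of $\ell^2$-norm at most $2$) at a cost of $\ee$, which yields the $2n_{j_0}^{-1/2}$ term; the $\|\cdot\|_\infty$ part is negligible. The only cosmetic difference is that the paper bounds $\|y\|_\infty\le 1/n_{j_0}<0.1$ directly rather than by $2n_{j_0}^{-1/2}$, which changes nothing.
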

\begin{proof}
   By Lemma
  \ref{small_estimation_on_means}, we have that
  \[\Bigg\|\Big(\Big\|\frac{\sum_{q=1}^{n_{j_0}}x_q}{n_{j_0}}\Big\|_j \Big)_{j=1}^{d_0}\Bigg\|_{\ell^2}
  \leq\sum_{j=1}^{d_0}\Big\|\frac{\sum_{q=1}^{n_{j_0}}x_q}{n_{j_0}}\Big\|_j\leq\sum_{j=1}^{d_0}\frac{2}{m_j}<0,2\]
   Using the above and the observation that $\|E(w'_q)\|_{\ell^2}\leq2$, for all $1\leq q\leq n_{j_0}$, we get the following.
  \[\begin{split}
    \Bigg\|\Big(\Big\|\frac{1}{n_{j_0}}\sum_{q=1}^{n_{j_0}}x_q \Big\|_j\Big)_j\Bigg\|_{\ell_2}&
    \leq0,2+\Bigg\|\Big(\Big\|\frac{1}{n_{j_0}}\sum_{q=1}^{n_{j_0}}x_q\Big\|_j\Big)_{j>d_0}\Bigg\|_{\ell^2}\\
    &\leq0,2+\Bigg\|\frac{1}{n_{j_0}}\sum_{q=1}^{n_{j_0}}\big(w_q(j)\big)_{j>d_0}\Bigg\|_{\ell^2}
    \leq0,2+\Big\|\sum_{q=1}^{n_{j_0}}\frac{E(w'_q)}{n_{j_0}}\Big\|_{\ell^2}+\ee\\
    &\leq0,2+\Big(\sum_{q=1}^{n_{j_0}}\Big(\frac{2}{n_{j_0}}\Big)^2\Big)^\frac{1}{2}+\ee= 0,2+\ee+2n_{j_0}^{-\frac12}
  \end{split}\]
  Moreover  $\|\frac{1}{n_{j_0}}\sum_{q=1}^{n_{j_0}}x_q\|_\infty\leq\frac{1}{n_{j_0}}<\frac{1}{m_1}<0,1$. Hence by (\ref{eq15}) the proof is completed.
\end{proof}

\begin{lem}\label{non containing l^1 block spreading
model}
  For all $k\in\nn$, every plegma block generated $k$-spreading model of $X$ is not equivalent to the usual basis of $\ell^1$.
\end{lem}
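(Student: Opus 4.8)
The plan is to argue by contradiction using Lemma \ref{small means}. Suppose some plegma block generated $k$-spreading model of $X$ is equivalent to the usual basis of $\ell^1$. Fix a plegma block $k$-sequence generating it; applying Proposition \ref{Prop on almost isometric l^1 spr mod} with $\varepsilon=\tfrac1{10}$ to this $k$-sequence and invoking Remark \ref{Rem on almost isometric l^1 spr mod} (legitimate since $X$ has a Schauder basis and the generating $k$-sequence is plegma block), we obtain a \emph{plegma block} $k$-sequence $(x_s)_{s\in[\nn]^k}$ in $X$ with $\|x_s\|\le1$ for all $s$ which generates, with respect to some null sequence $(\delta_n)_n$, a $k$-spreading model $(\tilde e_n)_n$ admitting a lower $\ell^1$-estimate of constant $\tfrac9{10}$; in particular $\big\|\tfrac1N\sum_{i=1}^N\tilde e_i\big\|_*\ge\tfrac9{10}$ for every $N$. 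The point of this reduction is to push the lower $\ell^1$-constant above the threshold $0.2$ coming from $\sum_j 1/m_j\le0.1$.

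Next, pass to the sequence of weights $s\mapsto w_s:=(\|x_s\|_j)_j$, which takes values in the unit ball of $\ell^2$ (recall $\big(\sum_j\|x\|_j^2\big)^{1/2}\le\|x\|$). Since $B_{\ell^2}$ is weakly compact, Proposition \ref{cor for subordinating} lets us restrict to an infinite set on which $(w_s)$ is subordinated with respect to the weak topology of $\ell^2$; let $v_0\in B_{\ell^2}$ be its weak limit. Applying Theorem \ref{canonical tree} to $(w_s)$ inside $\ell^2$, with a decreasing null sequence $(\varepsilon_n)_n$, yields $L\in[\nn]^\infty$ and vectors $(\widetilde w_s)_{s\in[L]^k}$ admitting a canonical tree decomposition $(z_t)_{t\in[L]^{\le k}}$ with $z_\emptyset=v_0$, such that $w_s=v_0+\sum_{i=1}^k z_{s|i}+r_s$ with $\|r_s\|_{\ell^2}<\varepsilon_n$ whenever $\min s=L(n)$, each $z_t$ ($t\neq\emptyset$) being finitely supported. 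The crucial combinatorial fact is that for \emph{any} plegma family $(s_q)_{q=1}^N$ in $[L]^k$ the vectors $\{z_{s_q|i}:1\le q\le N,\ 1\le i\le k\}$ are pairwise disjointly supported in $\ell^2$: this is Definition \ref{Def of plegma supported}(iii) when $q=q'$, and for $q<q'$ it follows from (iv) if $i\le i'$ and from (v) if $i>i'$. Hence, setting $w'_q:=\sum_{i=1}^k z_{s_q|i}$, the finite sequence $(w'_q)_{q=1}^N$ is disjointly supported and $w_{s_q}-w'_q=v_0+r_{s_q}$.

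Now choose $d_0$ with $\big\|(v_0(j))_{j>d_0}\big\|_{\ell^2}<\tfrac1{20}$, put $E=\{n:n>d_0\}$, and pick $j_0>d_0$ so large that $\varepsilon_{n_{j_0}}<\tfrac1{20}$, $\delta_{n_{j_0}}<\tfrac1{20}$ and $2n_{j_0}^{-1/2}<\tfrac1{20}$. Set $N=n_{j_0}$ and take a plegma family $(s_q)_{q=1}^N$ in $[L]^k$ with $s_1(1)\ge L(N)$; then $\min s_q\ge s_1(1)\ge L(N)$ forces $\|r_{s_q}\|_{\ell^2}<\varepsilon_{n_{j_0}}$ for all $q$, whence $\|E(w_{s_q}-w'_q)\|_{\ell^2}\le\big\|(v_0(j))_{j>d_0}\big\|_{\ell^2}+\|r_{s_q}\|_{\ell^2}<\tfrac1{10}$. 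Since $(s_q)_{q=1}^N$ is a plegma family and $(x_s)$ is plegma block, $(x_{s_q})_{q=1}^N$ is a block sequence in $B_X$ (Proposition \ref{rem34}(iii)); so Lemma \ref{small means} (with $\varepsilon=\tfrac1{10}$) yields
\[\Big\|\tfrac1N\sum_{q=1}^N x_{s_q}\Big\|<0.2+\tfrac1{10}+2n_{j_0}^{-1/2}<\tfrac7{20}.\]
On the other hand, by the spreading model inequality and the lower $\ell^1$-estimate,
\[\Big\|\tfrac1N\sum_{q=1}^N x_{s_q}\Big\|\ge\Big\|\tfrac1N\sum_{i=1}^N\tilde e_i\Big\|_*-\delta_{n_{j_0}}\ge\tfrac9{10}-\tfrac1{20}>\tfrac7{20},\]
a contradiction; hence no plegma block generated $k$-spreading model of $X$ is equivalent to the usual basis of $\ell^1$.

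The genuinely delicate steps are the two highlighted: the uniform disjointness of $\{z_{s_q|i}\}_{q,i}$, which is exactly what makes the hypothesis of Lemma \ref{small means} available with a single $\varepsilon$ and a single $d_0$ independent of $j_0$; and the use of Proposition \ref{Prop on almost isometric l^1 spr mod} to boost the lower $\ell^1$-estimate past $0.2$. The remaining steps — selecting subsequences far enough out so that $\delta_{n_{j_0}}$, the tail of $v_0$, and the tree errors $\varepsilon_n$ are all small — are routine.
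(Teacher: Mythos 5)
Your proposal is correct and follows essentially the same route as the paper's proof: reduce via Proposition \ref{Prop on almost isometric l^1 spr mod} and Remark \ref{Rem on almost isometric l^1 spr mod} to a plegma block $k$-sequence in $B_X$ with lower $\ell^1$-constant close to $1$, pass to the weight sequence $w_s=(\|x_s\|_j)_j$ in $B_{\ell^2}$, subordinate it and apply Theorem \ref{canonical tree} there to produce the disjointly supported approximants $w'_q$, and then contradict the lower estimate with Lemma \ref{small means}. The only cosmetic difference is that you verify the pairwise disjointness of $\{z_{s_q|i}\}$ directly from Definition \ref{Def of plegma supported}, where the paper cites Proposition \ref{trocan}(iv); these are the same observation.
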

\begin{proof}
Assume on the contrary that there exist $k\in\nn$ and a plegma block
$k$-sequence $(x_s)_{s\in[\nn]^k}$ in $X$ which generates $\ell^1$
as a $k$-spreading model. By Proposition \ref{Prop on almost
isometric l^1 spr mod}, we may also assume that $x_s\in B_X$, for
all $s\in[\nn]^k$ and $(x_s)_{s\in[\nn]^k}$ generates $\ell^1$ as a
$k$-spreading model of constant $1-\ee$,  where $\ee=0,1$.

For every $s\in[\nn]^k$,  let $w_s=(\|x_s\|_j)_j$. Since
$(w_s)_{s\in[\nn]^k}$ is a $k$-sequence in $B_{\ell^2}$, it is
weakly relatively compact. Hence, by Proposition \ref{cor for
subordinating}, there exists $M\in[\nn]^\infty$ such that the
$k$-subsequence $(w_s)_{s\in[M]^k}$ is subordinated with respect the
weak topology on $\ell^2$. Let $\widehat{\varphi}:[M]^{\leq
k}\to(\ell^2,w)$ be the continuous map witnessing this. By Theorem
\ref{canonical tree}, there exist $L\in[M]^\infty$ and a
$k$-subsequence $(\widetilde{w}_s)_{s\in[L]^k}$ in $X$ satisfying
the following.
\begin{enumerate}
\item[(i)] $(\widetilde{w}_s)_{s\in[L]^k}$ admits a canonical tree
 decomposition $(\widetilde{z}_t)_{t\in[L]^{\leq k}}$ with
 $\widetilde{z}_\emptyset=\widehat{\varphi}(\emptyset)$.
\item[(ii)] For every $s\in[L]^k$, $\|w_s-\widetilde{w}_s\|_{\ell^2}<\ee/2$, where $\min s=L(n)$.
\item[(iii)] The $k$-subsequence $(\widetilde{w}_s)_{s\in[L]^k}$ is
subordinated with respect to the weak topology of $\ell^2$.
\end{enumerate}
Let $d_0\in\nn$ such that
$\|E(\widehat{\varphi}(\emptyset))\|_{\ell^2}<\frac{\ee}{2}$, where
$E=\{d_0+1,\ldots\}$. For every $s\in[L]^k$ we set
$w'_s=\widetilde{w}_s-\widehat{\varphi}(\emptyset)$. By Proposition
\ref{trocan} (iv), we have that $(w'_s)_{s\in[L]^k}$ is plegma disjointly
supported. Moreover, notice that $\|E(w_s-w'_s)\|_{\ell^2}<\ee$, for
all $s\in[L]^k$. We pick $j_0>d_0$ such that
$2n_{j_0}^{-\frac12}<\ee$. Since $(x_s)_{s\in[\nn]^k}$ generates
$\ell^1$ as a $k$-spreading model of constant $0,9$, we may choose
$(s_q)_{q=1}^{n_{j_0}}\in\textit{Plm}_{n_{j_0}}([L]^k)$ such
that
\begin{equation}\label{eq16}
\Big\| \frac{1}{n_{j_0}}\sum_{q=1}^{n_{j_0}}x_{s_q} \Big\|\geq 0,8
\end{equation}
Observe that $d_0,j_0,\ee$, $(x_{s_q})_{q=1}^{n_{j_0}}$ and
$(w'_{s_q})_{q=1}^{n_{j_0}}$ satisfy the assumptions of Lemma
\ref{small means}. Hence \[\Big\|
\frac{1}{n_{j_0}}\sum_{q=1}^{n_{j_0}}x_{s_q}
\Big\|<0,2+\ee+2n_{j_0}^{-\frac12}<0,4\]
  which contradicts (\ref{eq16}) and the proof is complete.
\end{proof}
\begin{cor} The space  $X$ is reflexive.\end{cor}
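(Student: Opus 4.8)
The plan is to invoke James' theorem. The Hamel basis of $c_{00}(\nn)$ is an unconditional basis of $X$, so $X$ is reflexive if and only if this basis is boundedly complete and shrinking; equivalently, by James' theorem (see \cite{AK}), it suffices to show that no normalized block basic sequence of the basis is equivalent to the usual basis of $c_0$ and that none is equivalent to the usual basis of $\ell^1$.

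To rule out $c_0$, I would argue directly from the definition (\ref{eq15}) of the norm. Let $(x_q)_q$ be a normalized block sequence in $X$ and fix $j\in\nn$. Setting $E_p=\text{supp}(x_p)$ for $1\leq p\leq n_j$ — a successive family of finite sets — and $x=\sum_{q=1}^{n_j}x_q$, we have $E_p(x)=x_p$, whence
\[
\Big\|\sum_{q=1}^{n_j}x_q\Big\|\;\geq\;\|x\|_j\;\geq\;\frac{1}{m_j}\sum_{p=1}^{n_j}\|x_p\|\;=\;\frac{n_j}{m_j}.
\]
By property (ii) of the sequences $(n_j)_j$ and $(m_j)_j$ (applied with $a=1$), $\tfrac{n_j}{m_j}\to\infty$, so $\sup_N\big\|\sum_{q=1}^{N}x_q\big\|=\infty$ and $(x_q)_q$ is not equivalent to the usual basis of $c_0$.

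To rule out $\ell^1$, I would use Lemma \ref{non containing l^1 block spreading model}. Suppose some normalized block sequence $(x_q)_q$ were equivalent to the usual basis of $\ell^1$. Passing to a subsequence — still a normalized block sequence, equivalent to $\ell^1$ with the same constants — we may assume it generates a $1$-spreading model $(e_n)_n$; taking limits in (\ref{rsm}) (for fixed $m$ and coefficients, letting $l\to\infty$) shows that $(e_n)_n$ is again equivalent to the usual basis of $\ell^1$. Since $(x_q)_q$ is a block sequence, $\text{supp}(x_{q_1})<\text{supp}(x_{q_2})$ for $q_1<q_2$, which is precisely the plegma block condition for $k=1$; hence $(e_n)_n$ is a plegma block generated $1$-spreading model of $X$ equivalent to the $\ell^1$ basis, contradicting Lemma \ref{non containing l^1 block spreading model}. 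So no such block sequence exists, and by James' theorem $X$ is reflexive.

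The proof is short; the points requiring care are the precise form of James' theorem used (for an unconditional basis, absence of $c_0$- and $\ell^1$-block subspaces forces reflexivity) and the observation that a block sequence equivalent to $\ell^1$ produces, after passing to a subsequence, a \emph{plegma block generated} $1$-spreading model equivalent to the $\ell^1$ basis, so that Lemma \ref{non containing l^1 block spreading model} applies. No genuinely hard step is involved.
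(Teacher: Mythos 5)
Your proof is correct and follows essentially the same route as the paper: James' theorem for unconditional bases, the estimate $\|\sum_{q=1}^{n_j}x_q\|\geq n_j/m_j\to\infty$ to exclude $c_0$, and Lemma \ref{non containing l^1 block spreading model} (via the observation that a block sequence equivalent to the $\ell^1$ basis would yield a plegma block generated $1$-spreading model equivalent to $\ell^1$) to exclude $\ell^1$. You merely spell out the reduction to spreading models that the paper leaves implicit.
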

\begin{proof}
 Lemma \ref{non containing l^1 block spreading
model} implies that the space $X$ does not contain any isomorphic
copy of $\ell^1$. Moreover, using that
$\frac{n_j}{m_j}\stackrel{j\to\infty}{\longrightarrow}\infty$, it
is easy to see that the space $X$ does not contain any isomorphic
copy of $c_0$. Since the basis of $X$ is unconditional, the result follows by James' theorem.\end{proof}
\begin{cor}\label{corell1} For all $k\in\nn$,
 every  $k$-spreading model of $X$ is not equivalent to the usual basis of $\ell^1$.
 \end{cor}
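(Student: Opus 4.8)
The statement is a combination of three facts already established: the reflexivity of $X$, the reduction of $\ell^1$ $k$-spreading models to plegma block generated ones (Theorem \ref{getting block generated ell^1 spreading model}), and the non-existence of plegma block generated $\ell^1$ $k$-spreading models of $X$ (Lemma \ref{non containing l^1 block spreading model}). The plan is therefore to argue by contradiction and chain these together.

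\textbf{Key steps.} First I would suppose, toward a contradiction, that for some $k\in\nn$ there is a $k$-spreading model $(e_n)_n$ of $X$ equivalent to the usual basis of $\ell^1$; in particular $\mathcal{SM}_k(X)$ contains a sequence equivalent to the $\ell^1$ basis. Second, since $X$ is reflexive (proved in the corollary preceding this one), every bounded $k$-sequence in $X$ is weakly relatively compact, so $\mathcal{SM}_k^{wrc}(X)=\mathcal{SM}_k(X)$; hence $\mathcal{SM}_k^{wrc}(X)$ contains up to equivalence the usual basis of $\ell^1$. Third, I would invoke Theorem \ref{getting block generated ell^1 spreading model}, which under exactly this hypothesis produces a plegma block generated $k$-spreading model of $X$ equivalent to the usual basis of $\ell^1$. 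Fourth, this directly contradicts Lemma \ref{non containing l^1 block spreading model}, which asserts that no plegma block generated $k$-spreading model of $X$ can be equivalent to the $\ell^1$ basis. This contradiction completes the proof for every $k\in\nn$.

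\textbf{Main obstacle.} In the form stated here the argument is a short deduction, so there is no genuine obstacle left at this stage; the substantive work has already been carried out. The real content sits in Lemma \ref{non containing l^1 block spreading model} — whose proof uses Lemma \ref{small means} together with the averaging estimate $\|\tfrac{1}{n_{j_0}}\sum_{q=1}^{n_{j_0}}x_q\|_j<\tfrac{2}{m_j}$ for $j<j_0$ to defeat any would-be lower $\ell^1$-estimate on block averages — and in Theorem \ref{getting block generated ell^1 spreading model}, which via the canonical tree decomposition (Corollary \ref{cor canonical tree with spr mod}) and the splitting property of $\ell^1$ spreading models (Corollary \ref{ultrafilter property for ell^1 spreading models}) reduces the general case to the plegma block case by downward induction on the order $k$. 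Given these, the only point to be careful about in the present corollary is the appeal to reflexivity to identify $\mathcal{SM}_k^{wrc}(X)$ with $\mathcal{SM}_k(X)$, which is immediate.
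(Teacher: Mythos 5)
Your argument is correct and is essentially identical to the paper's own proof: both argue by contradiction, use the reflexivity of $X$ to see the generating $k$-sequence is weakly relatively compact, apply Theorem \ref{getting block generated ell^1 spreading model} to produce a plegma block generated $\ell^1$ $k$-spreading model, and contradict Lemma \ref{non containing l^1 block spreading model}. No issues.
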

\begin{proof}
Suppose on the contrary that there exist $k\in\nn$ and a bounded $k$-sequence
$(x_s)_{s\in[\nn]^k}$ which
generates  a $k$-spreading model equivalent to the  $\ell^1$ basis. By the reflexivity of $X$,
 we have that $(x_s)_{s\in[\nn]^k}$ is weakly relatively compact.
Therefore, by Theorem \ref{getting block generated ell^1 spreading
model},  there exists   a plegma block generated
$k$-spreading model of  $X$ equivalent to the usual basis of $\ell^1$, which contradicts to  Lemma \ref{non containing l^1 block spreading
model}.
\end{proof}
\begin{lem}\label{breaking upper l^p}
  Let $1<p\leq\infty$. Then for every $\delta,C>0$ there exists
  $l_0\in\nn$ such that for every $l\geq l_0$ and every block sequence $(x_q)_{q=1}^{n_l}$ in $X$ with $\|x_q\|>\delta$, for all $1\leq q\leq
  n_l$, we have that
  \[\Big\|\sum_{q=1}^{n_l}x_q\Big\|>Cn_l^\frac{1}{p}\]
  where by convection $\frac1\infty=0$
\end{lem}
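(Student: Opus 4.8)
The plan is to extract the whole estimate from a single one of the seminorms $\|\cdot\|_j$ entering the definition (\ref{eq15}) of the norm of $X$, namely the one with index $j=l$, and then to feed in growth property (ii) of the sequences $(n_j)_j$ and $(m_j)_j$.

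First I would record the elementary lower bound produced by a block sequence. If $(x_q)_{q=1}^{n_j}$ is a block sequence in $X$, then the finite sets $E_q=\text{supp}(x_q)$ satisfy $E_1<\cdots<E_{n_j}$, and by disjointness of the supports $E_q\big(\sum_{r=1}^{n_j}x_r\big)=x_q$ for each $q$. Using exactly this choice of $n_j$ successive sets in the supremum defining $\|\cdot\|_j$ gives
\[
\Big\|\sum_{q=1}^{n_j}x_q\Big\|_j\ \geq\ \frac{1}{m_j}\sum_{q=1}^{n_j}\Big\|E_q\Big(\sum_{r=1}^{n_j}x_r\Big)\Big\|\ =\ \frac{1}{m_j}\sum_{q=1}^{n_j}\|x_q\|,
\]
and hence, since $\|x\|\geq\big(\sum_i\|x\|_i^2\big)^{1/2}\geq\|x\|_j$ for every $j$ by (\ref{eq15}), we also get $\big\|\sum_{q=1}^{n_j}x_q\big\|\geq\frac{1}{m_j}\sum_{q=1}^{n_j}\|x_q\|$.

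Now, given $1<p\leq\infty$, $\delta>0$ and $C>0$, I would set $a=1-\tfrac1p$, which is strictly positive (and equals $1$ when $p=\infty$). By property (ii) we have $n_j^{a}/m_j\to\infty$, so there is $l_0\in\nn$ with $n_l^{a}/m_l>C/\delta$ for every $l\geq l_0$. Then for any $l\geq l_0$ and any block sequence $(x_q)_{q=1}^{n_l}$ in $X$ with $\|x_q\|>\delta$ for all $q$, the estimate above with $j=l$, together with $\sum_{q=1}^{n_l}\|x_q\|>\delta n_l$, yields
\[
\Big\|\sum_{q=1}^{n_l}x_q\Big\|\ >\ \frac{\delta\,n_l}{m_l}\ =\ \delta\,n_l^{1/p}\cdot\frac{n_l^{a}}{m_l}\ >\ C\,n_l^{1/p},
\]
where I used $n_l=n_l^{1/p}\,n_l^{a}$ and, in the last step, the defining property of $l_0$. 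This is exactly the claimed inequality (with the convention $1/\infty=0$ the case $p=\infty$ reads $\|\sum_q x_q\|>C$), so this $l_0$ works.

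There is no genuine obstacle in this argument; it is essentially a one-line computation once the right seminorm is selected. The only two points deserving a moment's care are that the sets $E_q$ in the definition of $\|\cdot\|_j$ need only be successive ($E_1<\cdots<E_{n_j}$) rather than intervals, which is precisely what a block sequence supplies, and that one is entitled to bound $\|\cdot\|$ from below by a single $\|\cdot\|_j$ via the $\ell^2$-component of (\ref{eq15}).
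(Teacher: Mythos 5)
Your argument is correct and is essentially the paper's own proof: both bound $\|\sum_q x_q\|$ from below by the single seminorm $\|\cdot\|_l$, evaluated on the successive supports of the block sequence to get $\frac{1}{m_l}\sum_q\|x_q\|>\frac{\delta n_l}{m_l}$, and then choose $l_0$ from the divergence of $n_l^{1-1/p}/m_l$ in condition (ii). The only difference is that you spell out the two small justifications (why block supports are admissible sets $E_1<\cdots<E_{n_l}$ and why $\|\cdot\|\geq\|\cdot\|_l$) that the paper leaves implicit.
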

\begin{proof}
  Since
  $\frac{n_l^{1-\frac{1}{p}}}{m_l}\stackrel{l\to\infty}{\longrightarrow}\infty$,
  there exists $l_0\in\nn$ such
  that $\frac{n_l^{1-\frac{1}{p}}}{m_l}>\frac{C}{\delta}$,  for every $l\geq l_0$. Let $(x_q)_{q=1}^{n_l}$
  be a block sequence in $X$ with $\|x_q\|>\delta$, for all $1\leq q\leq
  n_l$. Then
  \[\Big\|\sum_{q=1}^{n_l}x_q\Big\|\geq\Big\|\sum_{q=1}^{n_l}x_q\Big\|_l\geq
  \frac{1}{m_l}\sum_{q=1}^{n_l}\|x_q\|>\frac{n_l}{m_l}\delta>Cn_l^\frac{1}{p}\]
\end{proof}

\begin{cor}\label{adfs}
 For all $k\in\nn$,
 every  $k$-spreading model of $X$ is not equivalent to the usual basis of $\ell^p$, $1<p<\infty$, or $c_0$.
\end{cor}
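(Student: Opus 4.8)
The plan is to argue by contradiction, treating the cases $Y=\ell^p$ with $1<p<\infty$ and $Y=c_0$ uniformly: write $p\in(1,\infty]$ with the convention $1/\infty=0$, so that the usual basis $(f_i)$ of $Y$ satisfies $\|\sum_{i=1}^nf_i\|_Y=n^{1/p}$. The case $Y=\ell^1$ is already covered by Corollary \ref{corell1}, so suppose that for some $k\in\nn$ there is $(e_n)_n\in\mathcal{SM}_k(X)$ equivalent to the usual basis of such a $Y$, and fix an equivalence constant $C_Y\geq 1$, so that in particular $\|\sum_{i=1}^ne_i\|_*\leq C_Y\,n^{1/p}$ for all $n$. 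Since $X$ is reflexive, $\mathcal{SM}_k(X)=\mathcal{SM}_k^{wrc}(X)$; moreover $(e_n)_n$ is non trivial ($\|e_1-e_2\|_*$ is bounded away from $0$), it is unconditional (being equivalent to a $1$-unconditional basis), and it is not equivalent to the usual basis of $\ell^1$.

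By Corollary \ref{cor canonical tree with spr mod}, $(e_n)_n$ is generated as a $k$-spreading model by a weakly relatively compact $k$-sequence $(x_s)_{s\in[\nn]^k}$ in $X$ which is subordinated with respect to the weak topology and admits a canonical tree decomposition $(y_t)_{t\in[\nn]^{\leq k}}$ with $y_\emptyset$ equal to the weak limit $x_0$ of $(x_s)_{s\in[\nn]^k}$. Since $(e_n)_n$ is neither equivalent to the usual basis of $\ell^1$ nor singular (it is unconditional, hence Schauder basic), Theorem \ref{nb} forces $x_0=0$; thus $y_\emptyset=0$ and $x_s=\sum_{j=1}^{k}y_{s|j}$ for every $s\in[\nn]^k$. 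Arguing exactly as in the first part of the proof of Lemma \ref{lem for duality} (using that $(x_s)_{s\in[\nn]^k}$ is seminormalized together with $y_\emptyset=0$, and then Ramsey's theorem), we obtain $\delta>0$, an integer $1\leq j_0\leq k$ and $L\in[\nn]^\infty$ such that $\|y_{s|j_0}\|>\delta$ for every $s\in[L]^k$; by Proposition \ref{trocan}(iii) the $k$-sequence $(y_{s|j_0})_{s\in[L]^k}$ is plegma block.

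Now fix $C=C_Y+2$ and apply Lemma \ref{breaking upper l^p} to this $p$, to $\delta$ and to $C$, obtaining $l_0\in\nn$. Choose $l\geq l_0$ and then a plegma family $(s_q)_{q=1}^{n_l}$ in $[L]^k$ with $s_1(1)$ large enough that the defining inequality \eqref{rsm} of the $k$-spreading model holds for $m=n_l$ with error at most $1$ (such families exist by Proposition \ref{rem34}(i)). Since $(y_{s|j_0})_{s\in[L]^k}$ is plegma block, $(y_{s_q|j_0})_{q=1}^{n_l}$ is a finitely supported, nonzero block sequence in $X$ with $\|y_{s_q|j_0}\|>\delta$, whence Lemma \ref{breaking upper l^p} gives $\|\sum_{q=1}^{n_l}y_{s_q|j_0}\|>C\,n_l^{1/p}$. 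On the other hand, by Proposition \ref{trocan}(v) and $y_\emptyset=0$ the vector $\sum_{q=1}^{n_l}y_{s_q|j_0}$ is the restriction of $\sum_{q=1}^{n_l}x_{s_q}$ to an interval of $\nn$, and since the unconditional basis of $X$ is bimonotone (an immediate consequence of formula \eqref{eq15}) we get $\|\sum_{q=1}^{n_l}y_{s_q|j_0}\|\leq\|\sum_{q=1}^{n_l}x_{s_q}\|$. Finally, the $k$-spreading model estimate and the equivalence of $(e_n)_n$ with the basis of $Y$ yield $\|\sum_{q=1}^{n_l}x_{s_q}\|\leq\|\sum_{q=1}^{n_l}e_q\|_*+1\leq C_Y\,n_l^{1/p}+1<C\,n_l^{1/p}$, a contradiction, since $n_l^{1/p}\geq 1$.

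I expect the main obstacle to be the reduction from the abstract $k$-spreading model to an honest block sequence of $X$ on which Lemma \ref{breaking upper l^p} can be applied: this is precisely what the canonical tree decomposition of Corollary \ref{cor canonical tree with spr mod}, the identity $I(x_s)=y_{s|j_0}$ of Proposition \ref{trocan}(v), and the vanishing $y_\emptyset=0$ provide. For $Y=c_0$ one may bypass the tree machinery entirely, since Corollary \ref{getting block generated c_0 spreading model} directly produces a plegma block $k$-sequence in $X$ generating $c_0$ as a $k$-spreading model, after which the same computation with $p=\infty$ applies. The remaining points — non triviality of $(e_n)_n$, the equality $y_\emptyset=x_0=0$, the choice of $\delta,j_0,L$, and the index bookkeeping for the spreading model inequality — are routine given the results already established.
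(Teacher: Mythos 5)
Your proposal is correct and follows essentially the same route as the paper: reduce via Corollary \ref{cor canonical tree with spr mod} to a canonical tree decomposition with $y_\emptyset=0$, extract a level $j_0$ and $L$ by Ramsey's theorem so that $(y_{s|j_0})_{s\in[L]^k}$ is a seminormalized plegma block $k$-sequence, and contradict the upper $\ell^p$ (or $c_0$) estimate using Lemma \ref{breaking upper l^p} together with unconditionality of the basis of $X$. The only cosmetic differences are that you obtain $y_\emptyset=0$ from Theorem \ref{nb} whereas the paper deduces it from the Ces\`aro summability of $(e_n)_n$ to zero, and that you fix a single sufficiently large constant $C$ instead of letting $C$ range over all positive reals.
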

\begin{proof}
  Suppose on the contrary that for some $k\in\nn$, $X$ admits a $k$-spreading model
  $(e_n)_n$, which is equivalent to
  the usual basis of either $\ell^p$, for some $1<p<\infty$, or $c_0$. First we shall treat the case of $\ell^p$.
  Since $X$ is reflexive, we have that
  $(e_n)_n\in\mathcal{SM}_k^{wrc}(X)$. By Corollary
  \ref{cor canonical tree with spr mod}, there exists
  a subordinated $k$-sequence $(x_s)_{s\in[\nn]^k}$ admitting a
  canonical tree decomposition $(y_t)_{t\in[\nn]^{\leq k}}$, which
  generates $(e_n)_n$ as a $k$-spreading model. Since the
  basis of $X$ is unconditional and $(e_n)_n$ is Ces\'aro
  summable to zero, it is easy to see that $y_\emptyset=0$.
  Notice that $(x_s)_{s\in[\nn]^k}$ is seminormalized and let $\delta>0$ such that $\|x_s\|>\delta$, for all $s\in[\nn]^k$. Hence,
  for every  $s\in[\nn]^k$ there exists $1\leq d\leq k$ such that
  $\|y_{s|d}\|>\frac{\delta}{k}$. By Ramsey's theorem there exists $1\leq d\leq
  k$ and $L\in[\nn]^\infty$ such that for every $s\in[L]^k$,
  $\|y_{s|d}\|>\frac{\delta}{k}$.
   By Proposition \ref{trocan} (iii), we have that  $(y_{s|d})_{s\in[L]^k}$ is plegma block. Fix $C>0$. By Lemma \ref{breaking upper l^p}
  we have that there exists $l_0$ such that for every $l>l_0$ and $(s_q)_{q=1}^{n_l}\in\textit{Plm}_{n_l}[L]^k$ we have that
  $\Big\|\sum_{q=1}^{n_l}y_{s_q|d}\Big\|>Cn_l^{\frac1p}$.
 Hence, dy the 1-unconditionality of the basis of $X$, we conclude that
    \[\Big\|\sum_{q=1}^{n_l}x_{s_q}\Big\|>Cn_l^{\frac1p}\]
    Since the above holds for every $C>0$ we have that $(e_n)_n$ is not equivalent to the usual basis of $\ell^p$, which is a contradiction.

    Finally, if $(e_n)_n$ is equivalent to the usual basis of $c_0$, then the proof is carried out using identical arguments as above and applying Lemma \ref{breaking upper l^p} for $p=\infty$.
\end{proof}

\begin{proof}[Proof of Theorem \ref{Odel slumpr theorem}]
  Suppose that for some $k\in\nn$ there exists $(e_n)_n\in\mathcal{SM}_{k}(X)$ such that the space $E$ generated by $(e_n)_n$ contains an isomorphic copy of $Y$, where $Y$ is either $\ell^p$, for some $1\leq p<\infty$, or $c_0$. Obviously $(e_n)_n$ is non trivial. Since $X$ is reflexive, $(e_n)_n\in\mathcal{SM}_{k}^{wrc}(X)$. By Corollary \ref{l^p in wrc}, we have that $\mathcal{SM}_{k+1}(X)$ contains a sequence equivalent to the usual basis of $Y$. By Corollaries \ref{corell1} and \ref{adfs}, we get the contradiction.
\end{proof}

\section{A  space $X$ such that $\mathcal{SM}_k(X)$ is a proper subset of $\mathcal{SM}_{k+1}(X)$}\label{s12}
In this section we shall present a Banach space
$\mathfrak{X}_{k+1}$,  having an unconditional basis
$(e_s)_{s\in[\nn]^{k+1}}$ which  generates a $(k+1)$-spreading model
equivalent to the usual basis of $\ell^1$, while the space
$\mathfrak{X}_{k+1}$ does not admit $\ell^1$ as a $k$-spreading
model. Moreover, $(e_s)_{s\in[\nn]^{k+1}}$ is not $(k+1)$-Ces\`aro
summable to any $x_0$ in $\mathfrak{X}_{k+1}$.


\subsection{The definition of the space $\mathfrak{X}_{k+1}$}

We fix for the following a positive integer $k$. We will need the
next definition.
\begin{defn} A family $\mathcal{P}\subseteq [\nn]^{k+1}$ will be
called plegmatic in $[\nn]^{k+1}$, if there exist a finite block
sequence $F_1<\ldots<F_{k+1}$ of subsets of $\nn$ with
$|F_1|=\ldots=|F_{k+1}|$ such that $\mathcal{P}\subseteq
F_1\times\ldots\times F_{k+1}$. A plegmatic family
$\mathcal{P}\subseteq[\nn]^{k+1}$ will be called Schreier if in
addition $|F_1|\leq \min F_1$.
\end{defn}
For instance, for every
$(s_j)_{j=1}^l\in\textit{Plm}_l(\nn]^{k+1}$, the family
$\mathcal{P}=\{s_1,\ldots,s_l\}$ is plegmatic but notice that not
all  plegmatic families in $[\nn]^{k+1}$  are plegma.

Let $(e_s)_{s\in [\nn]^{k+1}}$ be the Hamel basis of
$c_{00}([\nn]^{k+1})$. For every $x=\sum_{s\in[\nn]^{k+1}}x(s)e_s$
in $c_{00}([\nn]^{k+1})$, we set
\begin{equation}\label{no}\|x\|=\sup
\Big(\sum_{i=1}^n\|\mathcal{P}_i(x)
\|_1^2\Big)^{\frac{1}{2}}\end{equation} where
$\|\mathcal{P}(x)\|_1=\sum_{s\in \mathcal{P}}|x(s)|$, for all
$\mathcal{P}\subseteq [\nn]^{k+1}$ and the supremum in (\ref{no}) is
taken over all finite sequences $(\mathcal{P}_i)_{i=1}^n$ of
disjoint Schreier plegmatic families in $[\nn]^{k+1}$. The space
$\mathfrak{X}_{k+1}$ is defined to be the completion of
$(c_{00}([\nn]^{k+1}),\|\cdot\|)$.

The proof of the next proposition is straightforward.

\begin{prop} \label{mn}The Hamel basis $(e_s)_{s\in[\nn]^{k+1}}$
of $c_{00}([\nn]^{k+1})$ is an unconditional basis for the space
$\mathfrak{X}_{k+1}$ and  it  generates a $(k+1)$-spreading model
which is isometric to the usual basis of  $\ell^1$.
\end{prop}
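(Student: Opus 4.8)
The plan is to establish both assertions of Proposition~\ref{mn} directly from the definition (\ref{no}) of the norm, with no passage to a subsequence. For the unconditionality, the first observation is that the right-hand side of (\ref{no}) depends on $x$ only through the moduli $(|x(s)|)_{s\in[\nn]^{k+1}}$, since $\|\mathcal{P}(x)\|_1=\sum_{s\in\mathcal{P}}|x(s)|$; hence $\|\cdot\|$ is invariant under arbitrary sign changes of the coordinates. Moreover, for every $E\subseteq[\nn]^{k+1}$, every $x$ and every Schreier plegmatic family $\mathcal{P}$ one has $\|\mathcal{P}(E(x))\|_1=\sum_{s\in\mathcal{P}\cap E}|x(s)|\le\|\mathcal{P}(x)\|_1$, so $\|E(x)\|\le\|x\|$; thus the coordinate restrictions are norm-one projections and $(e_s)_{s\in[\nn]^{k+1}}$ is $1$-suppression unconditional. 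Finally, each singleton $\{s\}$ is Schreier plegmatic (take $F_i=\{s(i)\}$, so that $|F_1|=1\le s(1)=\min F_1$), which forces $|x(s)|\le\|x\|$; since $c_{00}([\nn]^{k+1})$ is dense in $\mathfrak{X}_{k+1}$ by construction, the standard argument (bounded coordinate functionals, density of the finitely supported vectors, uniformly bounded coordinate projections) shows that $(e_s)_{s\in[\nn]^{k+1}}$ is a $1$-unconditional Schauder basis, the statement being independent of the chosen enumeration of $[\nn]^{k+1}$ by suppression unconditionality.

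For the $(k+1)$-spreading model, the heart of the matter is the exact identity
\[
\Big\|\sum_{j=1}^m a_j e_{s_j}\Big\|=\sum_{j=1}^m|a_j|,
\]
valid for every plegma family $(s_j)_{j=1}^m$ in $[\nn]^{k+1}$ with $s_1(1)\ge m$ and all scalars $a_1,\ldots,a_m$. The inequality ``$\le$'' is general: for pairwise disjoint Schreier plegmatic families $(\mathcal{P}_i)_{i=1}^n$ one has $\sum_{i=1}^n\|\mathcal{P}_i(x)\|_1^2\le\big(\sum_{i=1}^n\|\mathcal{P}_i(x)\|_1\big)^2\le\big(\sum_{s}|x(s)|\big)^2$, so $\|x\|\le\sum_{s}|x(s)|$ for every finitely supported $x$. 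For ``$\ge$'', set $F_i=\{s_j(i):1\le j\le m\}$ for $1\le i\le k+1$; the two defining properties of a plegma family give $|F_i|=m$ for each $i$ and $F_1<\cdots<F_{k+1}$, while $s_j(i)\in F_i$ gives $\{s_1,\ldots,s_m\}\subseteq F_1\times\cdots\times F_{k+1}$, so $\mathcal{P}:=\{s_1,\ldots,s_m\}$ is plegmatic; since $|F_1|=m\le s_1(1)=\min F_1$ it is in fact Schreier, and taking in (\ref{no}) the single-term sequence consisting of $\mathcal{P}$ gives $\|\sum_{j=1}^m a_j e_{s_j}\|\ge\|\mathcal{P}(\sum_{j=1}^m a_j e_{s_j})\|_1=\sum_{j=1}^m|a_j|$.

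Granting this identity, the second assertion follows at once from Definition~\ref{Definition of spreading model}: take $M=\nn$ (so $M(l)=l$), fix any null sequence $(\delta_n)_n$ of positive reals, and let $(e_n)_n$ be the usual basis of $\ell^1$, viewed as a Hamel basis of $c_{00}(\nn)$ with its $\ell^1$-norm. For every $m\le l$, every $(s_j)_{j=1}^m\in\textit{Plm}_m([\nn]^{k+1})$ with $s_1(1)\ge l$ (hence $s_1(1)\ge m$) and all $a_1,\ldots,a_m\in[-1,1]$, the identity gives $\big|\,\|\sum_{j=1}^m a_j e_{s_j}\|-\|\sum_{j=1}^m a_j e_j\|_{\ell^1}\,\big|=0\le\delta_l$, so $(e_s)_{s\in[\nn]^{k+1}}$ generates the usual $\ell^1$ basis as a $(k+1)$-spreading model, and in fact isometrically. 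The only point that needs care is the bookkeeping in the ``$\ge$'' step: verifying that a plegma family automatically sits inside a product of equal-size successive blocks, and that the threshold $s_1(1)\ge m$ furnished by the spreading-model definition is precisely what the Schreier condition $|F_1|\le\min F_1$ demands. Everything else is routine.
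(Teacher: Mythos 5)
Your proof is correct, and since the paper simply declares this proposition ``straightforward'' and omits the argument, what you have written is exactly the intended verification: $1$-unconditionality from the sign-invariance of (\ref{no}), and the isometric $\ell^1$ lower estimate from the observation that a plegma family $(s_j)_{j=1}^m$ with $s_1(1)\ge m$ is itself a single Schreier plegmatic family via $F_i=\{s_j(i):1\le j\le m\}$. No gaps.
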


We may also define a norming set $W$ for the space
$\mathfrak{X}_{k+1}$ as follows. First, let
\[W^0=\Big\{
\sum_{s\in\mathcal{P}} \pm e_s^*: \mathcal{P}\subseteq
[\nn]^{k+1}\;\;\text{is Schreier plegmatic}\Big\}\] For each
$f=\sum_{s\in\mathcal{P}} e_s^*\in W^0$,  the support of $f$,
denoted by $\text{supp}(f)$,  is defined to be the family
$\mathcal{P}$. It is easy to see that a norming set for
$\mathfrak{X}_{k+1}$ is the set $W$ which consists  of all
$f=\sum_{i=1}^n\lambda_i f_i$ where $(f_i)_{i=1}^n$ is a sequence
in $W^0$ such that
$\text{supp}(f_i)\cap\text{supp}(f_j)=\emptyset$, for all $1\leq
i<j\leq n$ and $\sum_{i=1}^n\lambda_i^2\leq 1$.

In order to study the basic properties of the space
$\mathfrak{X}_{k+1}$, we  need the following  proposition.
\begin{prop} \label{The space X_k+1 does not contain l^1 disjointly supported spreading models of order k}
Every  plegma disjointly generated $k$-spreading model of
$\mathfrak{X}_{k+1}$ is not equivalent to the usual basis of
$\ell^1$.
\end{prop}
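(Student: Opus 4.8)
The plan is to show that if a plegma disjointly supported $k$-sequence $(x_s)_{s\in[\nn]^k}$ in $\mathfrak{X}_{k+1}$ generated an $\ell^1$ $k$-spreading model, then by combining many such vectors along a long plegma family we would produce a vector whose norm is forced to be small by the $\ell^2$-structure of the norm in \eqref{no}, contradicting the lower $\ell^1$-estimate. The key point is that the norm of $\mathfrak{X}_{k+1}$ only ``sees'' a $k$-sequence through \emph{Schreier plegmatic} families, and a plegma family $(s_j)_{j=1}^l$ in $[\nn]^k$ is very far from being contained in a plegmatic family of $[\nn]^{k+1}$: the extra coordinate is missing, so one cannot pack many of the $x_{s_j}$ into a single Schreier plegmatic family, and consequently the functionals in $W$ acting on $\sum_j a_j x_{s_j}$ must split their ``mass'' across the $\ell^2$-sum, producing the decay.

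First I would reduce, via Proposition \ref{Prop on almost isometric l^1 spr mod} and Remark \ref{Rem on almost isometric l^1 spr mod}, to the case of a plegma disjointly supported $k$-sequence $(x_s)_{s\in[\nn]^k}$ with $\|x_s\|\le 1$ generating $\ell^1$ as a $k$-spreading model of constant $1-\ee$ with, say, $\ee=0.1$. Next I would analyze, for a plegma family $(s_j)_{j=1}^{n_l}$ in $[\nn]^k$, how a norming functional $f=\sum_{i=1}^n \lambda_i f_i \in W$ evaluates on $\frac{1}{n_l}\sum_{j=1}^{n_l} x_{s_j}$. Since the $x_{s_j}$ are disjointly supported, $|f(x_{s_j})|\le\|x_{s_j}\|\le 1$; the real work is to bound how many indices $j$ can have $|f_i(x_{s_j})|$ bounded away from zero \emph{for a single} $f_i$. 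Here I would use that $\mathrm{supp}(f_i)$ is a Schreier plegmatic family $\mathcal P_i\subseteq F_1\times\cdots\times F_{k+1}$ with $|F_1|\le\min F_1$; because $|F_1|=\cdots=|F_{k+1}|$ and these blocks are ordered, a Schreier plegmatic set lives on a ``diagonal-type'' block region, and the plegma ordering of $(s_j)_{j}$ in $[\nn]^k$ (coordinatewise increasing, with $s_{n_l}(i)<s_1(i+1)$) is incompatible with the $(k+1)$-dimensional plegmatic structure except on a uniformly bounded number of $j$'s — this is morally the same obstruction as in Theorem \ref{non plegma preserving maps} and Lemma \ref{lemma conserning the length of the plegma path}, which I would invoke or re-derive in the present combinatorial setting.

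Granting that each $f_i$ satisfies $\sum_{j=1}^{n_l}|f_i(x_{s_j})|\le M$ for an absolute constant $M$ (independent of $l$), and that disjointness of the $\mathrm{supp}(f_i)$ forces each $x_{s_j}$ to be essentially ``hit'' by only boundedly many $f_i$, I would estimate
\[
\Big| f\Big(\sum_{j=1}^{n_l} x_{s_j}\Big)\Big| \le \sum_{i=1}^n |\lambda_i|\sum_{j=1}^{n_l}|f_i(x_{s_j})|
\le M\sum_{i=1}^n|\lambda_i| \le M\sqrt{N}
\]
where $N$ is the number of functionals $f_i$ that can meet $\{x_{s_j}: 1\le j\le n_l\}$ at all; the crucial claim is that $N$ itself is bounded (or grows sublinearly in $n_l$), so that $\big\|\frac{1}{n_l}\sum_j x_{s_j}\big\|\to 0$ as $l\to\infty$. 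Since an $\ell^1$ $k$-spreading model of constant $1-\ee$ forces $\big\|\frac{1}{n_l}\sum_j x_{s_j}\big\|\ge 1-\ee$ for all $l$ (choosing the plegma family with $s_1(1)$ large enough), we reach a contradiction, completing the proof.

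\emph{The hard part} will be the combinatorial lemma pinning down that a plegma family in $[\nn]^k$ can meet a Schreier plegmatic family in $[\nn]^{k+1}$ — or the union of its ``diagonal blocks'' — in at most a bounded number of points. This is where the gap between dimension $k$ and dimension $k+1$ does the real work, and I expect the cleanest route is to argue as in Theorem \ref{non plegma preserving maps}: any plegma family in $[\nn]^k$ contains plegma paths of length exactly $k$ between far-apart elements (Proposition \ref{accessing everything with plegma path of length |s_0|}), whereas the images, if they all landed in a common plegmatic block region of $[\nn]^{k+1}$, would inherit an order structure forcing paths of length $k+1$ between the corresponding elements — a contradiction — so no such common region can contain more than finitely many of the $s_j$'s, and one then sums over the boundedly many regions a single $f\in W$ can use.
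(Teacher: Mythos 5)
There is a genuine gap, and it sits exactly where you flag ``the hard part.'' Your proposed combinatorial lemma --- that a plegma family $(s_j)_{j=1}^l$ in $[\nn]^k$ can meet a Schreier plegmatic family of $[\nn]^{k+1}$ in only boundedly many points --- conflates the \emph{index set} with the \emph{support set}. The $s_j$ live in $[\nn]^k$, but the supports of the $x_{s_j}$ are arbitrary pairwise disjoint finite subsets of $[\nn]^{k+1}$, a priori unrelated to the $s_j$. A single $f_i\in W^0$ has $|\mathrm{supp}(f_i)|\leq(\min F_1)^{k+1}$, and $\min F_1$ is only bounded below by the Schreier condition, not above; so if the supports of $x_{s_1},\ldots,x_{s_l}$ all sit far out in $[\nn]^{k+1}$, nothing prevents one Schreier plegmatic family from covering all $l$ of them, and then one $f_i$ with $\lambda_i=1$ norms $\frac1l\sum_j x_{s_j}$ up to $1$. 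Your estimate $\sum_j|f_i(x_{s_j})|\leq M$ with $M$ absolute, and the boundedness of $N$, therefore do not follow from the plegma/plegmatic ``dimension mismatch'' alone; the adversarial scenario you must exclude is precisely a $k$-sequence whose supports are packed, plegma family by plegma family, into ever larger Schreier plegmatic regions.

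What is missing is the bridge from the plegma structure of the indices to a \emph{location constraint on the supports}, and this is where the paper spends all its effort. It shows (Lemma \ref{Lemma for 2 vectos of norm almost 2}) that if $\|x_1+x_2\|$ is within $2\eta$ of $2$ and $\mathcal{G}_1$ carries all but $\eta$ of $x_1$, then some $\mathcal{G}_2$ carrying all but $\eta^{1/8}$ of $x_2$ is \emph{weakly plegmatically} linked to $\mathcal{G}_1$ --- i.e.\ the near-$\ell^1$ behaviour itself forces the norming functionals to tie the essential supports together by plegmatic pairs. Iterating along a plegma path of length exactly $k$ from a fixed $t_0$ to an arbitrary $t$ (Proposition \ref{accessing everything with plegma path of length |s_0|}) produces a weakly plegmatic path of length $k$ in $[\nn]^{k+1}$, and Lemma \ref{blocking the first coordinates in weakly allowable paths} shows such a path cannot move the first coordinate past $\max\{s(k+1):s\in\mathcal{G}_0\}$ --- this is the true use of the gap between $k$ and $k+1$. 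Only after the essential supports are thus confined to $\{s:s(1)\leq N_0\}$ for a fixed $N_0$ does Lemma \ref{supports} give $|\mathrm{supp}(f_i)|\leq N_0^{k+1}$ uniformly, and then the $\ell^2$ aggregation (via Cauchy--Schwarz, not a bound on the number $N$ of active $f_i$'s) yields $\|\frac1l\sum_j x_{s_j}^1\|\leq\ee+N_0^{k+1}/(\ee^2 l)$, contradicting the lower $\ell^1$-estimate. Your outline captures the analytic endgame and the right slogan about plegma paths of length $k$, but without the ``almost-isometric $\ell^1$ forces weakly plegmatic linking'' step the argument does not close.
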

The proof  is postponed in the next subsection. Assuming
Proposition \ref{The space X_k+1 does not contain l^1 disjointly
supported spreading models of order k} we  are able to  prove the
following.

\begin{thm} The space $\mathfrak{X}_{k+1}$ has the next properties.
\item [(i)] It is reflexive.
\item[(ii)] There is no sequence $(e_n)_n\in\mathcal{SM}_{k}(\mathfrak{X}_{k+1})$ equivalent to the usual basis of $\ell^1$.
\item[(iii)] Every (k+1)-subsequence of  $(e_s)_{s\in[\nn]^{k+1}}$  is not
$(k+1)$-Ces\`aro summable to any $x_0$ in $\mathfrak{X}_{k+1}$.
\end{thm}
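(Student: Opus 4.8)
The plan is to establish the three assertions in the order (i), (ii), (iii), since the reflexivity of $\mathfrak{X}_{k+1}$ is needed for the other two. For (i): as $(e_s)_{s\in[\nn]^{k+1}}$ is an unconditional basis, by James' theorem it suffices to rule out isomorphic copies of $c_0$ and of $\ell^1$ in $\mathfrak{X}_{k+1}$. For $c_0$, the point is that the norm (\ref{no}) grows like an $\ell^2$-sum along block sequences: given a normalized block sequence $(x_q)_q$ of the basis, for each $q$ choose finitely many pairwise disjoint Schreier plegmatic families $(\mathcal{P}^q_i)_i$ with $\sum_i\|\mathcal{P}^q_i(x_q)\|_1^2>\tfrac12$, and replace each $\mathcal{P}^q_i$ by $\mathcal{P}^q_i\cap\text{supp}(x_q)$ (still Schreier plegmatic, with the same $\|\cdot(x_q)\|_1$). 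Since the $\text{supp}(x_q)$ are pairwise disjoint, the whole collection $(\mathcal{P}^q_i)_{q,i}$ is admissible in (\ref{no}), and hence $\bigl\|\sum_{q=1}^N x_q\bigr\|\ge\bigl(\sum_{q=1}^N\sum_i\|\mathcal{P}^q_i(x_q)\|_1^2\bigr)^{1/2}>\sqrt{N/2}$; so no normalized block sequence is equivalent to the $c_0$-basis, and $c_0$ does not embed. For $\ell^1$: if it embedded, then (since $\mathfrak{X}_{k+1}$ has a basis) there would be a normalized block sequence $(w_n)_n$ equivalent to the usual $\ell^1$-basis; put $x_s=w_{s(1)}$ for $s\in[\nn]^{k}$. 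By Lemma~\ref{propxi} (or directly: for a plegma family $(s_j)_{j=1}^l$ in $[\nn]^k$ one has $s_1(1)<\dots<s_l(1)$, and the $\ell^1$-basis is subsymmetric), the $k$-sequence $(x_s)_{s\in[\nn]^{k}}$ admits a $k$-spreading model equivalent to the $\ell^1$-basis; moreover $(x_s)_{s\in[\nn]^{k}}$ is plegma block, hence plegma disjointly supported, so this spreading model is plegma disjointly generated — contradicting Proposition~\ref{The space X_k+1 does not contain l^1 disjointly supported spreading models of order k}. Therefore $\mathfrak{X}_{k+1}$ is reflexive.

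For (ii), suppose some $(e_n)_n\in\mathcal{SM}_k(\mathfrak{X}_{k+1})$ is equivalent to the usual $\ell^1$-basis. By reflexivity, $\mathcal{SM}_k(\mathfrak{X}_{k+1})=\mathcal{SM}_k^{wrc}(\mathfrak{X}_{k+1})$, so Theorem~\ref{getting block generated ell^1 spreading model} yields a plegma block generated $k$-spreading model of $\mathfrak{X}_{k+1}$ equivalent to the $\ell^1$-basis; being plegma block it is in particular plegma disjointly generated, again contradicting Proposition~\ref{The space X_k+1 does not contain l^1 disjointly supported spreading models of order k}.

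For (iii), fix $M\in[\nn]^\infty$ and set $u_n=\binom{n}{k+1}^{-1}\sum_{s\in[M|n]^{k+1}}e_s$. Since $\mathfrak{X}_{k+1}$ is reflexive, its unconditional basis is shrinking, so $\{s:|x^*(e_s)|\ge\ee\}$ is finite for every $x^*\in\mathfrak{X}_{k+1}^{*}$ and $\ee>0$; hence the $(k+1)$-sequence $(e_s)_{s\in[M]^{k+1}}$ converges weakly to $0$ in the sense of Definition~\ref{defn convergence of f-sequences}. Consequently, by Proposition~\ref{rem on k-Cesaro summability}(ii), if $(e_s)_{s\in[M]^{k+1}}$ were $(k+1)$-Ces\`aro summable to some $x_0$, then $x_0=0$, i.e. $\|u_n\|\to0$. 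But this fails: for $n$ large put $h=\lfloor n/(k+2)\rfloor$ and $F_j=\{M(jh+1),\dots,M((j+1)h)\}$ for $j=1,\dots,k+1$, which are disjoint blocks inside $M|n$ with $F_1<\dots<F_{k+1}$, $|F_j|=h$ and $h<h+1\le M(h+1)=\min F_1$, so $\mathcal{P}=F_1\times\dots\times F_{k+1}\subseteq[M|n]^{k+1}$ is a single Schreier plegmatic family; using the admissible sequence consisting of just $\mathcal{P}$ in (\ref{no}) we get
\[\|u_n\|\ \ge\ \|\mathcal{P}(u_n)\|_1\ =\ \frac{|\mathcal{P}|}{\binom{n}{k+1}}\ =\ \frac{h^{\,k+1}}{\binom{n}{k+1}}\ \longrightarrow\ \frac{(k+1)!}{(k+2)^{k+1}}\ >\ 0\]
as $n\to\infty$, contradicting $\|u_n\|\to0$. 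This proves (iii).

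The calculations are routine; the only subtle points are to respect the logical order — reflexivity (from the absence of $c_0$, the absence of an $\ell^1$-subspace, and James' theorem) must be in hand before it is used in (ii) and (iii) — and, in both $\ell^1$-exclusion arguments, to verify that the $k$-sequences produced really are plegma (disjointly) supported, which is precisely the hypothesis of Proposition~\ref{The space X_k+1 does not contain l^1 disjointly supported spreading models of order k}. Beyond that proposition, whose proof is deferred, no new difficulty arises.
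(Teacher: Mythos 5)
Your proof is correct and follows essentially the same route as the paper: reflexivity via James' theorem after excluding $c_0$ (lower $\ell^2$-estimates on disjointly supported sequences) and $\ell^1$ (via Proposition \ref{The space X_k+1 does not contain l^1 disjointly supported spreading models of order k}), then the same key proposition for (ii), and the same product-set functional computation for (iii). The only variation is in (ii), where you invoke Theorem \ref{getting block generated ell^1 spreading model} to produce a plegma block generated $\ell^1$ spreading model, whereas the paper goes directly through Corollary \ref{cor canonical tree with spr mod} and Lemma \ref{triv-ell} to obtain a plegma disjointly supported generating sequence; both reductions land on the same contradiction.
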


\begin{proof} (i) By Proposition \ref{mn}, we have that
$(e_s)_{s\in[\nn]^{k+1}}$ is  unconditional. Also, it is easy to
check that it is boundedly complete. Thus  $c_0$ is not contained in
$\mathfrak{X}_{k+1}$. Moreover, the same holds for $\ell^1$, since
otherwise there would exist a disjointly supported sequence
$(x_n)_{n}\in\mathfrak{X}_{k+1}$ equivalent to the usual basis of
$\ell^1$, which is impossible by  Proposition \ref{The space X_k+1
does not contain l^1 disjointly supported spreading models of order
k}. Hence, by James' theorem \cite{J}, the space
$\mathfrak{X}_{k+1}$ is reflexive.\\
(ii) Assume on the contrary, that there exists $(e_n)_{n}$ in
$\mathcal{SM}_k(\mathfrak{X}_{k+1})$ equivalent to the usual basis
of $\ell^1$. Since $\mathfrak{X}_{k+1}$ is reflexive, we get that
$(e_n)_{n}\in\mathcal{SM}^{wrc}_k(\mathfrak{X}_{k+1})$. Hence, by
Corollary \ref{cor canonical tree with spr mod}, $(e_n)_{n}$ is
generated by a $k$-sequence $(x_s)_{s\in[\nn]^k}$ in $X_{k+1}$
admitting a canonical tree decomposition $(y_t)_{t\in[\nn]^{\leq
k}}$. Setting $x'_s=x_s-y_\emptyset$, for all $s\in[\nn]^k$,
 by Lemma \ref{triv-ell}, we have that $(x'_s)_{s\in[\nn]^k}$ also
admits a $k$-spreading model equivalent to the usual basis of
$\ell^1$. Since  $(x'_s)_{s\in[\nn]^k}$ is a plegma disjointly
supported $k$-sequence, by Proposition \ref{The space X_k+1 does not
contain l^1 disjointly supported
spreading models of order k} we have reached to a contradiction.\\
(iii)  Since $\mathfrak{X}_{k+1}$ is reflexive we have that
$(e_s)_{s\in[\nn]^{k+1}}$ is a weakly null $(k+1)$-sequence. Let
$M\in[\nn]^\infty$ and assume that $(e_s)_{s\in[M]^{k+1}}$ is
$(k+1)$-Ces\`aro summable to some $x_0\in\mathfrak{X}_{k+1}$. By
Proposition \ref{rem on k-Cesaro summability}(ii), we get that $x_0=0$.
For every $n\in\nn$,  let \begin{equation}y_n=\Big(
\substack{(k+2)n\\\ k+1} \Big)^{-1}\sum_{s\in[M|
(k+2)n]^{k+1}}e_s\end{equation} where $l_n=(k+2)n$,
$\mathcal{P}_n=F_1^n\times\ldots\times
  F_{k+1}^n$, where for every  $1\leq i\leq
k+1$, $F_i^n=\{M(in+1),\ldots,M((i+1)n)\}$ and $f_n=\sum_{s\in
\mathcal{P}_n}e^*_s$. It is easy to check that
\begin{equation}\label{qr}f_n(y_n)= n^{k+1}\cdot \Big( \substack{(k+2)n\\k+1}
\Big)^{-1}\stackrel{n\to\infty}{\longrightarrow}\frac{(k+1)!}{(k+2)^{k+1}}\end{equation}
 Since $\|y_n\|\geq f_n(y_n)$, by (\ref{qr}) we conclude that
$(e_s)_{s\in[M]^{k+1}}$ is not $(k+1)$-Ces\`aro summable to $x_0=0$,
a contradiction.
\end{proof}

\subsection{Proof of Proposition \ref{The space X_k+1
does not contain l^1 disjointly supported spreading models of order
k}} \begin{lem}\label{supports} Let    $x\in \mathfrak{X}_{k+1}$ of
finite support and  $f\in W^0$ such that $\text{supp} (f)\cap
\text{supp}(x)\neq \emptyset$. Then $|\text{supp}(f)|\leq
n_0^{k+1}$, where $n_0=\max\{s(1):\;s\in\text{supp}(x)\}$.
\end{lem}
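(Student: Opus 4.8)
The plan is to unwind the definitions of the norming set $W^0$ and of Schreier plegmatic families, and then run an elementary counting argument. Since $f\in W^0$, by definition $f=\sum_{s\in\mathcal{P}}\pm e_s^*$ for some Schreier plegmatic family $\mathcal{P}\subseteq[\nn]^{k+1}$, and moreover $\text{supp}(f)=\mathcal{P}$. So the task reduces to bounding $|\mathcal{P}|$.

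First I would record what being Schreier plegmatic gives: there is a finite block sequence $F_1<\ldots<F_{k+1}$ of subsets of $\nn$ with $|F_1|=\ldots=|F_{k+1}|=:d$, $d\leq\min F_1$, and $\mathcal{P}\subseteq F_1\times\ldots\times F_{k+1}$. The product bound is then immediate: $|\text{supp}(f)|=|\mathcal{P}|\leq|F_1\times\ldots\times F_{k+1}|=d^{k+1}$.

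The second (and only slightly less trivial) step is to bound $d$ by $n_0$. Using the hypothesis $\text{supp}(f)\cap\text{supp}(x)\neq\emptyset$, pick $s\in\mathcal{P}\cap\text{supp}(x)$. Since $s\in F_1\times\ldots\times F_{k+1}$ we have $s(1)\in F_1$, hence $\min F_1\leq s(1)$. Since $s\in\text{supp}(x)$, we have $s(1)\leq n_0$ by the definition of $n_0$. Combining with the Schreier condition $d\leq\min F_1$ yields $d\leq\min F_1\leq s(1)\leq n_0$, and therefore $|\text{supp}(f)|\leq d^{k+1}\leq n_0^{k+1}$, which is exactly the claim.

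There is no real obstacle here; the statement is a direct consequence of the definitions. The only point requiring the slightest care is making explicit the (implicit in the definitions) fact that $\text{supp}(f)$ coincides with the family $\mathcal{P}$ indexing $f$, and that the nonempty-intersection hypothesis is used precisely to transfer the Schreier restriction on $\min F_1$ into a bound involving $\text{supp}(x)$. I would keep the write-up to a couple of sentences.
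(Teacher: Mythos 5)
Your proof is correct and follows exactly the same route as the paper's: bound $|\text{supp}(f)|$ by $(\min F_1)^{k+1}$ via the product structure and the Schreier condition $|F_1|\le\min F_1$, then use a point $s\in\text{supp}(f)\cap\text{supp}(x)$ to get $\min F_1\le s(1)\le n_0$. Nothing is missing.
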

\begin{proof}
There exist $F_1<\ldots< F_{k+1}$ subsets of $\nn$ such that
$|F_1|=...=|F_{k+1}|$, $\text{supp}(f)\subseteq
F_1\times\ldots\times F_{k+1}$ and $|F_1|\leq\min F_1$. Hence
$|\text{supp}(f)|\leq(\min F_1)^{k+1}$. Let
$s\in\text{supp}(f)\cap\text{supp}(x)$. Then $n_0\geq s(1)\geq\min
F_1$. Hence $n_0\geq\min F_1$ and therefore $|\text{supp}(f)|\leq
n_0^{k+1}$.
\end{proof}
\begin{lem}
  Let $N_0\in\nn$. Then for every $0<\ee<1$, every $l\in\nn$ and every disjointly supported finite sequence $(x_j)_{j=1}^l$ in the unit ball of $\mathfrak{X}_{k+1}$ such that for every $1\leq j\leq l$ and $s\in\text{supp}(x_j)$, $s(1)\leq N_0$, we have that
\[\Big\|\frac{1}{l}\sum_{j=1}^{l}x_{j}\Big\|\leq \ee +\frac{N_0^{k+1}}{\ee^2 l}\]
\end{lem}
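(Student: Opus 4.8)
The plan is to exploit the norming set $W$ introduced above. Write $x=\tfrac1l\sum_{j=1}^l x_j$ and fix $\eta>0$; since $W$ is symmetric we may choose $f=\sum_{i=1}^n\lambda_i f_i\in W$, with the $f_i\in W^0$ pairwise disjointly supported and $\sum_i\lambda_i^2\le1$, so that $0\le\|x\|-\eta<f(x)=\tfrac1l\sum_{j=1}^l f(x_j)$. As $W$ is norming, $f(x_j)\le\|x_j\|\le1$ for every $j$. Now I would split the indices according to the size of $f(x_j)$: set $B=\{1\le j\le l:\ f(x_j)>\ee\}$. The indices outside $B$ contribute at most $\ee$ to $\tfrac1l\sum_j f(x_j)$, while those in $B$ contribute at most $|B|/l$; hence $\|x\|<\ee+|B|/l+\eta$, and, letting $\eta\downarrow0$, everything reduces to the cardinality bound $|B|\le N_0^{k+1}/\ee^2$.

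This cardinality estimate is the core of the argument and is where Lemma~\ref{supports} enters. Fix $j\in B$; only those $f_i$ with $\text{supp}(f_i)\cap\text{supp}(x_j)\neq\emptyset$ act on $x_j$, and for any such $f_i$ the hypothesis $s(1)\le N_0$ for $s\in\text{supp}(x_j)$ (playing the role of the uniform bound in Lemma~\ref{supports}) gives $|\text{supp}(f_i)|\le N_0^{k+1}$; since the $x_j$ are disjointly supported, a single $f_i$ can then meet the supports of at most $N_0^{k+1}$ of the vectors $x_j$. Put $b_i=\sum_{j\in B}|f_i(x_j)|$. Summing $f(x_j)>\ee$ over $j\in B$ and applying Cauchy--Schwarz in the $\lambda_i$ gives $\ee|B|<\sum_i|\lambda_i|\,b_i\le\bigl(\sum_i b_i^2\bigr)^{1/2}$. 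A second Cauchy--Schwarz, inside each $b_i$ over the at most $N_0^{k+1}$ relevant $j$'s, together with the \emph{vertical} estimate $\sum_i f_i(x_j)^2\le\|x_j\|^2\le1$ (valid because the $\text{supp}(f_i)$ form a disjoint sequence of Schreier plegmatic families, exactly the configuration governed by the norm of $\mathfrak{X}_{k+1}$), yields $\sum_i b_i^2\le N_0^{k+1}\sum_{j\in B}\sum_i f_i(x_j)^2\le N_0^{k+1}|B|$. Hence $\ee^2|B|^2<N_0^{k+1}|B|$, so $|B|<N_0^{k+1}/\ee^2$, which finishes the proof.

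The step I expect to be the genuine obstacle is this last estimate of $\sum_i b_i^2$: it must simultaneously use the \emph{horizontal} control on how many $x_j$'s a single $f_i$ can reach — where the Schreier condition $|F_1|\le\min F_1$ combined with $s(1)\le N_0$ bounds $|\text{supp}(f_i)|$ by $N_0^{k+1}$, i.e.\ Lemma~\ref{supports} — and the \emph{vertical} $\ell^2$-orthogonality built into the norm, which forces $\sum_i f_i(x_j)^2\le1$ for each fixed $j$; the only real delicacy is keeping the various disjointness conditions (supports of the $f_i$'s, of the $x_j$'s, and of their pairwise intersections) straight. I note in passing that one can avoid $W$ altogether and argue directly with the disjoint Schreier plegmatic families appearing in the definition of the norm, via the same two Cauchy--Schwarz steps, to obtain the sharper bound $\|x\|\le (N_0^{k+1}/l)^{1/2}$; this already implies the stated inequality, since $(N_0^{k+1}/l)^{1/2}\le\ee+N_0^{k+1}/(\ee^2 l)$ whenever $0<\ee<1$.
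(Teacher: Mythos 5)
Your argument is correct and follows essentially the same route as the paper's: both dualize through the norming set $W$, use Lemma \ref{supports} together with the disjointness of the $x_j$ to see that a single $f_i$ can meet at most $N_0^{k+1}$ of the supports of the $x_j$, combine this horizontal bound with the vertical estimate $\sum_i f_i(x_j)^2\le\|x_j\|^2\le 1$ via two applications of Cauchy--Schwarz, and isolate a ``bad'' index set of cardinality at most $N_0^{k+1}/\ee^2$ (the paper takes the bad set to be $\{j:\sum_{i\in I_j}\lambda_i^2\ge\ee^2\}$ rather than your $\{j: f(x_j)>\ee\}$, but the bookkeeping is interchangeable). Your closing remark that the same two Cauchy--Schwarz steps yield the cleaner bound $\bigl\|\tfrac1l\sum_{j=1}^l x_j\bigr\|\le (N_0^{k+1}/l)^{1/2}$, which dominates the stated estimate for every $0<\ee<1$, is also correct.
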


\begin{proof}
  We fix $0<\ee<1$, $l\in\nn$ and $(x_j)_{j=1}^l$ satisfying the assumptions of the lemma.
 Let $\varphi=\sum_{i=1}^n\lambda_if_i\in W$, where $n\in\nn$,
$\lambda_1,\ldots,\lambda_n\in\rr$ with
$\sum_{i=1}^n\lambda_i^2\leq1$ and $f_1,\ldots,f_n\in W^0$ pairwise
disjointly supported. For every $j=1,\ldots,l$ we set
\[I_j=\Big\{i\in\{1,\ldots,n\}:\;\text{supp}(f_i)\cap\text{supp}(x_j)\neq\emptyset \Big\}\]
By Lemma \ref{supports},  we have that for every $1\leq j\leq l$, if $i\in I_j$ then
$|\text{supp}(f_i)|\leq N_0^{k+1}$. Also let
$F_1=\{j\in\{1,\ldots,l\}:\; \sum_{i\in I_j}\lambda_i^2
<\varepsilon^2\}$ and $F_2=\{1,\ldots,l\}\setminus F_1$.
It is easy to see that $\sum_{i\in I_j}\frac{f_i(x_j)}{( \sum_{i\in I_j}f_i(x_j)^2 )^\frac{1}{2}}f_i$ belongs to $W$,  for all $1\leq j\leq l$. Hence, since $\|x_j\|\leq1$, we have that $\sum_{i\in I_j} f_i(x_j)^2\leq1$,  for all $1\leq j\leq l$.
Therefore we have
\[\begin{split}
\varphi\Big(\sum_{j=1}^{l}x_j\Big)&= \sum_{i=1}^n\lambda_i f_i\Big(\sum_{j=1}^{l}x_j\Big) =
    \sum_{j=1}^{l} \sum_{i=1}^n \lambda_i f_i(x_j)\\
    &= \sum_{j=1}^{l} \sum_{i\in I_j} \lambda_i f_i(x_j)\leq
    \sum_{j=1}^{l}
    \Big{(}\sum_{i\in I_j} \lambda_i^2\Big{)}^\frac{1}{2} \Big{(}\sum_{i\in I_j}f_i(x_j)^2\Big{)}^\frac{1}{2}\\
    &\leq  \sum_{j\in F_1} \Big{(}\sum_{i\in I_j} \lambda_i^2\Big{)}^\frac{1}{2}  + \sum_{j\in F_2}
     \Big{(}\sum_{i\in I_j} \lambda_i^2\Big{)}^\frac{1}{2}\\
    &\leq \varepsilon |F_1|+|F_2|\leq \ee l+|F_2|
  \end{split}\]
If for some $1\leq i\leq n$ we have that $J_i\neq\emptyset$ then, by Lemma \ref{supports} we have that $|\text{supp}(f_i)|\leq N_0^{k+1}$ and since  $(x_j)_{j=1}^{l}$ are
disjointly supported, we conclude that  $|J_i|\leq N_0^{k+1}$. Therefore, for every $1\leq i\leq n$, $|J_i|\leq N_0^{k+1}$. Hence
$$\ee^2 |F_2|\leq \sum_{j\in F_2}\sum_{i\in I_j}\lambda_i^2\leq \sum_{j=1}^{l}\sum_{i\in I_j}\lambda_i^2=\sum_{i=1}^n|J_i|\lambda_i^2\leq N_0^{k+1}\sum_{i=1}^n \lambda_i^2\leq
N_0^{k+1}$$ which yields that $|F_2|\leq N_0^{k+1}/\ee^2$.
Therefore, for every $\varphi\in W$ we have
\[\varphi\Big(\sum_{j=1}^{l}x_j\Big)\leq \varepsilon
l+\frac{N_0^{k+1}}{\ee^2}\] Since $W$ is a norming set for $\mathfrak{X}_{k+1}$, the proof is complete.
\end{proof}
\begin{defn} (i) Let $\mathcal{G}_1,\mathcal{G}_2\subseteq
[\nn]^{k+1}$. We will call the pair $(\mathcal{G}_1,\mathcal{G}_2)$
weakly plegmatic if  for every $s_2\in G_{2}$ there exists $s_1\in
\mathcal{G}_i$ such that the
pair $\{s_1,s_2\}$ is  plegmatic.\\
 (ii) For
every $0\leq j\leq l$, let $\mathcal{G}_j\subseteq [\nn]^{k+1}$. The
finite sequence $(\mathcal{G}_j)_{j=0}^l$  will be called a
\textit{weakly plegmatic path of subsets of} $[\nn]^{k+1}$, if for
every $0\leq i< l$ the pair $(\mathcal{G}_{i},\mathcal{G}_{i+1})$ is
weakly plegmatic.
\end{defn}

\begin{lem} \label{blocking the first coordinates in weakly allowable paths}
Let $(\mathcal{G}_j)_{j=0}^k$ be a weakly plegmatic path of subsets
in $[\nn]^{k+1}$. Then $\max\{s(1):s\in\cup_{j=0}^k
\mathcal{G}_j\}\leq \max\{ s(k+1): s\in \g_0\}$.
\end{lem}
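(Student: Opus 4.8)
The plan is to adapt the proof of Lemma~\ref{lemma conserning the length of the plegma path}: a weakly plegmatic path is arranged exactly so that one can transport a single coordinate backwards along it, the coordinate index increasing by one at every step, so that after $k$ steps the index has reached $k+1$.

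First I would isolate the only property of plegmaticity that is used. If a pair $\{s_1,s_2\}\subseteq[\nn]^{k+1}$ is plegmatic, say $s_1,s_2\in F_1\times\cdots\times F_{k+1}$ with $F_1<\cdots<F_{k+1}$ a block sequence, then for every $1\le a\le k$ both $s_1(a),s_2(a)$ lie in $F_a$ while $s_1(a+1),s_2(a+1)$ lie in $F_{a+1}$, and $\max F_a<\min F_{a+1}$; hence $s_2(a)<s_1(a+1)$. Consequently, from a weakly plegmatic pair $(\mathcal{G},\mathcal{G}')$ and any $s'\in\mathcal{G}'$ I obtain some $s\in\mathcal{G}$ with $s'(a)<s(a+1)$ for all $1\le a\le k$.

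Now fix $0\le j\le k$ and $s\in\mathcal{G}_j$; the goal is $s(1)\le\max\{s'(k+1):s'\in\mathcal{G}_0\}$. If $j=0$ this is trivial, since $s(1)\le s(k+1)$ and $s\in\mathcal{G}_0$. If $j\ge1$, I would pick recursively $t_j:=s\in\mathcal{G}_j$ and then, using that $(\mathcal{G}_i,\mathcal{G}_{i+1})$ is weakly plegmatic for each $0\le i<k$, elements $t_{i}\in\mathcal{G}_i$ for $i=j-1,\dots,0$ with $\{t_i,t_{i+1}\}$ plegmatic, so that $t_{i+1}(a)<t_i(a+1)$ for all $1\le a\le k$. (Nonemptiness propagates downward along a weakly plegmatic path, so whenever the left-hand maximum is taken over a nonempty set one has $\mathcal{G}_0\ne\emptyset$ and the chain can be built.) Chaining these inequalities, with the coordinate index increased by one at each step, gives
\[
s(1)=t_j(1)<t_{j-1}(2)<t_{j-2}(3)<\cdots<t_1(j)<t_0(j+1),
\]
a chain of $j$ strict inequalities; since $j+1\le k+1$ and $t_0\in\mathcal{G}_0$, the right-hand side is at most $t_0(k+1)\le\max\{s'(k+1):s'\in\mathcal{G}_0\}$, as desired.

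No genuine obstacle is expected here. The only points needing care are the elementary coordinate inequality extracted from the block structure of a plegmatic pair, and the index bookkeeping ensuring the chain has exactly $j\le k$ links and terminates at the $(j+1)$-st coordinate of an element of $\mathcal{G}_0$ — which is precisely why the bound is phrased in terms of the $(k+1)$-st coordinate and why the path is required to have length $k$.
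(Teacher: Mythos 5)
Your proof is correct and follows essentially the same route as the paper: trace each $s\in\mathcal{G}_j$ back along the weakly plegmatic path to obtain $t_j=s, t_{j-1},\dots,t_0$ with consecutive pairs plegmatic, and chain the coordinate inequalities $s(1)=t_j(1)<t_{j-1}(2)<\cdots<t_0(j+1)\leq t_0(k+1)$. The paper leaves the existence of this chain and the elementary inequality $s_2(a)<s_1(a+1)$ for a plegmatic pair as "easy to see"; you have simply written out those details.
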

\begin{proof}
Let $0\leq j\leq k$ and $s\in \g_j$. Then it is easy to see that there exists a sequence
$(s_i)_{i=0}^j$ in $[\nn]^{k+1}$ with  $s_i\in \g_i$, for every $0\leq
i\leq j-1$ and $s_j=s$, such that $\{s_i,s_{i+1}\}$ is plegmatic, for all $0\leq i\leq j-1$. Hence
$$s(1)=s_j(1)<s_{j-1}(2)<...<s_0(j+1)\leq s_0(k+1)\leq \max\{
s(k+1): s\in \g_0\}$$
\end{proof}
\begin{lem} \label{Lemma for 2 vectos of norm almost 2}
Let $0<\eta<\frac18$ and  $x_1,x_2\in\mathfrak{X}_{k+1}$ with disjoint finite
supports such that $\|x_1\|,\|x_2\|\leq1$ and $\|x_1+x_2\|> 2-2\eta$.
Let  $\mathcal{G}_1\subseteq\text{supp}(x_1)$ such that
$\|\mathcal{G}_1^c(x_1)\|\leq \eta$. Then  there exists
$\mathcal{G}_2\subseteq\text{supp}(x_2)$ satisfying the following.
\begin{enumerate}
\item[(i)] The pair $(\mathcal{G}_1,\mathcal{G}_2)$ is a weakly plegmatic path and
\item[(ii)] $\|\mathcal{G}_2^c(x_2)\|\leq \eta^\frac{1}{8}$.
\end{enumerate}
\end{lem}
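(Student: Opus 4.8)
The plan is to extract $\mathcal{G}_2$ from a single norming functional of $x_1+x_2$, using the fact that any two members of a plegmatic family automatically form a plegmatic pair. Since $\|x_1+x_2\|>2-2\eta$, I would fix $\varphi\in W$ with $\varphi(x_1+x_2)>2-2\eta$ and write $\varphi=\sum_{i=1}^n\lambda_if_i$ with $f_i\in W^0$, $\lambda_i>0$, $\sum_i\lambda_i^2\le 1$, and with the supports $\mathcal{P}_i=\text{supp}(f_i)$ pairwise disjoint Schreier plegmatic families (discarding the $\mathcal{P}_i$ meeting neither $\text{supp}(x_1)$ nor $\text{supp}(x_2)$ costs nothing). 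Because $\|\varphi\|_{X^*}\le 1$ and $\|x_1\|,\|x_2\|\le 1$, the identity $\varphi(x_1)+\varphi(x_2)>2-2\eta$ forces $\varphi(x_1)>1-2\eta$ and $\varphi(x_2)>1-2\eta$. I then set $A=\{i:\mathcal{P}_i\cap\mathcal{G}_1\neq\emptyset\}$, $B=\{1,\dots,n\}\setminus A$, and define $\mathcal{G}_2=\bigcup_{i\in A}\bigl(\mathcal{P}_i\cap\text{supp}(x_2)\bigr)$.

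For (i): any two elements of a plegmatic family lie in a common product $F_1\times\cdots\times F_{k+1}$ of equal‑sized consecutive blocks, hence form a plegmatic pair. So if $s_2\in\mathcal{G}_2$, choosing $i\in A$ with $s_2\in\mathcal{P}_i$ and any $s_1\in\mathcal{P}_i\cap\mathcal{G}_1$ gives a plegmatic pair $\{s_1,s_2\}$; thus $(\mathcal{G}_1,\mathcal{G}_2)$ is a weakly plegmatic path. (Note $A\neq\emptyset$: otherwise $\varphi(x_1)=\varphi(\mathcal{G}_1^c(x_1))\le\eta<1-2\eta$.)

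For (ii) the analytic tool is superadditivity of the squared norm of $\mathfrak{X}_{k+1}$: for disjoint $E,F$, $\|E(x)\|^2+\|F(x)\|^2\le\|(E\cup F)(x)\|^2$, obtained by juxtaposing optimal Schreier plegmatic witnessing families. The estimates then go: for $i\in B$ the family $\mathcal{P}_i\cap\text{supp}(x_1)$ lies in $\mathcal{G}_1^c$ and these are disjoint, so $\sum_{i\in B}f_i(x_1)^2\le\|\mathcal{G}_1^c(x_1)\|^2\le\eta^2$, whence $\sum_{i\in B}\lambda_if_i(x_1)\le\eta$ and therefore $\sum_{i\in A}\lambda_if_i(x_1)>1-3\eta$; Cauchy–Schwarz with $\sum_{i\in A}f_i(x_1)^2\le\|x_1\|^2\le 1$ gives $\sum_{i\in A}\lambda_i^2>(1-3\eta)^2$, so $\sum_{i\in B}\lambda_i^2<6\eta$. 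Now $\varphi_A:=\sum_{i\in A}\lambda_if_i\in W$ is supported on $\bigcup_{i\in A}\mathcal{P}_i$, so $\varphi_A(x_2)=\varphi_A(\mathcal{G}_2(x_2))\le\|\mathcal{G}_2(x_2)\|$, while $\bigcup_{i\in B}\mathcal{P}_i\cap\text{supp}(x_2)\subseteq\mathcal{G}_2^c$ yields $\sum_{i\in B}f_i(x_2)^2\le\|\mathcal{G}_2^c(x_2)\|^2$, hence $\varphi_A(x_2)=\varphi(x_2)-\sum_{i\in B}\lambda_if_i(x_2)\ge(1-2\eta)-\sqrt{6\eta}\,\|\mathcal{G}_2^c(x_2)\|$. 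Writing $\beta=\|\mathcal{G}_2^c(x_2)\|$ and combining with $\|\mathcal{G}_2(x_2)\|^2+\beta^2\le\|x_2\|^2\le 1$, one gets $\bigl((1-2\eta)-\sqrt{6\eta}\,\beta\bigr)_{+}^{2}+\beta^2\le 1$, and solving this quadratic inequality bounds $\beta$ by $C\sqrt{\eta}$ for an absolute constant $C$, which in the stated range $0<\eta<\tfrac18$ is $\le\eta^{1/8}$.

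The main obstacle is precisely this last quantitative step: a priori $\mathcal{G}_2^c$ could carry a large share of $x_2$, and it is the coupling of $\varphi_A(x_2)\le\|\mathcal{G}_2(x_2)\|$ with $\|\mathcal{G}_2(x_2)\|^2+\beta^2\le 1$ that excludes this, but squeezing the resulting estimate down to the clean exponent $\tfrac18$ requires careful tracking of the numerical constants — and this is also the point where one may need the Schreier restriction (Lemma \ref{supports}, bounding the size of any Schreier plegmatic family meeting a fixed finitely supported vector) to sharpen the bound. Everything else — the choice of $\varphi$, the bookkeeping of $A$ versus $B$, and the verification that $\mathcal{G}_2$ is weakly plegmatically linked to $\mathcal{G}_1$ — is routine once the superadditivity lemma is in hand.
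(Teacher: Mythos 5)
Your argument is essentially identical to the paper's: the same choice of a norming functional $\varphi=\sum_i\lambda_if_i$ with $\varphi(x_1),\varphi(x_2)>1-2\eta$, the same split of the indices into those whose supports meet $\mathcal{G}_1$ and the rest, the same definition $\mathcal{G}_2=\mathrm{supp}(x_2)\cap\mathrm{supp}(\varphi_A)$ with the weak plegmaticity coming for free from the definition of a plegmatic family, and the same Cauchy--Schwarz bookkeeping giving $\sum_{i\in B}\lambda_i^2<6\eta$, hence $\varphi_A(x_2)\geq 1-2\eta-\sqrt{6\eta}$ and $\|\mathcal{G}_2^c(x_2)\|^2\leq 1-\varphi_A(x_2)^2$. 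The only differences are cosmetic --- the paper bounds $\sum_{i\in B}f_i(x_2)^2$ by $1$ rather than by $\|\mathcal{G}_2^c(x_2)\|^2$, so your refinement is unnecessary, the exponent $\frac18$ being deliberately generous (indeed the paper's own final numerical inequality is only honest for small $\eta$, exactly the regime in which the lemma is later applied) --- and your closing worry that Lemma \ref{supports} might be needed here is unfounded: it plays no role in this lemma.
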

\begin{proof}
  Since $\|x_1+x_2\|>2-2\eta$, there exists $\varphi\in W$ such that $\varphi(x_1+x_2)>2-2\eta$.
  Since $\|x_1\|,\|x_2\|\leq1$, we get that $\varphi(x_1)>1-2\eta$ and
  $\varphi(x_2)>1-2\eta$. The functional $\varphi$ is of the form $\sum_{i=1}^n \lambda_i f_i$,
  where $f_1,\ldots,f_n$ are pairwise disjoint supported elements of $W^0$ and $\sum_{i=1}^n\lambda_i^2\leq1$. We set $I=\{1,\ldots,n\}$
  and we split it to $I_1$ and $I_2$ as follows:
  \[I_1=\{i\in I:\;\text{supp}(f_i)\cap \g_1\neq\emptyset\}\;\;\text{and}\;\;I_2=I\setminus I_1=\{i\in I:\;\text{supp}(f_i)\subseteq \g_1^c\}\]
  We also set $\varphi_1=\sum_{i\in I_1}\lambda_i f_i$ and $\varphi_2=\sum_{i\in I_2}\lambda_i f_i$.
   Hence $\varphi_2(x_1)\leq\|\g_1^c(x_1)\|\leq \eta$ and therefore $\varphi_1(x_1)> 1-3\eta$.
   Applying Cauchy-Schwartz's inequality we get that
  \[1-3\eta < \varphi_1(x_1)=\sum_{i\in I_1}\lambda_i f_i(x_1)\leq
  \Big{(} \sum_{i\in I_1}\lambda_i^2 \Big{)}^\frac{1}{2}\Big{(} \sum_{i\in I_1}f_i(x_1)^2
   \Big{)}^\frac{1}{2}\leq\Big{(} \sum_{i\in I_1}\lambda_i^2 \Big{)}^\frac{1}{2}\]
  Since $\sum_{i\in I}\lambda_i^2\leq1$, we have that
$(\sum_{i\in I_2}\lambda_i^2)^\frac{1}{2}< (1-(1-3\eta)^2)^\frac{1}{2}\leq (6\eta)^\frac{1}{2}$.
  Hence
    \[\varphi_2(x_2)=\sum_{i\in I_2}\lambda_if_i(x_2)\leq \Big{(}
     \sum_{i\in I_2}\lambda_i^2 \Big{)}^\frac{1}{2}\Big{(} \sum_{i\in I_2} f_i(x_2)^2
      \Big{)}^\frac{1}{2}< (6\eta)^\frac{1}{2}\]
  Hence
  $\varphi_1(x_2)> 1-2\eta-(6\eta)^\frac{1}{2}>1-4\eta^\frac{1}{2}$.
We set $\g_2=\text{supp}(x_2)\cap\text{supp}(\varphi_1)$.
  Then by the definition of $I_1$ it is immediate that the pair $(\g_1,\g_2)$ is weakly plegmatic.
Finally, since $\|\g_2(x_2)\|^2+\|\g_2^c(x_2)\|^2\leq
\|x_2\|^2\leq 1$ and $\|\g_2(x_2)\|\geq
\varphi_1(x_2)$,
 we get that
 $\|\g_2^c(x_2)\|\leq (1-(1-4\eta^\frac12)^2)^\frac{1}{2}<\eta^\frac18$ and the proof is complete.
\end{proof}
An iterated use of the above yields the following.
\begin{cor}\label{final lemma for mathfrak-X-_k+1}
Let $m\in\nn$ and $0<\ee<\frac18$  Then for every sequence
$(x_i)_{i=0}^m$ of  disjointly and finitely supported  vectors in
$\mathfrak{X}_{k+1}$  with $\|x_i\|\leq1$, for all $0\leq i\leq m$,
and  $\|x_i+x_{i+1}\|> 2-2\varepsilon^{8^m}$, for all $0\leq i<m$,  there
exists a weakly plegmatic path $(\g_i)_{i=0}^m$ of subsets
of $[\nn]^{k+1}$ such that $\mathcal{G}_i\subseteq \text{supp}\;x_i$
and $\|\g_i^c(x_i)\|<\ee$, for all $0\leq i\leq m$.
\end{cor}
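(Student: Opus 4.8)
The plan is to build the path $(\g_i)_{i=0}^m$ by a finite induction on $i$, feeding the output of Lemma \ref{Lemma for 2 vectos of norm almost 2} at one step as its input at the next, along a doubly--exponential schedule of tolerances chosen so that the error $\|\g_i^c(x_i)\|$ --- which the lemma raises to the power $\tfrac18$ at each step but simultaneously allows to be as large as its parameter $\eta$ --- stays strictly below $\ee$ throughout.

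Concretely, for $0\le i\le m-1$ I would set $\eta_i=\ee^{8^{\,m-i}}$. Since $0<\ee<\tfrac18$ and $8^{\,m-i}\ge 8$, we have $0<\eta_i<\tfrac18$; moreover $\eta_0\le\eta_1\le\cdots\le\eta_{m-1}$, so $\eta_i\ge\eta_0=\ee^{8^m}$ for every $i$, while $\eta_i^{1/8}=\eta_{i+1}$ for $i\le m-2$ and $\eta_{m-1}^{1/8}=\ee$. Start with $\g_0=\text{supp}(x_0)$, so that $\|\g_0^c(x_0)\|=0\le\eta_0$. Assuming that for some $0\le i\le m-1$ a set $\g_i\subseteq\text{supp}(x_i)$ with $\|\g_i^c(x_i)\|\le\eta_i$ has been produced, apply Lemma \ref{Lemma for 2 vectos of norm almost 2} to $x_i,x_{i+1}$ (disjointly and finitely supported, of norm $\le 1$) with $\eta=\eta_i$ and $\g_1=\g_i$: the hypotheses hold because $\|x_i+x_{i+1}\|>2-2\ee^{8^m}\ge 2-2\eta_i$, because $0<\eta_i<\tfrac18$, and because $\|\g_i^c(x_i)\|\le\eta_i$. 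The lemma --- whose proof in fact yields a strict inequality in conclusion (ii) --- returns $\g_{i+1}\subseteq\text{supp}(x_{i+1})$ with $(\g_i,\g_{i+1})$ weakly plegmatic and $\|\g_{i+1}^c(x_{i+1})\|<\eta_i^{1/8}$; by the choice of the tolerances this reads $\|\g_{i+1}^c(x_{i+1})\|<\eta_{i+1}$ when $i\le m-2$ and $\|\g_m^c(x_m)\|<\ee$ when $i=m-1$, so the induction runs through all $m$ steps.

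Concatenating the pairs so produced, $(\g_i)_{i=0}^m$ is a weakly plegmatic path of subsets of $[\nn]^{k+1}$ with $\g_i\subseteq\text{supp}(x_i)$ for every $i$. Finally $\|\g_i^c(x_i)\|<\ee$ for all $0\le i\le m$: for $i=0$ this is $0<\ee$, and for $1\le i\le m$ we have $\|\g_i^c(x_i)\|<\eta_{i-1}^{1/8}$, which equals $\eta_i\le\eta_{m-1}=\ee^8<\ee$ when $i\le m-1$ and equals $\ee$ when $i=m$. This is exactly the assertion.

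The only delicate point is the arithmetic of the schedule $\eta_i=\ee^{8^{\,m-i}}$: at step $i$ the single number $\eta_i$ must both dominate the incoming error $\|\g_i^c(x_i)\|$ (which only $\eta_{i-1}^{1/8}=\eta_i$ controls) and be small enough that $2-2\eta_i$ does not exceed $\|x_i+x_{i+1}\|$; these are reconciled by the uniform hypothesis $\|x_i+x_{i+1}\|>2-2\ee^{8^m}$ together with the monotonicity $\eta_0\le\cdots\le\eta_{m-1}$, and the strictness of the final bound rests on the strict form of the estimate extracted from the proof of Lemma \ref{Lemma for 2 vectos of norm almost 2}. All the genuine content lives in that lemma, so no substantial obstacle remains.
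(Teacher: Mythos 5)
Your proposal is correct and is precisely the ``iterated use'' of Lemma \ref{Lemma for 2 vectos of norm almost 2} that the paper invokes without writing out; the schedule $\eta_i=\ee^{8^{m-i}}$, the verification that $2-2\ee^{8^m}\le 2-2\eta_i$, and the observation that the lemma's proof actually yields a strict bound $<\eta^{1/8}$ are exactly the details being elided. Nothing further is needed.
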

We are now ready to give the proof of Proposition \ref{The space X_k+1
does not contain l^1 disjointly supported spreading models of order
k}.
\begin{proof}[Proof of Proposition \ref{The space X_k+1
does not contain l^1 disjointly supported spreading models of order
k}:]
  Assume on the contrary that the space $\mathfrak{X}_{k+1}$ admits
  a plegma disjointly generated $k$-spreading model equivalent to the usual basis of $\ell^1$. Let $0<\ee<\frac18$.
  By Proposition \ref{Prop on almost isometric l^1 spr mod} and
  Remark \ref{Rem on almost isometric l^1 spr mod} there exists a
  sequence
  $(x_t)_{t\in[\nn]^k}$ in the unit ball of $\mathfrak{X}_{k+1}$ which is plegma disjointly supported and generates $\ell^1$ as a $k$-spreading model
  of constant $c>1-\ee^{8^k}$. Therefore, we may suppose that
  \begin{equation}\label{eq14}
    \Big\|\frac1l\sum_{j=1}^lx_{t_j}\Big\|>1-\ee^{8^k}
  \end{equation}
  for all $l\in\nn$ and $(t_j)_{j=1}^l\in\textit{Plm}_l([\nn]^k)$ with $t_1(1)\geq l$.

  We set $t_0=\{2,4,\ldots,2k\}$, $N_0=\max\{s(k+1):s\in\text{supp}(x_{t_0})\}$ and $L=\{2n:s>k\}$. For every $t\in[L]^k$ we select $\g_t\subseteq[\nn]^k$ such that $\g_t\subseteq\text{supp}(x_t)$, $\|\g_t^c(x_t)\|<\ee$ and $s(1)<N_0$, for all $s\in\g_t$, as follows. Let $t\in[L]^k$. Observe $t\in [\nn]_\shortparallel^{k}$ and $t_0<t$. By Proposition \ref{accessing everything with plegma path of
length |s_0|}  there exists a plegma path
    $(t_j)_{j=0}^{k}$ in $[\nn]^{k}$, with $t_k=t$.
    By Corollary \ref{final lemma for mathfrak-X-_k+1} (for $m=k$)
    there exists a weakly plegmatic path $(\g_j)_{j=0}^{k}$ such that  $\g_j\subseteq \text{supp}\;x_{t_j}$ and $\|\g_j^c(x_{t_j})\|<\ee$,
    for all $j=0,\ldots,k$. We set $\g_t=\g_k$. Lemma \ref{blocking the first coordinates in weakly allowable paths} and Corollary \ref{final lemma for mathfrak-X-_k+1} yield that the choice of $(\g_t)_{t\in[L]^k}$ is as desired.

    For every $t\in[L]^k$, let $x_t^1=\g_t(x_t)$. Then $\|x_t-x_t^1\|<\ee$, for all $t\in[L]^k$. Hence by (\ref{eq14}) we get that for every $l\in\nn$ and every $(t_j)_{j=1}^l\in\textit{Plm}_l([L]^k)$
     with $t_1(1)\geq l$, we have that \begin{equation}\label{eq9}
    \Big{\|} \frac{1}{l}\sum_{i=1}^l x^1_{t_i} \Big{\|}>1-2\varepsilon>\frac68
  \end{equation}
  Moreover notice that $(x'_t)_{t\in[L]^k}$ is a plegma disjointly supported $k$-subsequence in the unit ball of $\mathfrak{X}_{k+1}$. Therefore,
  by Lemma \ref{blocking the first coordinates in weakly allowable
    paths} and (\ref{eq9}) for  $l>8N_0^{k+1}/5\ee^2$, we get a contradiction. The proof of Proposition \ref{The space X_k+1
does not contain l^1 disjointly supported spreading models of order
k} is complete. \end{proof}

\begin{rem}
  As we have mentioned in the introduction of this article, the
  $k$-spreading models of a Banach space $X$ have a transfinite
  extension yielding an hierarchy of $\xi$-spreading models, for
  $\xi<\omega_1$. It can be shown that the space in Section \ref{space Odel_Schlum} does not admit $\ell^p$, for $1\leq p<\infty$, or $c_0$ as $\xi$-spreading model, for every $\xi<\omega_1$. Also an analogue of the last example exists. Namely, for every limit countable ordinal $\xi$ there exists a reflexive space $X_\xi$ admitting $\ell^1$ as $\xi$-spreading model but not less.
\end{rem}

\end{document}